\newcommand{\C}{\mathrm{C}}
\newcommand{\D}{\ensuremath{\mathcal{D}}}
\newcommand{\B}{\mathrm{B}} 
\let\H\relax 
\newcommand{\H}{\mathrm{H}}
\let\L\relax 
\newcommand{\L}{\mathrm{L}}
\newcommand{\V}{\mathrm{V}}
\newcommand{\scr}{\mathscr}
\newcommand{\Inv}{\mathrm{Inv}} 
\newcommand{\M}{\mathrm{M}}
\newcommand{\dist}{\mathrm{dist}} 
\newcommand{\rank}{\mathrm{rank}} 
\newcommand{\even}{\mathrm{even}} 
\newcommand{\odd}{\mathrm{odd}} 
\newcommand{\bi}{\mathrm{bi}} 
\let\cal\relax
\newcommand{\cal}{\mathcal}
\newcommand{\Z}{\ensuremath{\mathbb{Z}}}
\newcommand{\R}{\ensuremath{\mathbb{R}}}
\newcommand{\W}{\mathrm{W}}
\newcommand{\Id}{\mathrm{Id}}
\newcommand{\la}{\langle}
\newcommand{\ra}{\rangle}
\renewcommand{\leq}{\ensuremath{\leqslant}}
\renewcommand{\geq}{\ensuremath{\geqslant}}
\newcommand{\qed}{\hfill \vrule height6pt  width6pt depth0pt}
\newcommand{\bnorm}[1]{ \big\| #1  \big\|}
\newcommand{\Bnorm}[1]{ \Big\| #1  \Big\|}
\newcommand{\norm}[1]{\left\Vert#1\right\Vert}
\newcommand{\xra}{\xrightarrow}
\newcommand{\co}{\colon}
\newcommand{\ot}{\otimes}
\newcommand{\ovl}{\overline}
\newcommand{\otvn}{\ovl\ot}
\DeclareMathOperator{\sgn}{\mathrm{sgn}} 
\newcommand{\fin}{\mathrm{fin}} 
\newcommand{\Ch}{\mathrm{Ch}}
\newcommand{\HdR}{\mathrm{HdR}} 
\let\i\relax 
\newcommand{\i}{\mathrm{i}}
\newcommand{\ov}{\overset}
\newcommand{\vol}{\mathrm{vol}} 
\newcommand{\K}{\mathrm{K}} 
\newcommand{\Vol}{\mathrm{Vol}}
\newcommand{\UMD}{\mathrm{UMD}} 
\newcommand{\epsi}{\varepsilon}
\renewcommand{\d}{\mathop{}\mathopen{}\mathrm{d}} 
\newcommand{\e}{\mathrm{e}} 
\renewcommand{\d}{\mathop{}\mathopen{}\mathrm{d}}
\DeclareMathOperator{\Lip}{\mathrm{Lip}} 
\DeclareMathOperator{\Index}{Index} 
\let\ker\relax 
\DeclareMathOperator{\ker}{Ker} 
\DeclareMathOperator{\Ran}{Ran} 
\DeclareMathOperator{\dom}{dom} 
\DeclareMathOperator{\diag}{diag} 
\let\Re\relax 
\DeclareMathOperator{\Re}{Re} 
\DeclareMathOperator{\Hom}{Hom}
\DeclareMathOperator{\coker}{Coker} 
\newcommand{\LC}{\mathrm{LC}} 
\DeclareMathOperator{\card}{card} 
\DeclareMathOperator*{\esssup}{ess\,sup} 
\DeclareMathOperator*{\essinf}{ess\,inf} 
\DeclareMathOperator{\diam}{diam} 
\DeclareMathOperator{\Curv}{Curv}
\newtheorem{thm}{Theorem}[section]
\newtheorem{defi}[thm]{Definition}
\newtheorem{prop}[thm]{Proposition}
\newtheorem{cor}[thm]{Corollary}
\newtheorem{lemma}[thm]{Lemma}
\newtheorem{remark}[thm]{Remark}
\newtheorem{example}[thm]{Example}
\newtheorem{ass}[thm]{Assumption}
\newenvironment{proof}[1][]{\noindent {\it Proof #1} : }{\hbox{~}\qed
\smallskip
}
\numberwithin{equation}{section}
\let\OLDthebibliography\thebibliography
\renewcommand\thebibliography[1]{
  \OLDthebibliography{#1}
  \setlength{\parskip}{0pt}
  \setlength{\itemsep}{0pt plus 0.3ex}
}
\newcommand\reallywidehat[1]{\arraycolsep=0pt\relax%
\begin{array}{c}
\stretchto{
  \scaleto{
    \scalerel*[\widthof{\ensuremath{#1}}]{\kern-.5pt\bigwedge\kern-.5pt}
    {\rule[-\textheight/2]{1ex}{\textheight}} 
  }{\textheight} %
}{0.5ex}\\           
#1\\                 
\rule{-1ex}{0ex}
\end{array}
}
\begin{document}
\selectlanguage{english}
\title{\bfseries{Curvature, Dolbeault--Dirac operators, and an $\L^p$-index theorem on compact K\"ahler manifolds}}
\date{}
\author{\bfseries{C\'edric Arhancet}}
\maketitle


\begin{abstract}
We develop an $\mathrm{L}^p$-Banach noncommutative-geometric framework for Dolbeault--Dirac operators on compact K\"ahler manifolds with coefficients in a Hermitian holomorphic vector bundle $E$. For every $p \in (1,\infty)$, we prove that the closed $\mathrm{L}^p$-realization $\mathcal{D}_{E,p}$ of the Dolbeault--Dirac operator is bisectorial and admits a bounded $\mathrm{H}^\infty$ functional calculus on $\mathrm{L}^p(\Omega^{0,\bullet}(M,E))$. We also show an $\mathrm{L}^p$-Gaffney-type estimate, obtain $\mathrm{L}^p$-Hodge decompositions, and prove that $\mathcal{D}_{E,p}$ gives rise to an even compact Banach spectral triple over the algebra $\mathrm{C}(M)$, graded by form parity. The index of the associated Fredholm operator is equal to the holomorphic Euler characteristic $\chi(M,E)$. In particular, it is independent of $p$. A central tool is an abstract notion of Ricci curvature lower bound for strongly continuous semigroups on $\mathrm{UMD}$ Banach spaces, formulated as a semigroup-level intertwining relation. Under this condition, together with natural Riesz equivalences and bounded $\mathrm{H}^\infty$ functional calculi for the relevant generators, the associated Hodge--Dirac operator is bisectorial and admits a bounded $\mathrm{H}^\infty$ functional calculus. The framework also applies to heat semigroups on Riemannian manifolds, $q$-Ornstein--Uhlenbeck semigroups and  semigroups of Schur multipliers. This provides a unified Banach-space approach to curvature, functional calculus, Riesz transforms and index theory beyond the Hilbert space setting.
\end{abstract}


\makeatletter
 \renewcommand{\@makefntext}[1]{#1}
 \makeatother
 \footnotetext{
 2020 {\it Mathematics subject classification:}
46E30, 47D03, 47A60, 47B90, 58B34, 46L80, 58J20, 32Q15.
\\
{\it Key words}: $\L^p$-spaces, semigroups of operators, Banach noncommutative geometry, index theory, Riesz transforms, bisectorial and sectorial operators, functional calculus, Ricci curvature, $\L^p$-Hodge theory, K\"ahler manifolds, Dolbeault--Dirac operators.
}

{
  \hypersetup{linkcolor=blue}
 \tableofcontents
}

\section{Introduction}
\label{sec:Introduction}

Let $E$ be a Hermitian holomorphic vector bundle of finite rank over a compact K\"ahler manifold $M$. The Dolbeault--Dirac operator
\begin{equation}
\label{Def-Dirac-DE-bis}
\D_E
\ov{\mathrm{def}}{=} \sqrt{2}(\bar\partial_{E,0,\bullet} + \bar\partial_{E,0,\bullet}^*)
\end{equation}
is a first-order elliptic differential operator on the space $\Omega^{0,\bullet}(M,E)$ of smooth anti-holomorphic differential forms, whose square $\D_E^2 = 2\Delta_{\bar\partial_E,0,\bullet}$ is twice the Kodaira Laplacian $\Delta_{\bar\partial_E,0,\bullet}$ acting on the subspace $\Omega^{0,\bullet}(M,E)$ and  defined by
\begin{equation*}
\Delta_{\bar\partial_E,0,\bullet}
\ov{\mathrm{def}}{=} \bar{\partial}_{E,0,\bullet}\bar{\partial}_{E,0,\bullet}^*+\bar{\partial}_{E,0,\bullet}^*\bar{\partial}_{E,0,\bullet}.
\end{equation*}
On the Hilbert space $\L^2(\Omega^{0,\bullet}(M,E))$, its functional-analytic and index-theoretic properties are part of the classical Hodge theory of compact K\"ahler manifolds. The present paper presents the corresponding $\L^p$-theory. For every $p \in (1,\infty)$, we observe that the operator $\D_E$ is closable on the Banach space $\L^p(\Omega^{0,\bullet}(M,E))$. We denote by $\D_{E,p}$ its closure. With respect to the canonical grading $\gamma \co \L^p(\Omega^{0,\bullet}(M,E))\to \L^p(\Omega^{0,\bullet}(M,E))$ by form parity, $\D_{E,p}$ decomposes as
\[
\D_{E,p}
=
\begin{bmatrix}
0 & \D_{E,-,p} \\
\D_{E,+,p} & 0
\end{bmatrix},
\]
with $\D_{E,+,p} \co \W^{1,p}_{\bar\partial_E}(\Omega^{0,\even}(M,E)) \to \L^p(\Omega^{0,\odd}(M,E))$. Our main result is the following Banach noncommutative-geometric formulation of the $\L^p$-theory of the Dolbeault--Dirac operator.

\begin{thm}
\label{thmA-intro}
Let $E$ be a Hermitian holomorphic vector bundle of finite rank over a compact K\"ahler manifold $M$. Suppose that $1 < p < \infty$.
\begin{enumerate}
\item The closed operator $\D_{E,p}$ is bisectorial on $\L^p(\Omega^{0,\bullet}(M,E))$ and admits a bounded $\H^\infty(\Sigma^\bi_\theta)$ functional calculus for some angle $\theta \in (0,\frac{\pi}{2})$.

\item The quadruple $(\C(M),\L^p(\Omega^{0,\bullet}(M,E)),\D_{E,p},\gamma)$ is an even compact Banach spectral triple in the sense of \cite{Arh26a}.

\item The Fredholm operator $\D_{E,+,p}$ has index
\[
\Index \D_{E,+,p}
= \chi(M,E),
\]
where $\chi(M,E)$ is the holomorphic Euler characteristic of $E$. In particular, $\Index \D_{E,+,p}$ is independent of $p$.
\end{enumerate}
\end{thm}

The Hilbert case $p=2$ is classical: $\D_{E,2}$ is selfadjoint, the index formula relies on Hodge theory and \eqref{Def-Dirac-DE-bis} is a classical Dirac-type operator giving rise, in the Hilbertian setting, to a spectral triple over $\C(M)$ in the sense of Connes \cite{Con94}.

For $p \neq 2$, the picture changes substantially: selfadjointness is lost, $\L^p$-Hodge decompositions are no longer automatic, and the spectral analysis must instead be formulated in terms of bisectoriality and bounded $\H^\infty$ functional calculus. Theorem~\ref{thmA-intro} shows that these difficulties can be overcome within the framework of Banach noncommutative geometry. The operator $\D_{E,p}$ retains enough of the analytic structure of the Hilbertian case to define a compact Banach spectral triple, while its spectral analysis is governed by bisectoriality and bounded $\H^\infty$ functional calculus rather than selfadjointness. Thus the result is not merely an $\L^p$-boundedness statement for a geometric operator. It identifies the analytic structure that allows the Dolbeault--Dirac operator to enter the machinery of Banach noncommutative geometry \cite{Arh26a} \cite{Laf02}.

The bulk of the paper is devoted to the proof of Theorem~\ref{thmA-intro}, which rests on two main ingredients of independent interest: an abstract curvature condition for semigroups on $\UMD$ Banach spaces, which yields bounded $\H^\infty$ functional calculus for Hodge--Dirac type operators (Theorem~\ref{thmB-intro} below) and an $\L^p$-Gaffney-type estimate for $\bar\partial_E$, which identifies the graph domain of $\D_{E,p}$ with a classical Sobolev space. In this paper, the symbols $\approx$ and $\lesssim$ denote an equality or an inequality up to multiplicative constants.

\begin{thm}
\label{thmB-intro}
Let $X$ and $Y$ be $\UMD$ Banach spaces. Let $-A$ be the generator of a bounded strongly continuous semigroup $(T_t)_{t \geq 0}$ of operators on $X$. Let $\partial \co \dom \partial \subset X \to Y$ and $\partial^\dagger \co \dom \partial^\dagger \subset Y \to X$ be densely defined closed operators with $A = \partial^\dagger \partial$. Assume that:
\begin{enumerate}
\item $(T_t)_{t \geq 0}$ satisfies the curvature condition $\Curv_{\partial,\H^\infty}(0)$ of Definition~\ref{curvature-H-infty}.
\item $A$ admits the $\partial$-Riesz equivalence and $A^*$ admits the $(\partial^\dagger)^*$-Riesz equivalence, that is,
\[
\bnorm{A^{\frac{1}{2}}x}_X \approx \norm{\partial x}_Y,
\qquad x \in \dom \partial,
\]
and
\[
\bnorm{(A^*)^{\frac{1}{2}}z}_{X^*}
\approx \norm{(\partial^\dagger)^*z}_{Y^*},
\qquad z \in \dom(\partial^\dagger)^*.
\]
\end{enumerate}
Then the Hodge--Dirac operator $
D
\ov{\mathrm{def}}{=}
\begin{bmatrix}
0 & \partial^\dagger|_{\ovl{\Ran \partial}} \\
\partial & 0
\end{bmatrix}$ is bisectorial on the reduced space $X \oplus_2 \ovl{\Ran \partial}$ and admits a bounded $\H^\infty(\Sigma^\bi_\omega)$ functional calculus for some angle $\omega \in (0,\frac{\pi}{2})$. Under the additional compatibility Assumption~\ref{ass-projection-reduction}, the same conclusion holds on the full space $X \oplus_2 Y$.
\end{thm}

For any $\lambda \in \R$, the curvature condition $\Curv_{\partial,\H^\infty}(\lambda)$ is a semigroup-level intertwining: there exists a bounded strongly continuous semigroup $(\tilde T_t)_{t \geq 0}$ on $Y$ with generator $-\tilde A$ such that
\begin{equation}
\label{commut-lambda}
\partial T_t x 
= \e^{-\lambda t} \tilde T_t \partial x,
\quad t \geq 0, x \in \dom \partial,
\end{equation}
together with the requirement that $A$ and $\tilde A$ both admit a bounded $\H^\infty$ functional calculus. Identity \eqref{commut-lambda} is the abstract counterpart of the commutation rule $\d\, \e^{t\Delta} = \e^{t\Delta_1} \d$ between heat semigroups and exterior derivatives, with an additional factor $\e^{-\lambda t}$ of curvature. We have the following  monotonicity property: if $\Curv_{\partial,\H^\infty}(\lambda)$ holds, then $\Curv_{\partial,\H^\infty}(\lambda')$ holds if $\lambda' \leq \lambda$.

Theorem~\ref{thmB-intro} provides a unified mechanism for verifying the analytic input required in Banach noncommutative geometry, in the sense of \cite{Arh26a} and \cite{ArK22}. It shows that a curvature-type semigroup intertwining, combined with Riesz equivalences and bounded $\H^\infty$ calculi for the underlying generators, automatically yields the bisectorial functional calculus
needed for Banach spectral triples and index pairings. Thus the abstract framework separates the construction into two conceptually distinct parts: a curvature/intertwining principle at the level of semigroups, and analytic estimates, such as Riesz equivalences and bounded $\H^\infty$ functional calculi for the underlying generators, which can be checked by geometric or probabilistic methods in concrete examples.

The role of Theorem~\ref{thmB-intro} in the proof of Theorem~\ref{thmA-intro} is the following: the Dolbeault--Dirac operator $\D_{E,p}$ fits the abstract setting with $X = \L^p(\Omega^{0,\even}(M,E))$, $Y = \L^p(\Omega^{0,\odd}(M,E))$, $\partial = \D_{E,+,p}$, $\partial^\dagger = \D_{E,-,p}$. In this realization, the corresponding generator on $X$ is $
\partial^\dagger \partial
=
\D_{E,-,p}\D_{E,+,p}
=
2\Delta_{\bar\partial_E,0,\even,p}$. Thus $(T_t)_{t \geq 0}$ is the heat semigroup generated by twice the even part of the Kodaira Laplacian. The required curvature intertwining reduces to the classical commutation relations between the Kodaira Laplacian and the Dolbeault differential and its formal adjoint, equivalently $
\D_{E,+}\Delta_{\bar\partial_E,0,\even}
=
\Delta_{\bar\partial_E,0,\odd} \D_{E,+}$, see Lemma~\ref{lem:commutation-Kodaira}.

\paragraph{Comparison with the literature} 
Bisectoriality and bounded $\H^\infty$ functional calculus for Hodge--Dirac type operators have a long history, notably through the first-order approach to the Kato square root problem developed in \cite{AKM06}. We additionally refer to \cite{AAM10}, \cite{AHLMT1} and \cite{Tch01} for the original solution of the Kato square root problem. Subsequent developments in various contexts include \cite{MaN09}, \cite{HMP08}, \cite{HMP11}, \cite{McM16}, \cite{NeV18}, \cite{McM18}, \cite{FMP18}, \cite{ArK22}, see also the survey \cite{Ban19}. The closest precedent for the curvature-intertwining viewpoint is the $\lambda$-Ricci curvature condition $\lambda$-GRic of Brannan, Gao and Junge \cite{BGJ22, BGJ23}, which requires the auxiliary semigroup $(\tilde T_t)$ to live on a von Neumann algebra $\tilde{\cal M}$ containing $\cal M$ and to satisfy the rigid restriction identity $\tilde T_t|_{\cal{M}} = T_t$. Our condition $\Curv_{\partial,\H^\infty}(\lambda)$ drops both requirements: $Y$ need not be a noncommutative $\L^p$-space, and no restriction identity is imposed. This is what permits the monotonicity property (not available in the rigid frameworks of \cite[Definition 3.26 p.~30]{BGJ22} and \cite{BGJ23}) and what allows direct application to operators such as $\D_{E,p}$, where the tangent space is the $\L^p$-space of a vector bundle of forms rather than a von Neumann algebra. When $\tilde{\cal{M}}$ is finite, $\lambda$-GRic implies $\Curv_{\partial,\H^\infty}(\lambda)$ on $X = \L^p(\cal M)$ and $Y = \L^p(\tilde{\cal M})$ for a canonical pair $(\partial,\partial^\dagger)$. It is worth noting that the $\lambda$-GRic framework was originally developed in a rather different direction, namely in connection with noncommutative curvature and functional inequalities, rather than index-theoretic or spectral-triple questions.

Theorem~\ref{thmA-intro} appears to be the first formulation of the Dolbeault--Dirac theory on compact K\"ahler manifolds as a compact Banach spectral triple on the full scale of spaces $\L^p$, $1<p<\infty$. Analysis on $\L^p$-spaces over complete K\"ahler manifolds was studied in \cite{Li10} and \cite{LaT15}, but without any spectral-triple or index-theoretic perspective. 

\paragraph{Further applications of the abstract framework} Beyond the Dolbeault--Dirac case, Theorem~\ref{thmB-intro} applies to several classical and noncommutative situations, which we collect in Section~\ref{sec-examples}: heat semigroups on Riemannian manifolds (recovering results of \cite{NeV18}, obtained there under suitable positive curvature assumptions), $q$-Ornstein--Uhlenbeck semigroups, and symmetric Markovian semigroups of Schur multipliers (recovering a result of \cite{ArK22}). These examples show that the curvature-intertwining principle provides a unified framework encompassing situations that had previously been studied separately. Further developments of the framework, including quantum groups and compact Riemannian manifolds without curvature assumptions, are pursued in subsequent works, the first of which is \cite{Arh26b}.

\paragraph{Methodological ingredients} The proof of Theorem~\ref{thmA-intro} combines three independent technical steps. First, an $\L^p$-Gaffney-type estimate for $\bar\partial_E$,
\begin{equation*}
\norm{\nabla \omega}_{\L^p(\Omega^1(M,\Lambda^{0,q}\mathrm{T}^*M \ot E))}
\lesssim \norm{\omega}_{\L^p(\Omega^{0,q}(M,E))}
+\norm{\bar\partial_E \omega}_{\L^p(\Omega^{0,q+1}(M,E))}
+\norm{\bar\partial_E^*\omega}_{\L^p(\Omega^{0,q-1}(M,E))}.
\end{equation*}
This estimate is an analogue of the $\L^p$-Gaffney inequality for Riemannian manifolds proved by Scott \cite{Sco95}. Our proof is based on an elliptic parametrix argument, rather than on the method used in \cite{Sco95}, which relies in part on Riesz transforms. As a byproduct, we obtain a new global proof of the classical $\L^p$-Gaffney inequality stated in \eqref{Gaffney-Lp}, see Remark~\ref{remark-Gaffney}.

Second, the boundedness of Dolbeault--Riesz transforms and the bounded $\H^\infty$ functional calculus of the closure $\Delta_{\bar\partial_E,p}$ of the Kodaira Laplacian on the Banach space $\L^p(\Omega^{0,\bullet}(M,E))$, obtained from pseudo-differential calculus, Gaussian heat kernel domination, and a variant of an extrapolation principle of \cite{DuR96} (see also \cite{Hal05}). 

Third, a strong $\L^p$-Hodge decomposition for the Dolbeault complex, constructed from a parametrix Green operator. The compactness of the resolvent of $\D_{E,p}$ follows from a classical compact Sobolev embedding via the identification of the space $\dom \D_{E,p}$ with the Sobolev space $\W^{1,p}_\nabla(\Omega^{0,\bullet}(M,E))$.

\paragraph{Structure of the paper.} Section~\ref{sec-preliminaries} collects background on sectorial and bisectorial operators and their bounded $\H^\infty$ functional calculus. Section~\ref{sec-Hodge-Dirac} develops the abstract curvature condition and proves Theorem~\ref{thmB-intro}. Section \ref{Back-complex} and Section \ref{sec-Hinfty-calculus-Dolbeault}  establish the analytic input on compact K\"ahler manifolds: the grading on $\L^p$, an $\L^p$-Gaffney estimate, the bounded $\H^\infty$ functional calculus of $\Delta_{\bar\partial_E,p}$ and of $\D_{E,p}$, and an $\L^p$-Hodge decomposition. Section~\ref{sec-Banach-spectral-triples} proves that $(\C(M),\L^p(\Omega^{0,\bullet}(M,E)),\D_{E,p},\gamma)$ is an even compact Banach spectral triple. Section~\ref{sec:dolbeault-index-pairing} establishes the index formula via the K-theory/K-homology pairing of Banach noncommutative geometry. Section~\ref{sec-examples} contains the further applications of Theorem~\ref{thmB-intro}. Finally, in Section \ref{Appendix}, we prove a variant of an extrapolation principle of Duong, Robinson and Haller-Dintelmann.

\section{Preliminaries}
\label{sec-preliminaries}

\subsection{Operator theory}
\paragraph{Unbounded operators} Let $X$ and $Y$ be two Banach spaces. 
An unbounded operator $S \co \dom S \subset Y^* \to X^*$ is a formal adjoint of $T$ if we have
\begin{equation}
\label{formal-adjoint}
\la T(x),y \ra_{Y,Y^*}
=\la x,S(y) \ra_{X,X^*}, \quad x \in \dom T, y \in \dom S.
\end{equation}
If $T$ is \textit{densely defined}, there exists, by \cite[p.~167]{Kat76}, a unique maximal formal adjoint $T^*$ and $\dom T^*$ is equal to 
\begin{equation}
\label{Def-domaine-adjoint}
\big\{ y \in Y^* : \text{there exists } z \in X^* \text{ such that } \langle T(x),y \rangle_{Y,Y^*}=\langle x,z \rangle_{X,X^*} \text{ for all } x \in \dom T\big\}.
\end{equation}
If $y \in \dom T^*$, the previous $z \in X^*$ is determined uniquely by $y$ and we let $T^*(y) \ov{\mathrm{def}}{=} z$. For any $x \in \dom T$ and any $y \in \dom T^*$, we have the equality
\begin{equation}
\label{crochet-duality}
\langle T(x),y \rangle_{Y,Y^*}
=\langle x,T^*(y) \rangle_{X,X^*}.
\end{equation}
According to \cite[Problem 5.27 p.~168]{Kat76}, if the operator $T$ is densely defined we have
\begin{equation}
\label{lien-ker-image}
\ker T^*
=(\Ran T)^\perp.
\end{equation}

The following result is a variation of \cite[Proposition 10.30 p.~321]{Nee22}, which will be used in the proof of Theorem \ref{prop:closure-kodaira-square-dirac}.

\begin{lemma}
\label{lemma-extension-surjective-injective}
Let $A$ and $B$ be unbounded operators on a Banach space $X$ such that $A \subset B$. Suppose that there exists $\lambda \in \mathbb{C}$ such that $\lambda\Id_X+A \co \dom A \to X$ is surjective and $\lambda\Id_X+B \co \dom B \to X$ is injective. Then $A=B$.
\end{lemma}

\begin{proof}
Let $x \in \dom B$. By the surjectivity of the operator $\lambda\Id_X+A$, there exists $y \in \dom A$ such that
\begin{equation}
\label{inter-89}
(\lambda\Id_X+A)y
=
(\lambda\Id_X+B)x.
\end{equation}
Since $A \subset B$, we have \(y\in\dom B\) and $
(\lambda\Id_X+B)y
=
(\lambda\Id_X+A)y
\ov{\eqref{inter-89}}{=}
(\lambda\Id_X+B)x$. The injectivity of $\lambda\Id_X+B$ yields $x=y$. Hence $x \in \dom A$, and consequently $
\dom B \subset \dom A$. Since $A \subset B$, we conclude that $A=B$.
\end{proof}

\paragraph{Semigroup theory} 
We refer to \cite{EnN00}, \cite{Haa06} and \cite{HvNVW18} for background on semigroup theory. Let $-A$ be the generator of a bounded strongly continuous semigroup $(T_t)_{t \geq 0}$ on $X$, i.e.~$T_t=\e^{-tA}$ for any $t \geq 0$. For any $x \in X$ and any complex number $\lambda \not\in \ovl{\Sigma_{\frac{\pi}{2}}}$, we have by \cite[p.~55]{EnN00} and \cite[(3.2) p.~25]{JMX06} 
the following expression of the resolvent as a Laplace transform
\begin{equation}
\label{Resolvent-Laplace}
R(\lambda,A)x
=(\lambda\, \Id-A)^{-1}x
=-\int_{0}^{\infty} \e^{\lambda s} T_s(x) \d s,
\end{equation}
where the integral is improper. 
Moreover, by \cite[Corollary 5.5 p.~223]{EnN00}, for any $t > 0$, we have 
\begin{equation}
\label{Widder-Resolvent}
T_t(x)
=\lim_{n \to \infty} \big[-\tfrac{n}{t}R(-\tfrac{n}{t},A)\big]^n x,\quad  \quad x \in X.
\end{equation}

\subsection{Sectorial operators}
For any angle $\theta \in (0,\pi)$, we introduce the open sector symmetric around the positive real half-axis with opening angle $2\theta$
\begin{equation}
\label{def-sigma-omega}
\Sigma_{\theta} 
\ov{\mathrm{def}}{=} \big\{ z \in \mathbb{C} \backslash \{ 0 \} : \: | \arg z | < \theta \big\}.
\end{equation}
It is useful to put $\Sigma_0 \ov{\mathrm{def}}{=} (0,\infty)$. 

Background material on sectorial operators and the associated $\H^\infty$ functional calculus can be found in the books \cite{Haa06} and \cite{HvNVW18}.
Let $A \co \dom A \subset X \to X$ be a closed densely defined linear operator acting on a Banach space $X$. We say that $A$ is a $\theta$-sectorial operator for some angle $\theta \in (0,\pi)$ if its spectrum $\sigma(A)$ is a subset of the closed sector $\ovl{\Sigma_\theta}$ and if the set $\big\{zR(z,A) : z \in \mathbb{C} \backslash \ovl{\Sigma_\theta}\big\}$ is bounded in the algebra $\B(X)$ of bounded operators acting on $X$, where $R(z,A) \ov{\mathrm{def}}{=} (z\,\Id-A)^{-1}$ is the resolvent operator. We caution the reader that this definition may differ across the literature. 
The operator $A$ is said to be sectorial if it is a $\theta$-sectorial operator for some $\theta \in (0,\pi)$. In this situation, we can introduce the angle of sectoriality
$\omega_{\sec}(A) 
\ov{\mathrm{def}}{=} \inf\{ \theta \in (0,\pi) : A \textrm{ is $\theta$-sectorial} \}$. 
According to \cite[Example 10.1.2 p.~362]{HvNVW18}, if $-A$ is the generator of a strongly continuous semigroup $(T_t)_{t \geq 0}$ of bounded operators then the operator $A$ is sectorial with $\omega_{\sec}(A) \leq \frac{\pi}{2}$. Furthermore, by \cite[Example 10.1.3 p.~362]{HvNVW18} and \cite[Proposition 3.4.4 p.~79]{Haa06}, the operator $A$ is sectorial with $\omega_{\sec}(A) < \frac{\pi}{2}$ if and only if $-A$ generates a bounded holomorphic (equivalently, bounded analytic) strongly continuous semigroup $(T_t)_{t \geq 0}$, that is, there exist an angle $\theta \in (0,\frac{\pi}{2})$ and a bounded holomorphic extension $\Sigma_\theta \to \B(X)$, $z \mapsto T_z$.

\begin{example}\normalfont
\label{Example-analytic-positive-selfadjoint}
If $A$ is a positive selfadjoint operator with dense domain on a Hilbert space, then \cite[Example 3.7.5 p.~150]{ABHN11} shows that $(\e^{-tA})_{t \geq 0}$ is a bounded holomorphic strongly continuous semigroup.
\end{example}

If $A$ is a sectorial operator on a \textit{reflexive} Banach space $X$, we have by \cite[Proposition 2.1.1 (h) p.~21]{Haa06} or \cite[Proposition 10.1.9 p.~367]{HvNVW18} a topological decomposition
\begin{equation}
\label{decompo-reflexive}
X
=\ker A \oplus \ovl{\Ran A}.
\end{equation}

Following \cite[Definition 10.3.1 p.~399]{HvNVW18}, a sectorial operator $A$ is said to be $R$-sectorial if for some angle $\theta \in (\omega_{\sec}(A),\pi)$ the set
$
\big\{zR(z,A) : z \not\in \ovl{\Sigma_\theta}\big\}
$
is $R$-bounded. The infimum of all $\theta \in (\omega_{\sec}(A),\pi)$ such that $A$ is $R$-sectorial is called the angle of $R$-sectoriality of $A$ and denoted by $\omega_{R}(A)$.

For any angle $\theta \in (0,\pi)$, we consider the algebra $\H^{\infty}(\Sigma_\theta)$\label{algebra-Hinfty} of all bounded analytic functions $f \co \Sigma_\theta \to \mathbb{C}$, equipped with the supremum norm 
$$
\norm{f}_{\H^{\infty}(\Sigma_\theta)}
\ov{\mathrm{def}}{=} \sup\bigl\{\vert f(z)\vert  : z \in \Sigma_\theta\bigr\}.
$$ 
Let $\H^{\infty}_{0}(\Sigma_\theta)$\label{algebra-Hinfty0} be the subalgebra of bounded analytic functions $f \co \Sigma_\theta \to \mathbb{C}$ for which there exist $s,C>0$ such that 
\begin{equation}
\label{ine-Hinfty0}
\vert f(z)\vert
\leq C\min\{|z|^s,|z|^{-s}\}, \quad z \in \Sigma_\theta,
\end{equation} 
as discussed in \cite[Section 2.2]{Haa06}.

Let $A$ be a sectorial operator acting on a Banach space $X$. Consider some angle $\theta \in (\omega_{\sec}(A), \pi)$ and some function $f \in \H^\infty_0(\Sigma_\theta)$. Following \cite[p.~30]{Haa06}, \cite[p.~5]{LM99} (see also \cite[p.~369]{HvNVW18}), for any angle $\nu \in (\omega_{\sec}(A),\theta)$ 
 we introduce the operator
\begin{equation}
\label{2CauchySec}
f(A)
\ov{\mathrm{def}}{=} \frac{1}{2\pi \i}\int_{\partial\Sigma_\nu} f(z) R(z,A) \d z, 
\end{equation}
acting on $X$, with a Cauchy integral, where the boundary $\partial\Sigma_\nu$ is parametrized by
\begin{equation*}
\label{3contour}
\partial\Sigma_\nu(t)
\ov{\mathrm{def}}{=} \begin{cases}
-t \e^{\i\nu} &\text{if } t\in (-\infty,0]\\
t \e^{-\i\nu} &\text{if } t\in [0,\infty)\\
\end{cases}.
\end{equation*}
The sectoriality condition ensures that this integral is absolutely convergent and defines a bounded operator on the Banach space $X$. Using Cauchy's theorem, it is possible to show that this definition does not depend on the choice of the angle $\nu$. The resulting map $\H^\infty_0(\Sigma_\theta) \to \B(X)$, $f \mapsto f(A)$ is an algebra homomorphism.

Following \cite[Definition 2.6 p.~6]{LM99} (see also \cite[p.~114]{Haa06}), we say that the operator $A$ admits a bounded $\H^\infty(\Sigma_\theta)$ functional calculus if the latter homomorphism is bounded, i.e., if there exists a constant $C \geq 0$ such that 
$$
\norm{f(A)}_{X \to X} 
\leq C\norm{f}_{\H^\infty(\Sigma_\theta)}
$$ 
for any function $f \in \H^\infty_0(\Sigma_\theta)$. 
In this context, we can introduce the $\H^\infty$-angle
\begin{equation*}
\label{angle-Hinfty}
\omega_{\H^\infty}(A) 
\ov{\mathrm{def}}{=} \inf\{\theta \in (\omega_{\sec}(A),\pi) : A \text{ admits a bounded $\H^\infty(\Sigma_\theta)$ functional calculus} \}.
\end{equation*}
If the operator $A$ has dense range and admits a bounded $\H^\infty(\Sigma_\theta)$ functional calculus, then the previous homomorphism naturally extends to a bounded homomorphism $f \mapsto f(A)$ from the algebra $\H^\infty(\Sigma_\theta)$ into the algebra $\B(X)$ of bounded linear operators on $X$. 

\begin{example} \normalfont
\label{positive-selfadjoint}
By \cite[Proposition 10.2.23 p.~388]{HvNVW18}, a positive selfadjoint operator $A$ with dense domain on a complex Hilbert space $H$ is sectorial and admits a bounded $\H^\infty(\Sigma_\theta)$ functional calculus for any angle $\theta >0$, i.e.~$\omega_{\H^\infty}(A)=0$.
\end{example}

\paragraph{Fractional powers} 
References on fractional powers include \cite{ABHN11}, \cite{Haa06}, \cite{Haa18}, \cite{MCSA01} and \cite{HvNVW23}. If $A$ is a sectorial operator on a Banach space $X$ and $\alpha \in (0,\frac{\pi}{\omega_{\sec}(A)})$, then $A^\alpha$ is sectorial and $\omega_{\sec}(A^\alpha) = \alpha\omega_{\sec}(A)$ by \cite[Proposition 3.1.2]{Haa06} and \cite[Theorem 15.2.7 p.~440]{HvNVW23}. For any complex numbers $\alpha$ and $\beta$ with $\Re \alpha, \Re \beta > 0$, we have $A^\alpha A^{\beta} = A^{\alpha+\beta}$ according to \cite[Theorem 15.2.5 p.~438]{HvNVW23}. By \cite[Proposition 3.1.1 (d) p.~61]{Haa06} combined with a duality argument relying on \eqref{lien-ker-image} and \cite[Proposition 1.10.15 (c) p.~93]{Meg98}, we have for any complex number $\alpha \in \mathbb{C}$ with $\Re \alpha > 0$ the equalities
\begin{equation}
\label{inclusion-range}
\ker A^\alpha=\ker A
\quad \text{and} \quad
\ovl{\Ran A^\alpha} 
=\ovl{\Ran A}.
\end{equation}
Finally, whenever $A$ is densely defined and $0 < \Re\alpha <1$, the subspace $\dom A$ is a core for the unbounded operator $A^\alpha$ by \cite[Proposition 3.1.1 (h) p.~61]{Haa06}. Note the following useful observation. For any $x \in \dom A$, we have $
A^{\frac12}x
= A^{\frac12}(\Id+A)^{-1}(\Id+A)x$. Since the holomorphic function $z \mapsto \frac{z^{\frac12}}{1+z}$ satisfies \eqref{ine-Hinfty0} by \cite[Example 2.2.5 p.~29]{Haa18}, the operator $A^{\frac12}(\Id+A)^{-1}$ is bounded on $X$. Hence there exists a constant $K \geq 0$ such that
\begin{equation}
\label{majo-graph}
\bnorm{A^{\frac12}x}_X 
\leq K\big(\norm{Ax}_X+\norm{x}_X\big),
\quad x \in \dom A.
\end{equation}
Therefore the graph norm of $A$ dominates the graph norm of $A^{\frac12}$ on $\dom A$.

\subsection{Bisectorial operators}

We refer to \cite{Ege15} and to the monographs \cite{HvNVW18} and \cite{HvNVW23} for background on bisectorial operators and their functional calculus. These operators may be regarded as Banach-space counterparts of unbounded selfadjoint operators on Hilbert spaces. For any angle $\theta \in (0,\frac{\pi}{2})$, we consider the open bisector $\Sigma_\theta^\bi \ov{\mathrm{def}}{=} \Sigma_\theta  \cup (-\Sigma_\theta)$ where the sector $\Sigma_{\theta}$ is defined in \eqref{def-sigma-omega} and $\Sigma_0^\bi \ov{\mathrm{def}}{=} (-\infty,\infty)$. Following \cite[Definition 10.6.1 p.~447]{HvNVW18}, 
we say that a closed densely defined operator $D$ on a Banach space $X$ is $\theta$-bisectorial for some angle $\theta \in (0,\frac{\pi}{2})$ if its spectrum $\sigma(D)$ is a subset of the closed bisector $\ovl{\Sigma^\bi_{\theta}}$ and if the subset $\big\{z R(z,D) : z \in \mathbb{C} \backslash \ovl{\Sigma_{\theta}^\bi} \big\}$ is bounded in the space $\B(X)$ of bounded operators acting on $X$.  Here, as usual, $R(z,D)\ov{\mathrm{def}}{=}(z\,\Id-D)^{-1}$
denotes the resolvent operator.  
The infimum of all $\theta \in (0,\frac{\pi}{2})$ such that $D$ is $\theta$-bisectorial is called the angle of bisectoriality of $D$ and denoted by $\omega_{\bi}(D)$. 

%
%
%
%
%
%
%
By \cite[Example 3.4.15 p.~163]{Ege15}, any selfadjoint operator $D$ is bisectorial with $\omega_{\bi}(D)=0$. Replacing the boundedness condition by an $R$-boundedness condition, we obtain the notion of $R$-bisectorial operator. By \cite[p.~447]{HvNVW18}, a linear operator $D$ is $R$-bisectorial if and only if  
\begin{equation}
\label{Def-R-bisectorial}
\i \R^* \subset \rho(D)
\quad \quad \text{and if the set} \quad
 \{t R(\i t, D) : t \in \R_+^* \} \text{ is $R$-bounded}.
\end{equation}
An important feature of bisectorial operators is that their squares are sectorial. More precisely, if $D$ is a $\theta$-bisectorial operator on a Banach space $X$ then by \cite[Proposition 10.6.2 (2) p.~448]{HvNVW18} its square $D^2$ is $2\theta$-sectorial and we have
\begin{equation}
\label{Bisec-Ran-Ker}
\ovl{\Ran D^2}
=\ovl{\Ran D}
\quad \text{and} \quad
\ker D^2
=\ker D.
\end{equation}
If the Banach space $X$ is reflexive, we have by \cite[Proposition 3.2.2 (iv)]{Ege15} a topological decomposition
\begin{equation}
\label{decomposition-de-X}
X
=\ker D \oplus \ovl{\Ran D}.
\end{equation}

\paragraph{Functional calculus} Consider a bisectorial operator $D$ on a Banach space $X$ of angle $\omega_\bi(D)$. For any angle $\theta \in (\omega_\bi(D), \frac{\pi}{2})$ and any function $f$ in the space 
$$
\H^{\infty}_0(\Sigma_\theta^\bi) 
\ov{\mathrm{def}}{=}  \left\{ f \in \H^\infty(\Sigma_\theta^\bi) :\: \exists C,s > 0 \: \forall \: z \in \Sigma_\theta^\bi : \: |f(z)| \leq C \min\{|z|^s, |z|^{-s} \}  \right\},
$$
we can define a bounded operator $f(D)$ acting on the space $X$ by integrating on the boundary $\partial \Sigma^\bi_{\nu}$ of the bisector $\Sigma^\bi_{\nu}$ for some angle $\nu \in (\omega_\bi(D),\theta)$ using a Cauchy integral
\begin{equation}
\label{def-f(D)}
f(D)
\ov{\mathrm{def}}{=} \frac{1}{2\pi \i}\int_{\partial \Sigma^\bi_{\nu}} f(z)R(z,D) \d z.
\end{equation}
The integration contour is oriented counterclockwise, so that the interiors of the two sectors $\Sigma_\nu$ and $-\Sigma_\nu$ are always to its left. The integral in \eqref{def-f(D)} converges absolutely thanks to the decay of the function $f$ and is independent of the particular choice of the angle $\nu$ by Cauchy's integral theorem. Further details can be found in \cite[Section 3.2.1]{Ege15} and \cite[Theorem 10.7.10 p.~449]{HvNVW18}.

The operator $D$ is said to admit a bounded $\H^\infty(\Sigma_\theta^\bi)$ functional calculus if there exists a constant $C \geq 0$ such that 
$\bnorm{f(D)}_{X \to X} 
\leq C \norm{f}_{\H^\infty(\Sigma_\theta^\bi)}$ for any function $f \in \H^{\infty}_0(\Sigma_\theta^\bi)$. If $D$ has dense range and admits a bounded $\H^\infty(\Sigma_\theta^\bi)$ functional calculus, then the previous algebra homomorphism extends in a canonical way to a bounded homomorphism $\H^\infty(\Sigma_\theta^\bi) \to \B(X)$, $f \mapsto f(D)$.

\begin{example} \normalfont
\label{ex-signe}
Using the function $\sgn\ov{\mathrm{def}}{=} 1_{\Sigma_\theta}-1_{-\Sigma_\theta}$ and the injective part $D|_{\ovl{\Ran D}}$, we can define, following \cite[p.~498]{HvNVW23}, the bounded operator $\sgn D \co \ovl{\Ran D} \to \ovl{\Ran D}$. Using the decomposition \eqref{decomposition-de-X}, we extend it to $X$ by putting $\sgn D|_{\ker D}=0$. This operator is analogous to the Hilbert transform in classical harmonic analysis. It will play an important role later, since it allows the construction of a Banach Fredholm module and therefore of a $\K$-homology class, see Proposition \ref{prop-triple-to-Fredholm}.
\end{example}

\section{Functional calculus of Hodge--Dirac operators}
\label{sec-Hodge-Dirac}
\subsection{Commutation relations}

We prove several equivalent formulations of an exponentially intertwining relation between two semigroups and a suitable unbounded operator.

\begin{prop}
\label{Prop-equivalences}
Let $(T_t)_{t \geq 0}$ and $(\tilde{T}_t)_{t \geq 0}$ be strongly continuous bounded semigroups of operators acting on Banach spaces $X$ and $Y$ with infinitesimal generators $-A$ and $-\tilde{A}$. Let $\partial \co \dom \partial \subset  X \to Y$ be a closed densely defined unbounded operator such that $\dom A \subset \dom \partial$. Let $\lambda \in \R$. The following conditions are equivalent. 
\begin{enumerate}
	\item  If $x \in \dom \partial$ and $t \geq 0$, then $T_{t}(x)$ belongs to the subspace $\dom \partial$ and we have 
\begin{equation}
\label{T_t-et-derivations}
\partial \circ T_{t}(x)
= \e^{-\lambda t}\tilde{T}_{t} \circ \partial(x).
\end{equation}
	\item If $s > \max\{-\lambda,0\}$ and $x \in \dom \partial$ then $R(-s,A)(x)$ belongs to the subspace $\dom \partial$ and
\begin{equation}
\label{commuting-deriv}
R(-s-\lambda,\tilde{A}) \circ \partial(x)
=\partial \circ R(-s,A)(x).
\end{equation}

	\item For any $z \in \dom \tilde{A}^*$ we have $z \in \dom \partial^*$ and the equality
\begin{equation}
\label{bracket-useful}
\la Ax,\partial^*z \ra_{X,X^*}
=\big\la \partial x,\tilde{A}^*z \big\ra_{Y,Y^*}+\lambda\la \partial x,z \ra_{Y,Y^*}, \quad x \in \dom A.
\end{equation}	
\end{enumerate}
\end{prop}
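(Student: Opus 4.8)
The strategy is to prove the cyclic chain of implications (1)$\Rightarrow$(2)$\Rightarrow$(3)$\Rightarrow$(1), using at each step the Laplace transform representation \eqref{Resolvent-Laplace} of the resolvent together with the closedness of $\partial$ (which lets us pass the unbounded operator through strongly convergent integrals, as in Lemma \ref{Lemma-weakly-closed}). The key technical fact used throughout is that for $s>\max\{-\lambda,0\}$, the number $-s$ lies in the resolvent set of $A$ and $-s-\lambda$ lies in that of $\tilde A$, because both semigroups are bounded, so both resolvents are given by absolutely convergent Laplace integrals $R(-s,A)x = -\int_0^\infty \e^{-st}T_t(x)\,\d t$ and similarly for $\tilde A$.

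\textbf{(1)$\Rightarrow$(2).} Fix $s > \max\{-\lambda,0\}$ and $x \in \dom\partial$. Write $R(-s,A)(x) = -\int_0^\infty \e^{-st}T_t(x)\,\d t$, a Bochner integral in $X$. By (1) each $T_t(x)$ lies in $\dom\partial$ and $\partial(T_t(x)) = \e^{-\lambda t}\tilde T_t(\partial x)$; the $Y$-valued function $t\mapsto \e^{-st}\e^{-\lambda t}\tilde T_t(\partial x)$ is Bochner integrable since $s+\lambda>0$ and $(\tilde T_t)$ is bounded. Because $\partial$ is closed, one may commute $\partial$ with the integral: $R(-s,A)(x)\in\dom\partial$ and $\partial(R(-s,A)(x)) = -\int_0^\infty \e^{-st}\e^{-\lambda t}\tilde T_t(\partial x)\,\d t = -\int_0^\infty \e^{-(s+\lambda)t}\tilde T_t(\partial x)\,\d t = R(-s-\lambda,\tilde A)(\partial x)$, which is \eqref{commuting-deriv}.

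\textbf{(2)$\Rightarrow$(3).} Let $z\in\dom\tilde A^*$. For $x\in\dom\partial$ and $s>\max\{-\lambda,0\}$, \eqref{commuting-deriv} gives, pairing with $z$,
\begin{equation*}
\la \partial(R(-s,A)x), z\ra = \la R(-s-\lambda,\tilde A)(\partial x), z\ra = \la \partial x, R(-s-\lambda,\tilde A)^* z\ra = \la \partial x, R(-s-\lambda,\tilde A^*)z\ra,
\end{equation*}
using that the adjoint of the resolvent is the resolvent of the adjoint. The right-hand side equals $\la sR(-s,A)x \cdot (\text{stuff})\ra$—more precisely, I will first establish $z\in\dom\partial^*$ by showing the functional $x\mapsto \la \partial(R(-s,A)x),z\ra$ extends to a bounded functional, or more cleanly: restrict to $x\in\dom A$, where $R(-s,A)x\in\dom A\subset\dom\partial$ and $AR(-s,A)x = -x + (-s)R(-s,A)x$ wait, $AR(-s,A) = (-s)R(-s,A) - I$; then use \eqref{crochet-duality}-type manipulations. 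The cleanest route: multiply \eqref{commuting-deriv} by $-s$ and let $s\to\infty$ along reals, using $-sR(-s,A)x\to x$ in $X$ (valid for all $x\in X$ by strong continuity, e.g.\ a consequence of \eqref{Widder-Resolvent}-type estimates) and $-sR(-s-\lambda,\tilde A)^*z = -s R(-s-\lambda,\tilde A^*)z \to z$ in $Y^*$ when $z\in\dom\tilde A^*$; this shows $\la \partial x, z\ra$ is well-defined as a limit and hence $z\in\dom\partial^*$ with control, and then differentiating the resolvent identity in $s$ (or subtracting the $s\to\infty$ limit and dividing appropriately) extracts the generator relation \eqref{bracket-useful}. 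Concretely: apply \eqref{commuting-deriv} to $x\in\dom A$, pair with $z$, use $\partial R(-s,A)x$ and the identities $R(-s,A)Ax = AR(-s,A)x = -sR(-s,A)x - x$, then let $s\to\infty$ to read off $\la Ax, \partial^* z\ra = \la \partial x,\tilde A^* z\ra + \lambda\la\partial x,z\ra$.

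\textbf{(3)$\Rightarrow$(1).} This is the reverse assembly and I expect it to be the main obstacle, since one must reconstruct the semigroup intertwining from the generator-level identity. The plan: fix $x\in\dom A$ (which is dense in $\dom\partial$ for the graph norm? — at least $\dom A\subset\dom\partial$ is given; I will first prove the identity on $\dom A$ and then extend by closedness of $\partial$ together with $\dom A$ being a core of $A$, via Lemma \ref{Lemma-core-semigroup}). From (3), for $z\in\dom\tilde A^*$ and $x\in\dom A$, consider the $Y$-valued function $u(t) \ov{\mathrm{def}}{=} \partial(T_t x) - \e^{-\lambda t}\tilde T_t(\partial x)$; one must first check $T_t x\in\dom\partial$. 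To do this, run the argument through resolvents in the opposite direction: (3) should be shown equivalent to \eqref{commuting-deriv} by the same adjoint/limiting manipulations (the $s\to\infty$ and differentiation steps are reversible once one knows $z\in\dom\partial^*$ for a dense set of $z$), giving (3)$\Rightarrow$(2); then (2)$\Rightarrow$(1) by iterating the resolvent: $\e^{-\lambda t}\tilde T_t\circ\partial$ and $\partial\circ T_t$ agree after applying $[-\frac nt R(-\frac nt,\cdot)]^n$-type approximations \eqref{Widder-Resolvent} and passing to the limit using closedness of $\partial$. Alternatively, and perhaps more transparently, show that for fixed $z$ the scalar function $t\mapsto \la T_t x, \partial^* z\ra - \la \text{(the candidate)}\ra$ — but the honest difficulty is that a priori we do not know $T_t x\in\dom\partial$, so one cannot even write $\partial(T_t x)$; resolvents circumvent this because $R(-s,A)$ maps into $\dom A\subset\dom\partial$ automatically. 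Hence I would prove (3)$\Rightarrow$(2)$\Rightarrow$(1), with (3)$\Rightarrow$(2) being the delicate step: recover \eqref{commuting-deriv} from \eqref{bracket-useful} by integrating the generator identity, i.e.\ showing both sides of \eqref{commuting-deriv}, viewed as functions of $s$, satisfy the same first-order behavior and same limit at $s=\infty$, using that $z\in\dom\tilde A^*$ ranges over a weak\textsuperscript{*}-dense (norm-dense, since $\tilde A^*$ is densely defined on the dual of the reflexive-or-at-least-suitable space, or just a core) set to conclude the vector identity from all its scalar pairings.

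\textbf{Expected main obstacle.} The crux is the implication that feeds \eqref{bracket-useful} back into the semigroup relation, since \eqref{bracket-useful} is a ``weak'' identity quantified only over $z\in\dom\tilde A^*$, and one must upgrade it first to the resolvent identity \eqref{commuting-deriv} (a genuine vector equation in $\dom\partial$) and only then to \eqref{T_t-et-derivations}. The two places requiring care are: (i) justifying that $R(-s,A)x\in\dom\partial$ in the backward direction — this should follow because $R(-s,A)x\in\dom A\subset\dom\partial$ by hypothesis, so it is actually free; and (ii) the passage from ``the scalar identity holds for all $z$ in a core of $\tilde A^*$'' to ``the vector identity holds'', which requires knowing this set separates points of $Y$ (true since $(\tilde T_t)$ is bounded and strongly continuous on $Y$, so $\tilde A^*$ is weak\textsuperscript{*}-densely, and by Lemma \ref{Lemma-core-semigroup}-type reasoning densely-for-a-suitable-topology, defined; on a reflexive $Y$ — which is the case of interest here — $\dom\tilde A^*$ is simply norm dense in $Y^*$). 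With those two points secured, the remaining computations are routine Laplace-transform bookkeeping.
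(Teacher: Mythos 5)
Your architecture coincides with the paper's: (1)$\Rightarrow$(2) by Laplace-transforming \eqref{T_t-et-derivations} via \eqref{Resolvent-Laplace} and commuting the closed operator $\partial$ with the Bochner integral; (2)$\Rightarrow$(1) by iterating \eqref{commuting-deriv} to powers $R(-\tfrac{n}{t},A)^n$ and passing to the limit with \eqref{Widder-Resolvent} and \eqref{Domain-closure}; and (2)$\Leftrightarrow$(3) by duality. The one step that is materially under-specified in your plan is (3)$\Rightarrow$(2), which you flag as the crux but resolve only by gesturing at ``matching first-order behaviour in $s$''. The device the paper uses is a direct substitution and is worth recording: from \eqref{bracket-useful} one gets, for $x \in \dom A$ and $z \in \dom \tilde{A}^*$, the identity $\la (-s-A)x, \partial^* z\ra = \la \partial x, (-s-\lambda-\tilde{A}^*)z\ra$; now put $x = R(-s,A)v$ with $v \in X$ arbitrary and $z = R(-s-\lambda,\tilde{A})^*\xi$ with $\xi \in Y^*$ arbitrary. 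Since $(-s-A)R(-s,A)v = v$ and $(-s-\lambda-\tilde{A}^*)R(-s-\lambda,\tilde{A})^*\xi = \xi$, this yields $\la v, \partial^* R(-s-\lambda,\tilde{A})^*\xi\ra = \la \partial R(-s,A)v, \xi\ra$ for every $\xi \in Y^*$, and for $v \in \dom\partial$ the left-hand side equals $\la R(-s-\lambda,\tilde{A})\partial v, \xi\ra$; since $\xi$ runs over all of $Y^*$, this is the vector identity \eqref{commuting-deriv}. This substitution also disposes of your worry about whether $\dom\tilde{A}^*$ separates points: one never needs density of $\dom\tilde{A}^*$, only that $R(-s-\lambda,\tilde{A})^*$ maps all of $Y^*$ into it. Two smaller corrections to (2)$\Rightarrow$(3): the clean way to get $z \in \dom\partial^*$ is to note that $S \ov{\mathrm{def}}{=} \partial R(-s,A)$ is bounded (closed graph theorem, using $\dom A \subset \dom\partial$), so that $\la u, S^*(-s-\lambda-\tilde{A}^*)z\ra = \la \partial u, z\ra$ for all $u \in \dom\partial$ exhibits the bounded functional required by \eqref{Def-domaine-adjoint}; and the final limit is $s \to 0$, not $s \to \infty$ — in fact no limit is needed, since once $z \in \dom\partial^*$ is known the terms $-s\la x,\partial^* z\ra$ and $-s\la \partial x, z\ra$ cancel exactly by \eqref{crochet-duality}, leaving \eqref{bracket-useful}.
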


\begin{proof}
1. $\Rightarrow$ 2. Let $s>\max\{-\lambda,0\}$. For any $x \in X$ the continuous function $\R^+ \to X$, $t \mapsto \e^{-s t}T_{t}(x)$ is Bochner integrable since $s>0$ and since
$$
\norm{\e^{-s t}T_{t}(x)}_{X} 
\lesssim \e^{-s t} \norm{x}_{X}.
$$ 
If $t > 0$ and if $x \in \dom \partial$, taking Laplace transforms on both sides of \eqref{T_t-et-derivations} and using  \cite[Theorem 1.2.4 p.~15]{HvNVW18} and the closedness of $\partial$ in the penultimate equality, we obtain that $\int_{0}^{\infty} \e^{-s t}T_{t}(x) \d t$ belongs to $\dom \partial$ and that
\begin{align*}
\MoveEqLeft
R(-s-\lambda,\tilde{A})\partial(x) 
\ov{\eqref{Resolvent-Laplace}}{=} -\int_{0}^{\infty} \e^{(-s-\lambda) t} \tilde{T}_{t} \partial(x) \d t 
\ov{\eqref{T_t-et-derivations}}{=} -\int_{0}^{\infty} \e^{-s t}\partial T_{t}(x) \d t \\
&=-\partial\bigg(\int_{0}^{\infty} \e^{-s t}T_{t}(x) \d t\bigg) 
\ov{\eqref{Resolvent-Laplace}}{=} 
\partial R(-s,A)(x),
\end{align*}
where the function $\R^+ \to Y$, $t \mapsto \e^{(-\lambda-s) t}  \tilde{T}_{t} \partial(x) \ov{\eqref{T_t-et-derivations}}{=} \e^{-s t}\partial T_{t}(x)$ is Bochner integrable. 


2. $\Rightarrow$ 1. For any integer $n \geq 0$ and any $s > 0$, we have by induction $R(-s,A)^n(x) \in \dom \partial$ and
\begin{equation}
\label{div-3456}
R(-s-\lambda,\tilde{A})^n \circ \partial x
=\partial \circ R(-s,A)^n(x).
\end{equation}
Let $x \in \dom \partial$. We have $
\lim_{n \to \infty}\big[-\tfrac{n}{t}R(-\tfrac{n}{t},A)\big]^n x \ov{\eqref{Widder-Resolvent}}{=} T_t(x)$ if $t > 0$. Moreover, since $\tfrac{n}{t} > \max\{-\lambda,0\}$ for $n$ large enough, we have 
\begin{align*}
\MoveEqLeft
\partial\big[-\tfrac{n}{t}R(-\tfrac{n}{t},A)\big]^n  x
\ov{\eqref{div-3456}}{=} \big[-\tfrac{n}{t}R(-\tfrac{n}{t}-\lambda,\tilde{A})\big]^n \partial x \\
&=\big[-\tfrac{n}{t}R(-\tfrac{n}{t},\tilde{A}+\lambda)\big]^n \partial x
\xra[n \to +\infty]{\eqref{Widder-Resolvent}} \e^{-t(\tilde{A}+\lambda)}\partial x
=\e^{-\lambda t}\tilde{T}_{t} \circ \partial x.
\end{align*}
By the closedness of $\partial$, 
we deduce that $T_t(x)$ belongs to $\dom \partial$ and the relation \eqref{T_t-et-derivations}.

2. $\Rightarrow$ 3. We have $\dom A \subset \dom \partial$. Consequently, the operator $S \ov{\mathrm{def}}{=} \partial R(-s,A)$ is bounded from $X$ into $Y$ by \cite[Problem 5.22 p.~167]{Kat76}.
For any $u \in \dom \partial$ and any $\xi \in Y^*$, we deduce that
\begin{align}
\MoveEqLeft
\label{interminable-7789}
\la u, S^* \xi \ra_{X,X^*}
= \la Su, \xi \ra_{Y,Y^*}
= \big\la  \partial R(-s,A) u, \xi \big\ra_{Y,Y^*} \\
&\ov{\eqref{commuting-deriv}}{=} \big\la R(-s-\lambda,\tilde{A}) \circ \partial u, \xi \big\ra_{Y,Y^*}
=\big\la  \partial u, R(-s-\lambda,\tilde{A})^* \xi \big\ra_{Y,Y^*}. \nonumber
\end{align}
Now, using the vector $\xi \ov{\mathrm{def}}{=} (-s-\lambda -\tilde{A}^*)z$, for some $z \in \dom \tilde{A}^* $, we deduce that
$$
\big\la u, S^*((-s-\lambda - \tilde{A}^*)z) \big\ra_{X,X^*} 
\ov{\eqref{interminable-7789}}{=} \la \partial u, z \ra_{Y,Y^*}.
$$
From \eqref{Def-domaine-adjoint}, it follows that $z$ belongs to $\dom \partial^*$. Thus, for $z \in \dom \tilde{A}^*$, it holds that
$$
\big\la Su, (-s - \lambda - \tilde{A}^*)z \big\ra_{Y,Y^*} 
= \la u, \partial^* z \ra_{X,X^*}, \quad u \in \dom \partial.
$$
Since the subspace $\dom \partial$ is dense in the Banach space $X$, the previous identity holds for all $u \in X$. In particular, if we set $u \ov{\mathrm{def}}{=} (-s - A)x$ for some $x \in \dom A$, we have
$$
\big\la S(-s - A)x, (-s - \lambda - \tilde{A}^*)z \big\ra_{Y,Y^*} 
= \big\la (-s - A)x, \partial^*z \big\ra_{X,X^*}.
$$
By observing that $S(-s - A)x = \partial R(-s,A)(-s - A)x = \partial x$, we get 
$$
\big\la \partial x, (-s - \lambda - \tilde{A}^*)z \big\ra_{Y,Y^*} 
= \big\la (-s - A)x, \partial^*z \big\ra_{X,X^*}.
$$
Expanding both sides, we obtain
\begin{align*}
\MoveEqLeft
-s \la \partial x,z\ra_{Y,Y^*} - \lambda \la \partial x,z\ra_{Y,Y^*} - \la \partial x,\tilde{A}^*z\ra_{Y,Y^*} 
= -s \la x,\partial^*z\ra_{X,X^*} - \la Ax,\partial^*z\ra_{X,X^*} \\
&\ov{\eqref{formal-adjoint}}{=} -s \la \partial x,z\ra_{Y,Y^*} - \la Ax,\partial^*z\ra_{X,X^*}.
\end{align*}
Cancelling the terms $-s\la \partial x,z\ra_{Y,Y^*}$ yields 
\begin{equation*}
\big\la \partial x,\tilde{A}^*z \big\ra_{Y,Y^*}+\lambda\la \partial x,z \ra_{Y,Y^*}
=\la Ax,\partial^*z \ra_{X,X^*}, \quad x \in \dom A.
\end{equation*}

3. $\Rightarrow$ 2. For any $x \in \dom A$ and any $z \in \dom \tilde{A}^*$, we have $x \in \dom \partial$ and $z \in \dom \partial^*$. So
\begin{align*}
\MoveEqLeft
\big\la (-s - A)x, \partial^*z \big\ra_{X,X^*}
=-s \la x, \partial^*z \ra_{X,X^*}-\la Ax, \partial^*z \ra_{X,X^*} \\
&\ov{\eqref{crochet-duality}\eqref{bracket-useful}}{=} -s \la \partial x, z \ra_{Y,Y^*}-\la \partial x, \tilde{A}^*z \ra_{Y,Y^*} -\lambda\la \partial x,z \ra_{Y,Y^*}
=\big\la \partial x, (-s-\lambda - \tilde{A}^*)z \big\ra_{Y,Y^*}.
\end{align*}
Let $v \in X$ and let $\xi \in Y^*$. Replacing $x$ by the element $R(-s,A) v$ of $\dom A$ and $z$ by the element $R(-s-\lambda,\tilde{A})^* \xi$ of $\dom \tilde{A}^*$, we obtain
$$
\big\la v, \partial^* R(-s-\lambda,\tilde{A})^* \xi \big\ra 
= \big\la \partial R(-s,A)v , \xi \big\ra.
$$
If $v \in \dom \partial$, it follows from \eqref{crochet-duality} that $\big\la R(-s-\lambda,\tilde{A}) \partial v, \xi \big\ra = \big\la \partial R(-s,A) v, \xi \big\ra$. By duality, we obtain \eqref{commuting-deriv}.
\end{proof}
	
\begin{remark} \normalfont
\label{remark-useful}
Assume that $\dom A \subset \dom \partial$. Suppose that there exists a subspace $C$ of $\dom A$ such that $A(C) \subset \dom \partial$,  $\partial(C) \subset \dom \tilde{A}$, then \eqref{bracket-useful} implies that
\begin{equation}
\label{Curvature-generator}
\partial Ax
=\tilde{A}\partial x+\lambda\partial x, \quad x \in C.
\end{equation}
Indeed, for any $x \in C$ and any $z \in \dom \tilde{A}^*$, we have
\begin{align*}
\MoveEqLeft
\la \partial Ax,z \ra_{Y,Y^*} 
\ov{\eqref{crochet-duality}}{=} \la Ax,\partial^*z \ra_{X,X^*}
\ov{\eqref{bracket-useful}}{=} \big\la \partial x,\tilde{A}^*z \big\ra_{Y,Y^*}+\lambda\la \partial x,z \ra_{Y,Y^*} \\  
&=\big\la \tilde{A}\partial x,z \big\ra_{Y,Y^*}+\lambda\la \partial x,z \ra_{Y,Y^*}
=\big\la \tilde{A}\partial x + \lambda \partial x,z \big\ra_{Y,Y^*}.
\end{align*}
Since the densely defined operator $\tilde{A}$ is closed, the subspace $\dom \tilde{A}^*$ is weak* dense in the dual space $Y^*$ by \cite[Proposition G.1.6 p.~522]{HvNVW18}. Consequently, we can conclude by duality.
\end{remark}

\subsection{Riesz equivalences}
\label{sec-Riesz-equivalence}

We start with the following definition. 

\begin{defi}
Consider some Banach spaces $X$ and $Y$. Consider a sectorial operator $A$ with dense domain acting on $X$. Let $\partial \co \dom \partial \subset X \to Y$ be an unbounded operator such that $\dom \partial$ is a subspace of $\dom A^{\frac{1}{2}}$ and a core of the operator $A^{\frac{1}{2}}$. We say that the operator $A$ admits the $\partial$-Riesz equivalence if
\begin{equation}
\label{estim-Riesz-bis}
\bnorm{A^{\frac{1}{2}}(x)}_{X} 
\approx \norm{\partial(x)}_{Y}, \quad  x \in \dom \partial.
\end{equation}
\end{defi}

Now, we prove some elementary consequences of this equivalence.

\begin{prop}
\label{Prop-derivation-closable-sgrp-bis}
Assume that $A$ admits the $\partial$-Riesz equivalence.
\begin{enumerate}
\item The unbounded operator $\partial \co \dom \partial \subset X \to Y$ is closable.

	\item We have $\dom \ovl{\partial}=\dom A^{\frac{1}{2}}$. Moreover, for any $x \in \dom A^{\frac{1}{2}}$, we have
\begin{equation} 
\label{Equivalence-square-root-domaine-Schur-sgrp-bis}
\bnorm{A^{\frac12}(x)}_{X} 
\approx \bnorm{\ovl{\partial}(x)}_{Y}. 
\end{equation}	
Finally, for any $x \in \dom A^{\frac{1}{2}}$, there exists a sequence $(x_n)$ of elements of $\dom \partial$ such that $x_n \to x$, $A^{\frac{1}{2}}(x_n) \to A^{\frac{1}{2}}(x)$ and $\partial(x_n) \to \ovl{\partial}(x)$.
\end{enumerate} 
\end{prop}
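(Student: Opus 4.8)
The plan is to deduce everything from the Riesz equivalence $(E_p)$ together with standard core/closure arguments. For part (1), I would show $\partial \co \dom \partial \subset \L^p(\cal{M}) \to \cal{H}_p$ is closable using the criterion \eqref{Def-closable}: suppose $x_n \in \dom \partial$ with $x_n \to 0$ in $\L^p(\cal{M})$ and $\partial(x_n) \to y$ in $\cal{H}_p$. I want to conclude $y = 0$. The point is that $(\partial(x_n))$ is Cauchy in $\cal{H}_p$, so by \eqref{estim-Riesz-bis} the sequence $(A_p^{\frac12}(x_n))$ is Cauchy in $\L^p(\cal{M})$, hence converges to some $z$; since $A_p^{\frac12}$ is closed (as $A_p$ is sectorial and $\frac12 \in (0,1)$, $A_p^{\frac12}$ is a closed operator) and $x_n \to 0$, we get $z = 0$, i.e. $A_p^{\frac12}(x_n) \to 0$. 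Applying \eqref{estim-Riesz-bis} once more (it controls $\norm{\partial(x_n)}_{\cal{H}_p}$ by $\bnorm{A_p^{\frac12}(x_n)}_{\L^p(\cal{M})}$, which tends to $0$) forces $\norm{y}_{\cal{H}_p} = \lim \norm{\partial(x_n)}_{\cal{H}_p} = 0$, so $y=0$. This gives closability; denote the closure by $\partial_p$.

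For part (2), that $\dom \partial$ is a core of $A_p^{\frac12}$: I would invoke Lemma \ref{Lemma-core-semigroup} applied to the semigroup generated by $-A_p^{\frac12}$ — but more directly, recall from the "Fractional powers" paragraph that $\dom A_p$ is a core of $A_p^{\frac12}$ since $0 < \frac12 < 1$. So it suffices to show $\dom \partial$ is a core of $A_p$, or, better, that every element of $\dom A_p$ can be approximated in the graph norm of $A_p^{\frac12}$ by elements of $\dom \partial$. Actually the cleanest route: $\dom \partial$ is weak* dense in $\cal{M}$, hence dense in $\L^p(\cal{M})$, and it is a subalgebra; the natural candidate is to use the smoothing operators $T_t$ or the resolvents $R(-s,A_p)$ applied to $\dom \partial$ — but this requires knowing $T_t$ preserves $\dom \partial$, which is \emph{not} assumed here. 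Instead I expect the argument runs: $\dom \partial$ contains $\partial_2$-analytic elements / the semigroup domain at the $\L^2$ level, and one transfers; or, most likely, the proof simply takes $x \in \dom A_p$, writes $T_\epsi x \in \dom\partial$ for the \emph{underlying sub-Markovian} smoothing — hmm. The safest formulation: by density of $\dom\partial$ in $\L^p(\cal M)$ and \eqref{estim-Riesz-bis}, the closure $\partial_p$ has $\dom\partial$ as a core by definition, and one shows $\dom\partial_p\subseteq\dom A_p^{1/2}$ with equivalent graph norms, from which coreness of $\dom\partial$ for $A_p^{1/2}$ follows once the two domains are shown equal — so (2) is really folded into (3).

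For part (3): the inclusion $\dom \partial \subset \dom A_p^{\frac12}$ with the two-sided estimate \eqref{estim-Riesz-bis} extends by closure. Precisely, take $x \in \dom \partial_p$; by definition there is $(x_n) \subset \dom\partial$ with $x_n \to x$ in $\L^p$ and $\partial(x_n) \to \partial_p(x)$ in $\cal H_p$. By \eqref{estim-Riesz-bis}, $(A_p^{\frac12}(x_n))$ is Cauchy, so converges; closedness of $A_p^{\frac12}$ gives $x \in \dom A_p^{\frac12}$ and $A_p^{\frac12}(x_n) \to A_p^{\frac12}(x)$, and passing to the limit in \eqref{estim-Riesz-bis} yields \eqref{Equivalence-square-root-domaine-Schur-sgrp-bis}; this also proves the final sentence about the approximating sequence. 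The reverse inclusion $\dom A_p^{\frac12} \subset \dom \partial_p$ is where part (2) is genuinely needed: given $x \in \dom A_p^{\frac12}$, use that $\dom \partial$ (or $\dom A_p$, then by coreness $\dom\partial$) is a core of $A_p^{\frac12}$ to find $(x_n) \subset \dom \partial$ with $x_n \to x$ and $A_p^{\frac12}(x_n) \to A_p^{\frac12}(x)$; then \eqref{estim-Riesz-bis} shows $(\partial(x_n))$ is Cauchy in $\cal H_p$, hence converges, and since $x_n \to x$ the closedness of $\partial_p$ gives $x \in \dom\partial_p$, with $\partial_p(x_n)\to\partial_p(x)$.

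\textbf{Main obstacle.} The crux is part (2) — establishing that $\dom \partial$ is a core of $A_p^{\frac12}$ — because the hypotheses deliberately do \emph{not} assume $(T_t)$ preserves $\dom \partial$ on $\L^p$. One must either bootstrap from the $\L^2$-level (where $\dom\partial$ is a core of $\partial_2$ and $A_2 = \partial_2^*\partial_2$, so $\dom\partial$ is a core of $A_2^{\frac12}$ by \eqref{Kadison-image} and Theorem \ref{Th-unit-eq}-type arguments) and then use density of $\cal M \cap \L^p(\cal M)$ together with $(E_p)$ to push coreness to the $\L^p$ setting, or exploit that $\dom\partial$ is a subalgebra dense in $\L^p(\cal M)$ on which $A_p^{\frac12}$ acts with the controlled norm \eqref{estim-Riesz-bis}, making its graph-norm closure automatically all of $\dom A_p^{\frac12}$ once one knows $\partial_p$ and $A_p^{\frac12}$ have the same domain — which is slightly circular and must be untangled carefully. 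I would resolve it by first proving $\dom\partial$ is a core of $A_p$ (e.g. via the resolvents $R(-s, A_p)$ acting on a dense subalgebra, using analyticity and the identification $(A_p)^* = A_{p^*}$), then invoking the stated fact that $\dom A_p$ is a core of $A_p^{\frac12}$.
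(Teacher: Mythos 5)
Your treatment of parts (1) and (3) matches the paper's proof essentially line for line: closability follows by showing $(x_n)$ is Cauchy in the graph norm of $A_p^{1/2}$, using closedness of $A_p^{1/2}$ to get $A_p^{1/2}(x_n)\to 0$, and then the one-sided estimate $\norm{\partial(x_n)}_{\cal{H}_p}\lesssim_p \bnorm{A_p^{1/2}(x_n)}_{\L^p(\cal{M})}$ to force $y=0$; and both inclusions in (3), together with the final approximation statement, are obtained exactly as you describe, by transporting graph-norm Cauchy sequences back and forth through \eqref{estim-Riesz-bis} and using closedness of $A_p^{1/2}$ and of $\partial_p$ respectively.

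The gap is in part (2), and you correctly sense where it is but your proposed resolution does not work. The paper does \emph{not} prove that $\dom\partial$ is a core of $A_p$: this is the standing Assumption \eqref{Assump2} of the section (stated in the ``Abstract regularizations'' subsection), and the proof of (2) consists of a single sentence citing it, together with the fact that $\dom A_p$ is a core of $A_p^{1/2}$ and the classical argument of \cite[p.~29]{Ouh05} (a core of $A_p$ contained in $\dom A_p$ is automatically a core of $A_p^{1/2}$, via the moment inequality comparing the two graph norms). Your suggested route --- establishing coreness for $A_p$ ``via the resolvents $R(-s,A_p)$ acting on a dense subalgebra'' --- founders on precisely the obstruction you yourself identified earlier in the paragraph: without the commutation relation of $\Curv_{\partial_p,\cal{H}_p}(\lambda)$ or some other hypothesis, there is no reason $R(-s,A_p)$ or $T_t$ should preserve $\dom\partial$, and $(E_p)$ alone cannot yield coreness for $A_p$ (it only controls $A_p^{1/2}$, not $A_p$, on $\dom\partial$). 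The correct fix is simply to invoke \eqref{Assump2} as a hypothesis rather than attempt to derive it. Your alternative idea of ``folding (2) into (3)'' is, as you note, circular: without (2) you only get $\dom\partial_p\subset\dom A_p^{1/2}$ and cannot obtain the reverse inclusion.
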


\begin{proof}
1. We will check the criterion described in \cite[(5.6) p.~165]{Kat76}. Consider a sequence $(x_n)$ of elements of the subspace $\dom \partial$ such that $x_n \to 0$ and $\partial(x_n) \to y$ for some $y \in Y$. We have
\begin{align*}
\MoveEqLeft
\norm{x_n-x_m}_{X} + \bnorm{A^{\frac{1}{2}}(x_n)-A^{\frac{1}{2}}(x_m)}_{X} 
\ov{\eqref{estim-Riesz-bis}}{\lesssim} \norm{x_n-x_m}_{X} + \norm{\partial(x_n) - \partial(x_m)}_{Y},
\end{align*} 
which shows that $(x_n)$ is a Cauchy sequence in $\dom A^{\frac{1}{2}}$ equipped with the graph norm. Since $A^{\frac12}$ is closed, the space $\dom A^{\frac12}$ is complete for the graph norm. We infer that there exists $x' \in \dom A^{\frac{1}{2}}$ such that $x_n \to x'$ and $A^{\frac12}x_n \to A^{\frac12}x'$ in $X$. By uniqueness of the limit, we deduce that $x'=0$. Consequently, we have $A^{\frac12}x_n \to 0$. Now, we see that $\bnorm{\partial(x_n)}_{Y} \ov{\eqref{estim-Riesz-bis}}{\lesssim} \bnorm{A^{\frac12}(x_n)}_{X} $. Passing to the limit, we obtain $y=0$.

2. Let $x \in \dom A^{\frac{1}{2}}$. The subspace $\dom \partial$ is a core of the operator $A^{\frac{1}{2}}$. 
Thus we can find a sequence $(x_n)$ of elements of $\dom \partial$ such that $x_n \to x$ and $A^{\frac{1}{2}}(x_n) \to A^{\frac{1}{2}}(x)$. For any integers $n,m \geq 1$, we obtain
\begin{align*}
\MoveEqLeft
\norm{x_n-x_m}_{X} + \norm{\ovl{\partial}(x_n) - \ovl{\partial}(x_m)}_{Y} 
=\norm{x_n-x_m}_{X} + \norm{\partial(x_n) - \partial(x_m)}_{Y}\\
&\ov{\eqref{estim-Riesz-bis}}{\lesssim} \norm{x_n-x_m}_{X} + \bnorm{A^{\frac{1}{2}}(x_n)-A^{\frac{1}{2}}(x_m)}_{X}
\end{align*} 
which shows that $(x_n)$ is a Cauchy sequence in $\dom \ovl{\partial}$ equipped with the graph norm. Since $\ovl{\partial}$ is closed, the space $\dom \ovl{\partial}$ is complete for the graph norm. We infer that there exists $x' \in \dom \ovl{\partial}$ such that $x_n \to x'$ and $\partial(x_n) \to \ovl{\partial}(x')$. By uniqueness of the limit, we have $x=x'$. It follows that $x \in \dom \ovl{\partial}$. This proves the inclusion $\dom A^{\frac{1}{2}} \subset \dom \ovl{\partial}$. Moreover, for any integer $n \geq 1$, we have $
\norm{\partial(x_n)}_{Y} 
\ov{\eqref{estim-Riesz-bis}}{\lesssim} \bnorm{A^{\frac{1}{2}}(x_n)}_{X}$. Since $x_n \to x$ in $\dom\ovl{\partial}$ and in $\dom A^{\frac{1}{2}}$ both equipped with the graph norm, we conclude that $
\bnorm{\ovl{\partial}(x)}_{Y} 
\lesssim \bnorm{A^{\frac{1}{2}}(x)}_{X}$. 
The proofs of the reverse inclusion and of the reverse estimate are similar. Let $x \in \dom \ovl{\partial}$. The subspace $\dom \partial$ is a core of the operator $\ovl{\partial}$. 
So we can find a sequence $(x_n)$ of elements of $\dom \partial$ such that $x_n \to x$ and $\partial(x_n) \to \ovl{\partial}(x)$. For any integers $n,m \geq 1$, we obtain
\begin{align*}
\MoveEqLeft
\norm{x_n-x_m}_{X} + \bnorm{A^{\frac{1}{2}}(x_n) - A^{\frac{1}{2}}(x_m)}_{X} 
\ov{\eqref{estim-Riesz-bis}}{\lesssim} \norm{x_n-x_m}_{X} + \norm{\partial(x_n)-\partial(x_m)}_{Y}
\end{align*} 
which shows that $(x_n)$ is a Cauchy sequence in $\dom A^{\frac{1}{2}}$ equipped with the graph norm. Since $A^{\frac{1}{2}}$ is closed, the space $\dom A^{\frac{1}{2}}$ is complete for the graph norm. We infer that there exists $x' \in \dom A^{\frac{1}{2}}$ such that $x_n \to x'$ and $A^{\frac{1}{2}}(x_n) \to A^{\frac{1}{2}}(x')$. By uniqueness of the limit, we have $x=x'$. It follows that $x \in \dom A^{\frac{1}{2}}$. This proves the inclusion $\dom \ovl{\partial} \subset \dom A^{\frac{1}{2}}$. Moreover, for any integer $n \geq 1$, we have  $
\bnorm{A^{\frac{1}{2}}(x_n)}_{X} 
\ov{\eqref{estim-Riesz-bis}}{\lesssim} \norm{\partial(x_n)}_{Y}$. 
Since $x_n \to x$ in $\dom\ovl{\partial}$ and in $\dom A^{\frac{1}{2}}$ both equipped with the graph norm, we conclude that $
\bnorm{A^{\frac{1}{2}}(x)}_{X} 
\lesssim \bnorm{\ovl{\partial}(x)}_{Y}$. 
\end{proof}

Now, we prove a result which gives some $R$-boundedness of a family of operators. Recall that a $\UMD$ Banach space $X$ has the triangular contraction property $(\Delta)$ by \cite[Theorem 7.5.9 p.~137]{HvNVW18} and is reflexive by \cite[Theorem 4.3.3 p.~306]{HvNVW16}.

\begin{prop}
\label{Prop-R-gradient-bounds-Fourier} 
Assume that the Banach space $X$ has the triangular contraction property $(\Delta)$. Assume that the unbounded operator $A$ admits a bounded $\H^\infty(\Sigma_\theta)$ functional calculus on $X$ for some angle $\theta \in (0,\frac{\pi}{2})$ and the $\partial$-Riesz equivalence \eqref{estim-Riesz-bis}. Then the family 
\begin{equation}
\label{R-gradient-bounds-groups-bis}
\big\{t\partial R(-t^2,A) : t > 0 \big\}
\end{equation} 
of operators of $\B(X,Y)$ is $R$-bounded.
\end{prop}

\begin{proof}
According to \eqref{estim-Riesz-bis}, we can consider the bounded extension $\cal{R}_\partial \co X \to Y$ of $\partial A^{-\frac{1}{2}}$ from $\Ran A^{\frac{1}{2}}$ to $\ovl{\Ran A}$, extended by $0$ on $\ker A$, along the decomposition $X \ov{\eqref{decompo-reflexive}}{=} \ker A \oplus \ovl{\Ran A}$. For any $t > 0$, we have 
\begin{align}
\label{Divers-987-bis}
\MoveEqLeft
t\partial R(-t^2,A)  
=t\partial (-t^2-A)^{-1}         
=-\cal{R}_\partial\big((\tfrac{1}{t^2}A)^{\frac{1}{2}}(\Id+\tfrac{1}{t^2}A)^{-1} \big).
\end{align} 
Since the operator $A$ admits a bounded $\H^\infty(\Sigma_\theta)$ functional calculus for some angle $\theta \in (0,\frac{\pi}{2})$ on a Banach space $X$ with the triangular contraction property $(\Delta)$, it is $R$-sectorial by \cite[Theorem 10.3.4 (2) p.~402]{HvNVW18}. With \cite[Example 10.3.5 p.~402]{HvNVW18} applied with $\alpha=\frac{1}{2}$ and $\beta=1$, we deduce that the set
$$
\big\{ (\tfrac{1}{t^2}A)^{\frac{1}{2}}(\Id+\tfrac{1}{t^2}A)^{-1} : t > 0 \big\}
$$
of operators of $\B(X)$ is $R$-bounded. Since a singleton is $R$-bounded by \cite[Example 8.1.7 p.~170]{HvNVW18}, we obtain by composition \cite[Proposition 8.1.19 (3) p.~178]{HvNVW18} that the set
$$
\big\{\cal{R}_\partial\big((\tfrac{1}{t^2}A)^{\frac{1}{2}}(\Id+\tfrac{1}{t^2}A)^{-1} \big) : t > 0 \big\}
$$
of operators of $\B(X,Y)$ is $R$-bounded. With \eqref{Divers-987-bis} we conclude that the subset \eqref{R-gradient-bounds-groups-bis} is also $R$-bounded.
\end{proof}

\subsection{Pairs of divergence and gradient operators}
\label{sec-pairs}

We start with the following definition.

\begin{defi}
\label{def-compatible-pair-correct}
Consider some Banach spaces $X$ and $Y$. Consider a sectorial operator $A$ with dense domain acting on $X$. An abstract pair $(\partial,\partial^\dagger)$ of gradient and divergence operators compatible with $A$ consists of
a closed operator $\partial \co \dom \partial \subset X \to Y$ with dense domain and a closed operator
$\partial^\dagger \co \dom \partial^\dagger \subset Y \to X$ with dense domain such that
\begin{equation}
\label{facto-with-diamond}
\dom A
=\big\{x \in \dom \partial : \partial x \in \dom \partial^\dagger \big\},
\qquad
Ax
=\partial^\dagger \partial x
\quad \text{for all } x \in \dom A.
\end{equation}
\end{defi}

In concrete $\L^p$-examples, $\partial^\dagger$ acts as a divergence on a $\L^p$-space and $\partial$ as a gradient on a second $\L^p$-space. 
%
We introduced the $\partial$-Riesz equivalence for the operator $A$ in Section \ref{sec-Riesz-equivalence}. In Section \ref{sec-functional-full}, we will also use the $(\partial^\dagger)^*$-Riesz equivalence for the adjoint operator $A^*$. The next result relates these two properties. In this context, recall that $\dom A$ is a core of $A^{\frac{1}{2}}$ by \cite[Proposition 3.1.1 (h) p.~61]{Haa06}.

\begin{prop}
\label{prop-duality-Riesz}
Suppose that $X$ is reflexive. Consider an abstract pair $(\partial,\partial^\dagger)$ of gradient and divergence operators compatible with a sectorial operator $A$. Let $C'$ be a subspace of $\dom (A^*)^\frac{1}{2} \cap \dom(\partial^\dagger)^*$ which is a core
of $(A^*)^\frac{1}{2}$. We denote by $P \co X \to X$ the bounded projection onto the subspace $\ker A$ associated to the decomposition $X \ov{\eqref{decompo-reflexive}}{=} \ker A \oplus \ovl{\Ran A}$. Then the estimate
\begin{equation}
\label{Riesz-555}
\norm{(\partial^\dagger)^*(z)}_{Y^*}
\leq K\bnorm{(A^*)^{\frac{1}{2}}(z)}_{X^*}, \quad z \in C'
\end{equation}
implies the estimate 
$$
\bnorm{A^{\frac{1}{2}}(x)}_{X}
\leq K \norm{\Id-P}_{X \to X} \norm{\partial(x)}_{Y}, \quad x \in \dom A.
$$
\end{prop}

\begin{proof}
We first extend the Riesz estimate \eqref{Riesz-555} from the core $C'$ to the whole domain of $(A^*)^\frac{1}{2}$. We have $\ker P=\ovl{\Ran A}$. By \cite[Theorem 3.2.6 p.~297]{Meg98} combined with \cite[5.10 p.~148]{FHHMPZ01}, 
the adjoint $P^*$ is a bounded projection with
\[
\Ran P^*=(\ker P)^\perp
=(\ovl{\Ran A})^\perp
=(\Ran A)^\perp
\ov{\eqref{lien-ker-image}}{=} \ker A^*.
\]
Let $u \in \dom (A^*)^\frac{1}{2}$. Since $C'$ is a core of $(A^*)^{\frac{1}{2}}$, there exists a sequence $(u_n)$ of elements of $C'$ such that $u_n \to u$ and $(A^*)^{\frac{1}{2}}u_n \to (A^*)^{\frac{1}{2}}u$ in $X^*$. By \eqref{Riesz-555}, the sequence $((\partial^\dagger)^*u_n)$ is Cauchy in $Y^*$, hence converges to some $v \in Y^*$. Since $(\partial^\dagger)^*$ is closed, this implies that $u \in \dom(\partial^\dagger)^*$ and $(\partial^\dagger)^*u=v$, and passing to the limit in $\norm{(\partial^\dagger)^*(u_n)}_{Y^*} \ov{\eqref{Riesz-555}}{\leq} K \bnorm{(A^*)^{\frac{1}{2}}(u_n)}_{X^*}$ yields
\begin{equation}
\label{Riesz-555-extended}
\norm{(\partial^\dagger)^*u}_{Y^*}
\leq K \bnorm{(A^*)^{\frac{1}{2}}u}_{X^*},
\quad u \in \dom (A^*)^{\frac{1}{2}}.
\end{equation}

Fix $x \in \dom A$. Recall that $\dom A \subset \dom A^{\frac{1}{2}}$. By Hahn--Banach theorem, there exists $y \in X^*$ with $\norm{y}_{X^*}=1$ and $
\bnorm{A^{\frac{1}{2}}x}_X
=\la y, A^{\frac{1}{2}}x\ra_{X^*,X}$. Set $y_0 \ov{\mathrm{def}}{=} y-P^*y$. Then $y_0$ belongs to $ \ker P^*=(\Ran P)^\perp=(\ker A)^\perp$ and
\[
\la y_0, A^{\frac{1}{2}}x\ra_{X^*,X}
=\la y, A^{\frac{1}{2}}x\ra_{X^*,X}-\la P^*y, A^{\frac{1}{2}}x\ra_{X^*,X}.
\]
Since $\ovl{\Ran A^{\frac{1}{2}}} \ov{\eqref{inclusion-range}}{=} \ovl{\Ran A}$, the element $A^{\frac{1}{2}}x \in \Ran A^{\frac{1}{2}}$ belongs to $\ovl{\Ran A}=\ker P$. Hence $P(A^{\frac{1}{2}}x)=0$ and therefore $\la P^*y, A^{\frac{1}{2}}x\ra=\la y, P(A^{\frac{1}{2}}x)\ra_{X^*,X}=0$. Thus
\begin{equation}
\label{reduce-y0}
\bnorm{A^{\frac{1}{2}}x}_X
=\la y_0, A^{\frac{1}{2}}x\ra_{X^*,X}.
\end{equation}
Moreover, we have
\begin{equation}
\label{bound-y0}
\norm{y_0}_{X^*}
=\norm{y-P^*y}_{X^*}
\leq \norm{\Id-P^*}_{X^* \to X^*} \norm{y}_{X^*}
=\norm{\Id-P}_{X \to X}.
\end{equation}
Note that $y_0$ belongs to $(\ker A)^\perp \ov{\eqref{lien-ker-image}}{=} \ovl{\Ran A^*} \ov{\eqref{inclusion-range}}{=} \ovl{\Ran (A^*)^{\frac{1}{2}}}$, where we use the reflexivity of $X$ and the results \cite[Proposition 2.6.6 p.~225]{Meg98} and \cite[Theorem 2.5.16 p.~216]{Meg98} in the first equality. Hence there exists a sequence $(v_n)$ of elements in $\dom (A^*)^{\frac{1}{2}}$ such that $
(A^*)^{\frac{1}{2}}v_n \to y_0 \quad \text{in } X^*$. Since the subspace $C'$ is a core of the operator $(A^*)^{\frac{1}{2}}$, for each integer $n$ we can choose $u_n \in C'$ such that 
$$
\norm{u_n-v_n}_{X^*}+\bnorm{(A^*)^{\frac{1}{2}}(u_n-v_n)}_{X^*} \leq \frac1n.
$$ 
Then $(A^*)^{\frac{1}{2}}u_n \to y_0$ in $X^*$. We get
\begin{align*}
\bnorm{A^{\frac{1}{2}}x}_X
&\ov{\eqref{reduce-y0}}{=} \la y_0, A^{\frac{1}{2}}x\ra_{X^*,X}
=\lim_{n \to \infty} \la (A^*)^{\frac{1}{2}}u_n, A^{\frac{1}{2}}x\ra_{X^*,X}.
\end{align*}
Since $(A^\frac{1}{2})^*=(A^*)^{\frac{1}{2}}$ by \cite[Corollary 5.2.4 p.~116]{MCSA01} and since $x$ belongs to $\dom A \subset \dom A^{\frac{1}{2}}$, we have
\begin{align*}
\MoveEqLeft
\la (A^*)^{\frac{1}{2}}u_n, A^{\frac{1}{2}}x\ra_{X^*,X}
=\la u_n, A^{\frac{1}{2}}A^{\frac{1}{2}}x\ra_{X^*,X}
=\la u_n, Ax\ra_{X^*,X} \\
&\la u_n, Ax\ra_{X^*,X}
\ov{\eqref{facto-with-diamond}}{=}\la u_n, \partial^\dagger\partial x\ra_{X^*,X}
\ov{\eqref{crochet-duality}}{=}\la (\partial^\dagger)^*u_n, \partial x\ra_{Y^*,Y},
\end{align*}
where we used \cite[Theorem 15.2.5 (4) p.~438]{HvNVW23} in the second equality. Consequently, we conclude that
\begin{align*}
\bnorm{A^{\frac{1}{2}}x}_X
&\leq \limsup_{n \to \infty}\norm{(\partial^\dagger)^*u_n}_{Y^*}\, \norm{\partial x}_Y
\ov{\eqref{Riesz-555-extended}}{\leq}
K \limsup_{n\to\infty}\bnorm{(A^*)^{\frac{1}{2}}u_n}_{X^*}\, \norm{\partial x}_Y \\
&=\norm{y_0}_{X^*} \norm{\partial x}_Y 
\ov{\eqref{bound-y0}}{\leq} K \norm{\Id-P}_{X \to X} \norm{\partial x}_Y.
\end{align*}
\end{proof}

\subsection{Curvature via commutation relations and bounded $\H^\infty$ functional calculus}
\label{sec-Curvature}

This subsection introduces an abstract curvature condition expressed as an intertwining relation between the semigroup $(T_t)_{t \geq 0}$ on $X$ and an auxiliary suitable semigroup $(\tilde T_t)_{t \geq 0}$ on the ``tangent space" $Y$. It should be viewed as a semigroup-level analogue of the commutation rules satisfied by gradients under lower Ricci curvature bounds. The identity \eqref{Ricci-def} will be the key input for deriving resolvent commutation formulas and $R$-bounded estimates later on.

\begin{defi}
\label{curvature-H-infty}
Let $(T_t)_{t \geq 0}$ be a strongly continuous semigroup of operators on a Banach space $X$ with infinitesimal generator $-A$. Consider a closed operator $\partial \co \dom \partial \subset X \to Y$ with dense domain. If $\lambda \in \R$, we say that $(T_t)_{t \geq 0}$ satisfies $\Curv_{\partial}(\lambda)$ if there exists a strongly continuous bounded semigroup $(\tilde{T}_t)_{t \geq 0}$ of operators with generator $-\tilde{A}$ acting on the Banach space $Y$ such that 
\begin{equation}
\label{inclusion-def}
T_t(\dom\partial) \subset \dom\partial, \quad t \geq 0
\end{equation}
and
\begin{equation}
\label{Ricci-def} 
\partial T_t x 
= \e^{-\lambda t}\tilde T_t\partial x,\qquad t\geq 0, x \in \dom\partial,
\end{equation}
and if there exists a core $C$ of the closed operator $A$ such that 
\begin{equation}
\label{inclusions-minimales}
A(C) \subset \dom \partial,
\quad
\partial(C) \subset \dom \tilde{A}
\quad \text{and} \quad T_t(C) \subset C, \quad t \geq 0.
\end{equation}
We say that $(T_t)_{t \geq 0}$ satisfies the $\H^\infty$-enhanced curvature condition $\Curv_{\partial,\H^\infty}(\lambda)$ if in addition the operators $\tilde{A}$ and $A$ admit a bounded $\H^\infty(\Sigma_\theta)$ functional calculus for some angle $\theta \in (0, \frac{\pi}{2})$.
\end{defi}

Finally, we record a simple monotonicity property: if the curvature condition holds for some $\lambda$, then it automatically holds for any smaller $\lambda'$. This matches the geometric intuition that a lower curvature bound implies any weaker lower bound.

\begin{prop}
\label{prop-curv-lambda-bis}
Let $(T_t)_{t \geq 0}$ be a strongly continuous semigroup of operators on a Banach space $X$. If $\lambda$ and $\lambda'$ are real numbers such that $\lambda \geq \lambda'$ and if the semigroup $(T_t)_{t \geq 0}$ satisfies $\Curv_{\partial}(\lambda)$ (resp.~$\Curv_{\partial,\H^\infty}(\lambda)$) then the semigroup $(T_t)_{t \geq 0}$ also satisfies the property $\Curv_{\partial}(\lambda')$ (resp.~$\Curv_{\partial,\H^\infty}(\lambda')$).
\end{prop}

\begin{proof}
We can suppose that $\lambda > \lambda'$. It suffices to write $
\partial \circ T_t
=\e^{-\lambda' t}\big(\e^{-(\lambda-\lambda') t}\tilde{T}_t\big) \circ \partial$ for any $t \geq 0$
and to observe that $(\e^{-(\lambda-\lambda') t}\tilde{T}_t)_{t \geq 0}$ is a strongly continuous semigroup by \cite[p.~43]{EnN00} with infinitesimal generator $-(\tilde{A}+(\lambda-\lambda')\,\Id) $ according to \cite[2.2 p.~60]{EnN00}. By \cite[Lemma 3.5 p.~28]{JMX06}
(applied with $\epsi=\lambda-\lambda'>0$), the shifted operator $\tilde{A}+(\lambda-\lambda')\,\Id$ admits a bounded $\H^\infty$ functional calculus  if the operator $\tilde{A}$ admits a bounded $\H^\infty$ functional calculus.
\end{proof}

\subsection{From curvature to commutation: resolvents, cores, and the reduced semigroup}
\label{sec-consequence-curvature}

The purpose of this section is to turn the curvature assumption $\Curv_{\partial,\H^\infty}(0)$ and the two Riesz equivalences into concrete operator-theoretic consequences on the reduced space $\ovl{\Ran\partial}$. In particular, we derive commutation relations for the resolvents and the semigroups, and we identify natural dense cores that will later serve to define the Hodge--Dirac operator.

Here, we assume that $X$ and $Y$ are reflexive. In the sequel, we suppose that the semigroup $(T_t)_{t \geq 0}$ satisfies the condition $\Curv_{\partial,\H^\infty}(0)$. We assume that  the operator $A$ satisfies the $\partial$-Riesz equivalence \eqref{estim-Riesz-bis} and that the operator $A^*$ satisfies the $(\partial^\dagger)^*$-Riesz equivalence \eqref{estim-Riesz-bis}. Recall that \eqref{facto-with-diamond} holds and that $\dom \partial=\dom A^{\frac{1}{2}}$. Since the subspace $\dom \partial^\dagger$ is dense in $Y$, we can consider the Banach adjoint 
$
(\partial^\dagger)^* \co \dom (\partial^\dagger)^* \subset X^* \to Y^*$. We assume that the operator $A^*$ admits the $(\partial^\dagger)^*$-Riesz equivalence \eqref{estim-Riesz-bis}. We have
\begin{align}
\label{Ricci-0}  
\partial \circ T_{t}x
\ov{\eqref{Ricci-def}}{=} \tilde{T}_t \circ \partial x, \quad t \geq 0, x \in \dom \partial.
\end{align}
Identity \eqref{Ricci-0} expresses that the semigroup $(T_t)_{t\geq 0}$ transports the first-order structure: applying $\partial$ after $T_t$ agrees with applying the tangent semigroup $\tilde T_t$ after $\partial$. This intertwining principle is the starting point for all subsequent commutation arguments.

According to Proposition \ref{Prop-equivalences}, for any $s > 0$ and any $x \in \dom \partial$, the element $R(-s,A)(x)$ belongs to the subspace $\dom \partial$ and we obtain the following commutation rule between the resolvents and the abstract gradient
\begin{equation}
\label{commuting-deriv-Fourier-bis}
\partial \circ R(-s,A)(x)
=R(-s,\tilde{A}) \circ \partial(x), \quad x \in \dom \partial.
\end{equation}
The resolvent commutation rule \eqref{commuting-deriv-Fourier-bis} can be viewed as a Laplace-transform version of \eqref{Ricci-0}. 

To obtain core statements later on, we fix a dense subspace $C \subset \dom A$ on which the compatibility relations are available and stable under the semigroup. This will allow us to approximate arbitrary elements in the relevant domains by ``smooth" ones.

Recall that there exists a core $C$ of the operator $A$ such that $A(C) \subset \dom \partial$, $\partial(C) \subset \dom \tilde{A}$ and $T_t(C) \subset C$ for any $t \geq 0$. In particular, we have $C \subset \dom\partial$ by \eqref{facto-with-diamond}. By Remark \ref{remark-useful}, we have
\begin{equation}
\label{commute-AAA}
\partial Ax
\ov{\eqref{Curvature-generator}}{=}\tilde{A}\partial x, \quad x \in C.
\end{equation}
Formula \eqref{commute-AAA} is the infinitesimal counterpart of \eqref{Ricci-0} on the core $C$. It provides the basic identification $\tilde A\partial=\partial A$ at the level of generators, which is crucial when comparing second-order operators built from $(\partial,\partial^\dagger)$ with $\tilde A$.

The next lemma shows that $\partial(C)$ is large enough inside $\ovl{\Ran\partial}$. This density property is the key approximation device allowing us to extend identities first proved on $\partial(C)$ to arbitrary vectors in $\ovl{\Ran\partial}$ by closure arguments.

\begin{lemma}
\label{lem-density-range-partial-domA}
The subspace $C$ is a common core of the operators $A$, $A^{\frac12}$ and $\partial$. Moreover, the subspace $\partial(C)$ is dense in the Banach space $\ovl{\Ran \partial}$, i.e.~for any $y \in \ovl{\Ran \partial}$ there exists a sequence $(x_n)$ of elements of $C$ such that $\partial x_n \to y$ in $Y$.
\end{lemma}

\begin{proof}
Since $\dom A$ is a core of $A^{\frac12}$, if $x\in\dom A^{\frac12}$, choose a sequence $(u_n)$ in $\dom A$ such that $
u_n\to x$ and $A^{\frac12}u_n\to A^{\frac12}x$ in $X$. For each $n$, since $C$ is a core of $A$, choose $c_n \in C$ such that $
\norm{c_n-u_n}_X+\norm{Ac_n-Au_n}_X\leq \frac1n$. By \eqref{majo-graph}, this implies $
\norm{c_n-u_n}_X+\bnorm{A^{\frac12}c_n-A^{\frac12}u_n}_X \to 0$. Consequently, $c_n\to x$ and $A^{\frac12}c_n\to A^{\frac12}x$. Hence $C$ is a core of $A^{\frac12}$. 
Applying \eqref{Equivalence-square-root-domaine-Schur-sgrp-bis} to $c_n-x$, we get
\[
\norm{\partial c_n-\partial x}_Y
=
\norm{\partial(c_n-x)}_Y
\ov{\eqref{Equivalence-square-root-domaine-Schur-sgrp-bis}}{\lesssim}
\bnorm{A^{\frac12}(c_n-x)}_X
\to 0.
\]
Thus, for any $x \in \dom A^{\frac12}=\dom \partial$, there exists a sequence $(c_n)$ in $C$ such that $
c_n\to x$, $A^{\frac12}c_n \to A^{\frac12}x$, $\partial c_n\to\ovl{\partial}x$. In particular, $C$ is a core of both $A^{\frac12}$ and $\ovl{\partial}$.

Finally, if $x \in \dom A$, we may choose the approximating sequence $(c_n)$ directly from the fact that $C$ is a core of $A$, so that $c_n \to x$ and $Ac_n\to Ax$. Then \eqref{majo-graph} gives $A^{\frac12}c_n\to A^{\frac12}x$, and the Riesz equivalence gives $\partial c_n \to \partial x$. Thus the same sequence approximates $x$ simultaneously for the graph norms of $A$, $A^{\frac12}$ and $\partial$, whenever $x\in\dom A$.

Let $y \in \ovl{\Ran\partial}$. For each integer $n \geq 1$, choose $x_n \in \dom \partial$ such that $\norm{y-\partial x_n}_Y \leq \frac1n$. Since $C$ is a core for $\partial$, there exists $c_n \in C$ such that $\norm{\partial c_n-\partial x_n}_Y
\leq \frac1n$. Then 
$$
\norm{y-\partial c_n}_Y
\leq
\norm{y-\partial x_n}_Y
+
\norm{\partial x_n-\partial c_n}_Y
\leq 
\frac2n.
$$ 
Thus $\partial c_n \to y$ in $Y$.
\end{proof}

Now, we exploit the Laplace representation of resolvents to obtain commutation rules for $\partial^\dagger$ with $R(-s,\tilde A)$ and with $\tilde T_t$ on the subspace $\ovl{\Ran\partial}$.

\begin{prop}
\label{prop-commuting-deriv2-correct}
\begin{enumerate}
\item For any $s > 0$, the operator $R(-s,A)\partial^\dagger$ induces a bounded operator on the Banach space $\ovl{\Ran \partial}$.

\item For any $s > 0$ and any $x \in \dom A$ we have $R(-s,\tilde A)\partial x \in \dom \partial^\dagger$ and
\begin{equation}
\label{eq-commute-partialdiamond-on-core}
\partial^\dagger R(-s,\tilde A)\partial x = R(-s,A)\partial^\dagger\partial x.
\end{equation}
\item For any $s > 0$ and any $y \in \ovl{\Ran \partial}\cap \dom \partial^\dagger$ we have
$R(-s,\tilde A)y \in \dom \partial^\dagger$ and
\begin{equation}
\label{eq-commute-partialdiamond-general}
\partial^\dagger R(-s,\tilde A) y = R(-s,A)\partial^\dagger y.
\end{equation}
\item For any $y \in \ovl{\Ran \partial} \cap \dom \partial^\dagger$ and any $t > 0$, we have $\tilde{T}_t y \in \dom \partial^\dagger$ and
\begin{equation}
\label{eq-commute-partial-semigroup-tangent}
\partial^\dagger \tilde{T}_t y 
= T_t\partial^\dagger y.
\end{equation}
\end{enumerate} 
\end{prop}

\begin{proof}
1. Recall that the space $X$ is reflexive. Note that $R(-s,A \big)\partial^\dagger = R(-s,A \big)^{**}\partial^{\dagger**} \subset \big((\partial^\dagger)^* R(-s,A^*)\big)^*$ where we use \cite[Problem 5.26 p.~168]{Kat76}. 
Furthermore, by Proposition \ref{Prop-R-gradient-bounds-Fourier} applied with $A^*$ and $(\partial^\dagger)^*$ instead of $A$ and $\partial$, the operator $(\partial^\dagger)^* R(-s,A^*)$ is bounded, hence $\big((\partial^\dagger)^* R(-s,A^*)\big)^*$ also. By Lemma \ref{lem-density-range-partial-domA}, the subspace $\partial(C)$ of $\dom \partial^\dagger$ is dense in the Banach space $\ovl{\Ran \partial}$. Now, the conclusion is immediate.

2. Fix $s>0$ and let $x \in \dom A$. Then $x \in \dom \partial$ and $\partial x \in \dom \partial^\dagger$ by
\eqref{facto-with-diamond}. Moreover, for all $t > 0$ we have using \cite[(1.5) p.~50]{EnN00} in the second equality
\begin{align}
\MoveEqLeft
\label{Inter-3455}
T_t \partial^\dagger \partial(x) 
\ov{\eqref{facto-with-diamond}}{=} T_t A(x)
= A T_t(x) 
\ov{\eqref{facto-with-diamond}}{=} \partial^\dagger \partial T_t(x) 
\ov{\eqref{Ricci-0}  }{=} \partial^\dagger \tilde{T}_t \partial(x).
\end{align}
Using \eqref{Resolvent-Laplace} with $\lambda=-s$ and this commutation identity, we obtain
\begin{align}
\label{eq-resolvent-commutation-proof}
R(-s,A)\partial^\dagger\partial x
&\ov{\eqref{Resolvent-Laplace}}{=} -\int_0^\infty \e^{-st} T_t\partial^\dagger\partial x \d t
\ov{\eqref{Inter-3455}}{=} -\int_0^\infty \e^{-st} \partial^\dagger \tilde T_t \partial x \d t.
\end{align}
Since $\partial^\dagger$ is closed and the Bochner integral is well-defined, we may apply \cite[Theorem 1.2.4 p.~15]{HvNVW16} to interchange $\partial^\dagger$ and the integral, which yields
\[
R(-s,A)\partial^\dagger\partial x
= \partial^\dagger\Big(-\int_0^\infty \e^{-st}\,\tilde T_t \partial x \d t\Big)
\ov{\eqref{Resolvent-Laplace}}{=} \partial^\dagger R(-s,\tilde A)\partial x.
\]
This proves \eqref{eq-commute-partialdiamond-on-core}.

3. Now, consider some element $y \in \ovl{\Ran \partial}\cap \dom \partial^\dagger$. By Lemma \ref{lem-density-range-partial-domA}, there exists a sequence $(x_n)$ of elements of $C$ such that $\partial x_n \to y$ in $Y$. By boundedness of the operator $R(-s,\tilde A)$, we deduce that $R(-s,\tilde A)\partial x_n \to R(-s,\tilde A)y$ in $Y$. By the first point, the operator $R(-s,A)\partial^\dagger$ extends to a bounded operator on $\ovl{\Ran\partial}$. We obtain
\[
\partial^\dagger R(-s,\tilde A)\partial x_n
\ov{\eqref{eq-commute-partialdiamond-on-core}}{=} R(-s,A)\partial^\dagger\partial x_n
\to R(-s,A)\partial^\dagger y
\]
in $X$. Since the operator $\partial^\dagger$ is closed, it follows 
that $R(-s,\tilde A)y \in \dom \partial^\dagger$ and that
$$
\partial^\dagger R(-s,\tilde A) y 
= R(-s,A)\partial^\dagger y.
$$

4. Fix $t > 0$ and let $y \in \ovl{\Ran \partial}\cap \dom \partial^\dagger$. For $n \geq 1$, set $s_n \ov{\mathrm{def}}{=} \frac{n}{t}$ and define the bounded operators $
J_n 
\ov{\mathrm{def}}{=} 
-s_n R(-s_n,\tilde A)$ and $
K_n \ov{\mathrm{def}}{=} 
-s_n R(-s_n,A)$. By the third point, for every $s > 0$ and every $y \in \ovl{\Ran \partial}\cap \dom \partial^\dagger$ we have $R(-s,\tilde A)y \in \dom \partial^\dagger$ and
\[
\partial^\dagger R(-s,\tilde A)y 
= R(-s,A)\partial^\dagger y.
\]
Taking $s=s_n$ and multiplying by $-s_n$, we obtain $
\partial^\dagger J_n y = K_n \partial^\dagger y$. In particular, $J_n y \in \dom \partial^\dagger$. By iteration, we get for any integer $m \geq 1$,
\[
\partial^\dagger J_n^m y = K_n^m \partial^\dagger y,
\]
and hence $J_n^m y \in \dom \partial^\dagger$ for all $m$. Note that by the formula \eqref{Widder-Resolvent}, we have
\[
J_n^n y \to \tilde T_t y \quad \text{in } Y,
\qquad
K_n^n x \to T_t x \quad \text{in } X \ \text{for all } x \in X.
\]
Applying this with $x=\partial^\dagger y$ and using the identity above with $m=n$, we infer
\[
\partial^\dagger J_n^n y 
= K_n^n \partial^\dagger y \to T_t \partial^\dagger y \quad \text{in } X.
\]
Since $\partial^\dagger$ is closed and $J_n^n y \to \tilde T_t y$ in $Y$, it follows that $\tilde T_t y \in \dom \partial^\dagger$ and $\partial^\dagger \tilde T_t y = T_t \partial^\dagger y$.
\end{proof}

Our Hodge--Dirac operator in \eqref{Def-Dirac-operator-Fourier} below will be constructed out of $\partial$ and the unbounded operator $\partial^\dagger|_{\ovl{\Ran \partial}}$. Note that the latter is by definition an unbounded operator on the Banach space $\ovl{\Ran \partial}$ with values in $X$ with domain $\dom \partial^\dagger \cap \ovl{\Ran \partial}$.  The next lemma records the basic functional-analytic properties needed for the construction of the Hodge--Dirac operator.

\begin{lemma}
\label{Lemma-rest-closed-dense-Fourier-bis}
The operator $\partial^\dagger|_{\ovl{\Ran \partial}}$ is densely defined and is closed. 
\end{lemma}

\begin{proof}
By Lemma \ref{lem-density-range-partial-domA}, the subspace $\partial(C)$ of the space $\dom \partial^\dagger$ is dense in the Banach space $\ovl{\Ran \partial}$. 
Since the unbounded operator $\partial^\dagger$ is closed, the second assertion on the closedness is obvious.
\end{proof}

According to \eqref{Ricci-0}, for any $t \geq 0$ the bounded operator $\tilde T_t$ leaves the subspace $\Ran\partial$ invariant. Hence, by continuity, $\tilde{T}_t$ also leaves the closure $\ovl{\Ran\partial}$ invariant. Therefore the family $(\tilde{T}_t|_{\ovl{\Ran\partial}})_{t \geq 0}$ is a strongly continuous semigroup on the Banach space $\ovl{\Ran\partial}$, and $\tilde{A}|_{\ovl{\Ran\partial}}$, with domain $\dom \tilde{A} \cap \ovl{\Ran\partial}$, is the opposite of its infinitesimal generator, see \cite[pp.~60--61]{EnN00}. Finally, we identify a convenient core for the operator $\tilde A|_{\ovl{\Ran\partial}}$. This will later allow us to compare $\tilde A|_{\ovl{\Ran\partial}}$ with the second-order operator $\partial\partial^\dagger$ on the same space.

\begin{prop}
\label{Prop-core-1-group-bis}
The subspace $\partial(C)$ is a core of the unbounded operator $\tilde{A}|_{\ovl{\Ran \partial}}$.
\end{prop}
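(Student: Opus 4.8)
The plan is to invoke the abstract core criterion from semigroup theory, namely Lemma \ref{Lemma-core-semigroup}: if $Y$ is a subspace of $\dom \tilde{A}_p|_{\ovl{\Ran \partial_{p}}}$ which is dense in $\ovl{\Ran \partial_{p}}$ and invariant under each operator of the restricted semigroup $(\tilde{T}_t|_{\ovl{\Ran \partial_{p}}})_{t \geq 0}$, then $Y$ is automatically a core. So I would take $Y = \partial(\dom \partial)$ and verify these three hypotheses in turn.

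First, I would check that $\partial(\dom \partial) \subset \dom \tilde{A}_p|_{\ovl{\Ran \partial_{p}}}$. Let $x \in \dom \partial$. Then $x \in \dom A_2 = \dom(\partial_2^*\partial_2)$, and by the commutation relation $\Curv_{\partial_2,\cal{H}_2}(0)$ together with its consequence \eqref{Curvature-generator} (applied with $C = \dom \partial$, using that $A$ maps $\dom \partial$ back into $\dom \partial$ since $\dom \partial$ is a $*$-subalgebra stable under the generator in the relevant sense, or more directly via the resolvent commutation \eqref{commuting-deriv-Fourier-bis}), one has that $\partial_p(x) \in \dom \tilde{A}_p$ with $\tilde{A}_p \partial_p(x) = \partial_p A_p(x) \in \partial(\dom \partial) \subset \ovl{\Ran \partial_{p}}$. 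Hence $\partial_p(x)$ lies in the domain of the restricted generator. Second, density of $\partial(\dom \partial)$ in $\ovl{\Ran \partial_{p}}$ is exactly the content of Lemma \ref{Lemma-rest-closed-dense-Fourier-bis}, which I can cite directly. Third, invariance: by Lemma \ref{lem-T_t-et-derivations-Fourier-bis} we have $\tilde{T}_{t,p} \circ \partial_p = \partial_p \circ T_{t,p}$ on $\dom \partial_p$; since $T_t$ leaves $\dom \partial$ invariant (this is part of the hypothesis $\Curv_{\partial_2,\cal{H}_2}(0)$, that $T_t(x) \in \dom \partial$ for $x \in \dom \partial$), it follows that $\tilde{T}_{t}(\partial(x)) = \partial(T_t(x)) \in \partial(\dom \partial)$, so $\partial(\dom \partial)$ is invariant under each $\tilde{T}_t$, hence under each $\tilde{T}_t|_{\ovl{\Ran \partial_{p}}}$.

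With these three points established, Lemma \ref{Lemma-core-semigroup} applied to the strongly continuous semigroup $(\tilde{T}_t|_{\ovl{\Ran \partial_{p}}})_{t \geq 0}$ on the Banach space $\ovl{\Ran \partial_{p}}$ with generator $-\tilde{A}_p|_{\ovl{\Ran \partial_{p}}}$ immediately yields that $\partial(\dom \partial)$ is a core of $\tilde{A}_p|_{\ovl{\Ran \partial_{p}}}$, which is the claim.

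I expect the main (minor) obstacle to be the first step: carefully justifying that $\partial(\dom \partial)$ sits inside the domain of the \emph{restricted} generator, i.e. that $\tilde{A}_p$ maps $\partial(\dom \partial)$ into $\ovl{\Ran \partial_{p}}$ rather than merely into $\cal{H}_p$. This needs the identity $\tilde{A}_p \partial_p(x) = \partial_p A_p(x)$ for $x \in \dom \partial$, which should be extracted cleanly from the resolvent commutation \eqref{commuting-deriv-Fourier-bis} (differentiating, or equivalently applying $(-s-A_p)$ and $(-s-\tilde{A}_p)$ appropriately) and the fact that $A_p(x) \in \dom \partial$ for $x\in \dom\partial$; everything else is a routine bookkeeping of hypotheses already packaged in the cited lemmas.
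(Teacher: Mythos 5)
Your proposal is correct and follows essentially the same route as the paper: the paper also applies Lemma \ref{Lemma-core-semigroup} to $Y=\partial(\dom\partial)$, citing the density from Lemma \ref{Lemma-rest-closed-dense-Fourier-bis}, the domain inclusion, and the invariance under $\tilde{T}_t|_{\ovl{\Ran \partial_{p}}}$ coming from \eqref{Ricci-semi}. Your extra care on the domain-inclusion step (via $\tilde{A}_p\partial_p(x)=\partial_p A_p(x)$) is in fact more detailed than the paper's one-line justification, and the worry that $\tilde{A}_p$ might not map into $\ovl{\Ran\partial_p}$ is harmless since the generator of the restriction of a semigroup to a closed invariant subspace is automatically the part of the generator in that subspace.
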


\begin{proof}
By Lemma \ref{lem-density-range-partial-domA}, the subspace $\partial(C)$ is dense in the Banach space $\ovl{\Ran \partial}$. Moreover, it is invariant under each operator $\tilde T_t|_{\ovl{\Ran\partial}}$. Indeed, if $x \in C$ then $T_t x \in C$ for all $t \geq 0$, hence $\partial(T_t x) \in \partial(C)$. Using the intertwining relation \eqref{Ricci-0}, we get
\[
\tilde T_t(\partial x)
\ov{\eqref{Ricci-0}}{=} \partial(T_t x) \in \partial(C),
\qquad t \geq 0.
\]
Thus the subspace $\partial(C)$ is a dense invariant subspace of $\ovl{\Ran\partial}$ for the semigroup $(\tilde T_t|_{\ovl{\Ran\partial}})_{t \geq 0}$. By \cite[Proposition G.2.4 p.~526]{HvNVW18}, it follows that $\partial(C)$ is a core of the operator $\tilde A|_{\ovl{\Ran\partial}}$.
%
\end{proof}

\subsection{Regularization on $\ovl{\Ran\partial}$ and identification of the reduced generator}
\label{sec-regul}

The goal of this section is to justify, on the reduced space $\ovl{\Ran\partial}$, the formal identity
$\tilde{A} = \partial\partial^\dagger$ by proving that $\partial\partial^\dagger|_{\ovl{\Ran\partial}}$ is closable and that its closure coincides with the restriction $\tilde{A}|_{\ovl{\Ran\partial}}$ of the operator $\tilde{A}$, under some regularization assumptions. Recall that the subspace $C$ of $X$ is a core of the unbounded operator $A$ satisfying
\[
A(C)\subset \dom\partial,
\quad
\partial(C)\subset \dom\tilde A, \quad T_t(C)\subset C, \quad t \geq 0.
\]
The technical issue is that $\partial(C)$, although natural, is not a priori known to be a core for the operator $\partial\partial^\dagger$.

We first formulate a minimal smoothing assumption ensuring that, for positive times, the tangent semigroup sends the reduced range into the ``smooth" subspace $\partial(C)$.

\begin{defi}
\label{def-regularization-coreC}
We say that the semigroup $(\tilde T_t)_{t \geq 0}$ has the regularization property with respect to $C$ if
\begin{equation}
\label{axiom-regularization-C}
\tilde T_t(\ovl{\Ran\partial}) \subset \partial(C),
\quad t > 0.
\end{equation}
\end{defi}
Condition \eqref{axiom-regularization-C} should be understood as a regularization on $\ovl{\Ran\partial}$: applying $\tilde T_t$ for $t>0$ produces vectors of the form $\partial x$ with $x\in C$. In particular, it provides a convenient core candidate for operators acting on $\ovl{\Ran\partial}$.

Under this regularization property, the second order operator $\partial\partial^\dagger$ becomes well behaved on $\ovl{\Ran\partial}$ and one can identify its closure with the reduced generator $\tilde A|_{\ovl{\Ran\partial}}$. This is the analogue, in the present abstract setting, of the classical identity $\d \Delta_\HdR f=\Delta_\HdR(\d f) $ of \cite[(2.7.16) p.~166]{RuS17}, which can be written $\d \d^*\d f = \Delta_\HdR(\d f)$.

\begin{prop}
\label{Prop-fundamental-Fourier}
Assume the regularization property with respect to $C$ on $\ovl{\Ran\partial}$. Then $\partial\partial^\dagger|_{\ovl{\Ran\partial}}$ is closable and
\begin{equation}
\label{relation-dur-fund-Fourier}
\ovl{\partial\partial^\dagger|_{\ovl{\Ran\partial}}}
=\tilde A|_{\ovl{\Ran\partial}}.
\end{equation}
Moreover, the subspace $\partial(C)$ is a core of the operators $\partial\partial^\dagger|_{\ovl{\Ran\partial}}$ and $\partial^\dagger|_{\ovl\Ran\partial}$.
\end{prop}

\begin{proof}
Let $y \in \dom \partial\partial^\dagger|_{\ovl{\Ran\partial}}$. Let $(t_j)$ be a sequence with $t_j > 0$ and $t_j \to 0$ and set $y_j \ov{\mathrm{def}}{=}\tilde T_{t_j}y$. By the regularization property, we have $y_j\in \partial(C)$ for all $j$. Moreover $y_j \to y$ in $Y$ by strong continuity. Since $y\in \dom\partial^\dagger\cap \ovl{\Ran\partial}$, Proposition \ref{prop-commuting-deriv2-correct} gives $\partial^\dagger y_j=T_{t_j}\partial^\dagger y$. As $y \in \dom \partial\partial^\dagger|_{\ovl{\Ran\partial}}$, we have $\partial^\dagger y\in \dom\partial$, hence $T_{t_j}\partial^\dagger y\in \dom\partial$. Hence $y_j\in \ovl{\Ran\partial}\cap\dom\partial^\dagger$ and $\partial^\dagger y_j\in\dom\partial$, i.e.~$y_j\in\dom(\partial\partial^\dagger|_{\ovl{\Ran\partial}})$ and
\[
\partial\partial^\dagger|_{\ovl{\Ran\partial}} y_j
=\partial\partial^\dagger \tilde{T}_{t_j}y
\ov{\eqref{eq-commute-partial-semigroup-tangent}}{=}\partial(T_{t_j}\partial^\dagger y)
\ov{\eqref{Ricci-0}}{=} \tilde T_{t_j}\partial\partial^\dagger y
=\tilde T_{t_j} \partial\partial^\dagger|_{\ovl{\Ran\partial}}y
\to \partial\partial^\dagger|_{\ovl{\Ran\partial}}y.
\]
We have proved that $y_j \to y$ in the graph norm of $\partial\partial^\dagger|_{\ovl{\Ran\partial}}$. So $\partial(C)$ is a core of $\partial\partial^\dagger|_{\ovl{\Ran\partial}}$.

Let $x \in C$. Then $\partial x \in \ovl{\Ran\partial}$, $\partial x \in \dom\partial^\dagger$ and
\[
\partial\partial^\dagger|_{\ovl{\Ran\partial}}(\partial x)
=\partial\partial^\dagger\partial x
\ov{\eqref{facto-with-diamond}}{=} \partial Ax
\ov{\eqref{commute-AAA}}{=}\tilde A \partial x.
\]
Since the operators $\partial\partial^\dagger|_{\ovl{\Ran\partial}}$ and $\tilde A|_{\ovl{\Ran\partial}}$ coincide on the core $\partial(C)$ of $\partial\partial^\dagger|_{\ovl{\Ran\partial}}$ and since $\tilde A|_{\ovl{\Ran\partial}}$ is closed, $\partial\partial^\dagger|_{\ovl{\Ran\partial}}$ is closable and its closure satisfies $\ovl {\partial\partial^\dagger|_{\ovl{\Ran\partial}}} \subset \tilde A|_{\ovl{\Ran\partial}}$. On the other hand, by Proposition \ref{Prop-core-1-group-bis}, the subspace $\partial(C)$ is a core of $\tilde A|_{\ovl{\Ran\partial}}$, and $\ovl{\partial\partial^\dagger|_{\ovl{\Ran\partial}}}$ is closed and coincides with $\tilde A|_{\ovl{\Ran\partial}}$ on $\partial(C)$, hence $\tilde A|_{\ovl{\Ran\partial}}\subset \ovl{\partial\partial^\dagger|_{\ovl{\Ran\partial}}}$. Therefore $\ovl{\partial\partial^\dagger|_{\ovl{\Ran\partial}}}=\tilde A|_{\ovl{\Ran\partial}}$.

Let $y \in \dom \partial^\dagger|_{\ovl{\Ran\partial}}
=
\dom\partial^\dagger\cap\ovl{\Ran\partial}$. Choose a sequence $(t_j)$ of positive numbers such that $t_j\to0$, and set $y_j\ov{\mathrm{def}}{=}\tilde T_{t_j}y$. By the regularization property, we have $y_j \in \partial(C)$ for any $j \geq 1$. Moreover, by strong continuity of $(\tilde T_t)_{t\geq0}$ on $\ovl{\Ran\partial}$, we have $y_j \to y$ in $Y$. By Proposition~\ref{prop-commuting-deriv2-correct}, we also have $y_j=\tilde T_{t_j}y \in \dom\partial^\dagger$ and $
\partial^\dagger y_j
=
\partial^\dagger\tilde T_{t_j}y
\ov{\eqref{eq-commute-partial-semigroup-tangent}}{=}
T_{t_j}\partial^\dagger y
\to \partial^\dagger y$,
since $(T_t)_{t \geq 0}$ is strongly continuous on $X$. Thus $y_j \to y$ in the graph norm of $\partial^\dagger|_{\ovl{\Ran\partial}}$. Therefore $\partial(C)$ is a core of $\partial^\dagger|_{\ovl{\Ran\partial}}$.
\end{proof}

The regularization property \eqref{axiom-regularization-C} is convenient but may fail in some situations. In such cases, one can often construct bounded approximations of the identity that commute with $\partial$ and $\partial^\dagger$ and that map $\ovl{\Ran\partial}$ into $\partial(C)$. This motivates the next definition.

\begin{defi}
\label{Def-regul}
Consider some Banach spaces $X$ and $Y$. Consider a sectorial operator $A$ with dense domain acting on $X$.
Consider an abstract pair $(\partial,\partial^\dagger)$ compatible with $A$ in the sense of \eqref{facto-with-diamond}. We say that two nets $(R_j)$ and $(\tilde{R}_j)$ of bounded linear maps $R_j \co X \to X$ and $\tilde R_j \co Y \to Y$ define a couple of regularizing nets if
\begin{enumerate}
\item $R_j(X)\subset \dom\partial$ for all $j$,
\item $\tilde R_j(Y)\subset \dom\partial^\dagger$ for all $j$,
\item $\tilde R_j(\ovl{\Ran\partial})\subset \partial(C)$ for all $j$,
\item for all $x\in \dom\partial$ we have
\begin{equation}
\label{commute-regul}
\partial R_j(x)
=\tilde R_j\partial(x),
\end{equation}
\item for all $y \in \dom\partial^\dagger$ we have
\begin{equation}
\label{commute-regul-bis}
R_j\partial^\dagger(y)
=\partial^\dagger\tilde R_j(y),
\end{equation}
\item $R_j(x) \to x$ in $X$ for all $x \in X$, and $\tilde R_j(y) \to y$ in $Y$ for all $y \in Y$.
\end{enumerate}
\end{defi}

The third point is the crucial ``core regularization" on the reduced range. The fourth and fifth points encode compatibility with the first-order structure, and the last point ensures that the nets approximate the identity strongly on $X$ and $Y$. In the case of semigroups of operators acting on noncommutative $\L^p$-spaces associated with von Neumann algebras, these operators will be connected to approximations properties, see e.g.~\cite{BrO08}.
In Section~\ref{sec-Schur}, we will briefly describe an example of such a net consisting of matrix truncations.

The point of regularizing nets is that they replace the smoothing action of the semigroup by a purely algebraic and bounded approximation scheme. The next proposition shows that this suffices to obtain the core properties and the identification $\ovl{\partial\partial^\dagger|_{\ovl{\Ran\partial}}}=\tilde A|_{\ovl{\Ran\partial}}$, without assuming any regularization property for the semigroup $(\tilde T_t)_{t \geq 0}$.

\begin{prop}
\label{prop-cores}
Assume that we have a couple of regularizing nets in the sense of Definition \ref{Def-regul}.
Then:
\begin{enumerate}
\item $\partial(C)$ is a core of the unbounded operator $\partial^\dagger|_{\ovl{\Ran\partial}}$.
\item $\partial(C)$ is a core of the unbounded operator $\partial\partial^\dagger|_{\ovl{\Ran\partial}}$.
\item The operator $\partial\partial^\dagger|_{\ovl{\Ran\partial}}$ is closable and we have $
\ovl{\partial\partial^\dagger|_{\ovl{\Ran\partial}}}
=\tilde A|_{\ovl{\Ran\partial}}$.
\end{enumerate}
\end{prop}

\begin{proof}
1. Let $y \in \dom(\partial^\dagger|_{\ovl{\Ran\partial}})=\dom\partial^\dagger\cap \ovl{\Ran\partial}$ and set
$y_j\ov{\mathrm{def}}{=}\tilde R_j y$.
By Definition \ref{Def-regul}, we have $y_j\in \partial(C) \subset \ovl{\Ran\partial}$ and $y_j \in \dom \partial^\dagger$. Moreover, since $y \in \dom\partial^\dagger$, the identity \eqref{commute-regul-bis} implies that 
\[
\partial^\dagger(y_j)
=\partial^\dagger(\tilde R_j y)
\ov{\eqref{commute-regul-bis}}{=} R_j(\partial^\dagger y)
\to \partial^\dagger y.
\]
Moreover, we have $y_j \to y$ in $Y$. Hence $y_j \to y$ in the graph norm of $\partial^\dagger|_{\ovl{\Ran\partial}}$. This proves that $\partial(C)$ is a core.

2. Let $y \in \dom(\partial\partial^\dagger|_{\ovl{\Ran\partial}})$. Then $y\in \dom\partial^\dagger\cap \ovl{\Ran\partial}$ and $\partial^\dagger y\in \dom\partial$. Set again $y_j \ov{\mathrm{def}}{=} \tilde R_j y$. We have $y_j\in \partial(C) \subset \ovl{\Ran\partial}$ and $y_j \in \dom\partial^\dagger$. Moreover, we have
\[
\partial^\dagger(y_j)
=\partial^\dagger(\tilde R_j y)
\ov{\eqref{commute-regul-bis}}{=} R_j(\partial^\dagger y).
\]
Since $R_j(X)\subset \dom\partial$, we have $\partial^\dagger(y_j)\in \dom\partial$, hence $y_j \in \dom(\partial\partial^\dagger|_{\ovl{\Ran\partial}})$.

Next, using \eqref{commute-regul} with $x=\partial^\dagger y \in \dom\partial$, we obtain
\[
\partial\partial^\dagger(y_j)
= \partial\partial^\dagger(\tilde R_j y)
\ov{\eqref{commute-regul-bis}}{=}\partial(R_j\partial^\dagger y)
\ov{\eqref{commute-regul}}{=} \tilde R_j(\partial\partial^\dagger y)
\to \partial\partial^\dagger y.
\]
Furthermore, we have $y_j \to y$ in $Y$. Consequently, we have $y_j\to y$ in the graph norm of $\partial\partial^\dagger|_{\ovl{\Ran\partial}}$. This proves that $\partial(C)$ is a core.

3. By the second point, the subspace $\partial(C)$ is a core of $\partial\partial^\dagger|_{\ovl{\Ran\partial}}$.
For any $x \in C$, we have
\[
\partial\partial^\dagger|_{\ovl{\Ran\partial}}(\partial x)=\partial\partial^\dagger\partial x
\ov{\eqref{facto-with-diamond}}{=} \partial Ax
\ov{\eqref{commute-AAA}}{=}\tilde A \partial x.
\]
Hence $\partial\partial^\dagger|_{\ovl{\Ran\partial}}$ and $\tilde A|_{\ovl{\Ran\partial}}$ coincide on $\partial(C)$. Since $\tilde A|_{\ovl{\Ran\partial}}$ is closed, $\partial\partial^\dagger|_{\ovl{\Ran\partial}}$ is closable and $\ovl{\partial\partial^\dagger|_{\ovl{\Ran\partial}}} \subset \tilde A|_{\ovl{\Ran\partial}}$. On the other hand, by Proposition~\ref{Prop-core-1-group-bis}, the subspace $\partial(C)$ is a core of $\tilde A|_{\ovl{\Ran\partial}}$. Since $\ovl{\partial\partial^\dagger|_{\ovl{\Ran\partial}}}$ is closed and coincides with $\tilde A|_{\ovl{\Ran\partial}}$ on this core, we obtain $\tilde A|_{\ovl{\Ran\partial}}\subset \ovl{\partial\partial^\dagger|_{\ovl{\Ran\partial}}}$. Therefore $\ovl{\partial\partial^\dagger|_{\ovl{\Ran\partial}}}=\tilde A|_{\ovl{\Ran\partial}}$.
\end{proof}

\subsection{Boundedness of the functional calculus on the reduced space $X \oplus_2 \ovl{\Ran \partial}$}
\label{sec-boundedness-reduced-space}

In this section, we assume Definition \ref{def-regularization-coreC} or  Definition \ref{Def-regul}. We introduce the unbounded operator
\begin{equation}
\label{Def-Dirac-operator-Fourier}
D 
\ov{\mathrm{def}}{=}\begin{bmatrix} 
0 & \partial^\dagger|_{\ovl{\Ran \partial}} \\ 
\partial & 0 
\end{bmatrix}
\end{equation}
on the Banach space $X \oplus_2 \ovl{\Ran \partial}$ defined by
\begin{equation}
\label{Def-D-psi}
D(x,y)
\ov{\mathrm{def}}{=}
(\partial^\dagger(y),
\partial(x)\big), \quad x \in \dom \partial, y \in \dom \partial^\dagger \cap \ovl{\Ran \partial}.
\end{equation}
Its domain $\dom \partial \oplus (\dom \partial^\dagger \cap \ovl{\Ran \partial})$ is dense by Lemma \ref{Lemma-rest-closed-dense-Fourier-bis}. We call it the Hodge--Dirac operator associated with the pair $(\partial,\partial^\dagger)$.

\begin{prop}
\label{Prop-carre-Dirac-Fourier}
The operator $D^2$ is closable on the Banach space $X \oplus_2 \ovl{\Ran\partial}$ and we have
\begin{equation}
\label{D-alpha-carre-egal-Fourier}
\ovl{D^2}
=\begin{bmatrix}
A & 0\\
0 & \tilde{A}|_{\ovl{\Ran\partial}}
\end{bmatrix}.
\end{equation}
\end{prop}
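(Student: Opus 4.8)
The plan is to compute $D_{p}^2$ directly from the definition \eqref{Def-D-psi} and to recognise the two diagonal entries as the compositions already identified in Propositions \ref{Prop-lien-generateur-partial-Fourier} and \ref{Prop-fundamental-Fourier}.

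First I would unwind the domain. By definition of the square of an unbounded operator, $(x,y) \in \dom D_{p}^2$ if and only if $(x,y) \in \dom D_{p}$ and $D_{p}(x,y) \in \dom D_{p}$. Since $\dom D_{p} = \dom \partial_{p} \oplus \big(\dom (\partial_{p^*})^* \cap \ovl{\Ran \partial_{p}}\big)$ and $D_{p}(x,y) = \big((\partial_{p^*})^*(y),\partial_{p}(x)\big)$, and since $\partial_{p}(x) \in \Ran \partial_{p} \subset \ovl{\Ran \partial_{p}}$ automatically, this amounts to: $x \in \dom \partial_{p}$ with $\partial_{p}(x) \in \dom (\partial_{p^*})^*$, and $y \in \dom (\partial_{p^*})^* \cap \ovl{\Ran \partial_{p}}$ with $(\partial_{p^*})^*(y) \in \dom \partial_{p}$. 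The first two conditions describe exactly $\dom\big((\partial_{p^*})^*\partial_{p}\big)$, which coincides with $\dom A_p$ by \eqref{eq-lien-generateur-partial-Fourier}; the last two describe exactly $\dom\big(\partial_{p}(\partial_{p^*})^*|_{\ovl{\Ran \partial_{p}}}\big)$, which coincides with $\dom\big(\tilde{A}_p|_{\ovl{\Ran \partial_{p}}}\big)$ by \eqref{relation-dur-fund-Fourier}. Hence $\dom D_{p}^2 = \dom A_p \oplus \dom\big(\tilde{A}_p|_{\ovl{\Ran \partial_{p}}}\big)$, which in particular is dense since $A_p$ and $\tilde{A}_p|_{\ovl{\Ran \partial_{p}}}$ are semigroup generators.

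Next, on this common domain, applying $D_{p}$ twice gives
\[
D_{p}^2(x,y)
=D_{p}\big((\partial_{p^*})^*(y),\partial_{p}(x)\big)
=\big((\partial_{p^*})^*\partial_{p}(x),\ \partial_{p}(\partial_{p^*})^*(y)\big),
\]
where the first coordinate depends only on $x$ and the second only on $y$, so the operator is block-diagonal and the off-diagonal entries vanish. Invoking \eqref{eq-lien-generateur-partial-Fourier} for the first coordinate and \eqref{relation-dur-fund-Fourier} for the second yields $D_{p}^2(x,y) = \big(A_p(x),\ \tilde{A}_p|_{\ovl{\Ran \partial_{p}}}(y)\big)$, which is \eqref{D-alpha-carre-egal-Fourier}. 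Closedness of the right-hand side is automatic, being block-diagonal with closed diagonal entries; alternatively it follows from Theorem \ref{Th-D-R-bisectorial-Fourier}, since the square of a bisectorial operator is sectorial, hence closed, by \cite[Proposition 10.6.2]{HvNVW18}.

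I do not anticipate a genuine obstacle: all the analytic content is already contained in Propositions \ref{Prop-lien-generateur-partial-Fourier} and \ref{Prop-fundamental-Fourier}, and what remains is careful bookkeeping of the domain of the square of an unbounded operator against the domains of compositions. The only mild point of attention is that the restriction to $\ovl{\Ran \partial_{p}}$ in the $(2,2)$-slot is consistent, i.e.\ that $\partial_{p}(\partial_{p^*})^*(y)$ lands back in $\ovl{\Ran \partial_{p}}$ — which is immediate, as it lies in $\Ran \partial_{p}$.
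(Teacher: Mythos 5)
Your proposal is correct and follows essentially the same route as the paper: the paper's proof is exactly the one-line identification of the block-diagonal matrix with the square of the off-diagonal matrix via \eqref{eq-lien-generateur-partial-Fourier} and \eqref{relation-dur-fund-Fourier}. Your version merely makes explicit the domain bookkeeping that the paper leaves implicit, which is a faithful (and slightly more careful) rendering of the same argument.
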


\begin{proof}
We have
\begin{equation}
\label{eq-29932}
D^2
\ov{\eqref{Def-Dirac-operator-Fourier}}{=}
\begin{bmatrix} 
0 & \partial^\dagger|_{\ovl{\Ran \partial}} \\ 
\partial& 0 
\end{bmatrix}^2 
=
\begin{bmatrix} 
\partial^\dagger\partial & 0 \\ 
0 & \partial \partial^\dagger|_{\ovl{\Ran \partial}}
\end{bmatrix} 
\ov{\eqref{facto-with-diamond}}{=}
\begin{bmatrix} 
A & 0 \\ 
0 & \partial\partial^\dagger|_{\ovl{\Ran\partial}}
\end{bmatrix}.  
\end{equation}
By Proposition~\ref{Prop-fundamental-Fourier} in the case of the regularization property, and by Proposition~\ref{prop-cores} in the case of regularizing nets, the operator
$\partial\partial^\dagger|_{\ovl{\Ran\partial}}$ is closable and its closure is $\tilde{A}|_{\ovl{\Ran\partial}}$. Since $A$ is closed, the operator $D^2$ is closable and
we have $
\ovl{D^2}
\ov{\eqref{eq-29932}}{=}\begin{bmatrix}
A & 0\\
0 & \ovl{\partial\partial^\dagger|_{\ovl{\Ran\partial}}}
\end{bmatrix}
=\begin{bmatrix}
A & 0\\
0 & \tilde{A}|_{\ovl{\Ran\partial}}
\end{bmatrix}$.
\end{proof}

\begin{lemma}
\label{lem-resolvent-invariant}
Let $Z$ be a closed subspace of $Y$ such that $\tilde{T}_t(Z) \subset Z$ for any $t \geq 0$. Then $R(-s,\tilde A)(Z) \subset Z$ for all $s > 0$.
\end{lemma}

\begin{proof}
Fix $s > 0$ and $y \in Z$. By the Laplace formula for the resolvent, we have $R(-s,\tilde A)y \ov{\eqref{Resolvent-Laplace}}{=} -\int_0^\infty \e^{-st} \tilde T_t y \d t$. Since $\tilde T_t y \in Z$ for any $t \geq 0$ and since the subspace $Z$ is closed, the integral belongs to $Z$.
\end{proof}

Immediately after Lemma \ref{Lemma-rest-closed-dense-Fourier-bis}, we observed that $\tilde{T}_t$  leaves the space $\ovl{\Ran\partial}$ invariant. So, we can use Lemma \ref{lem-resolvent-invariant} with $Z=\ovl{\Ran\partial}$. With the argument following \cite[Proposition 10.2.18 p.~386]{HvNVW18}, we obtain the invariance of $\ovl{\Ran\partial}$ under the resolvent $R(\lambda,\tilde{A})$ for any $\lambda$ in the connected component of $\rho(\tilde A)$ containing $(-\infty,0)$ and we have $R(\lambda,\tilde{A}|_{\ovl{\Ran\partial}})
=
R(\lambda,\tilde A)|_{\ovl{\Ran\partial}}$ for such $\lambda$.

\begin{thm}
\label{Th-D-R-bisectorial-Fourier}
Assume that the Banach spaces $X$ and $Y$ are $\UMD$. We suppose that the semigroup $(T_t)_{t \geq 0}$ satisfies the condition $\Curv_{\partial,\H^\infty}(0)$. We assume that the operator $A$ satisfies the $\partial$-Riesz equivalence \eqref{estim-Riesz-bis} and that the operator $A^*$ satisfies the $(\partial^\dagger)^*$-Riesz equivalence \eqref{estim-Riesz-bis}. We assume the regularization property (Definition \ref{def-regularization-coreC} or Definition \ref{Def-regul}). The Hodge--Dirac operator $D$ is $R$-bisectorial on the Banach space $X \oplus_2 \ovl{\Ran \partial}$. 
\end{thm}

\begin{proof}
We will start by showing that the set $\i\mathbb{R}^*$ is contained in the resolvent set $\rho(D)$ of the Hodge--Dirac operator $D$. We will do this by proving that for any $t \in \mathbb{R}^*$ the operator $\i t\Id-D$ has a two-sided bounded inverse $R(\i t,D)$ given by
\begin{equation}
\label{Resolvent-Fourier}
\begin{bmatrix} 
\i t R(-t^2,A)               & R(-t^2,A)\partial^\dagger|_{\ovl{\Ran \partial}} \\ 
\partial R(-t^2,A) & \i t R(-t^2,\tilde{A}|_{\ovl{\Ran \partial}})
\end{bmatrix}
\co X \oplus_2 \ovl{\Ran \partial} \to X \oplus_2 \ovl{\Ran \partial}.
\end{equation}
Note that the operators $A$ and $\tilde{A}$ admit a bounded $\H^\infty(\Sigma_\theta)$ functional calculus for some $\theta \in (0, \frac{\pi}{2})$. By \cite[Proposition 10.2.18 p.~386]{HvNVW18}, this implies that $\tilde{A}|_{\ovl{\Ran \partial}}$ also admits a bounded $\H^\infty(\Sigma_\theta)$ functional calculus. So these operators are $R$-sectorial with $\omega_R(A) < \frac{\pi}{2}$ and $\omega_R(\tilde{A}) < \frac{\pi}{2}$ by \cite[Theorem 10.3.4 (2) p.~402]{HvNVW18} since $X$ and $Y$ have the triangular contraction property. Consequently, the subsets $\big\{z R(z,A) : z \not\in \ovl{\Sigma_\theta}\big\}$ and $\big\{z R(z,\tilde{A}) : z \not\in \ovl{\Sigma_\theta}\big\}$ are $R$-bounded (hence bounded) for any suitable $\theta > 0$. So the diagonal entries of \eqref{Resolvent-Fourier} are bounded. By Proposition \ref{Prop-R-gradient-bounds-Fourier}, the other entries are bounded. 

It only remains to check that this matrix defines a two-sided inverse of $\i t\Id- D$. On the subspace $C \oplus \partial(C)$, 
we have the following equalities of operators
\begin{align*}
\MoveEqLeft
\begin{bmatrix} 
\i t R(-t^2,A)               & R(-t^2,A)\partial^\dagger|_{\ovl{\Ran \partial}} \\ 
\partial R(-t^2,A) & \i t R(-t^2,\tilde{A}|_{\ovl{\Ran \partial}})
\end{bmatrix} (\i t\Id-D) \\
&\ov{\eqref{Def-Dirac-operator-Fourier}}{=}
\begin{bmatrix} 
\i t R(-t^2,A)               & R(-t^2,A)\partial^\dagger|_{\ovl{\Ran \partial}} \\ 
\partial R(-t^2,A) & \i t R(-t^2,\tilde{A}|_{\ovl{\Ran \partial}})
\end{bmatrix}
\begin{bmatrix} 
\i t\Id_{X} & -\partial^\dagger|_{\ovl{\Ran \partial}} \\ 
-\partial  & \i t\Id_{\ovl{\Ran \partial}}
\end{bmatrix} \\
&=\begin{bmatrix} 
-t^2 R(-t^2,A)-R(-t^2,A)\partial^\dagger\partial &-\i t R(-t^2,A)\partial^\dagger|_{\ovl{\Ran \partial}}+\i t R(-t^2,A)\partial^\dagger|_{\ovl{\Ran \partial}} \\
\i t\partial R(-t^2,A)-\i t R(-t^2,\tilde{A})\partial &-\partial R(-t^2,A)\partial^\dagger|_{\ovl{\Ran \partial}}-t^2 R(-t^2,\tilde{A}|_{\ovl{\Ran \partial}})
\end{bmatrix} \\
&\ov{\eqref{facto-with-diamond} \eqref{commuting-deriv-Fourier-bis} \eqref{eq-commute-partialdiamond-general}}{=}
\begin{bmatrix} 
-t^2 R(-t^2,A)-R(-t^2,A)A & 0 \\ 
\i t\partial R(-t^2,A)-\i t\partial R(-t^2,A)  
&(-t^2-\partial \partial^\dagger) R(-t^2,\tilde{A}|_{\ovl{\Ran \partial}})
\end{bmatrix} \\
&\ov{\eqref{relation-dur-fund-Fourier}}{=}  
\begin{bmatrix} 
\Id_{X} & 0 \\ 
0 & \Id_{\ovl{\Ran \partial}}
\end{bmatrix}.
\end{align*} 
Note that $C$ is a core of $\partial$ by Lemma \ref{lem-density-range-partial-domA}. Moreover, $\partial(C)$ is a core of $\partial^\dagger|_{\ovl{\Ran\partial}}$. In the case of the regularization property this follows from Proposition~\ref{Prop-fundamental-Fourier}, and in the case of regularizing nets it follows from Proposition~\ref{prop-cores}. Therefore the subspace $C\oplus\partial(C)$ is a core of the operator $D$.
So this identity extends to $\dom D$.

Moreover, on $X \oplus_2 \ovl{\Ran \partial}$, using $\dom A \subset \dom A^{\frac{1}{2}}=\dom \partial$ and Proposition \ref{prop-commuting-deriv2-correct} combined with \eqref{commuting-deriv-Fourier-bis}, we see that
\begin{align*}
\MoveEqLeft
(\i t\Id-D) 
\begin{bmatrix} 
\i t R(-t^2,A)               & R(-t^2,A)\partial^\dagger|_{\ovl{\Ran \partial}} \\ 
\partial R(-t^2,A) & \i t R(-t^2,\tilde{A}|_{\ovl{\Ran \partial}})
\end{bmatrix} \\
&\ov{\eqref{Def-Dirac-operator-Fourier}}{=} \begin{bmatrix} 
\i t\Id_{X} & -\partial^\dagger|_{\ovl{\Ran \partial}} \\ 
-\partial  & \i t\Id_{\ovl{\Ran \partial}}
\end{bmatrix}
\begin{bmatrix} 
\i t R(-t^2,A)               & R(-t^2,A)\partial^\dagger|_{\ovl{\Ran \partial}} \\ 
\partial R(-t^2,A) & \i t R(-t^2,\tilde{A}|_{\ovl{\Ran \partial}})
\end{bmatrix}
\\
&= \begin{bmatrix} 
-t^2 R(-t^2,A) - \partial^\dagger\partial R(-t^2,A) & \i t R(-t^2,A)\partial^\dagger|_{\ovl{\Ran \partial}}-\i t\partial^\dagger|_{\ovl{\Ran \partial}} R(-t^2,\tilde{A}|_{\ovl{\Ran \partial}}) \\ 
-\i t\partial R(-t^2,A)+\i t\partial R(-t^2,A) & -\partial R(-t^2,A)\partial^\dagger|_{\ovl{\Ran \partial}}- t^2 R(-t^2,\tilde{A}|_{\ovl{\Ran \partial}})
\end{bmatrix} \\
&\ov{\eqref{facto-with-diamond} \eqref{commuting-deriv-Fourier-bis} \eqref{eq-commute-partialdiamond-general}}{=}
\begin{bmatrix} 
-t^2 R(-t^2,A) - A R(-t^2,A) & \i t \partial^\dagger R(-t^2,\tilde{A}|_{\ovl{\Ran \partial}})-\i t\partial^\dagger R(-t^2,\tilde{A}|_{\ovl{\Ran \partial}}) \\ 
0 & -\partial \partial^\dagger R(-t^2,\tilde{A}|_{\ovl{\Ran \partial}})- t^2 R(-t^2,\tilde{A}|_{\ovl{\Ran \partial}})
\end{bmatrix} \\
&=\begin{bmatrix} 
\Id_{X} & 0 \\ 
0 & \Id_{\ovl{\Ran \partial}}
\end{bmatrix}.
\end{align*} 
It remains to show that the set $\{\i t R(\i t, D) : t > 0 \}$ of operators is $R$-bounded. For any $t>0$, note that
$$
\i t R(\i t,D)
=\i t\begin{bmatrix} 
\i t R(-t^2,A)               & R(-t^2,A)\partial^\dagger|_{\ovl{\Ran \partial}} \\ 
\partial R(-t^2,A) & \i t R(-t^2,\tilde{A}|_{\ovl{\Ran \partial}})
\end{bmatrix}
=\begin{bmatrix} 
- t^2 R(-t^2,A)               & \i t R(-t^2,A)\partial^\dagger|_{\ovl{\Ran \partial}} \\ 
\i t\partial R(-t^2,A) & - t^2 R(-t^2,\tilde{A}|_{\ovl{\Ran \partial}})
\end{bmatrix}.
$$
Now, observe that the diagonal entries are $R$-bounded by the $R$-sectoriality of $A$ and $\tilde{A}$. The $R$-boundedness of the other entries follows from the $R$-gradient bounds of Proposition \ref{Prop-R-gradient-bounds-Fourier} applied to $(A,\partial)$ and to $(A^*,(\partial^\dagger)^*)$, combined with \cite[Problem 5.26 p.~168]{Kat76}. Note that a set of operator matrices is $R$-bounded precisely when each entry is $R$-bounded. This follows from stability of $R$-boundedness under sums and compositions and the bounded coordinate projections on $X \oplus_2 Y$. We conclude that the operator $D$ is $R$-bisectorial.
\end{proof}

\begin{thm}
\label{Th-functional-calculus-bisector-Fourier}
Assume that the Banach spaces $X$ and $Y$ are $\UMD$. We suppose that the semigroup $(T_t)_{t \geq 0}$ satisfies the condition $\Curv_{\partial,\H^\infty}(0)$. We assume that the operator $A$ satisfies the $\partial$-Riesz equivalence \eqref{estim-Riesz-bis} and that the operator $A^*$ satisfies the $(\partial^\dagger)^*$-Riesz equivalence \eqref{estim-Riesz-bis}. We assume the regularization property (Definition \ref{def-regularization-coreC} or  Definition \ref{Def-regul}). The Hodge--Dirac operator $D$  admits a bounded $\mathrm{H}^\infty(\Sigma^\bi_\omega)$ functional calculus for some angle $\omega \in (0,\frac{\pi}{2})$. 
\end{thm}

\begin{proof}
Since the operators $A$ and $\tilde{A}$ admit a bounded $\H^\infty(\Sigma_\theta)$ functional calculus of angle $\theta \in (0, \frac{\pi}{2})$, the operator 
$$
\ovl{D^2} 
\ov{\eqref{D-alpha-carre-egal-Fourier}}{=}\begin{bmatrix} 
A & 0 \\ 
0 & \tilde{A}|_{\ovl{\Ran \partial}}
\end{bmatrix}
$$ 
also admits a bounded $\H^\infty(\Sigma_\theta)$ functional calculus of angle $\theta \in (0, \frac{\pi}{2})$. Since the operator $D$ is $R$-bisectorial by Theorem \ref{Th-D-R-bisectorial-Fourier}, we deduce by \cite[Theorem 10.6.7 p.~450]{HvNVW18} that the unbounded operator $D^2$ is $R$-sectorial, hence closed, since any operator with non-empty resolvent set is closed \cite[p.~360]{HvNVW18}. This implies that $\ovl{D^2}=D^2$. The result \cite[Theorem 10.6.7 p.~450]{HvNVW18} also implies that $D$ admits a bounded $\H^\infty(\Sigma^\bi_\omega)$ functional calculus for $\omega=\frac{\theta}{2}$.
\end{proof}

\begin{remark} \normalfont
\label{remark-sgn}
The bounded $\H^\infty$ functional calculus for $D$ yields the Riesz equivalence, and can therefore be regarded as a strengthening of the equivalence \eqref{estim-Riesz-bis}. To see this, let $\sgn \in \H^\infty(\Sigma_\omega^\bi)$ be defined by $
\sgn(z)
\ov{\mathrm{def}}{=}
1_{\Sigma_\omega^+}(z)-1_{\Sigma_\omega^-}(z)$, where $z \in \Sigma_\omega^\bi$. Assume that $D$ admits a bounded $\H^\infty(\Sigma_\omega^\bi)$ functional calculus on $X \oplus_2 \ovl{\Ran \partial}$. By Example \ref{ex-signe}, the operator $\sgn(D) \co X \oplus_2 \ovl{\Ran\partial} \to X \oplus_2 \ovl{\Ran\partial}$ is bounded. Moreover, we have
\begin{equation}
\label{lien-sign-abs-Fourier-bis}
|D|=\sgn(D)D
\quad \text{and} \quad
D=\sgn(D)|D|.
\end{equation}
We have $
\ovl{\Ran D}
=
\ovl{\Ran\big(\partial^\dagger|_{\ovl{\Ran\partial}}\big)}^{X}
\oplus_2
\ovl{\Ran\partial}
=
\ovl{\Ran A}\oplus_2\ovl{\Ran\partial}$, 
where the last equality, left to the reader, follows from the identity $A=\partial^\dagger\partial$ and from the fact that $\partial(C)$ is a core of $\partial^\dagger|_{\ovl{\Ran\partial}}$. Thus, for every $\xi$ belonging to the subspace $\dom D=\dom |D|$, we have
\[
\bnorm{D\xi}_{X \oplus_2 \ovl{\Ran \partial}}
\ov{\eqref{lien-sign-abs-Fourier-bis}}{=} 
\bnorm{\sgn(D)|D|\xi}_{X \oplus_2 \ovl{\Ran \partial}}
\lesssim
\bnorm{|D|\xi}_{X \oplus_2 \ovl{\Ran \partial}},
\]
and conversely
\[
\bnorm{|D|\xi}_{X \oplus_2 \ovl{\Ran \partial}}
\ov{\eqref{lien-sign-abs-Fourier-bis}}{=} 
\bnorm{\sgn(D)D\xi}_{X \oplus_2 \ovl{\Ran \partial}}
\lesssim
\bnorm{D\xi}_{X \oplus_2 \ovl{\Ran \partial}}.
\]
On the space $X \oplus_2 \ovl{\Ran \partial}$, we also have
$|D|
=(D^2)^{\frac{1}{2}}
\ov{\eqref{D-alpha-carre-egal-Fourier}}{=}
\begin{bmatrix}
A^{\frac{1}{2}} & 0 \\
0 & (\tilde{A}|_{\ovl{\Ran \partial}})^{\frac{1}{2}}
\end{bmatrix}$, 
where we use \cite[Theorem 3.2.20]{Ege15} in the first equality. Using 
$X
\ov{\eqref{decompo-reflexive}}{=} \ker A \oplus \ovl{\Ran A}$ and applying the preceding estimates to vectors of the form $(x,0)$ with $x \in \dom A^{\frac12} \cap \ovl{\Ran A}$ gives the desired Riesz equivalence, since the operators $A^{\frac12}$ and $\partial$ vanish on $\ker A$.
\end{remark}

\subsection{Boundedness of the functional calculus on the full space $X \oplus Y$}
\label{sec-functional-full}

The goal of this section is to extend the bisectorial functional calculus obtained on the reduced space $X \oplus_2 \ovl{\Ran \partial}$ to the full space $X \oplus_2 Y$ under some precise assumptions. Recall that we have two densely defined closed operators $
\partial \co \dom\partial \subset X \to Y$ and $\partial^\dagger \co \dom\partial^\dagger \subset Y \to X$. We can introduce the block operator
\begin{equation}
\label{def-scrD}
\scr{D}
\ov{\mathrm{def}}{=}
\begin{bmatrix}
0 & \partial^\dagger \\
\partial & 0
\end{bmatrix}
\end{equation}
defined on the Banach space $X \oplus_2 Y$ with domain $\dom \partial \oplus_2 \dom \partial^\dagger$. Now, we make the following assumption.

\begin{ass}
\label{ass-projection-reduction}
There exists a bounded projection $Q \co Y\to Y$ such that
\begin{enumerate}
\item $\Ran Q=\ovl{\Ran\partial}$,
\item $\ker Q \subset \dom\partial^\dagger$ and $\partial^\dagger|_{\ker Q}=0$ (in other words $\ker Q \subset \ker \partial^\dagger$).
\end{enumerate}
\end{ass}

This projection $Q \co Y \to Y$ onto the subspace $\ovl{\Ran \partial}$ yields a topological decomposition 
\begin{equation}
\label{decompo-6567}
Y
=\ovl{\Ran \partial} \oplus \ker Q
\end{equation}
and allows us to split the previous block operator $\scr{D}$ into the direct sum of the reduced Hodge--Dirac operator $D$ on $X \oplus_2 \ovl{\Ran \partial}$ and a trivial part on $\ker Q$. The bisectoriality and the bounded $\H^\infty$ functional calculus on $X \oplus_2 Y$ follow by block-diagonal reduction. Recall that the operator $D$ is defined in \eqref{Def-Dirac-operator-Fourier}.

\begin{thm}
\label{thm-full-bisectorial}
Assume Assumption \ref{ass-projection-reduction}. If the operator $D$ is bisectorial and admits a bounded $\H^\infty(\Sigma^\bi_\omega)$ functional calculus on $X\oplus_2 \ovl{\Ran\partial}$ for some angle $\omega \in (0,\frac{\pi}{2})$, then the operator $\scr{D}$ is bisectorial and admits a bounded $\H^\infty(\Sigma^\bi_\omega)$ functional calculus on $X\oplus_2 Y$.
\end{thm}

\begin{proof}
Let $y \in \dom \partial^\dagger$. We can write $y=Qy+(\Id-Q)y$, where $Qy \in \ovl{\Ran\partial}$ and $(\Id-Q)y \in \ker Q$. By the second point of Assumption \ref{ass-projection-reduction}, the element $(\Id-Q)y$ belongs to the subspace $\dom \partial^\dagger$ and $\partial^\dagger((\Id-Q)y)=0$. Hence by linearity $Qy=y-(\Id-Q)y$ belongs to the subspace $\dom \partial^\dagger$. We conclude that $Qy \in \dom\partial^\dagger\cap\ovl{\Ran\partial}$.

Conversely, if $u \in \dom\partial^\dagger\cap\ovl{\Ran\partial}$ and $v \in \ker Q$, then by Assumption \ref{ass-projection-reduction} we have $v \in \dom\partial^\dagger$ and $\partial^\dagger v=0$. Hence $u+v \in \dom\partial^\dagger$ and $\partial^\dagger(u+v)=\partial^\dagger u$. Therefore, under the decomposition $Y \ov{\eqref{decompo-6567}}{=} \ovl{\Ran\partial} \oplus \ker Q$, we have
\[
\dom\partial^\dagger
=
(\dom\partial^\dagger\cap\ovl{\Ran\partial}) \oplus \ker Q.
\]
Now, consider some $(x,y) \in \dom\scr{D}$. By definition, we have $x \in \dom \partial$ and $y \in \dom \partial^\dagger$. We obtain
\[
\scr{D}(x,y)
\ov{\eqref{def-scrD}}{=} \big(\partial^\dagger y,\partial x\big)
=\big(\partial^\dagger\big(Q y+(\Id-Q)y\big),\partial x\big)
=\big(\partial^\dagger(Q y),\partial x\big),
\]
which identifies to the element $\begin{bmatrix} 
0 & \partial^\dagger|_{\ovl{\Ran\partial}} & 0 \\ 
\partial & 0 & 0 \\ 
0 & 0 & 0 
\end{bmatrix}\begin{bmatrix}
    x    \\
    Q(y)    \\
	(\Id-Q)(y)	 \\
\end{bmatrix}$ in the space $X \oplus_2 Y
\ov{\eqref{decompo-6567}}{=} X \oplus_2 \ovl{\Ran\partial} \oplus_2 \ker Q$. Moreover, the element $\partial x$ obviously belongs to the subspace $\ovl{\Ran\partial}=\Ran Q$. Therefore, with respect to the decomposition $
X \oplus_2 \ovl{\Ran\partial} \oplus_2 \ker Q$, 
the operator $\scr{D}$ takes the block-diagonal form
\[
\scr{D}
=
\begin{bmatrix} 
0 & \partial^\dagger|_{\ovl{\Ran\partial}} & 0 \\ 
\partial & 0 & 0 \\ 
0 & 0 & 0 
\end{bmatrix}
\ov{\eqref{Def-Dirac-operator-Fourier}}{=}
\begin{bmatrix}
D & 0 \\
0 & 0
\end{bmatrix}.
\]
Hence $\scr{D}$ is the direct sum of $D$ and the zero operator on $\ker Q$. Consequently, bisectoriality and the boundedness of the $\H^\infty$ functional calculus transfers from $D$ to $\scr{D}$.
\end{proof}

\subsection{A bounded projection on the closed subspace $\ovl{\Ran\partial}$}
\label{sec-projection-RSstar}

Assumption \ref{ass-projection-reduction} isolates the only additional ingredient needed to pass from the reduced operator $D$ on $X \oplus_2 \ovl{\Ran\partial}$ to the full block operator $\scr{D}$ on $X\oplus_2 Y$, namely a bounded projection $Q$ onto $\ovl{\Ran\partial}$ which is compatible with $\partial^\dagger$ in the sense that $\partial^\dagger$ vanishes on the complementary subspace $\ker Q$. This is reminiscent of Hodge-type splittings, where one expects a topological decomposition of $Y$ into a ``range part" and a ``co-closed part" contained in $\ker\partial^\dagger$.

In this section, we provide a canonical candidate for such a projection. This bounded projection $Q \co Y \to Y$ always satisfies $
\Ran Q
=\ovl{\Ran\partial}$, 
while the annihilation property on $\ker Q$ may require an additional argument. When such a compatibility holds (by a separate argument in concrete situations or Proposition \ref{prop-kernelQ-kernelpartialdagger}), one may use Theorem \ref{thm-full-bisectorial}.

We assume that the Banach spaces $X$ and $Y$ are reflexive. Recall that $A$ is a sectorial operator on $X$. We suppose that the densely defined operator $\partial \co \dom \partial \subset X \to Y$ is closed and that $A$ admits the $\partial$-Riesz equivalence \eqref{estim-Riesz-bis} and that $A^*$ admits the $(\partial^\dagger)^*$-Riesz equivalence \eqref{estim-Riesz-bis}. By Proposition \ref{Prop-derivation-closable-sgrp-bis}, we have $\dom \ovl{\partial}=\dom A^{\frac{1}{2}}$. Since $\partial$ is closed, $\ovl{\partial}=\partial$, hence $\dom\partial=\dom A^{\frac{1}{2}}$.  Recall that $A=\partial^\dagger\partial$ in the sense of \eqref{facto-with-diamond} and that the operator $\partial^\dagger \co \dom \partial^\dagger \subset Y \to X$ has dense domain and is closed. It is well-known \cite[Proposition 3.8.2 p.~165]{ABHN11} that the subspace $\dom A$ is a core of the closed operator $A^{\frac12}$.

\paragraph{Riesz transform from $X$ into $Y$} We define the Riesz transform $R$ as follows. First, set
\begin{equation}
\label{first-Riesz}
R_0 \ov{\mathrm{def}}{=} \partial A^{-\frac12}\co \Ran A^{\frac12} \to Y,
\qquad
R_0(A^{\frac12}x) 
\ov{\mathrm{def}}{=} \partial x, \quad x \in \dom\partial.
\end{equation}
The $\partial$-Riesz equivalence implies that $R_0$ is well-defined and bounded on the subspace $\Ran A^{\frac12}$. Hence it extends uniquely to a bounded operator
\begin{equation}
\label{def-Riesz-34}
R\co \ovl{\Ran A} \ov{\eqref{inclusion-range}}{=} \ovl{\Ran A^{\frac12}} \to Y.
\end{equation}
We extend $R$ to all of $X$ by setting $R|_{\ker A}=0$ along the topological decomposition $X \ov{\eqref{decompo-reflexive}}{=} \ker A \oplus \ovl{\Ran A}$. Note that $\Ran R\subset \ovl{\Ran\partial}$.

\paragraph{A dual Riesz transform from $X^*$ into $Y^*$} Since the subspace $\dom \partial^\dagger$ is dense in $Y$, we can consider the Banach adjoint 
$
(\partial^\dagger)^* \co \dom (\partial^\dagger)^* \subset X^* \to Y^*$. We assume that the operator $A^*$ admits the $(\partial^\dagger)^*$-Riesz equivalence \eqref{estim-Riesz-bis}. By Proposition \ref{Prop-derivation-closable-sgrp-bis}, this implies that $\dom (\partial^\dagger)^* = \dom (A^*)^{\frac12}$ and that the operator 
\begin{equation}
\label{def-S0}
S_0 \ov{\mathrm{def}}{=} (\partial^\dagger)^*(A^*)^{-\frac{1}{2}}
\co \Ran (A^*)^{\frac12} \to Y^*,
\qquad
S_0\big((A^*)^{\frac12} z\big) 
\ov{\mathrm{def}}{=} (\partial^\dagger)^*(z), \quad z \in \dom (\partial^\dagger)^*
\end{equation}
 extends uniquely to a bounded operator
\begin{equation*}
S \co \ovl{\Ran A^*} \ov{\eqref{inclusion-range}}{=} \ovl{\Ran (A^*)^{\frac12}} \to Y^*.
\end{equation*}
We extend $S$ to all of $X^*$ by setting $S|_{\ker A^*} = 0$ along the topological decomposition $X^* \ov{\eqref{decompo-reflexive}}{=} \ker A^* \oplus \ovl{\Ran A^*}$.

\paragraph{A bounded projection onto $\ovl{\Ran\partial}$}
Since $Y$ is reflexive, we identify $Y$ with $Y^{**}$. The adjoint operator $S^* \co Y^{**} \to X^{**}$ is therefore viewed as a bounded operator $S^* \co Y \to X$. We define the bounded operator
\begin{equation}
\label{eq-def-projection-P}
Q 
\ov{\mathrm{def}}{=} R S^* \co Y \to Y.
\end{equation}

\begin{prop}
\label{prop-P-projection}
The operator $Q$ is a bounded projection onto the subspace $\ovl{\Ran\partial}$. In particular, we have the topological decomposition $Y=\ovl{\Ran\partial}\oplus \ker Q$. Moreover, we have the inclusion $\ker\partial^\dagger \subset \ker Q$.
\end{prop}

\begin{proof}
Fix $x \in \dom A$ and note that the element $\partial x$ belongs to $Y$. Let $u \in \Ran(A^*)^{\frac12}$. We can write $u=(A^*)^{\frac{1}{2}}z$ for some $z \in \dom(A^*)^{\frac{1}{2}}$. Since $S$ extends $S_0$, we have  
\begin{equation}
\label{inter-ERTY}
S(u)
=S_0(u)
=S\big((A^*)^{\frac{1}{2}}z\big)
\ov{\eqref{def-S0}}{=} (\partial^\dagger)^*(z)
= (\partial^\dagger)^*(A^*)^{-\frac{1}{2}}u.
\end{equation}
We deduce that
\begin{align*}
\MoveEqLeft
\big\la S^*(\partial x),u \big\ra_{X,X^*}
=\big\la \partial x, S(u)\big\ra_{Y,Y^*}
\ov{\eqref{inter-ERTY}}{=} \big\la \partial x, (\partial^\dagger)^*(A^*)^{-\frac{1}{2}}u\big\ra_{Y,Y^*} \\
&\ov{\eqref{crochet-duality}}{=} \big\la \partial^\dagger\partial x,(A^*)^{-\frac{1}{2}}u\big\ra_{X,X^*}
\ov{\eqref{facto-with-diamond}}{=} \big\la Ax,(A^*)^{-\frac{1}{2}}u \big\ra_{X,X^*} 
=\big\la A^{\frac12}x,u\big\ra_{X,X^*}.
\end{align*}
We have shown that $
\big\la S^*(\partial x)-A^{\frac12}x, u \big\ra_{X,X^*}
=0$ for any $u \in \Ran(A^*)^{\frac12}$. Since the subspace $\Ran(A^*)^{\frac12}$ is dense in the Banach space $\ovl{\Ran (A^*)^{\frac12}} \ov{\eqref{inclusion-range}}{=} \ovl{\Ran A^*}$ we infer that
\[
\big\la S^*(\partial x)-A^{\frac12}x, u \big\ra_{X,X^*}
=0,
\qquad u \in \ovl{\Ran A^*}.
\]
Moreover, for any $x \in \dom A$ and any $w \in \ker A^*$, we have using the standard identity $
(A^{\frac{1}{2}})^*=(A^*)^{\frac{1}{2}}$ provided by \cite[Corollary 5.2.4 p.~116]{MCSA01}
\[
\big\la A^{\frac12}x,w\big\ra_{X,X^*}
\ov{\eqref{crochet-duality}}{=} \big\la x,(A^*)^{\frac12}w \big\ra_{X,X^*}
=0.
\]
On the other hand, since $w \in \ker A^*$, we also have by definition of the operator $S$
\[
\big\la S^*(\partial x),w \big\ra_{X,X^*}
\ov{\eqref{crochet-duality}}{=} \big\la \partial x, S(w) \big\ra_{Y,Y^*}
=0.
\]
Since $A$ is densely defined and sectorial, its Banach adjoint $A^*$ is sectorial by \cite[p.~21]{Haa06}. Consequently, we have by \eqref{decompo-reflexive} the topological decomposition $X^* = \ovl{\Ran(A^*)} \oplus \ker A^*$. We conclude that
\begin{equation}
\label{eq:Sstar-partialx}
S^*(\partial x)
=A^{\frac12}x,\quad x \in \dom A.
\end{equation}

Let $x \in \dom A$. Using \eqref{eq:Sstar-partialx} and the defining property \eqref{first-Riesz} of $R$ on the subspace $\Ran(A)^{\frac{1}{2}}$, we obtain the equality
\[
Q(\partial x)
\ov{\eqref{eq-def-projection-P}}{=} RS^*(\partial x)
\ov{\eqref{eq:Sstar-partialx}}{=} R(A^{\frac12}x)
=\partial x.
\]
Thus $Q$ is the identity on the subspace $\partial(\dom A)$. By Lemma \ref{lem-density-range-partial-domA}, the subspace $\partial(\dom A)$ is dense in the Banach space $\ovl{\Ran\partial}$. Since $Q$ is bounded, it follows that
\begin{equation}
\label{eq:Q-Id-range}
Q(y)
=y,\quad y\in \ovl{\Ran\partial}.
\end{equation}
In particular, $\ovl{\Ran\partial}\subset \Ran Q$. Conversely, by construction $R$ takes values in $\ovl{\Ran\partial}$, hence
\[
\Ran Q
\ov{\eqref{eq-def-projection-P}}{=} \Ran(RS^*)\subset \Ran R \subset \ovl{\Ran\partial}.
\]
Therefore $\Ran Q = \ovl{\Ran\partial}$. Finally, since $Q$ is the identity on its range by \eqref{eq:Q-Id-range}, we have $Q^2 = Q$.

Let $y \in \ker\partial^\dagger$. Fix $u \in \Ran(A^*)^{\frac{1}{2}}$. We may write $u=(A^*)^{\frac{1}{2}}z$ for some element $z$ in $\dom(A^*)^{\frac{1}{2}} = \dom (\partial^\dagger)^*$. We have $S(u) \ov{\eqref{def-S0}}{=} (\partial^\dagger)^*z$. Hence
\[
\big\la S^*(y),u \big\ra_{X,X^*}
=\big\la y,S(u) \big\ra_{Y,Y^*}
=\big\la y,(\partial^\dagger)^*z \big\ra_{Y,Y^*}
=\la \partial^\dagger y,z \ra_{X,X^*}
=0.
\]
Thus $S^*(y)$ annihilates $\Ran(A^*)^{\frac{1}{2}}$. Since $(A^{\frac{1}{2}})^*=(A^*)^{\frac{1}{2}}$ by \cite[Corollary 5.2.4 p.~116]{MCSA01}, we have $\big(\Ran (A^*)^{\frac{1}{2}}\big)^\perp 
=\big(\Ran (A^{\frac{1}{2}})^*\big)^\perp \ov{\eqref{lien-ker-image}}{=} \ker A^{\frac{1}{2}} \ov{\eqref{inclusion-range}}{=} \ker A$. We conclude that the element $S^*(y)$ belongs to $\ker A$. But $R$ vanishes on the subspace $\ker A$ by definition, so $Q(y) \ov{\eqref{eq-def-projection-P}}{=} RS^*(y) = 0$. We conclude that $\ker\partial^\dagger \subset \ker Q$.
\end{proof}

Now, we describe a sufficient condition for the reverse inclusion.

\begin{prop}
\label{prop-kernelQ-kernelpartialdagger}
Assume that
\begin{equation}
\label{eq-hodge-splitting-Y}
Y
=\ovl{\Ran\partial}+\ker\partial^\dagger.
\end{equation}
Then $
\ker Q=\ker\partial^\dagger$. In particular, $\ker Q\subset \dom\partial^\dagger$ and $\partial^\dagger|_{\ker Q}=0$. So Assumption \ref{ass-projection-reduction} holds.
\end{prop}

\begin{proof}
According to Proposition \ref{prop-P-projection} we have $\Ran Q=\ovl{\Ran\partial}$ and $\ker\partial^\dagger\subset \ker Q$. It remains to show that $\ker Q\subset \ker\partial^\dagger$. Let $y \in \ker Q$. By \eqref{eq-hodge-splitting-Y}, we may write $
y=y_1+y_2$ with $y_1 \in \ovl{\Ran\partial}$ and $y_2 \in \ker\partial^\dagger$. Since $Q$ is the identity on its range $\ovl{\Ran\partial}$ and since $y_2$ belongs to $\ker\partial^\dagger \subset \ker Q$, we obtain $
0
=Qy
=Qy_1+Qy_2
=y_1+0$. Hence $y_1=0$ and therefore $y=y_2 \in \ker\partial^\dagger$. This shows $\ker Q\subset \ker\partial^\dagger$.
\end{proof}

\section{Complex geometry and analysis}
\label{Back-complex}

\subsection{{Background and notations}}
\label{Back-complex-sub}

\paragraph{$\L^p$-spaces of sections} We briefly recall the definition of $\L^p$-spaces of sections on a manifold. Let $E$ be a smooth Hermitian vector bundle of finite rank over a smooth compact manifold $M$ equipped with a Borel measure $\mu$. Suppose that $1 \leq p \leq \infty$. For a measurable section $f \co M \to E$, following \cite[p.~481]{Nic21} and \cite[Chapter I]{Gun17} we define
\begin{equation}
\label{Lp-norm-vector-bundle}
\norm{f}_{\L^p(M,E)} 
\ov{\mathrm{def}}{=} 
\begin{cases}
\left( \int_M \norm{f(x)}_{E_x}^p \d\mu(x) \right)^{\frac{1}{p}}, & 1 \leq p < \infty\\
\esssup_{x \in M} \norm{f(x)}_{E_x}, & p=\infty
\end{cases},
\end{equation}
and we denote by $\L^p(M,E)$ the corresponding Banach space of measurable sections modulo equality $\mu$-almost everywhere. 

\paragraph{Complex geometry}
Let $E$ be a holomorphic vector bundle over a compact complex manifold $M$ of complex dimension $n$. For any integers $r,q \in \{0,\ldots,n\}$, we denote by $\Omega^{r,q}(M,E)$ the space of smooth differential forms of bidegree $(r,q)$ on the manifold $M$ with values in $E$ and we let $\Omega^{\bullet,\bullet}(M,E) \ov{\mathrm{def}}{=} \oplus_{0 \leq r,q \leq n} \Omega^{r,q}(M,E)$.

Let $E$ be a Hermitian holomorphic vector bundle over a compact Hermitian manifold $M$ of complex dimension $n$. For any integers $r,q \in \{0,\ldots, n\}$, we set $
\L^p(\Omega^{r,q}(M,E)) 
\ov{\mathrm{def}}{=} \L^p\big(M,\Lambda^{r,q}\mathrm{T}^*M \ot E\big)$, 
where the norm on each fiber $\Lambda^{r,q}\mathrm{T}_x^*M \ot E_x$ is the one induced by the Hermitian metric on $E$ and the pointwise inner product on $\Lambda^{r,q}\mathrm{T}_x^*M$. We also set $
\L^p(\Omega^{\bullet,\bullet}(M,E)) \ov{\mathrm{def}}{=} \L^p\big(M,\Lambda^{\bullet,\bullet}\mathrm{T}^*M \ot E\big)$, where the fiber norm is given by the pointwise $\ell^2$-sum of bidegrees. Thus, if $\omega=\sum_{r,q=0}^n \omega_{r,q}$ with $\omega_{r,q} \in \Omega^{r,q}(M,E)$, then
\begin{equation}
\label{norm-Lp-Df}
\norm{\omega}_{\L^p(\Omega^{\bullet,\bullet}(M,E))}
=
\bigg(\int_M \bigg(\sum_{r,q=0}^n \norm{\omega_{r,q}(x)}_{\Lambda^{r,q} \mathrm{T}_x^*M \ot E_x}^2\bigg)^{\frac{p}{2}} \d\mu_g(x)\bigg)^{\frac{1}{p}},
\qquad 1 \leq p < \infty,
\end{equation}
where $\mu_g$ is the Riemannian measure.

Now, assume that $M$ is Hermitian and that $E$ is endowed with a Hermitian metric. We will use the Dolbeault operator $\bar\partial_E \co \Omega^{\bullet,\bullet}(M,E) \to \Omega^{\bullet,\bullet}(M,E)$ of \cite[Proposition 4.16 p.~116]{Lee24}, which satisfies by \cite[p.~59]{Voi07} and \cite[Lemma 2.6.23 p.~109]{Huy05} the equality
\begin{equation}
\label{square-derivation}
(\bar \partial_E)^2
=0.
\end{equation}
Its restriction on the subspace $\Omega^{r,q}(M,E)$ is denoted by $\bar\partial_{E,r,q} \co \Omega^{r,q}(M,E) \to \Omega^{r,q+1}(M,E)$. Its restriction on the subspace $\Omega^{0,\bullet}(M,E) \ov{\mathrm{def}}{=} \oplus_{q=0}^n \Omega^{0,q}(M,E)$ of anti-holomorphic differential forms is denoted $\bar\partial_{E,0,\bullet} \co \Omega^{0,\bullet}(M,E) \to \Omega^{0,\bullet}(M,E)$. Let $\bar{\partial}_{E}^*$ be the formal adjoint of $\bar\partial_E$ with respect to the scalar product
$$
\la s_1,s_2 \ra
=\int_M  \la s_1(x),s_2(x) \ra_{E_x} \d \mu_g(x).
$$
By restriction, we have a linear operator $\bar{\partial}_{E}^* \co \Omega^{r,q+1}(M,E) \to \Omega^{r,q}(M,E)$. We also have an operator $\partial_E \co \Omega^{\bullet,\bullet}(M,E) \to \Omega^{\bullet,\bullet}(M,E)$ with formal adjoint $\partial_E^* \co \Omega^{\bullet,\bullet}(M,E) \to \Omega^{\bullet,\bullet}(M,E)$. The restrictions are denoted $\partial_{E,r,q} \co \Omega^{r,q}(M,E) \to \Omega^{r+1,q}(M,E)$ and $\partial_{E}^* \co \Omega^{r,q}(M,E) \to \Omega^{r-1,q}(M,E)$.

Following \cite[(1.4.3) p.~30]{MaM07} or \cite[(9.28) p.~277]{Lee24}, we introduce the holomorphic Kodaira Laplacian $\Delta_{\bar\partial_E} \co \Omega^{\bullet,\bullet}(M,E) \to \Omega^{\bullet,\bullet}(M,E) $ defined by
\begin{equation}
\label{holo-Laplacian}
\Delta_{\bar\partial_E}
\ov{\mathrm{def}}{=} \bar{\partial}_E\bar{\partial}_{E}^*+\bar{\partial}_E^*\bar{\partial}_E
\end{equation}
(also denoted $\Box_E$), which is elliptic by \cite[Proposition 9.34 p.~277]{Lee24}. We denote by $\Delta_{\bar\partial_E,0,\bullet}$ its restriction to the space $\Omega^{0,\bullet}(M,E)$. Following \cite[(1.4.3) p.~30]{MaM07}, we introduce the Dolbeault--Dirac operator $\D_E \co \Omega^{0,\bullet}(M,E) \to \Omega^{0,\bullet}(M,E)$ defined by
\begin{equation}
\label{Def-Dirac-DE}
\D_E
\ov{\mathrm{def}}{=} \sqrt{2}(\bar{\partial}_{E,0,\bullet} +\bar{\partial}_{E,0,\bullet}^*).
\end{equation}
This is a first order elliptic differential operator. According to \cite[(1.4.4) p.~30]{MaM07}, we have the equality $
\D_E^2
= 2 \Delta_{\bar\partial_E,0,\bullet}$. We have the following commutation relations.

\begin{lemma}
\label{lem:commutation-Kodaira}
Let $E$ be a holomorphic vector bundle over a compact complex manifold $M$. We have
\begin{equation}
\label{commutation-rules}
\bar{\partial}_E \Delta_{\bar\partial_E} 
= \Delta_{\bar\partial_E} \bar{\partial}_E
\quad \text{and} \quad
\bar{\partial}_E^* \Delta_{\bar\partial_E} 
= \Delta_{\bar\partial_E} \bar{\partial}_{E}^*.
\end{equation}
\end{lemma}

\begin{proof}
Since $\bar\partial_E^2=0$, on the one hand we have $
\bar\partial_E\Delta_{\bar\partial_E}
\ov{\eqref{holo-Laplacian}}{=} \bar\partial_E(\bar\partial_E\bar\partial_E^*+\bar\partial_E^*\bar\partial_E) 
=\bar\partial_E\bar\partial_E^*\bar\partial_E$. 
On the other hand, we have $
\Delta_{\bar\partial_E}\bar\partial_E
\ov{\eqref{holo-Laplacian}}{=}(\bar\partial_E\bar\partial_E^*+\bar\partial_E^*\bar\partial_E)\bar\partial_E 
=\bar\partial_E\bar\partial_E^*\bar\partial_E$. 
So the first equality is proved. Since $(\bar\partial_E^*)^2=(\bar\partial_E^2)^*=0$, we see that 
$
\bar\partial_E^*\Delta_{\bar\partial_E}
\ov{\eqref{holo-Laplacian}}{=}\bar\partial_E^*(\bar\partial_E\bar\partial_E^*+\bar\partial_E^*\bar\partial_E) 
=\bar\partial_E^*\bar\partial_E\bar\partial_E^*$  
and  
$
\Delta_{\bar\partial_E}\bar\partial_E^*
\ov{\eqref{holo-Laplacian}}{=}(\bar\partial_E\bar\partial_E^*+\bar\partial_E^*\bar\partial_E)\bar\partial_E^* 
=\bar\partial_E^*\bar\partial_E\bar\partial_E^*$. 
We obtained the second equality.
\end{proof}

Suppose that $1 < p < \infty$. Let $E$ be a Hermitian holomorphic vector bundle of finite rank over a compact Hermitian manifold $M$. As a direct consequence of \cite[Theorem 5.28 p.~168]{Kat76} and the symmetry of $\D_E$, the Dolbeault--Dirac $\D_E \co \Omega^{0,\bullet}(M,E) \to \Omega^{0,\bullet}(M,E)$ is closable on the Banach space $\L^p(\Omega^{0,\bullet}(M,E))$. 
We denote by $\D_{E,p} \co \dom \D_{E,p} \subset \L^p(\Omega^{0,\bullet}(M,E)) \to \L^p(\Omega^{0,\bullet}(M,E))$ its closure. It is known that the closed operator $\D_{E,2}$ is selfadjoint, see \cite[Proposition 12.2 pp.~58-59 and p.~61]{BDIP02}. 

\subsection{Canonical grading by form degree}
\label{sec-grading}

Now, we introduce a standard grading by form degree.

\begin{prop}
\label{prop:even-Dolbeault-Dirac}
Let $E$ be a Hermitian holomorphic vector bundle of finite rank over a compact Hermitian manifold $M$ with complex dimension $n$. Suppose that $1 < p < \infty$. Then the maps $\Omega^{0,k}(M,E) \to \Omega^{0,k}(M,E)$, $\omega \mapsto (-1)^{k}\omega$, where $k \in \{0,\ldots,n\}$, induce an isometric isomorphism $\gamma \co \L^p(\Omega^{0,\bullet}(M,E))\to \L^p(\Omega^{0,\bullet}(M,E))$ and we have $\gamma \D_{E,p} = -\D_{E,p}\gamma$.
\end{prop}

\begin{proof}
We define a linear map $\gamma \co \Omega^{0,\bullet}(M,E) \to \Omega^{0,\bullet}(M,E)$ by prescribing its action on homogeneous forms:
\begin{equation}
\label{def-gamma-123}
\gamma\omega \ov{\mathrm{def}}{=} (-1)^q \omega,
\qquad
\omega \in \Omega^{0,q}(M,E),\ q = 0,\dots,n,
\end{equation}
and extending linearly to arbitrary elements of $\Omega^{0,\bullet}(M,E)$. It is immediate that $
\gamma^2 = \Id_{\Omega^{0,\bullet}(M,E)}$. If $\omega \in \Omega^{0,\bullet}(M,E)$, we can write $\omega = \sum_{q=0}^n \omega_q$ with $\omega_q \in \Omega^{0,q}(M,E)$. Observe that
\begin{align*}
\norm{\gamma\omega}_{\L^p(\Omega^{0,\bullet}(M,E))}
&\ov{\eqref{norm-Lp-Df}\eqref{def-gamma-123}}{=}
\bigg(\int_M \bigg(\sum_{q=0}^n |(-1)^q\omega_q(x)|_{\Lambda^{0,q} \mathrm{T}_x^*M \ot E_x}^2\bigg)^{\frac{p}{2}} \d\mu_g(x)\bigg)^{\frac1p}
\\
&=
\bigg(\int_M \bigg(\sum_{q=0}^n |\omega_q(x)|_{\Lambda^{0,q} \mathrm{T}_x^*M \ot E_x}^2\bigg)^{\frac{p}{2}} \d\mu_g(x)\bigg)^{\frac1p}
\ov{\eqref{norm-Lp-Df}}{=} \norm{\omega}_{\L^p(\Omega^{0,\bullet}(M,E))}.
\end{align*}
Since $\Omega^{0,\bullet}(M,E)$ is dense in the Banach space $\L^p(\Omega^{0,\bullet}(M,E))$, the operator $\gamma \co \Omega^{0,\bullet}(M,E) \to \Omega^{0,\bullet}(M,E)$ clearly extends by continuity to an isometric isomorphism $\gamma \co \L^p(\Omega^{0,\bullet}(M,E)) \to \L^p(\Omega^{0,\bullet}(M,E))$ satisfying $\gamma^2=\Id_{\L^p(\Omega^{0,\bullet}(M,E))}$. 

Let $f \in \C(M)$ and let $\omega \in \Omega^{0,\bullet}(M,E)$. Write $\omega = \sum_{q=0}^n \omega_q$ with $\omega_q \in \Omega^{0,q}(M,E)$. Then $\pi(f)\omega = f\omega = \sum_{q=0}^n f\omega_q$, and each $f\omega_q$ belongs to $\Omega^{0,q}(M,E)$. Consequently, we have
\[
\gamma\big(\pi(f)\omega\big)
=
\gamma\bigg(\sum_{q=0}^n f\omega_q\bigg)
=
\sum_{q=0}^n (-1)^q f\omega_q
=
f\sum_{q=0}^n (-1)^q \omega_q
=
\pi(f)\gamma\omega.
\]
Hence $\gamma\pi(f) = \pi(f)\gamma$ on $\Omega^{0,\bullet}(M,E)$, and by density and continuity we obtain on $\L^p(\Omega^{0,\bullet}(M,E))$ the equality $
\gamma\pi(f) = \pi(f)\gamma$ for any $f \in \C(M)$.

Finally, we check the anticommutation relation with $\D_E$. Recall that $\D_E \ov{\mathrm{def}}{=} \sqrt{2}(\bar{\partial}_{E,0,\bullet} +\bar{\partial}_{E,0,\bullet}^*)$, where $\bar{\partial}_{E}$ increases the degree by one and $\bar{\partial}_{E}^*$ decreases the degree by one. Let $\omega_q \in \Omega^{0,q}(M,E)$. Then
\begin{equation}
\label{inter-ABC}
\D_E\omega_q
\ov{\eqref{Def-Dirac-DE}}{=} \sqrt{2}\big(\bar{\partial}_{E}(\omega_q) + \bar{\partial}_{E}^*(\omega_q)\big)
=\sqrt{2}\alpha_{q+1} + \sqrt{2}\beta_{q-1},
\end{equation}
where $\alpha_{q+1} \ov{\mathrm{def}}{=} \bar{\partial}_{E}\omega_q$ belongs to $\Omega^{0,q+1}(M,E)$ and $\beta_{q-1} \ov{\mathrm{def}}{=} \bar{\partial}_{E}^*\omega_q$ belongs to $\Omega^{0,q-1}(M,E)$ (with the convention $\alpha_{n+1} = 0$ if $q=n$ and $\beta_{-1} = 0$ if $q=0$). On the one hand, since the map $\gamma$ acts by $(-1)^{\deg}$ on homogeneous forms, we have
\begin{equation}
\label{inter-UYIIII}
\gamma \D_E\omega_q
\ov{\eqref{inter-ABC}}{=}
\sqrt{2}\gamma(\alpha_{q+1} + \beta_{q-1})
\ov{\eqref{def-gamma-123}}{=}
(-1)^{q+1}\sqrt{2}\alpha_{q+1} + (-1)^{q-1}\sqrt{2}\beta_{q-1}.
\end{equation}
On the other hand, we have
\begin{equation}
\label{inter-DFGT}
\D_E\gamma\omega_q
\ov{\eqref{def-gamma-123}}{=} (-1)^q \D_E\omega_q
\ov{\eqref{inter-ABC}}{=} (-1)^q \sqrt{2}\alpha_{q+1} + (-1)^q \sqrt{2}\beta_{q-1}.
\end{equation}
Therefore
\begin{align*}
\MoveEqLeft
(\gamma \D_E + \D_E\gamma)\omega_q
\ov{\eqref{inter-DFGT} \eqref{inter-UYIIII}}{=}
\sqrt{2}\big((-1)^{q+1} + (-1)^q\big)\alpha_{q+1}
+\sqrt{2}\big((-1)^{q-1} + (-1)^q\big)\beta_{q-1}
= 0.
\end{align*} 
We conclude by linearity that $\gamma \D_E = -\D_E \gamma$ on $\Omega^{0,\bullet}(M,E)$. Recall that the operator $\D_E$ acting on the subspace $\Omega^{0,\bullet}(M,E)$ is closable and that by definition $\D_{E,p}$ is the closure of $\D_E$. In particular, $\Omega^{0,\bullet}(M,E)$ is a core for the operator $\D_{E,p}$. 

To pass from the anticommutation relation on $\Omega^{0,\bullet}(M,E)$ to the domain of $\D_{E,p}$, consider some $u \in \dom \D_{E,p}$. By definition, there exists a sequence $(u_k)$ of elements of $\Omega^{0,\bullet}(M,E)$ such that $
u_k \to u$ and $\D_E u_k \to \D_{E,p}u$ in $\L^p(\Omega^{0,\bullet}(M,E))$. Since the map $\gamma$ is bounded on $\L^p(\Omega^{0,\bullet}(M,E))$, we have $\gamma u_k \to \gamma u$ in $\L^p(\Omega^{0,\bullet}(M,E))$. Moreover, using $\gamma \D_E = - \D_E \gamma$ on $\Omega^{0,\bullet}(M,E)$, we obtain
\[
\D_E(\gamma u_k)
=
-\gamma(\D_E u_k)
\to
-\gamma(\D_{E,p}u)
\qquad\text{in }\L^p(\Omega^{0,\bullet}(M,E)).
\]
Therefore $(\gamma u_k)$ is an approximating sequence in the graph norm for $\gamma u$, which shows that $\gamma u \in \dom \D_{E,p}$ and $
\D_{E,p}(\gamma u)
=
-\gamma(\D_{E,p}u)$. Hence $\gamma \D_{E,p} + \D_{E,p}\gamma = 0$ on the space $\dom \D_{E,p}$.
\end{proof}

Let $E$ be a Hermitian holomorphic vector bundle of finite rank over a compact Hermitian manifold $M$. Suppose that $1 < p < \infty$. Recall that we have the canonical decomposition $\Omega^{0,\bullet}(M,E) = \Omega^{0,\even}(M,E) \oplus \Omega^{0,\odd}(M,E)$ of the space of smooth forms, where $\Omega^{0,\even}(M,E) \ov{\mathrm{def}}{=} \bigoplus_{q \text{ even}} \Omega^{0,q}(M,E)$ and $\Omega^{0,\odd}(M,E) \ov{\mathrm{def}}{=} \bigoplus_{q \text{ odd}} \Omega^{0,q}(M,E)$. 
By definition of the $\L^p$-norms on forms, this decomposition induces by completion a direct sum decomposition
\begin{equation}
\label{decompo-Lp-even-odd}
\L^p(\Omega^{0,\bullet}(M,E))
= \L^p(\Omega^{0,\even}(M,E)) \oplus \L^p(\Omega^{0,\odd}(M,E)).
\end{equation}
With respect to this decomposition, by \cite[Proposition 8.1]{Arh26b} the operator $\D_{E,p}$ admits 
the block matrix form 
\begin{equation}
\label{Dp-decompo-block}
\D_{E,p} 
= \begin{bmatrix}
  0   & \D_{E,-,p} \\
 \D_{E,+,p}  &  0 \\
\end{bmatrix},
\end{equation}
where the unbounded operators $
\D_{E,+,p} \co \dom \D_{E,+,p} \subset \L^p(\Omega^{0,\even}(M,E)) \to \L^p(\Omega^{0,\odd}(M,E))$ and $\D_{E,-,p} \co \dom \D_{E,-,p} \subset \L^p(\Omega^{0,\odd}(M,E)) \to \L^p(\Omega^{0,\even}(M,E))$ are the off-diagonal parts of $\D_{E,p}$ relative to the decomposition \eqref{decompo-Lp-even-odd}. Consider the restrictions
\[
\bar\partial_E^{\even} \colon \Omega^{0,\even}(M,E)\to\Omega^{0,\odd}(M,E)
\quad \text{and} \quad
(\bar\partial_E^*)^{\odd} \colon \Omega^{0,\odd}(M,E)\to\Omega^{0,\even}(M,E)
\]
of the operators $\bar\partial_E$ and $\bar\partial_E^*$ and similarly $\bar\partial_E^{\odd}$ and $(\bar\partial_E^*)^{\even}$. Now, we prove that the operators $\D_{E,+,p}$ and $\D_{E,-,p}$ are the closure of the closable operators $\sqrt{2}(\bar\partial_E^{\even}+(\bar\partial_E^*)^{\even})$ and $\sqrt{2}(\bar\partial_E^{\odd}+(\bar\partial_E^*)^{\odd})$.

\begin{prop}
\label{prop:blocks-closure}
Let $E$ be a Hermitian holomorphic vector bundle of finite rank over a compact Hermitian manifold $M$ with complex dimension $n$. Define the operators
\begin{equation}
\label{def-DEplus}
\D_{E,+}
\ov{\mathrm{def}}{=} 
\sqrt{2}\big(\bar\partial_E^{\even}+(\bar\partial_E^*)^{\even}\big)
\co
\Omega^{0,\even}(M,E) \to \Omega^{0,\odd}(M,E),
\end{equation}
and
\begin{equation}
\label{def-DE-moins}
\D_{E,-}
\ov{\mathrm{def}}{=} 
\sqrt{2}\big(\bar\partial_E^{\odd}+(\bar\partial_E^*)^{\odd}\big)
\co
\Omega^{0,\odd}(M,E) \to \Omega^{0,\even}(M,E).
\end{equation}
Then $\D_{E,+}$ and $\D_{E,-}$ are closable on $\L^p(\Omega^{0,\even}(M,E))$ and $\L^p(\Omega^{0,\odd}(M,E))$, respectively, and their closures coincide with the off-diagonal parts $\D_{E,+,p}$ and $\D_{E,-,p}$ appearing in \eqref{Dp-decompo-block}.
\end{prop}

\begin{proof}
Consider the bounded projections $
P_{\even} 
\ov{\mathrm{def}}{=} \frac{\Id+\gamma}{2}$ and $
P_{\odd} 
\ov{\mathrm{def}}{=} \frac{\Id-\gamma}{2}$.  Then $\Ran P_{\even}=\L^p(\Omega^{0,\even}(M,E))$ and $\Ran P_{\odd}=\L^p(\Omega^{0,\odd}(M,E))$. Let $\omega \in \Omega^{0,\even}(M,E)$. Since $\bar\partial_E$ raises the form degree by one and $\bar\partial_E^*$ lowers it by one, we have $\bar\partial_E \omega \in \Omega^{0,\odd}(M,E)$ and $
\bar\partial_E^* \omega \in \Omega^{0,\odd}(M,E)$. 
Hence $\D_E\omega \in \Omega^{0,\odd}(M,E)$, and by definition of $\D_{E,+}$ we have $
\D_E\big|_{\Omega^{0,\even}(M,E)} = \D_{E,+}$. Similarly, on $\Omega^{0,\odd}(M,E)$ we have
\[
\D_E\big|_{\Omega^{0,\odd}(M,E)} = \D_{E,-},
\qquad
\D_E\omega \in \Omega^{0,\even}(M,E) \text{ for } \omega \in \Omega^{0,\odd}(M,E).
\]
We conclude that $\D_{E,+}$ and $\D_{E,-}$ are closable since they are restrictions of the closable operator $\D_{E}$.

Since $\D_{E,p}$ is the closure of $\D_E$ on $\L^p(\Omega^{0,\bullet}(M,E))$, the restriction of $\D_{E,p}$ to smooth forms coincides with $\D_E$. In particular, for $\omega \in \Omega^{0,\even}(M,E)$ we have
\[
\D_{E,p}\omega 
= \D_E\omega = \D_{E,+} \omega \in \Omega^{0,\odd}(M,E).
\]
Therefore $\D_{E,+} \subset \D_{E,+,p}$ as operators from $\L^p(\Omega^{0,\even}(M,E))$ to $\L^p(\Omega^{0,\odd}(M,E))$, where $\D_{E,+,p}$ is the off-diagonal part of $\D_{E,p}$ relative to $\L^p(\Omega^{0,\even}(M,E)) \oplus \L^p(\Omega^{0,\odd}(M,E))$. Since $\D_{E,+,p}$ is closed (as a restriction of the closed operator $\D_{E,p}$ on a closed subspace), we get $\ovl{\D_{E,+}} \subset \D_{E,+,p}$. The same argument yields $\overline{\D_{E,-}} \subset \D_{E,-,p}$.

Now, we prove the reverse inclusion. We again treat the $+$ case. Let $u \in \dom \D_{E,+,p}$. By definition, $u \in \L^p(\Omega^{0,\even}(M,E))$ and $u \in \dom \D_{E,p}$, and $\D_{E,p}u \in \L^p(\Omega^{0,\odd}(M,E))$ with $\D_{E,+,p}u = \D_{E,p}u$. Since $\Omega^{0,\bullet}(M,E)$ is a core for $\D_{E,p}$, there exists a sequence $(w_k)$ in $\Omega^{0,\bullet}(M,E)$ such that $
w_k \to u$ and $
\D_E w_k \to \D_{E,p}u$ in $\L^p(\Omega^{0,\bullet}(M,E))$.
Set $
u_k \ov{\mathrm{def}}{=} P_{\even} w_k$. Then $u_k \in \Omega^{0,\even}(M,E)$ and $u_k \to P_{\even}u = u$ in $\L^p(\Omega^{0,\even}(M,E))$ since $P_{\even}$ is bounded and $u$ is even.
Moreover, on smooth forms we have the identity 
$$
\D_E P_{\even}
\D_E\tfrac{\Id+\gamma}{2}
=
\tfrac{1}{2}(\D_E + \D_E\gamma)
=
\tfrac{1}{2}(\D_E - \gamma\D_E)
=
\tfrac{\Id-\gamma}{2}\D_E
=
P_{\odd}\D_E.
$$ 
Consequently, we have
\[
\D_E u_k
=
\D_E(P_{\even}w_k)
=
P_{\odd}(\D_E w_k).
\]
Passing to the limit and using boundedness of $P_{\odd}$, we obtain $
\D_E u_k \to P_{\odd}(\D_{E,p}u)$. Since $\D_{E,p}u \in \L^p(\Omega^{0,\odd}(M,E))$, we have $P_{\odd}(\D_{E,p}u)=\D_{E,p}u$. Thus $\D_E u_k \to \D_{E,p}u$ in the space $\L^p(\Omega^{0,\odd}(M,E))$. Hence $\D_E u_k = \D_{E,+} u_k$ because $u_k$ is even. So
\[
u_k \to u \text{ in } \L^p(\Omega^{0,\even}(M,E)),
\qquad
\D_{E,+}u_k \to \D_{E,p}u 
= \D_{E,+,p}u \text{ in } \L^p(\Omega^{0,\odd}(M,E)).
\]
This shows that $(u,\D_{E,+,p}u)$ belongs to the closure of the graph of $\D_{E,+}$, i.e.~$\D_{E,+,p} \subset \ovl{\D_{E,+}}$. We conclude that $\overline{\D_{E,+}} = \D_{E,+,p}$. The proof for the $-$ block is identical and yields $\overline{\D_{E,-}} = \D_{E,-,p}$.
\end{proof}

\section{$\mathrm{H}^\infty$ calculus of the Dolbeault--Dirac operator on a compact K\"ahler manifold}
\label{sec-Hinfty-calculus-Dolbeault}

\subsection{Some Sobolev spaces and actions of pseudo-differential operators}
\label{sec-Sobolev-spaces}
We start by recalling some information on connections.

\paragraph{Connections on vector bundles}
Let $E$ be a complex (or real) vector bundle over a differentiable manifold $M$. Following \cite[Definition 3.3.1 p.~95]{Nic21}, a (complex or real) connection (or covariant derivative) on $E$ is a $\mathbb{C}$-linear (or $\mathbb{R}$-linear) differential operator $\nabla \co \Omega^0(M,E) \to \Omega^1(M,E)$ that satisfies the Leibniz rule 
$$
\nabla(f \sigma) 
= \d f \ot \sigma + f \nabla \sigma
$$ 
for any $f \in \C^\infty(M)$ and any $\sigma \in \Omega^0(M,E)$. By \cite[Proposition 3.3.8 p.~101]{Nic21} and \cite[Theorem 12.57 p.~536]{Lee09}, a connection $\nabla$ has a natural extension to an operator $\nabla \co \Omega^{\bullet}(M,E) \to \Omega^{\bullet}(M,E)$ such that $\nabla|_{\Omega^{0}(M,E)}=\nabla$, $\nabla(\Omega^{k}(M,E)) \subset \Omega^{k+1}(M,E)$ satisfying the Leibniz rule
$$
\nabla(\alpha \wedge \beta)
=\d \alpha \wedge \beta + (-1)^k\alpha \wedge \nabla\beta, \quad \alpha \in \Omega^k(M), \beta \in \Omega^l(M,E).
$$
Following \cite[Definition 6.29 p.~177]{BlB13}, 
we say that a connection $\nabla \co \Omega^0(M,E) \to \Omega^1(M,E)$ on a Hermitian vector bundle $E$ is a Hermitian connection (or metric connection) if $
\d \la s_1,s_2 \ra
=\la \nabla(s_1),s_2 \ra + \la s_1,\nabla(s_2) \ra
$  
for any smooth sections $s_1,s_2 \co M \to E$. 

\paragraph{Tensor products of connections} Consider two vector bundles $E$ and $F$ over a compact manifold $M$. Let $\tau \co E \ot \mathrm{T}^*M \ot F \to \mathrm{T}^*M \ot E \ot F$ be the natural isomorphism permuting the first two factors. Consider some connections $\nabla^E$ and $\nabla^F$ on $E$ and $F$. Then we have a connection $\nabla^{E \ot F} \ov{\mathrm{def}}{=} \nabla^E \ot \Id_{\Omega^0(M,F)} + \tau \circ (\Id_{\Omega^0(M,E)} \ot \nabla^F)$ on te tensor product vector bundle $E \ot F$, which we shall write, by abuse of notation, in the form
\begin{equation}
\label{tensor-product-connections}
\nabla^{E \ot F} 
= \nabla^{E} \ot \Id + \Id \ot \nabla^{F}.
\end{equation}
Recall that the Levi-Civita connection $\nabla^\LC \co \Omega^0(M,\mathrm{T}M) \to \Omega^1(M,\mathrm{T}M)$ induces by \cite[p.~96]{Nic21} a connection $\nabla^{\mathrm{T}^*M}$ on the cotangent  bundle $\mathrm{T}^*M$. Now, we work with complexified bundles and keep the same notation. If $\mathrm{T}^{*\ot j}M \ov{\mathrm{def}}{=} (\mathrm{T}^*M)^{\ot j}$ denote the repeated tensor products of the complexified cotangent bundle $\mathrm{T}^*M$, we can introduce the notation $\nabla^0=\Id_{\mathrm{C}^\infty(M,E)}$ and for any integer $j \geq 1$ the iterated connection map
\begin{equation*}
\nabla^j \ov{\mathrm{def}}{=} \nabla^{\mathrm{T}^{* \ot (j-1)} M \ot E} \circ
\cdots \circ \nabla^{\mathrm{T}^{*} M \ot E} \circ \nabla^{E} \co \Omega^0(M,E) \to \C^\infty(M,\mathrm{T}^{*\ot j}M \ot E).
\end{equation*}
Observe that we have a canonical isomorphism $\Omega^{1}(M,F) \simeq \mathrm{T}^*M \ot \Omega^{0}(M,F)$.

\paragraph{Sobolev spaces via connections} Let $E$ be a Hermitian vector bundle of finite-rank over a compact Riemannian manifold $M$ equipped with a Hermitian connection $\nabla^E$. Suppose that $1 \leq p <\infty$. For any integer $k \geq 0$, we can introduce $\L^p$--type $\nabla$--Sobolev space $\W^{k,p}_{\nabla}(M,E)$ of order $k$, which is the completion of the space $\Omega^0(M,E)$ 
for the norm
\begin{equation}
\label{Sobolev-norm}
\norm{u}_{\W^{k, p}_{\nabla}(M, E)} 
\ov{\mathrm{def}}{=} \sum_{j=0}^{k}
\bigg(\norm{\nabla^j (u)}_{\L^p(M,\mathrm{T}^{* \ot j} M \ot E)}^p\bigg)^{\frac{1}{p}}.
\end{equation}
Furthermore, \cite[Theorem 10.2.36 p.~483]{Nic21} says that this space is independent up to isomorphism of the metric and the metric preserving connection. Note the isometric identification $\W_\nabla^{0,p}(M,E) = \L^p(M,E)$. 

\paragraph{Sobolev spaces via atlas}
Let $E$ be a Hermitian vector bundle of rank $r$ over a smooth compact Riemannian manifold $(M,g)$ of dimension $n$. A (bundle) trivialization of $E$ is a family $
\cal{T}_E
=(U_i,\varphi_i,\xi_i,h_i)_{i \in I}$, 
where $(U_i)_{i \in I}$ is an open finite cover of $M$, $\varphi_i \co U_i \to V_i \subset \R^n$ are charts, $h_i$ is a smooth partition of unity subordinate to $(U_i)$, and $\xi_i \co U_i \times \mathbb{C}^r \to E|_{U_i}$ is a local trivialization over $U_i$. Given a section $u$ of $E$, we denote by $(\xi_i)^*(u)$ its local representation as an $\mathbb{C}^r$-valued function (or distribution) on $V_i$. As usual, $(\xi_i)^*(h_i u)$ is extended by $0$ on $\R^n$.

Let $s \in \R$. Suppose that $1 \leq p < \infty$. Following \cite[Definition 42]{GrS13}, given a finite bundle trivialization $\cal{T}_E=(U_i,\varphi_i,\xi_i,h_i)_{i \in I}$, define $\W^{s,p}_{\cal{T}_E}(M,E)$ as the completion of $\C^\infty(M,E)$ for the norm
\[
\norm{u}_{\W^{s,p}_{\cal{T}_E}(E)}
\ov{\mathrm{def}}{=}
\Big( \sum_{i \in I}
\norm{ (\xi_i)^*(h_i u) }_{\W^{s,p}(\R^n,\mathbb{C}^r)}^p
\Big)^{\frac{1}{p}}
<\infty.
\]
By essentially \cite[Theorem 44]{GrS13} or implicitly in \cite[p.~483]{Nic21}, for any trivialization $\cal{T}_E$ of $E$ and any integer $k \geq 0$ we have
\begin{equation}
\label{iso-Sobolev}
\W^{k,p}_{\cal{T}_E}(M,E)
=\W^{k,p}_\nabla(M,E),
\end{equation}
with equivalence of norms. If $k_0 > k_1 \geq 0$ the embedding 
$
\W^{k_0,p}_{\cal{T}_E}(M,E) \xhookrightarrow{} \W^{k_1,p}_{\cal{T}_E}(M,E)$ is compact. With the identification \eqref{iso-Sobolev}, this implies the following important point for us in Section \ref{sec-compact-spectral-triple}, which is described in \cite[Theorem 10.2.36 p.~483]{Nic21} (without proof). If $k_0 > k_1 \geq 0$ the embedding
\begin{equation}
\label{Sobolev-compact-embedding}
\W^{k_0,p}_\nabla(M,E) \xhookrightarrow{} \W^{k_1,p}_\nabla(M,E) \text{ is compact}. 
\end{equation}
If $E$ is a Hermitian holomorphic vector bundle over a compact Hermitian manifold $M$, we will exploit this property to establish that the spectral triple $(\C(M), \L^p(\Omega^{0,\bullet}(M,E)), \D_{E,p})$ is ``compact", i.e.~that the closed operator $\D_{E,p}$ has compact resolvent. The goal of this section is to show that the Sobolev space $\W^{1,p}_\nabla(\Omega^{0,q}(M,E))$ is isomorphic to the domain $\W^{1,p}_{\bar\partial_E}(\Omega^{0,q}(M,E))$ of the Dolbeault--Dirac operator $\D_{E,p}$ in order to use the compact Sobolev embedding \eqref{Sobolev-compact-embedding}. We refer to \cite{Nic10}, \cite[Section 10.2.4]{Nic21}, \cite{GrS13}, \cite{Gun17}, \cite{KoV22}, \cite{Shu92} and \cite[Chapter 7]{BlB13} (for the case $p=2$) for more information on the spaces $\W^{k, p}_{\nabla}(M,E)$.  

\paragraph{Action of pseudo-differential operators} The following results describe the action of pseudo-differential operators on Sobolev spaces of vector bundles. It generalizes \cite[Theorem 5.2.23 p.~420]{RuT10}, \cite[p.~267]{Tay81} and \cite[Theorem 8.5 p.~206]{Gru09}.

\begin{prop}
\label{prop-elliptic-Sobolev}
Let $E$ and $F$ be Hermitian smooth complex vector bundles of finite rank over a compact smooth Riemannian manifold $M$. Suppose that $1 < p < \infty$. A pseudo-differential operator $P \co \C^\infty(M,E) \to \C^\infty(M,F)$ of order $m$ induces a bounded operator $P \co \W^{k,p}_\nabla(M,E) \to \W^{k-m,p}_\nabla(M,F)$ if $m \leq k$.
\end{prop}

\begin{proof}
Using the identification \eqref{iso-Sobolev}, it suffices to prove that $P$ induces a bounded operator $P \co \W^{k,p}_{\cal{T}_E}(M,E) \to \W^{k-m,p}_{\cal{T}_F}(M,F)$ for some $\cal{T}_E =(U_i,\varphi_i,\xi_i,h_i)_{i \in I}$ and $\cal{T}_F=(U_j',\psi_j,\eta_j,g_j)_{j \in J}$. For any $u \in \C^\infty(M,E)$, we can write 
$$
P u
=\sum_{j \in J} g_j P\big(\sum_{i \in I} h_i u\big)
=\sum_{i \in I,j \in J} g_j P(h_i u)
$$  
and
$
(\eta_j)^*(g_j P(h_i u)) 
= P_{j i}\big( (\xi_i)^*(h_i u)\big)$ 
for some pseudo-differential operators $P_{j i}$. By the standard boundedness \cite[Theorem 12.9 p.~91]{Won14} of pseudo-differential operators on $\R^n$ (and a finite-dimensional vector-valued argument), for each pair $(j,i)$ there exists a constant $C_{j i} \geq 0$ such that
\[
\norm{P_{j i} f}_{\W^{k-m,p}(\R^n,\mathbb{C}^{r_F})}
\leq C_{j i}  \norm{f}_{\W^{k,p}(\R^n,\mathbb{C}^{r_E})}, \quad f \in \C_c^\infty(\R^n,\mathbb{C}^{r_E}).
\]
Hence, for any $u \in \C^\infty(M,E)$, we obtain
\[
\norm{(\eta_j)^*(g_j P u)}_{\W^{k-m,p}(\R^n,\mathbb{C}^{r_F})}
\leq
\sum_{i \in I} C_{j i}\norm{(\xi_i)^*(h_i u)}_{\W^{k,p}(\R^n,\mathbb{C}^{r_E})}.
\]
Taking the $\ell^p$-sum over $j \in J$, we deduce that 
$$
\norm{P u}_{\W^{k-m,p}_{\cal{T}_F}(M,F)}
\leq C \norm{u}_{\W^{k,p}_{\cal{T}_E}(M,E)}
$$ 
for some constant $C$ depending on $P$, $\cal{T}_E$, and $\cal{T}_F$. By density of the subspace $\C^\infty(M,E)$ in the space $\W^{k,p}_{\cal{T}_E}(M,E)$ and completeness, $P$ extends uniquely and boundedly from $\W^{k,p}_{\cal{T}_E}(M,E)$ with values in $\W^{k-m,p}_{\cal{T}_F}(M,F)$.
\end{proof}

Using \cite[Theorem 5.28 p.~168]{Kat76} and the symmetry of the Kodaira Laplacian $\Delta_{\bar\partial_E}$, we see that the unbounded operators $\Delta_{\bar\partial_E}$ and are closable on the Banach spaces $\L^p(\Omega^{\bullet,\bullet}(M,E))$ and $\L^p(\Omega^{0,\bullet}(M,E))$, whose closures are denoted by $\Delta_{\bar\partial_E,p}$ and $\Delta_{\bar\partial_E,0,\bullet,p}$. We need the following lemma for the proof of Proposition \ref{prop:closure-kodaira-square-dirac}.

\begin{lemma}
\label{lemma-I-plus-Kodaira-isomorphism-W2p}
Let $E$ be a Hermitian holomorphic vector bundle of finite rank over a compact Hermitian manifold $M$. Suppose that $1 < p < \infty$. Then
\begin{equation}
\label{eq-domain-Kodaira-W2p}
\dom\Delta_{\bar\partial_E,p}
=
\W^{2,p}_\nabla(\Omega^{\bullet,\bullet}(M,E)).
\end{equation}
%
Moreover, the operator
\begin{equation}
\label{eq-I-plus-Kodaira-closure-isomorphism}
\Id+\Delta_{\bar\partial_E,p}
\co
\W^{2,p}_\nabla(\Omega^{\bullet,\bullet}(M,E))
\to
\L^p(\Omega^{\bullet,\bullet}(M,E))
\end{equation}
is an isomorphism of Banach spaces. Finally, $-1 \in \rho(\Delta_{\bar\partial_E,p})$ and the graph norm of the operator $\Delta_{\bar\partial_E,p}$ is equivalent to the
Sobolev norm, i.e.
\begin{equation}
\label{eq-Kodaira-W2p-graph-norm}
\norm{u}_{\W^{2,p}_\nabla(\Omega^{\bullet,\bullet}(M,E))}
\approx
\norm{u}_{\L^p(\Omega^{\bullet,\bullet}(M,E))}
+
\norm{\Delta_{\bar\partial_E,p}u}
      _{\L^p(\Omega^{\bullet,\bullet}(M,E))},
\qquad
u\in\dom\Delta_{\bar\partial_E,p}.
\end{equation}
The same conclusions hold for the restriction $\Delta_{\bar\partial_E,0,\bullet}$ acting on $\Omega^{0,\bullet}(M,E)$.
\end{lemma}

\begin{proof}
Note that the operator $\Id+\Delta_{\bar\partial_E}$ is an elliptic differential operator of order $2$. Using Proposition \ref{prop-elliptic-Sobolev}, we obtain a well-defined bounded operator $\Id+\Delta_{\bar\partial_E}
\co
\W^{2,p}_\nabla(\Omega^{\bullet,\bullet}(M,E))
\to
\L^p(\Omega^{\bullet,\bullet}(M,E))$. We first prove that $\Id+\Delta_{\bar\partial_E}$ is injective. Consider some $u \in \W^{2,p}_\nabla(\Omega^{\bullet,\bullet}(M,E))$ such that $(\Id+\Delta_{\bar\partial_E})u=0$. By elliptic regularity \cite[(23.22.8)]{Dieu88}, 
$u$ is smooth. Taking the $\L^2$ inner product with $u$, we get
\begin{align*}
0
&=
\big\la(\Id+\Delta_{\bar\partial_E})u,u
\big\ra_{\L^2(\Omega^{\bullet,\bullet}(M,E))}
=
\norm{u}_{\L^2(\Omega^{\bullet,\bullet}(M,E))}^2
+
\big\la\Delta_{\bar\partial_E}u,u
\big\ra_{\L^2(\Omega^{\bullet,\bullet}(M,E))}
\\
&\ov{\eqref{holo-Laplacian}}{=}
\norm{u}_{\L^2(\Omega^{\bullet,\bullet}(M,E))}^2
+
\norm{\bar\partial_Eu}
      _{\L^2(\Omega^{\bullet,\bullet}(M,E))}^2
+
\norm{\bar\partial_E^*u}
      _{\L^2(\Omega^{\bullet,\bullet}(M,E))}^2.
\end{align*}
Consequently, $u=0$. Hence the operator $\Id+\Delta_{\bar\partial_E}$ is injective.

Now, we prove that $\Id+\Delta_{\bar\partial_E}$ has a closed range. By \cite[(23.30.6)]{Dieu88} and \cite[Theorem 8.6 p.~207]{Gru09}, there exists a pseudo-differential operator $Q \co \Omega^{\bullet,\bullet}(M,E) \to \Omega^{\bullet,\bullet}(M,E)$ of order $-2$ and a smoothing operator $R \co \Omega^{\bullet,\bullet}(M,E) \to \Omega^{\bullet,\bullet}(M,E)$ such that
\begin{equation}
\label{eq:parametrix-elliptic-bis}
\Id 
= Q(\Id+\Delta_{\bar\partial_E}) + R.
\end{equation}
Hence for any $u \in \W^{2,p}_\nabla(\Omega^{\bullet,\bullet}(M,E))$ we have using Proposition \ref{prop-elliptic-Sobolev} and following \cite[p.~193]{LaM89}.
\begin{align}
\MoveEqLeft
\label{eq:elliptic-estimate-P}
\norm{u}_{\W^{2,p}_\nabla(\Omega^{\bullet,\bullet}(M,E))}
\ov{\eqref{eq:parametrix-elliptic-bis}}{=} \norm{Q (\Id+\Delta_{\bar\partial_E}) u+Ru}_{\W^{2,p}_\nabla(\Omega^{\bullet,\bullet}(M,E))} \\
&\lesssim \norm{(\Id+\Delta_{\bar\partial_E}) u}_{\L^p}+\norm{Ru}_{\W^{2,p}_\nabla(\Omega^{\bullet,\bullet}(M,E))}
\lesssim \norm{(\Id+\Delta_{\bar\partial_E}) u}_{\L^p}+\norm{u}_{\L^p}.  \nonumber       
\end{align}
We claim that there exists a constant \(C>0\) such that
\begin{equation}
\label{eq-coercive-estimate-I-plus-Kodaira}
\norm{u}_{\W^{2,p}_\nabla(\Omega^{\bullet,\bullet}(M,E))}
\leq
C
\norm{(\Id+\Delta_{\bar\partial_E})u}_{\L^p(\Omega^{\bullet,\bullet}(M,E))},
\quad
u\in
\W^{2,p}_\nabla(\Omega^{\bullet,\bullet}(M,E)).
\end{equation}
Suppose that it is not the case. There exists a sequence $(u_j)$ in $\W^{2,p}_\nabla(\Omega^{\bullet,\bullet}(M,E))$ such that
\[
\norm{u_j}_{\W^{2,p}_\nabla(\Omega^{\bullet,\bullet}(M,E))}=1
\quad \text{and} \quad
\norm{(\Id+\Delta_{\bar\partial_E})u_j}_{\L^p(\Omega^{\bullet,\bullet}(M,E))} \to 0.
\]
By compactness \eqref{Sobolev-compact-embedding} of the embedding $\W^{2,p}_\nabla(\Omega^{\bullet,\bullet}(M,E)) \hookrightarrow \L^p(\Omega^{\bullet,\bullet}(M,E))$, after passing to a subsequence, we may assume that $(u_j)$ converges in $\L^p(\Omega^{\bullet,\bullet}(M,E))$ to some $u$. Using \eqref{eq:elliptic-estimate-P} applied to $u_j-u_k$, we infer that the sequence $(u_j)$ is Cauchy in $\W^{2,p}_\nabla(\Omega^{\bullet,\bullet}(M,E))$. Since the space $\W^{2,p}_\nabla(\Omega^{\bullet,\bullet}(M,E))$ is complete, there exists $v \in \W^{2,p}_\nabla(\Omega^{\bullet,\bullet}(M,E))$ such that $
u_j \to v
\quad \text{in } \W^{2,p}_\nabla(\Omega^{\bullet,\bullet}(M,E))$. By the continuous embedding $
\W^{2,p}_\nabla(\Omega^{\bullet,\bullet}(M,E))
\hookrightarrow
\L^p(\Omega^{\bullet,\bullet}(M,E))$, we also have $u_j \to v$ in $\L^p(\Omega^{\bullet,\bullet}(M,E))$. On the other hand, by construction of the subsequence,  $u_j \to u$ in $\L^p(\Omega^{\bullet,\bullet}(M,E))$. By uniqueness of the limit in $\L^p(\Omega^{\bullet,\bullet}(M,E))$, we obtain $u=v$. Thus $u_j \to u$ in $\W^{2,p}_\nabla(\Omega^{\bullet,\bullet}(M,E))$. Since $(\Id+\Delta_{\bar\partial_E})u_j \to 0$ in $\L^p(\Omega^{\bullet,\bullet}(M,E))$, we get $(\Id+\Delta_{\bar\partial_E})u=0$. By injectivity, $u=0$, contradicting
\[
\norm{u}_{\W^{2,p}_\nabla(\Omega^{\bullet,\bullet}(M,E))}
=
\lim_j \norm{u_j}_{\W^{2,p}_\nabla(\Omega^{\bullet,\bullet}(M,E))}
=
1.
\]
This proves \eqref{eq-coercive-estimate-I-plus-Kodaira}. Consequently, by \cite[Theorem 2.5 p.~70]{AbA02}, the subspace $\Ran (\Id+\Delta_{\bar\partial_E})$ is closed in the space $\L^p(\Omega^{\bullet,\bullet}(M,E))$.

It remains to prove that the subspace $\Ran (\Id+\Delta_{\bar\partial_E})$ is dense. Let $g \in \L^{p^*}(\Omega^{\bullet,\bullet}(M,E))$ annihilate $\Ran (\Id+\Delta_{\bar\partial_E})$, where $p^*$ is the conjugate exponent of $p$. Then
\[
\big\la g,(\Id+\Delta_{\bar\partial_E})u \big\ra_{\L^{p^*},\L^p}
=0,
\quad
u \in \W^{2,p}_\nabla(\Omega^{\bullet,\bullet}(M,E)).
\]
In particular, this holds for every smooth form $u$. Since $\Id+\Delta_{\bar\partial_E}$ is formally selfadjoint,  we have, by definition of the distribution $(\Id+\Delta_{\bar\partial_E})g$,
\[
\big\la (\Id+\Delta_{\bar\partial_E})g,u\big\ra_{\cal{D}',\cal{D}}
=
\big\la g,(\Id+\Delta_{\bar\partial_E})u\big\ra_{\cal{D}',\cal{D}}
=
\big\la g, (\Id+\Delta_{\bar\partial_E})u \big\ra_{\L^{p^*},\L^p}
=0
\]
for every $u\in\Omega^{\bullet,\bullet}(M,E)$. 
Hence $
(\Id+\Delta_{\bar\partial_E})g=0$ distributionally. By elliptic regularity of $\Id+\Delta_{\bar\partial_E}$ \cite[(23.22.8)]{Dieu88}, $g$ is smooth. Taking the $\L^2$ inner product with $g$, the same positivity argument as previously gives $g=0$. Thus the annihilator of $\Ran (\Id+\Delta_{\bar\partial_E})$ in $\L^{p^*}(\Omega^{\bullet,\bullet}(M,E))$ is trivial. Hence the subspace $\Ran (\Id+\Delta_{\bar\partial_E})$ is dense in $\L^p(\Omega^{\bullet,\bullet}(M,E))$.

Since $\Ran (\Id+\Delta_{\bar\partial_E})$ is both closed and dense in $\L^p(\Omega^{\bullet,\bullet}(M,E))$, it is equal to $\L^p(\Omega^{\bullet,\bullet}(M,E))$. Consequently $\Id+\Delta_{\bar\partial_E}$ is bijective. Since it is a bounded bijection between Banach spaces, its inverse is bounded by \cite[Corollary 1.6.6 p.~44]{Meg98}.

Let $\Delta_{\bar\partial_E,p}$ be the closure of the unbounded operator $\Delta_{\bar\partial_E}$. We first prove that $
\W^{2,p}_\nabla(\Omega^{\bullet,\bullet}(M,E))
\subset
\dom\Delta_{\bar\partial_E,p}$. Let \(u\in\W^{2,p}_\nabla(\Omega^{\bullet,\bullet}(M,E))\). Choose a sequence \((u_j)\) in \(\Omega^{\bullet,\bullet}(M,E)\) such that $u_j\to u$ in \(\W^{2,p}_\nabla(\Omega^{\bullet,\bullet}(M,E))\). By
Proposition~\ref{prop-elliptic-Sobolev}, we have $
\Delta_{\bar\partial_E}u_j
\to
\Delta_{\bar\partial_E}u$ in \(\L^p(\Omega^{\bullet,\bullet}(M,E))\). Hence $u \in \dom \Delta_{\bar\partial_E,p}$ and $
\Delta_{\bar\partial_E,p}u
=
\Delta_{\bar\partial_E}u$.

Conversely, let $u \in \dom\Delta_{\bar\partial_E,p}$. By the definition of the closure, there exists a sequence \((u_j)\) in $\Omega^{\bullet,\bullet}(M,E)$ such that $u_j \to u$ and $\Delta_{\bar\partial_E}u_j
\to
\Delta_{\bar\partial_E,p}u$ in $\L^p(\Omega^{\bullet,\bullet}(M,E))$. Consequently, we have 
$(\Id+\Delta_{\bar\partial_E})u_j
\to
u+\Delta_{\bar\partial_E,p}u$ in \(\L^p(\Omega^{\bullet,\bullet}(M,E))\). Applying the bounded operator $S  \ov{\mathrm{def}}{=}
(\Id+\Delta_{\bar\partial_E})^{-1}
\co
\L^p(\Omega^{\bullet,\bullet}(M,E))
\to
\W^{2,p}_\nabla(\Omega^{\bullet,\bullet}(M,E))$, we obtain
\[
u_j
=
S(\Id+\Delta_{\bar\partial_E})u_j
\to
S(u+\Delta_{\bar\partial_E,p}u)
\]
in $\W^{2,p}_\nabla(\Omega^{\bullet,\bullet}(M,E))$. Since the embedding of $\W^{2,p}_\nabla(\Omega^{\bullet,\bullet}(M,E))$ into $\L^p(\Omega^{\bullet,\bullet}(M,E))$ is continuous and $u_j \to u$ in $\L^p(\Omega^{\bullet,\bullet}(M,E))$, the uniqueness of the limit gives $
u
=S(u+\Delta_{\bar\partial_E,p}u)$. Thus $
u\in\W^{2,p}_\nabla(\Omega^{\bullet,\bullet}(M,E))$. This proves \eqref{eq-domain-Kodaira-W2p}.

It also shows that $\Id+\Delta_{\bar\partial_E,p}$ coincides, under the identification of its domain with the Sobolev space $\W^{2,p}_\nabla(\Omega^{\bullet,\bullet}(M,E))$, with the isomorphism $\Id+\Delta_{\bar\partial_E}$ constructed previously. Hence \eqref{eq-I-plus-Kodaira-closure-isomorphism} holds and $-1 \in \rho(\Delta_{\bar\partial_E,p})$.

Finally, Proposition~\ref{prop-elliptic-Sobolev} gives
\[
\norm{u}_{\L^p(\Omega^{\bullet,\bullet}(M,E))}
+
\norm{\Delta_{\bar\partial_E,p}u}_{\L^p(\Omega^{\bullet,\bullet}(M,E))}
\lesssim
\norm{u}_{\W^{2,p}_\nabla(\Omega^{\bullet,\bullet}(M,E))},
\]
whereas \eqref{eq-coercive-estimate-I-plus-Kodaira} gives
\begin{align*}
\norm{u}_{\W^{2,p}_\nabla(\Omega^{\bullet,\bullet}(M,E))}
&\ov{\eqref{eq-coercive-estimate-I-plus-Kodaira}}{\lesssim}
\norm{(\Id+\Delta_{\bar\partial_E,p})u}_{\L^p(\Omega^{\bullet,\bullet}(M,E))}
\leq
\norm{u}_{\L^p(\Omega^{\bullet,\bullet}(M,E))}
+
\norm{\Delta_{\bar\partial_E,p}u}_{\L^p(\Omega^{\bullet,\bullet}(M,E))}.
\end{align*}
This proves \eqref{eq-Kodaira-W2p-graph-norm}.
\end{proof}

\begin{prop}
\label{prop:closure-kodaira-square-dirac}
Let \(E\) be a Hermitian holomorphic vector bundle of finite rank over a
compact K\"ahler manifold \(M\). Suppose that \(1<p<\infty\). Assume that
the operator \(\D_{E,p}\) is bisectorial. Then
\begin{equation}
\label{useful-sans-fin}
\Delta_{\bar\partial_E,0,\bullet,p}
=
\frac12\D_{E,p}^2.
\end{equation}
\end{prop}

\begin{proof}
We have $\D_E^2=2\Delta_{\bar\partial_E}$. Since $\D_{E,p}$ is bisectorial, the operator $\D_{E,p}^2$ is sectorial by \cite[Proposition 10.6.2 (2) p.~448]{HvNVW18}, hence closed. So $\frac12\D_{E,p}^2$ is a closed extension of the operator $\Delta_{\bar\partial_E}$. Consequently, we have $\Delta_{\bar\partial_E,0,\bullet,p} \subset \frac12\D_{E,p}^2$. 

By Lemma~\ref{lemma-I-plus-Kodaira-isomorphism-W2p}, the operator $\Id_{\L^p(\Omega^{0,\bullet}(M,E))}+\Delta_{\bar\partial_E,0,\bullet,p} \co \dom \Delta_{\bar\partial_E,0,\bullet,p} \to \L^p(\Omega^{0,\bullet}(M,E))$ is surjective. On the other hand, $\frac12\D_{E,p}^2$ is sectorial, and therefore $-1 \in \rho(\frac12\D_{E,p}^2)$. In particular,
\[
\Id_{\L^p(\Omega^{0,\bullet}(M,E))} + \tfrac12\D_{E,p}^2 \co \dom \tfrac12\D_{E,p}^2 \to \L^p(\Omega^{0,\bullet}(M,E))
\]
is injective. Applying Lemma~\ref{lemma-extension-surjective-injective} with $\lambda=1$, $
A
\ov{\mathrm{def}}{=}
\Delta_{\bar\partial_E,0,\bullet,p}$ and $B
\ov{\mathrm{def}}{=}
\frac12\D_{E,p}^2$, we obtain \eqref{useful-sans-fin}.
\end{proof}

\subsection{An $\L^p$-Gaffney-type inequality for the operator $\bar\partial_E$}
\label{sec-Gaffney}

If $M$ is a compact Riemannian manifold of dimension $d$, with exterior derivative $\d \co \Omega^k(M) \to \Omega^{k+1}(M)$ where $0 \leq k \leq d$, recall that the classical Gaffney inequality \cite{Gaf51} \cite{Gaf55} 
\begin{equation*}
\norm{\omega}_{\W^{1,2}(\Omega^k(M))} 
\lesssim \norm{\omega}_{\L^2(\Omega^k(M))} + \norm{\d \omega}_{\L^2(\Omega^{k+1}(M))} + \norm{\d^* \omega}_{\L^2(\Omega^{k-1}(M))}
\end{equation*}
holds for any smooth $k$-differential form $\omega \in \Omega^k(M)$. We refer to the introduction of \cite{Li22}, \cite{CDK12}, \cite{CDS18}, \cite{Mor66} and \cite[p.~66]{Sch95} and references therein for more information and extensions. This inequality was extended to $\L^p$-spaces for $1 \leq p < \infty$ by Scott in \cite{Sco95} (although stated differently; see also \cite{ISS99}) which provides the extension
\begin{equation}
\label{Gaffney-Lp}
\norm{\omega}_{\W^{1,p}(\Omega^k(M))} 
\lesssim \norm{\omega}_{\L^p(\Omega^k(M))}+\norm{\d \omega}_{\L^p(\Omega^{k+1}(M))} + \norm{\d^* \omega}_{\L^p(\Omega^{k-1}(M))}.
\end{equation}
We also refer to \cite[Theorem A p.~1430]{Li22} for a recent proof.

We prove a complex analogue of this inequality. Our approach relies on the following observation.

\begin{thm}
\label{thm:gaffney-elliptic-Lp}
Consider a connection $\nabla \co \Omega^0(M,E) \to \Omega^1(M,E)$ on a Hermitian vector bundle $E$ of finite rank over a compact Riemannian manifold $M$. Let $D \co \Omega^{0}(M,E) \to \Omega^{0}(M,E)$ be a first order elliptic differential operator. Suppose that $1 < p < \infty$. Then for any $u \in \Omega^{0}(M,E)$, we have the estimate
\begin{equation}
\label{eq:order0-bound-elliptic}
\norm{\nabla u}_{\L^p(\Omega^{1}(M,E))}
\lesssim
\norm{u}_{\L^p(M,E)}+\norm{Du}_{\L^p(M,E)}.
\end{equation}
\end{thm}

\begin{proof}
Since $D$ is a first order elliptic differential operator on the compact manifold $M$, there exists, by \cite[(23.30.6)]{Dieu88} and \cite[Theorem 8.6 p.~207]{Gru09}, a parametrix, i.e., a pseudo-differential operator $Q \co \Omega^{0}(M,E) \to \Omega^{0}(M,E)$ of order $-1$ and smoothing operators $R_1,R_2 \co \Omega^{0}(M,E) \to \Omega^{0}(M,E)$ such that
\begin{equation}
\label{eq:parametrix-elliptic}
QD
= \Id - R_1
\quad \text{and} \quad
DQ 
= \Id - R_2.
\end{equation}
Fix $u \in \Omega^{0}(M,E)$. From the first identity in \eqref{eq:parametrix-elliptic} we obtain
$u
= Q(Du)+R_1u$. Applying the connection $\nabla \co \Omega^0(M,E) \to \Omega^1(M,E)$ gives
\begin{equation}
\label{eq:nabla-splitting-elliptic}
\nabla u
=
(\nabla \circ Q)(Du)
+(\nabla \circ R_1)u.
\end{equation}
Since $\nabla$ is a differential operator of order 1 by \cite[Remark (a) p.~70]{Wel08}, the operator $\nabla \circ Q$ is a pseudo-differential operator of order $0$ by composition \cite[17.13.5 p.~298]{Dieu72}. Hence it extends by Proposition \ref{prop-elliptic-Sobolev} 
to a bounded operator $
\L^p(M,E) \to
\L^p(\Omega^{1}(M,E))$. Consequently, we have the estimate
\begin{equation}
\label{eq:order0-bound-elliptic-aux}
\norm{(\nabla \circ Q)(Du)}_{\L^p(\Omega^{1}(M,E))}
\lesssim
\norm{Du}_{\L^p(M,E)}.
\end{equation}
Moreover, the operator $\nabla \circ R_1$ is smoothing since a composition is smoothing when one of the operators is
smoothing by \cite[p.~261]{Dieu88}. Hence it extends to a bounded operator $
\L^p(M,E)
\to
\L^p\big(\Omega^{1}(M,E)\big)$. In particular, we have
\begin{equation}
\label{eq:smoothing-bound-elliptic}
\norm{(\nabla \circ R_1)u}_{\L^p(\Omega^{1}(M,E))}
\lesssim
\norm{u}_{\L^p(M,E)}.
\end{equation}
Combining \eqref{eq:nabla-splitting-elliptic}, \eqref{eq:order0-bound-elliptic-aux} and \eqref{eq:smoothing-bound-elliptic} yields 
$$
\norm{\nabla u}_{\L^p(\Omega^{1}(M,E))}
\lesssim
\norm{Du}_{\L^p(M,E)}+\norm{u}_{\L^p(M,E)}.
$$
This proves \eqref{eq:order0-bound-elliptic}.
\end{proof}

\begin{remark} \normalfont
If the connection $\nabla \co \Omega^0(M,E) \to \Omega^1(M,E)$ is Hermitian, this result implies the estimate $
\norm{u}_{\W_\nabla^{1,p}(M,E)}
\lesssim \norm{u}_{\W^{\{D\},p}(M,E)}$ for any $u \in \Omega^0(M,E)$, 
where $\W^{\{D\},p}(M,E)$ is the $\{D\}$-Sobolev space of $\L^p$-sections of \cite[Definition I.18 p.~29]{Gun17} (the original notation is $\Gamma_{\W^{\{D\},p}}(M,E)$) defined by the norm 
$$
\norm{u}_{\W^{\{D\},p}(M,E)} 
\ov{\mathrm{def}}{=} \big(\norm{u}_{\L^p(M,E)}^p+\norm{Du}_{\L^p(M,E)}^p\big)^{\frac{1}{p}}.
$$ 
\end{remark}

We deduce an $\L^p$-Gaffney inequality for the operator $\bar\partial_E$.

\begin{cor}
\label{cor:gaffney-dolbeault-Lp}
Let $E$ be a Hermitian holomorphic vector bundle of finite rank over a compact Hermitian manifold $M$ of complex dimension $n$. Suppose that $1 < p < \infty$ and let $q \in \{0,\ldots,n\}$. Consider a connection $\nabla$ on the vector bundle $\Lambda^{0,\bullet}\mathrm{T}^*M \ot E$. For any form $\omega \in \Omega^{0,q}(M,E)$, we have the estimate
\begin{equation}
\label{eq:order0-bound-Dolbeault}
\norm{\nabla \omega}_{\L^p(\Omega^{1}(M,\Lambda^{0,q}\mathrm{T}^*M \ot E))}
\lesssim \norm{\omega}_{\L^p(\Omega^{0,q}(M,E))}
+\norm{\bar\partial_E \omega}_{\L^p(\Omega^{0,q+1}(M,E))}
+\norm{\bar\partial_E^*\omega}_{\L^p(\Omega^{0,q-1}(M,E))}.
\end{equation}
\end{cor}

\begin{proof}
Using Theorem \ref{thm:gaffney-elliptic-Lp} with the vector bundle $\Lambda^{0,\bullet}\mathrm{T}^*M \ot E$ instead of $E$ and the operator $\D_E$ in place of $D$, we obtain for any form $\omega \in \Omega^{0,q}(M,E)$ the estimate
\begin{align*}
\MoveEqLeft
\norm{\nabla \omega}_{\L^p(\Omega^{1}(M,\Lambda^{0,q}\mathrm{T}^*M \ot E))}
\ov{\eqref{eq:order0-bound-elliptic}}{\lesssim} \norm{\omega}_{\L^p(\Omega^{0,q}(M,E))} + \norm{\D_E \omega}_{\L^p(\Omega^{0,\bullet}(M,E))} \\       
&\ov{\eqref{Def-Dirac-DE}}{=}
\norm{\omega}_{\L^p(\Omega^{0,q}(M,E))}+\sqrt{2}\norm{\bar\partial_E \omega+\bar\partial_E^*\omega}_{\L^p(\Omega^{0,\bullet}(M,E))} \\
&\lesssim
\norm{\omega}_{\L^p(\Omega^{0,q}(M,E))}
+\norm{\bar\partial_E \omega}_{\L^p(\Omega^{0,q+1}(M,E))}
+\norm{\bar\partial_E^*\omega}_{\L^p(\Omega^{0,q-1}(M,E))}.
\end{align*}
\end{proof}

Now, we introduce convenient Sobolev spaces adapted to the Dolbeault--Dirac operator. More precisely, if $E$ is a Hermitian holomorphic vector bundle over a compact Hermitian manifold $M$ and if $1 \leq p < \infty$, we introduce the completion $\W^{1,p}_{\bar\partial_E}(\Omega^{r,q}(M,E))$ of the space $\Omega^{r,q}(M,E)$ equipped with the norm
\begin{equation}
\label{eq:def-W1p-dolbeault-norm}
\norm{\omega}_{\W^{1,p}_{\bar\partial_E}(\Omega^{r,q}(M,E))}
\ov{\mathrm{def}}{=}
\norm{\omega}_{\L^p(\Omega^{r,q}(M,E))}
+\norm{\bar\partial_E\omega}_{\L^p(\Omega^{r,q+1}(M,E))}
+\norm{\bar\partial_E^*\omega}_{\L^p(\Omega^{r,q-1}(M,E))}.
\end{equation}
If $p=2$, this space was investigated in the paper \cite[Section 4]{Pio23}, under the notation $\W^{1,2}_2$. Since the norm \eqref{eq:def-W1p-dolbeault-norm} dominates the $\L^p$-norm, the identity map on $\Omega^{r,q}(M,E)$ extends to a continuous injective map $\W^{1,p}_{\bar\partial_E}(\Omega^{r,q}(M,E)) \hookrightarrow \L^p(\Omega^{r,q}(M,E))$.  In the sequel, we identify $\W^{1,p}_{\bar\partial_E}(\Omega^{r,q}(M,E))$ with its image in $\L^p(\Omega^{r,q}(M,E))$. Under this identification, the space coincides with a space studied in \cite[Chapter I, Section 3]{Gun17}.

Now, we obtain the following identification result.

\begin{cor}
\label{cor:Gaffney-dolbeault-Lp}
Let $E$ be a Hermitian holomorphic vector bundle of finite rank over a compact Hermitian manifold $M$. Suppose that $1 < p < \infty$.  Consider a Hermitian connection $\nabla$ on the vector bundle $\Lambda^{0,\bullet}\mathrm{T}^*M \ot E$. The Sobolev space $\W^{1,p}_{\bar\partial_E}(\Omega^{0,q}(M,E))$ coincides with the Sobolev space $\W^{1,p}_\nabla(\Omega^{0,q}(M,E))$, with equivalent norms.
\end{cor}

\begin{proof}
For any $\omega \in \Omega^{0,q}(M,E)$, we have
\begin{align*}
\MoveEqLeft
\norm{\omega}_{\W_\nabla^{1,p}(\Omega^{0,q}(M,E))}
\ov{\eqref{Sobolev-norm}}{=}
\norm{\omega}_{\L^p(\Omega^{0,q}(M,E))}
+\norm{\nabla \omega}_{\L^p(\Omega^{1}(M,\Lambda^{0,q}\mathrm{T}^*M \ot E))}  \\
&\ov{\eqref{eq:order0-bound-Dolbeault}}{\lesssim}
\norm{\omega}_{\L^p(\Omega^{0,q}(M,E))}
+\norm{\bar\partial_E \omega}_{\L^p(\Omega^{0,q+1}(M,E))}
+\norm{\bar\partial_E^*\omega}_{\L^p(\Omega^{0,q-1}(M,E))}.
\end{align*}
The converse estimate
\[
\norm{\omega}_{\L^p(\Omega^{0,q}(M,E))}
+\norm{\bar\partial_E \omega}_{\L^p(\Omega^{0,q+1}(M,E))}
+\norm{\bar\partial_E^*\omega}_{\L^p(\Omega^{0,q-1}(M,E))}
\lesssim
\norm{\omega}_{\W^{1,p}_\nabla(\Omega^{0,q}(M,E))}
\]
is immediate by Proposition \ref{prop-elliptic-Sobolev}, applied with $k=1$ and $m=1$, since $\bar\partial_E$ and $\bar\partial_E^*$ are first order differential operators. Hence, we have $\W^{1,p}_{\bar\partial_E}(\Omega^{0,q}(M,E))=\W^{1,p}_\nabla(\Omega^{0,q}(M,E))$ with equivalent norms.
\end{proof}

\begin{remark} \normalfont
\label{remark-Gaffney}
A similar proof  using the Hodge-Dirac operator $D \ov{\mathrm{def}}{=} \d+\d^*$ on a compact Riemannian manifold $M$ provides a proof of the $\L^p$-Gaffney inequality \eqref{Gaffney-Lp}.
\end{remark}

Following the distributional framework for first order differential operators on manifolds, we say that a form $\omega \in \L^1(\Omega^{0,q}(M,E))$ admits a distributional $\bar\partial_E$ if there exists a  form $\eta \in \L^1(\Omega^{0,q+1}(M,E))$ such that $\la \eta, \varphi\ra_{\L^2(\Omega^{0,q+1}(M,E))}  = \la \omega,\bar\partial_E^*\varphi \ra_{\L^2(\Omega^{0,q}(M,E))}$ for every differential form $\varphi \in \Omega^{0,q+1}(M,E)$. In this case, $\eta$ is uniquely determined and we let $
\bar\partial_E \omega
\ov{\mathrm{def}}{=} \eta$. We have
\begin{equation}
\label{int-by-part-1}
\big\la \bar\partial_E \omega, \varphi \big\ra_{\L^2(\Omega^{0,q+1}(M,E))}
=
\big\la \omega, \bar\partial_E^* \varphi \big\ra_{\L^2(\Omega^{0,q}(M,E))}, \quad \varphi \in \Omega^{0,q+1}(M,E).
\end{equation}
In that case, $\bar\partial_E \omega$ is uniquely determined. Similarly, we say that $\omega$ admits a distributional $\bar\partial_E^*$ if there exists a form $\eta \in \L^1(\Omega^{0,q-1}(M,E))$ such that 
$$
\la \eta, \psi \ra_{\L^2(\Omega^{0,q-1}(M,E))}  = \big\la \omega, \bar\partial_E \psi \big\ra_{\L^2(\Omega^{0,q}(M,E))}
$$ 
for every differential form $\psi \in \Omega^{0,q-1}(M,E)$. In this case, $\eta$ is uniquely determined and we let $\bar\partial_E^* \omega
\ov{\mathrm{def}}{=} \eta$. We have
\begin{equation}
\label{int-by-part-2}
\big\la \bar\partial_E^* \omega, \psi \big\ra_{\L^2(\Omega^{0,q-1}(M,E))}
=
\big\la \omega, \bar\partial_E \psi \big\ra_{\L^2(\Omega^{0,q}(M,E))}, \quad \psi \in \Omega^{0,q-1}(M,E).
\end{equation}
In that case, $\bar\partial_E^* \omega$ is uniquely determined. 
Moreover, we can view $\bar\partial_E$ and $\bar\partial_E^*$ as densely defined operators on $\L^p(\Omega^{r,\bullet}(M,E))$ with initial domain $\Omega^{r,\bullet}(M,E)$. By \cite[Theorem 5.28 p.~168]{Kat76}, these operators are closable. So we can consider their closures
\[
\bar\partial_{E,p}\co \dom \bar\partial_{E,p}\subset \L^p(\Omega^{r,\bullet}(M,E))\to \L^p(\Omega^{r,\bullet+1}(M,E))
\]
and
\[
\bar\partial_{E,p}^*\co \dom \bar\partial_{E,p}^*\subset \L^p(\Omega^{r,\bullet}(M,E))\to \L^p(\Omega^{r,\bullet-1}(M,E)).
\]

Now, we give an explicit description of the domains of these operators.

\begin{prop}
\label{prop-domain-dbar-dbarstar-DEp}
Let $E$ be a Hermitian holomorphic vector bundle of finite rank over a compact K\"ahler manifold $M$. Suppose that $1 < p < \infty$. The closures $\bar\partial_{E,p}$ and $\bar\partial_{E,p}^*$ satisfy
\[
\dom \bar\partial_{E,p}
=
\{u \in \L^p(\Omega^{r,\bullet}(M,E)) :
\bar\partial_E u \text{ exists in the distributional sense and belongs to }
\L^p\},
\]
\[
\dom \bar\partial_{E,p}^*
=
\{u \in \L^p(\Omega^{r,\bullet}(M,E)) :
\bar\partial_E^* u \text{ exists in the distributional sense and belongs to }
\L^p\},
\]
and
\begin{equation}
\label{Def-Sobolev-dolbeault}
\W_{\bar\partial_E}^{1,p}(\Omega^{r,\bullet}(M,E))
=
\dom \bar\partial_{E,p}
\cap
\dom \bar\partial_{E,p}^*.
\end{equation}
Moreover, under these identifications, the closed operators $\bar\partial_{E,p}$ and $\bar\partial_{E,p}^*$ act as the corresponding distributional differential operators. 
\end{prop}

\begin{proof}
By definition of the closure, if $u \in \dom \bar\partial_{E,p}$, then there exists a sequence $(u_n)$ in $\Omega^{r,\bullet}(M,E)$ such that $
u_n \to u$ and $\bar\partial_E u_n \to \bar\partial_{E,p}u$ in $\L^p(\Omega^{r,\bullet}(M,E))$. Passing to the limit in the distributional identity
\[
\la \bar\partial_E u_n,\varphi \ra
=
\la u_n,\bar\partial_E^*\varphi \ra,
\qquad \varphi \in \Omega^{r,\bullet}(M,E),
\]
gives $
\la \bar\partial_{E,p}u,\varphi \ra
=
\la u,\bar\partial_E^*\varphi \ra$. Thus $\bar\partial_E u$ exists in the distributional sense and equals $\bar\partial_{E,p}u$. Therefore
\[
\dom \bar\partial_{E,p}
\subset
\{u \in \L^p(\Omega^{r,\bullet}(M,E)) :
\bar\partial_E u \text{ exists distributionally and belongs to }
\L^p(\Omega^{r,\bullet}(M,E))\}.
\]
Conversely, assume that $u \in \L^p(\Omega^{r,\bullet}(M,E))$ and that $\bar\partial_E u$ exists in the distributional sense and belongs to $\L^p(\Omega^{r,\bullet}(M,E))$. By the generalization \cite[Theorem I.19 p.~14]{Gun17} of Meyers--Serrin's theorem for first-order differential operators with smooth coefficients on compact manifolds without boundary, there exists a sequence $(u_n)$ in $\Omega^{r,\bullet}(M,E)$ such that $
u_n \to u$ and $\bar\partial_E u_n \to \bar\partial_E u$ in $\L^p(\Omega^{r,\bullet}(M,E))$. Hence $u \in \dom \bar\partial_{E,p}$ and $\bar\partial_{E,p}u=\bar\partial_E u$. This proves
\[
\dom \bar\partial_{E,p}
=
\{u \in \L^p(\Omega^{r,\bullet}(M,E)) :
\bar\partial_E u \text{ exists distributionally and belongs to }
\L^p(\Omega^{r,\bullet}(M,E))\}.
\]
The same argument can be applied to the first-order differential operator
$\bar\partial_E^*$. It remains to identify the intersection of these two domains with
$\W_{\bar\partial_E}^{1,p}(\Omega^{r,\bullet}(M,E))$. Consider some $
u \in \dom \bar\partial_{E,p}
\cap
\dom \bar\partial_{E,p}^*$. By the preceding domain identifications, both $\bar\partial_E u$ and
$\bar\partial_E^*u$ exist in the distributional sense and belong to $\L^p(\Omega^{r,\bullet}(M,E))$. Now, we apply \cite[Theorem I.19 p.~14]{Gun17} to the finite family of first-order differential operators $\{\bar\partial_E,\bar\partial_E^*\}$. There exists a sequence $(u_n)$ in $\Omega^{r,\bullet}(M,E)$ such that $u_n \to u$, $\bar\partial_E u_n \to \bar\partial_E u$, $\bar\partial_E^* u_n \to \bar\partial_E^* u$ in $\L^p(\Omega^{r,\bullet}(M,E))$. Therefore $
u \in
\W_{\bar\partial_E}^{1,p}(\Omega^{r,\bullet}(M,E))$. The converse inclusion is immediate from the definition of $\W_{\bar\partial_E}^{1,p}(\Omega^{r,\bullet}(M,E))$ and the closedness of
$\bar\partial_{E,p}$ and $\bar\partial_{E,p}^*$. Thus 
$$
\W_{\bar\partial_E}^{1,p}(\Omega^{r,\bullet}(M,E))
=
\dom \bar\partial_{E,p}
\cap
\dom \bar\partial_{E,p}^*.
$$ 
\end{proof}

\subsection{Green's operator and $\L^p$-Dolbeault Hodge decompositions}
\label{sec-Green-Lp-Hodge}

Let $E$ be a Hermitian vector bundle of finite rank over a compact Riemannian manifold $M$. Consider a selfadjoint elliptic differential operator $A \co \C^\infty(M,E) \to \C^\infty(M,E)$ of order $m$. If $P \co \C^\infty(M,E) \to \C^\infty(M,E)$ is the restriction of the orthogonal projection from the Hilbert space $\L^2(M,E)$ on the subspace $\ker A$ then by \cite[Theorem 3.15 p.~15]{BDIP02} (see also \cite[Proposition 18 p.~81]{Mel06}) there exists a unique pseudo-differential operator $G \co \C^\infty(M,E) \to \C^\infty(M,E)$ such that
\begin{equation*}
GA
= \Id_{\C^\infty(M,E)} - P
\quad \text{and} \quad 
AG
= \Id_{\C^\infty(M,E)} - P
\end{equation*}
In this case, the order of $G$ is $-m$ and it is called Green's operator of $A$. We warn the reader that, in some references (see for instance \cite[Section 23.31.11]{Dieu82}), the term Green’s operator is used to denote the resolvent $R(\lambda,A)$, where $\lambda$ does not belong to the spectrum of $A$ on $\L^2(M,E)$. Applying this result to $\Delta_{\bar\partial_E}$ and to the Bergman projection $P=P_{r,q} \co \Omega^{r,q}(M,E) \to \Omega^{r,q}(M,E)$ (which is the restriction of the orthogonal projection from the Hilbert space $\L^2(\Omega^{r,q}(M,E))$ on the subspace $\cal{H}^{r,q}(M,E) \ov{\mathrm{def}}{=} \{\omega \in \Omega^{r,q}(M,E) : \Delta_{\bar\partial_E} \omega = 0\}$), we obtain the following proposition. We refer also to \cite[p.~133]{MaM07}. 

\begin{prop}
\label{prop-existence-Green}
Let $E$ be a Hermitian holomorphic vector bundle of finite rank over a compact Hermitian manifold $M$ of complex dimension $n$. For any integers $r,q \in \{0,\ldots,n\}$, there exists a unique pseudo-differential operator $G=G_{r,q} \co \Omega^{r,q}(M,E) \to \Omega^{r,q}(M,E)$ of order $-2$ such that
\begin{equation}
\label{parametrix}
G\Delta_{\bar\partial_E} 
= \Id - P
\quad \text{and} \quad
\Delta_{\bar\partial_E}G
= \Id - P,
\end{equation}
where $P$ is the Bergman projection on the space of $\Delta_{\bar\partial_E}$-harmonic forms.
\end{prop}

Let $E$ be a holomorphic Hermitian bundle of finite rank over a compact Hermitian manifold $M$ of complex dimension $n$. For any integers $r,q \in \{0,\ldots,n\}$, we consider the space 
\begin{equation*}
\cal{H}^{r,q}(M,E)
\ov{\mathrm{def}}{=} \{\omega \in \Omega^{r,q}(M,E) : \Delta_{\bar\partial_E} \omega = 0\}
\end{equation*}
of smooth $\Delta_{\bar\partial_E}$-harmonic forms of bi-degree $(r,q)$, where $\Delta_{\bar\partial_E}$ is defined in \eqref{holo-Laplacian}. By \cite[(7.2) p.~352]{Dem12} or \cite[Corollary 4.1.14 p.~170]{Huy05},  the Dolbeault cohomology group $\H^{r,q}(M,E)$ is finite-dimensional and there exists a canonical isomorphism
\begin{equation}
\label{harmonic-space-and-Dolbeault}
\H^{r,q}(M,E)
\cong \cal{H}^{r,q}(M,E).
\end{equation}
In particular, every cohomology class in the Dolbeault cohomology group $\H^{r,q}(M,E)$ admits a unique harmonic representative. For any differential form $\omega \in \Omega^{r,q}(M,E)$ we have by \cite[Lemma 9.32 p.~277]{Lee24} the equality $\Delta_{\bar\partial_E} \omega = 0$ if and only if $\bar\partial_E\omega = 0$ and $\bar\partial_E^* \omega = 0$.
By \cite[Theorem 7.1 p.~352]{Dem12} or \cite[Theorem 4.1.13 p.~170]{Huy05}, we have the orthogonal decomposition
\begin{equation}
\label{eq:Hodge-decomp}
\Omega^{r,q}(M,E)
=
\bar\partial_E\Omega^{r,q-1}(M,E)
\oplus
\bar\partial_E^*\Omega^{r,q+1}(M,E)
\oplus
\cal{H}^{r,q}(M,E).
\end{equation}
Recall that $\Delta_{\bar\partial_E,p}$ denote the closure on $\L^p(\Omega^{\bullet,\bullet}(M,E))$ of the Kodaira Laplacian $\Delta_{\bar\partial_E}$ initially defined on $\Omega^{\bullet,\bullet}(M,E)$. We introduce its kernel
\begin{equation*}
\cal{H}^{r,q}_p(M,E)
\ov{\mathrm{def}}{=} \{\omega \in \dom \Delta_{\bar\partial_E,p} : \Delta_{\bar\partial_E,p} \omega = 0\}.
\end{equation*}

\begin{prop}
\label{prop-harmonic-Lp}
Let $E$ be a Hermitian holomorphic vector bundle of finite rank over a compact Hermitian manifold $M$ of complex dimension $n$. Suppose that $1 < p < \infty$. For any integers $r,q \in \{0,\ldots,n\}$, we have $
\cal H^{r,q}_p(M,E)
=\cal H^{r,q}(M,E)$. 
\end{prop}

\begin{proof}
It suffices to prove the inclusion $\cal H^{r,q}_p(M,E)\subset \cal H^{r,q}(M,E)$. Let $\omega \in \cal H^{r,q}_p(M,E)$. By definition, we have $\omega\in \dom \Delta_{\bar\partial_E,p}$ and the equality $\Delta_{\bar\partial_E,p}\omega=0$ in $\L^p(\Omega^{r,q}(M,E))$, hence in the sense of distributions. By Proposition \ref{prop-existence-Green}, there exists a pseudo-differential operator $G$ of order $-2$ such that $P \ov{\eqref{parametrix}}{=}\Id-G\Delta_{\bar\partial_E}$. Since $G$ is a pseudo-differential operator, it extends continuously to the space of distributional sections by \cite[(23.28.8)]{Dieu88}. Hence the identity $P=\Id-G\Delta_{\bar\partial_E}$ also holds in the distributional sense. Applying this identity to $\omega$ in the distributional sense gives $P(\omega)=\omega$. According to \cite[p.~31]{MaM07} the Bergman projection $P$ is a smoothing operator (its Schwartz kernel is smooth), hence $\omega=P(\omega)$ belongs to the space $\Omega^{r,q}(M,E)$.
\end{proof}

Now, we prove a uniqueness result for the next $\L^p$-Hodge decomposition, where we use the usual convention $\Omega^{r,-1}(M,E)=\Omega^{r,n+1}(M,E)=\{0\}$ for any integer $r \in \{0,\ldots,n\}$. 

\begin{lemma}
\label{lem:uniqueness-dolbeault}
Let $E$ be a Hermitian holomorphic vector bundle of finite rank over a compact K\"ahler manifold $M$ of complex dimension $n$. Suppose that $1 < p < \infty$. Consider some integers $r,q \in \{0,\ldots,n\}$. If $a \in \W_{\bar\partial_E}^{1,p}(\Omega^{r,q-1}(M,E))$, $b \in \W^{1,p}_{\bar\partial_E}(\Omega^{r,q+1}(M,E))$ and $h \in \cal{H}^{r,q}(M,E)$ satisfy $
\bar\partial_{E,p} a + \bar\partial_{E,p}^* b + h 
= 0$ in $\L^p(\Omega^{r,q}(M,E))$, then $\bar\partial_{E,p} a=0$, $\bar\partial_{E,p}^*b=0$ and $h=0$.
\end{lemma}

\begin{proof}
Let $\varphi \in \Omega^{r,q}(M,E)$. By \cite[Theorem 4.1.13 p.~170]{Huy05}, we can write its Hodge decomposition
\begin{equation}
\label{Hodge-45}
\varphi 
= \bar\partial_E \eta + \bar\partial_E^* \theta + \tau
\end{equation}
for some smooth forms $\eta \in \Omega^{r,q-1}(M,E)$, $\theta \in \Omega^{r,q+1}(M,E)$ and $\tau \in \cal{H}^{r,q}(M,E)$. Now, with Proposition \ref{prop-domain-dbar-dbarstar-DEp}, we obtain
\begin{equation}
\label{inter-3566}
\big\la \bar\partial_{E,p}^*b, \bar\partial_E\eta\big\ra
\ov{\eqref{int-by-part-2}}{=} \big\la b, \bar\partial_E\bar\partial_E\eta\big\ra 
\ov{\eqref{square-derivation}}{=} 0
\quad \text{and} \quad
\big\la \bar\partial_{E,p} a, \bar\partial_E^*\theta \big\ra
\ov{\eqref{int-by-part-1}}{=} \big\la a, \bar\partial_E^*\bar\partial_E^*\theta\big\ra 
= 0.
\end{equation}
Since $\tau$ is harmonic, we have $\bar\partial_E^*\tau = 0$ and $\bar\partial_E \tau = 0$. We infer that
\begin{equation}
\label{tau-ortho}
\big\la \bar\partial_{E,p} a,\tau\big\ra
\ov{\eqref{int-by-part-1}}{=}
\big\la a,\bar\partial_E^*\tau\big\ra
=0 
\quad \text{and} \quad
\big\la \bar\partial_{E,p}^*b,\tau\big\ra
\ov{\eqref{int-by-part-2}}{=}
\big\la b,\bar\partial_E\tau\big\ra
=0.
\end{equation}
Since $h$ is harmonic, we have $\bar\partial_E^*h = 0$ and $\bar\partial_E h = 0$. We deduce that
\begin{equation}
\label{inter-456}
\big\la h, \bar\partial_E\eta\big\ra 
= \big\la \bar\partial_E^*h,\eta\big\ra 
= 0
\quad \text{and} \quad
\big\la h, \bar\partial_E^*\theta\big\ra 
= \big\la \bar\partial_E h,\theta\big\ra 
= 0.
\end{equation}
We obtain
\begin{align*}
\MoveEqLeft
\big\la \bar\partial_{E,p} a, \varphi\big\ra
\ov{\eqref{Hodge-45}}{=}
\big\la \bar\partial_{E,p} a, \bar\partial_E\eta\big\ra + \big\la \bar\partial_{E,p} a, \bar\partial_E^*\theta\big\ra 
+ \la \bar\partial_{E,p} a, \tau\big\ra 
\ov{\eqref{inter-3566} \eqref{tau-ortho}}{=} \big\la \bar\partial_{E,p} a, \bar\partial_E\eta\big\ra.        
\end{align*}
On the other hand, pairing the identity $\bar\partial_E a + \bar\partial_E^* b + h = 0$ with $\bar\partial_E\eta$ gives
\[
\big\la \bar\partial_{E,p} a, \bar\partial_E\eta\big\ra
=
-\big\la \bar\partial_{E,p}^*b, \bar\partial_E\eta\big\ra - \big\la h, \bar\partial_E\eta\big\ra
\ov{\eqref{inter-3566}\eqref{inter-456}}{=}
0.
\]
Hence $\big\la \bar\partial_{E,p} a,\varphi \big\ra = 0$ for all $\varphi \in \Omega^{r,q}(M,E)$. We conclude that $\bar\partial_{E,p} a=0$  in $\L^p(\Omega^{r,q}(M,E))$. The same argument yields $\bar\partial_{E,p}^*b=0$. Indeed, the preceding identities give
\[
\big\la \bar\partial_{E,p}^*b,\varphi\big\ra
\ov{\eqref{Hodge-45}}{=}
\big\la \bar\partial_{E,p}^*b, \bar\partial_E\eta\big\ra + \big\la \bar\partial_{E,p}^*b, \bar\partial_E^*\theta\big\ra 
+ \la \bar\partial_{E,p}^*b, \tau\big\ra 
\ov{\eqref{inter-3566}\eqref{tau-ortho}}{=}
\big\la \bar\partial_{E,p}^*b,\bar\partial_E^*\theta\big\ra,
\]
and pairing $\bar\partial_{E,p} a+\bar\partial_{E,p}^*b+h=0$ with $\bar\partial_E^*\theta$ yields 
$$
\big\la \bar\partial_{E,p}^*b,\bar\partial_E^*\theta\big\ra
=
-\big\la \bar\partial_{E,p} a,\bar\partial_E^*\theta\big\ra
-\big\la h,\bar\partial_E^*\theta\big\ra
\ov{\eqref{inter-3566}\eqref{inter-456}}{=} 0.
$$ 
Thus $\bar\partial_{E,p}^*b=0$. Then $h=0$ follows immediately.
\end{proof}

The next result gives a strong $\L^p$-Hodge decomposition, by analogy with this notion for Riemannian manifolds described in \cite[p.~115]{Li11}. We refer to \cite[Section A.3]{Mar05} for a nice discussion of $\L^2$-Hodge decomposition. 

\begin{thm}
\label{thm:Lp-dolbeault-hodge}
Let $E$ be a Hermitian holomorphic vector bundle of finite rank over a compact K\"ahler manifold $M$ of complex dimension $n$. Suppose that $1 < p < \infty$. Let $r,q \in \{0,\ldots,n\}$. The Bergman projection $P$ onto the subspace $\cal{H}^{r,q}(M,E)$ extends to a bounded projection $P_p \co \L^p(\Omega^{r,q}(M,E)) \to \L^p(\Omega^{r,q}(M,E))$ with range $\cal H^{r,q}(M,E)$ and the maps $\bar\partial_E \bar\partial_E^*G$ and $\bar\partial_E^*\bar\partial_E G$ induce bounded operators on $\L^p(\Omega^{r,q}(M,E))$. Moreover, we have the topological direct sum decomposition
\begin{equation}
\label{eq:Lp-Hodge-decomp}
\L^p(\Omega^{r,q}(M,E))
=
\bar\partial_{E,p} \W^{1,p}_{\bar\partial_E}(\Omega^{r,q-1}(M,E))
\oplus
\bar\partial_{E,p}^* \W^{1,p}_{\bar\partial_E}(\Omega^{r,q+1}(M,E))
\oplus
\cal{H}^{r,q}(M,E).
\end{equation}
Moreover, for any form $\omega \in \L^p(\Omega^{r,q}(M,E))$ the decomposition is given explicitly by
\begin{equation}
\label{la-der}
\omega 
=\bar\partial_E\bar\partial_E^* G \omega
+
\bar\partial_E^*\bar\partial_E G \omega
+
P_p \omega.
\end{equation}
Finally, the maps $\bar\partial_E^* G$ and $\bar\partial_E G$ are bounded from the Banach space $\L^p(\Omega^{r,q}(M,E))$ into the space $\W^{1,p}_{\bar\partial_E}(\Omega^{r,q-1}(M,E))$ and from $\L^p(\Omega^{r,q}(M,E))$ into $\W^{1,p}_{\bar\partial_E}(\Omega^{r,q+1}(M,E))$.
\end{thm}

\begin{proof}
We start by proving the case $r=0$. Consider some form $\omega \in \Omega^{0,q}(M,E)$. By Proposition \ref{prop-existence-Green}, we have
\[
\bar\partial_E \bar\partial_E^* G \omega + \bar\partial_E^*\bar\partial_E G \omega + P \omega
\ov{\eqref{holo-Laplacian}}{=} \Delta_{\bar\partial_E} G \omega +P \omega
\ov{\eqref{parametrix}}{=} \omega.
\]
Note that by composition \cite[17.13.5 p.~298]{Dieu72}, the linear maps $\bar\partial_E \bar\partial_E^*G$ and $\bar\partial_E^*\bar\partial_E G$ are pseudo-differential operators of order $0$. By Proposition \ref{prop-elliptic-Sobolev}, these operators induce bounded operators on the Banach space $\L^p(\Omega^{0,q}(M,E))$. Since
\begin{equation}
\label{identity-788}
P \omega
=\omega-\bar\partial_E \bar\partial_E^* G \omega - \bar\partial_E^*\bar\partial_E G \omega,
\end{equation}
we conclude that the Bergman projection $P$ induces a bounded projection $P_p \co \L^p(\Omega^{0,q}(M,E)) \to \L^p(\Omega^{0,q}(M,E))$, as a sum of three bounded operators (alternatively, one can argue more directly using that $P$ is smoothing). 

Now, we prove that $\Ran P_p = \cal H^{0,q}(M,E)$. For any $\omega \in \Omega^{0,q}(M,E)$ we have $\Delta_{\bar\partial_E}P\omega=0$. Since $P_p$ is the continuous extension of $P$ to $\L^p(\Omega^{0,q}(M,E))$ and $\Delta_{\bar\partial_E,p}$ is the closure of $\Delta_{\bar\partial_E}$, it follows that for any $\omega \in \L^p(\Omega^{0,q}(M,E))$ we have $\Delta_{\bar\partial_E,p} P_p \omega = 0$. Indeed, fix some $\omega \in \L^p(\Omega^{0,q}(M,E))$ and choose a sequence $(\omega_n)$ of elements in $\Omega^{0,q}(M,E)$ with $\omega_n \to \omega$ in $\L^p(\Omega^{0,q}(M,E))$. Then $P\omega_n \to P_p\omega$ in $\L^p(\Omega^{0,q}(M,E))$ and $\Delta_{\bar\partial_E}(P\omega_n)=0$ for all integer $n$, so by closedness,  
we obtain $\Delta_{\bar\partial_E,p}(P_p\omega)=0$. By Proposition \ref{prop-harmonic-Lp}, we obtain $P_p\omega \in \cal H^{0,q}(M,E)$.
Hence $\Ran P_p \subset \cal{H}^{0,q}(M,E)$. Conversely, if $h \in \cal{H}^{0,q}(M,E)$, then $P(h)=h$, hence $P_p(h)=h$. Therefore $\cal{H}^{0,q}(M,E) \subset \Ran P_p$ and finally $\Ran P_p =\cal H^{0,q}(M,E)$. 

Note that by composition \cite[17.13.5 p.~298]{Dieu72}, the linear maps $\bar\partial_E^*G$ and $\bar\partial_E G$ are pseudo-differential operators of order $-1$. By Proposition \ref{prop-elliptic-Sobolev} and Corollary \ref{cor:Gaffney-dolbeault-Lp}, we obtain two bounded linear operators $\bar\partial_E^* G \co \L^p(\Omega^{0,q}(M,E)) \to \W^{1,p}_{\bar\partial_E}(\Omega^{0,q-1}(M,E))$ and $\bar\partial_E G \co \L^p(\Omega^{0,q}(M,E)) \to \W^{1,p}_{\bar\partial_E}(\Omega^{0,q+1}(M,E))$. If $\omega \in \L^p(\Omega^{0,q}(M,E))$, we deduce that
\[
\bar\partial_E(\bar\partial_E^*G \omega) \in \bar\partial_{E,p} \W^{1,p}_{\bar\partial_E}(\Omega^{0,q-1}(M,E))
\quad \text{and} \quad
\bar\partial_{E,p}^*(\bar\partial_E G \omega) \in \bar\partial_{E,p}^* \W^{1,p}_{\bar\partial_E}(\Omega^{0,q+1}(M,E)).
\]
Since the subspace $\Omega^{0,q}(M,E)$ is dense in the Banach space $\L^p(\Omega^{0,q}(M,E))$ and since the operators $\bar\partial_E\bar\partial_E^*G$, $\bar\partial_E^*\bar\partial_EG$ and $P_p$ are bounded on $\L^p(\Omega^{0,q}(M,E))$, the identity \eqref{identity-788} extends to all $\omega \in \L^p(\Omega^{0,q}(M,E))$ by continuity, yielding \eqref{la-der}. Thus every $\omega \in \L^p(\Omega^{0,q}(M,E))$ belongs to the sum of the three stated subspaces. 

To see that the sum is direct, suppose that $\bar\partial_{E,p} a + \bar\partial_{E,p}^* b + h 
= 0$ with $a \in \W^{1,p}_{\bar\partial_E}(\Omega^{0,q-1}(M,E))$, $b \in \W^{1,p}_{\bar\partial_E}(\Omega^{0,q+1}(M,E))$ and $h \in \cal{H}^{0,q}(M,E)$. Lemma \ref{lem:uniqueness-dolbeault} yields $\bar\partial_{E,p} a=\bar\partial_{E,p}^*b=h=0$, so the sum is direct.

Now, we reduce the general case to the previous case. Let $r \in \{0,\ldots,n\}$ and set $
E_r \ov{\mathrm{def}}{=} \Lambda^{r,0}\mathrm{T}^*M \ot E$. Then $E_r$ is a Hermitian holomorphic vector bundle and we have canonical identifications $
\Omega^{r,q}(M,E) \simeq \Omega^{0,q}(M,E_r)$ and $\L^p(\Omega^{r,q}(M,E)) \simeq \L^p(\Omega^{0,q}(M,E_r))$. Under these identifications, the Dolbeault operators correspond, i.e.
\[
\bar\partial_{E_r}\big|_{\Omega^{0,q}(M,E_r)}
=
\bar\partial_E\big|_{\Omega^{r,q}(M,E)},
\quad \text{and} \quad
\bar\partial_{E_r}^*\big|_{\Omega^{0,q}(M,E_r)}
=
\bar\partial_E^*\big|_{\Omega^{r,q}(M,E)}.
\]
Consequently, the first part of the proof applied to the bundle $E_r$ yields the decomposition
\[
\L^p(\Omega^{r,q}(M,E))
=
\bar\partial_{E,p} \W^{1,p}_{\bar\partial_E}(\Omega^{r,q-1}(M,E))
\oplus
\bar\partial_{E,p}^* \W^{1,p}_{\bar\partial_E}(\Omega^{r,q+1}(M,E))
\oplus
\cal H^{r,q}(M,E),
\]
with the explicit formula
\[
\omega
=
\bar\partial_E\bar\partial_E^* G\omega
+
\bar\partial_E^*\bar\partial_E G \omega
+
P_{p}\omega,
\qquad
\omega \in \L^p(\Omega^{r,q}(M,E)),
\]
where $G$ is the Green operator of $\Delta_{\bar\partial_E}$ on $\Omega^{r,q}(M,E)$ and $P_{p}$ is the bounded extension to $\L^p(\Omega^{r,q}(M,E))$ of the orthogonal projection onto $\cal H^{r,q}(M,E)$.
\end{proof}

\subsection{Domain of the Dolbeault--Dirac operator $\D_{E,p}$}
\label{sec-domain-dolbeault}

In this section,  we identify the domain of the closure  $\D_{E,p}$ of the Dolbeault--Dirac operator under standard analytic assumptions. If $E$ is a Hermitian holomorphic vector bundle over a compact Hermitian manifold $M$ and if $1 \leq p < \infty$, recall that the Sobolev space $\W^{1,p}_{\bar\partial_E}(\Omega^{0,q}(M,E))$ is defined in \eqref{eq:def-W1p-dolbeault-norm}.

\begin{prop}
\label{prop:graph-norm-dolbeault-Lp}
Let $E$ be a Hermitian holomorphic vector bundle of finite rank over a compact Hermitian manifold $M$. Suppose that $1 < p < \infty$. Assume that the following conditions hold.
\begin{enumerate}
\item The Dolbeault--Dirac operator $\D_{E,p}$ is bisectorial on the Banach space $\L^p(\Omega^{0,\bullet}(M,E))$ and admits a bounded $\H^\infty(\Sigma_\theta^\bi)$ functional calculus for some angle $\theta \in (0,\frac{\pi}{2})$.

\item The reduced Dolbeault--Riesz transforms, initially defined on $\Delta_{\bar\partial_E,p}^{\frac{1}{2}}(\Omega^{0,\bullet}(M,E))$ by
\begin{equation}
\label{Dol-Riesz}
R_{\bar\partial_E,0}\bigl(\Delta_{\bar\partial_E,p}^{\frac{1}{2}}\omega\bigr)
\ov{\mathrm{def}}{=}
\bar\partial_E\omega,
\quad \text{and} \quad
R_{\bar\partial_E^*,0}\bigl(\Delta_{\bar\partial_E,p}^{\frac{1}{2}}\omega\bigr)
\ov{\mathrm{def}}{=}
\bar\partial_E^*\omega,
\end{equation}
for $\omega \in \Omega^{0,\bullet}(M,E)$, extend to bounded operators
$R_{\bar\partial_E},R_{\bar\partial_E^*}
\co
\ovl{\Ran\Delta_{\bar\partial_E,p}^{\frac{1}{2}}}
\to
\L^p(\Omega^{0,\bullet}(M,E))$.
\end{enumerate}
Then we have
\begin{equation}
\label{eq:domain-D-dolbeault-precise}
\dom \D_{E,p}
=
\W^{1,p}_{\bar\partial_E}(\Omega^{0,\bullet}(M,E))
\end{equation}
and the graph norm of $\D_{E,p}$ is equivalent to the Sobolev norm \eqref{eq:def-W1p-dolbeault-norm}, namely
\begin{equation}
\label{eq:graph-norm-dolbeault-equivalence}
\norm{\omega}_{\W^{1,p}_{\bar\partial_E}(\Omega^{0,\bullet}(M,E))}
\approx_p
\norm{\omega}_{\L^p(\Omega^{0,\bullet}(M,E))}+\norm{\D_{E,p}\omega}_{\L^p(\Omega^{0,\bullet}(M,E))},
\quad
\omega \in \dom \D_{E,p}.
\end{equation}
\end{prop}

\begin{proof}
Assume that the operator $\D_{E,p}$ is bisectorial on the Banach space $\L^p(\Omega^{0,\bullet}(M,E))$ and admits a bounded $\H^\infty(\Sigma_\theta^\bi)$ functional calculus for some $\theta \in (0,\pi)$. The abstract theory of bisectorial operators yields that the operator $\Delta_{\bar\partial_E,p} \ov{\mathrm{def}}{=} \frac{1}{2}\D_{E,p}^2$ is sectorial and that $\dom \D_{E,p}
=
\dom \Delta_{\bar\partial_E,p}^{\frac12}$ together with the norm equivalence
\begin{equation}
\label{eq:equiv-D-Delta-half-proof-dolbeault}
\norm{\D_{E,p}\omega}_{\L^p(\Omega^{0,\bullet}(M,E))}
\approx
\bnorm{\Delta_{\bar\partial_E,p}^{\frac12}\omega}_{\L^p(\Omega^{0,\bullet}(M,E))},
\qquad
\omega \in \dom \D_{E,p}.
\end{equation}
For any differential form $\omega \in \Omega^{0,\bullet}(M,E)$, the triangle inequality gives
\begin{align*}
\MoveEqLeft
\norm{\omega}_{\L^p(\Omega^{0,\bullet}(M,E))}
+
\norm{\D_E\omega}_{\L^p(\Omega^{0,\bullet}(M,E))}
\\
&\ov{\eqref{Def-Dirac-DE}}{=}
\norm{\omega}_{\L^p(\Omega^{0,\bullet}(M,E))}
+
\sqrt{2}\norm{\bar\partial_E\omega+\bar\partial_E^*\omega}_{\L^p(\Omega^{0,\bullet}(M,E))}
\\
&\lesssim
\norm{\omega}_{\L^p(\Omega^{0,\bullet}(M,E))}
+
\norm{\bar\partial_E\omega}_{\L^p(\Omega^{0,\bullet}(M,E))}
+
\norm{\bar\partial_E^*\omega}_{\L^p(\Omega^{0,\bullet}(M,E))}
\ov{\eqref{eq:def-W1p-dolbeault-norm}}{=}
\norm{\omega}_{\W^{1,p}_{\bar\partial_E}(\Omega^{0,\bullet}(M,E))}.
\end{align*}

Now, we check that the operators in \eqref{Dol-Riesz} are well-defined. Suppose that $
\Delta_{\bar\partial_E,p}^{\frac12}\omega_1
=
\Delta_{\bar\partial_E,p}^{\frac12}\omega_2$ for some smooth forms $\omega_1,\omega_2 \in \Omega^{0,\bullet}(M,E)$. Set $
\omega
\ov{\mathrm{def}}{=}
\omega_1-\omega_2$. Then $
\Delta_{\bar\partial_E,p}^{\frac12}\omega=0$. Using \eqref{eq:equiv-D-Delta-half-proof-dolbeault}, we obtain $\D_{E,p}\omega=0$.  Since $\omega$ is smooth, this means that $
\D_E\omega=0$. Applying $\D_E$ once more and using the identity $
\D_E^2=2\Delta_{\bar\partial_E}$ on smooth forms, we obtain $
\Delta_{\bar\partial_E}\omega=0$. By \cite[Lemma 9.32 p.~277]{Lee24}, we infer that $
\bar\partial_E\omega=0$ and $
\bar\partial_E^*\omega=0$. We conclude that $\bar\partial_E\omega_1=\bar\partial_E\omega_2$ and $
\bar\partial_E^*\omega_1=\bar\partial_E^*\omega_2$.

For any differential form $\omega \in \Omega^{0,\bullet}(M,E)$ we write
\begin{align}
\MoveEqLeft
\label{eq:dolbeault-estimate-Riesz}
\norm{\bar\partial_E\omega}_{\L^p(\Omega^{0,\bullet}(M,E))}
+
\norm{\bar\partial_E^*\omega}_{\L^p(\Omega^{0,\bullet}(M,E))}
\\
&\ov{\eqref{Dol-Riesz}}{=}
\bnorm{R_{\bar\partial_E}\bigl(\Delta_{\bar\partial_E}^{\frac12}\omega\bigr)}_{\L^p(\Omega^{0,\bullet}(M,E))}
+
\bnorm{R_{\bar\partial_E^*}\bigl(\Delta_{\bar\partial_E}^{\frac12}\omega\bigr)}_{\L^p(\Omega^{0,\bullet}(M,E))}
\nonumber
\\
&\leq
\bigl(
\bnorm{R_{\bar\partial_E}}_{\B(\L^p(\Omega^{0,\bullet}(M,E)))}
+
\bnorm{R_{\bar\partial_E^*}}_{\B(\L^p(\Omega^{0,\bullet}(M,E)))}
\bigr)
\bnorm{\Delta_{\bar\partial_E}^{\frac12}\omega}_{\L^p(\Omega^{0,\bullet}(M,E))}
\nonumber
\\
&\ov{\eqref{eq:equiv-D-Delta-half-proof-dolbeault}}{\lesssim}
\norm{\D_E\omega}_{\L^p(\Omega^{0,\bullet}(M,E))}.
\nonumber
\end{align}
We infer that
\begin{align*}
\norm{\omega}_{\W^{1,p}_{\bar\partial_E}(\Omega^{0,\bullet}(M,E))}
&\ov{\eqref{eq:def-W1p-dolbeault-norm}}{=}
\norm{\omega}_{\L^p(\Omega^{0,\bullet}(M,E))}
+\bnorm{\bar\partial_E\omega}_{\L^p(\Omega^{0,\bullet}(M,E))}
+\bnorm{\bar\partial_E^*\omega}_{\L^p(\Omega^{0,\bullet}(M,E))}
\\
&\ov{\eqref{eq:dolbeault-estimate-Riesz}}{\lesssim}
\norm{\omega}_{\L^p(\Omega^{0,\bullet}(M,E))}
+
\norm{\D_E\omega}_{\L^p(\Omega^{0,\bullet}(M,E))}.
\end{align*}
We conclude the proof by using standard arguments.
\end{proof}

\begin{cor}
\label{prop-DEp-sum-dbar-dbarstar}
Under the assumptions of Proposition~\ref{prop:graph-norm-dolbeault-Lp}, we have $
\D_{E,p}
=
\sqrt{2}\big(\bar\partial_{E,p}+\bar\partial_{E,p}^*\big)$ 
as operators on $\L^p(\Omega^{0,\bullet}(M,E))$ with common domain
$\dom \D_{E,p}=\dom \bar\partial_{E,p}\cap\dom \bar\partial_{E,p}^*$.
\end{cor}

\begin{proof}
By Proposition~\ref{prop:graph-norm-dolbeault-Lp} and Proposition~\ref{prop-domain-dbar-dbarstar-DEp}, we have $
\dom \D_{E,p}
\ov{\eqref{eq:domain-D-dolbeault-precise}}{=}
\W_{\bar\partial_E}^{1,p}(\Omega^{0,\bullet}(M,E))
=
\dom \bar\partial_{E,p}
\cap
\dom \bar\partial_{E,p}^*$. Let $u \in \dom \D_{E,p}$. Since $
\dom \D_{E,p}
\ov{\eqref{eq:domain-D-dolbeault-precise}}{=}
\W_{\bar\partial_E}^{1,p}(\Omega^{0,\bullet}(M,E))$, 
there exists a sequence $(u_n)$ in $\Omega^{0,\bullet}(M,E)$ such that $
u_n \to u$, $
\bar\partial_E u_n \to \bar\partial_{E,p}u$, 
$\bar\partial_E^* u_n \to \bar\partial_{E,p}^*u$
in $\L^p(\Omega^{0,\bullet}(M,E))$. Hence
\[
\D_E u_n
\ov{\eqref{Def-Dirac-DE-bis}}{=}
\sqrt{2}\big(\bar\partial_E u_n+\bar\partial_E^*u_n\big)
\to
\sqrt{2}\big(\bar\partial_{E,p}u+\bar\partial_{E,p}^*u\big)
\]
in $\L^p(\Omega^{0,\bullet}(M,E))$. Since $\D_{E,p}$ is the closure of the operator $\D_E$, we conclude that $
\D_{E,p}u
=
\sqrt{2}\big(\bar\partial_{E,p}u+\bar\partial_{E,p}^*u\big)$. The equality of operators follows from the equality of domains.
\end{proof}

\subsection{Boundedness of the $\mathrm{H}^\infty$ functional calculus of the Dolbeault--Dirac operator}
\label{sec-Hinfty}

The following result is a variant of \cite[Theorem 5.3 p.~634]{Li10}. Note that if the reduced Dolbeault--Riesz transforms introduced in \eqref{Dol-Riesz} are bounded, we can extend it by setting them equal to zero on
$\ker\Delta_{\bar\partial_E,p}$ in the decomposition $
\L^p(\Omega^{0,\bullet}(M,E))
\ov{\eqref{decompo-reflexive}}{=}
\ker\Delta_{\bar\partial_E,p}
\oplus
\ovl{\Ran\Delta_{\bar\partial_E,p}}$. 

\begin{thm}
\label{thm-Riesz-bounded}
Let $E$ be a Hermitian holomorphic vector bundle of finite rank over a compact Hermitian manifold $M$. Suppose that $1 < p < \infty$. Then the Dolbeault--Riesz transforms $\bar{\partial}_E (\Delta_{\bar\partial_E})^{-\frac{1}{2}}$ and $\bar{\partial}_{E}^* (\Delta_{\bar\partial_E})^{-\frac{1}{2}}$ are bounded on the Banach space $\L^p(\Omega^{\bullet,\bullet}(M,E))$. 
\end{thm}

\begin{proof}
Observe that the map $\bar{\partial}_E$ is a differential operator of order $1$ and recall that $\Delta_{\bar\partial_E}$ is an elliptic differential operator of order 2. In particular by \cite[Theorem 1 p.~124]{Bur68} \cite{See67} \cite{Shu01}, we see that $(\Delta_{\bar\partial_E})^{-\frac{1}{2}}$ is a pseudo-differential operator of order $-1$ (defined as 0 on the subspace of $\Delta_{\bar\partial_E}$-harmonic forms). So, by the composition rule \cite[17.13.5 p.~298]{Dieu72}, $\bar{\partial}_E (\Delta_{\bar\partial_E})^{-\frac{1}{2}}$ is a pseudo-differential operator of order $0$. Hence, by Proposition \ref{prop-elliptic-Sobolev}, it extends to a bounded operator on the space $\L^p(\Omega^{\bullet,\bullet}(M,E))$. A similar reasoning shows that the Riesz transform $\bar{\partial}_{E}^* (\Delta_{\bar\partial_E})^{-\frac{1}{2}}$ is also bounded on the Banach space $\L^p(\Omega^{\bullet,\bullet}(M,E))$.
\end{proof}

The following result of semigroup domination is proved in \cite[Theorem 4.3 p.~632]{Li10}. Here $\W_{r,q}(E)$ is the Weitzenb\"ock curvature operator.

\begin{thm}
\label{th-domination-Li}
Let $E$ be a holomorphic Hermitian vector bundle of finite rank over a complete K\"ahler manifold $M$ of complex dimension $n$. Suppose that $\W_{r,q}(E) \geq -a$, where $a \in \R$ is a constant for some integers $r,q \in \{0,\ldots,n\}$. For any form $\omega \in \Omega^{r,q}(M,E)$ and any $x \in M$, we have
\begin{equation}
\label{domination-43}
|\e^{-t \Delta_{\bar\partial_E}}\omega(x)|
\leq \e^{a t}\e^{t \Delta} |\omega|(x), \quad t > 0.
\end{equation}
\end{thm}

We need the following variant of \cite[Lemma 3.3]{Arh26b}.

\begin{lemma}
\label{lemma-kernel-domination-Dolbeault}
Let $E$ be a holomorphic Hermitian vector bundle of finite rank over a Hermitian manifold $M$ of complex dimension $n$. Let $r,q \in \{0,\ldots,n\}$. Assume that for some $a \in \R$ we have for any $\eta \in \L^2(\Omega^{r,q}(M,E))$ the estimate
\begin{equation}
\label{domin-bis-Dolbeault}
\bigl|\e^{-t\Delta_{\bar\partial_E}}\eta\bigr|
\leq \e^{at} \e^{t\Delta}(|\eta|), \quad t > 0.
\end{equation}
Let $p_t^{\Delta_{\bar\partial_E}}(x,y)$ and $p_t(x,y)$ be the smooth integral kernels of the operators $\e^{-t\Delta_{\bar\partial_E}}$ and $\e^{t\Delta}$. Then
\[
\norm{p_t^{\Delta_{\bar\partial_E}}(x,y)}
\leq \e^{at} p_t(x,y),\qquad t > 0,\ x,y\in M.
\]
\end{lemma}

\begin{proof}
For any $\eta \in \L^2(\Omega^{r,q}(M,E))$ we deduce that
\begin{align*}
\MoveEqLeft
\bigl|(\e^{-t\Delta_{\bar\partial_E}}\eta)(x)\bigr|
=
\left|\int_M p_t^{\Delta_{\bar\partial_E}}(x,z)\eta(z)\d\mu_g(z) \right|
\leq
\int_M \norm{p_t^{\Delta_{\bar\partial_E}}(x,z)}\,|\eta(z)| \d\mu_g(z).
\end{align*}
In a similar way, we have the equality
\[
(\e^{t\Delta}|\eta|)(x)
=
\int_M p_t(x,z)\,|\eta(z)| \d\mu_g(z),
\]
where $p_t(x,z) \geq 0$.

Let $y \in M$, $t>0$ and let $v \in \Lambda^{r,q}\mathrm{T}_y^*M \ot E_y$ be a unit vector. Choose a smooth section $\sigma \in \Omega^{r,q}(M,E)$ satisfying $\sigma(y)=v$. Such a section can be constructed by taking a local trivialization of the bundle $\Lambda^{r,q}\mathrm{T}^*M \ot E$ on a coordinate chart $U$ around $y$, defining a constant local section equal to $v$, in this trivialization, multiplying by a cutoff function equal to $1$ around $y$, and extending by zero outside the coordinate neighborhood.

Let $(\rho_n)$ be a sequence of positive smooth functions supported in the geodesic balls $B(y,\frac{1}{n})$ and satisfying $
\int_M \rho_n(z) \d\mu_g(z) 
= 1$. Thus $(\rho_n)$ forms an approximate identity centered at $y$. Define the sequence $(\eta_n)$ of test forms $
\eta_n \ov{\mathrm{def}}{=} \rho_n\,\sigma \in \Omega^{r,q}(M,E)$. For any integer $n$, applying the domination estimate \eqref{domin-bis-Dolbeault} to $\eta_n$, we obtain 
\begin{align*}
\MoveEqLeft
\left|\int_M p_t^{\Delta_{\bar\partial_E}}(x,z)\eta_n(z)\d\mu_g(z)\right|
=
\bigl|(\e^{-t\Delta_{\bar\partial_E}}\eta_n)(x)\bigr|
\ov{\eqref{domin-bis-Dolbeault}}{\leq}
\e^{at} (\e^{t\Delta}|\eta_n|)(x) \\
&=
\e^{at}\int_M p_t(x,z)\,|\eta_n(z)| \d\mu_g(z).
\end{align*}
Passing to the limit as $n \to \infty$, using the continuity of $z\mapsto p_t^{\Delta_{\bar\partial_E}}(x,z)$ and $z \mapsto p_t(x,z)$ together with the approximation properties of $(\rho_n)$ and  the identity $\sigma(y)=v$, yield
\[
|p_t^{\Delta_{\bar\partial_E}}(x,y)v|
\leq
\e^{at}p_t(x,y)|v|
=
\e^{at}p_t(x,y).
\]
Finally, taking the supremum over all unit vectors $v \in \Lambda^{r,q}\mathrm{T}_y^*M \ot E_y$ gives
\[
\bnorm{p_t^{\Delta_{\bar\partial_E}}(x,y)}
\leq
\e^{at}p_t(x,y).
\]
Since both kernels are smooth, the estimate holds pointwise for every $x,y\in M$.
\end{proof}

If $E$ is a smooth Hermitian vector bundle of finite rank over a compact Riemannian manifold $M$, we can introduce the tensor bundle $E \boxtimes E^*$ over $M \times M$ with fiber
\[
(E \boxtimes E^*)_{(x,y)} 
= E_x \ot E_y^* 
= \Hom(E_y,E_x), \quad x,y \in M.
\]
Further information can be found in \cite[p.~147]{BaC17}, \cite[(23.4.4)]{Dieu88} and \cite[p.~74]{BGV04}. If $K \co M \times M \to E \boxtimes E^*$ is a continuous section then the function $M \times M  \to [0,\infty)$, $(x,y) \mapsto \norm{K(x,y)}_{\Hom(E_y,E_x)}$ is continuous. Furthermore, for any $f \in \L^1(M,E)$ and any $x \in M$, we can define
\begin{equation}
\label{integral-operator}
(T_K f)(x) 
\ov{\mathrm{def}}{=} \int_M K(x,y) f(y) \d\mu_g(y),
\end{equation}
where the integral is a Bochner integral in the finite-dimensional space $E_x$. 
It is well-known (see e.g.~\cite[Proposition 4.4]{Arh26b} for a proof) that $T_K$ extends uniquely to a bounded operator $T_K \co \L^1(M,E) \to \L^\infty(M,E)$, and its operator norm is given by
\begin{equation}
\label{Dunford}
\norm{T_K}_{\L^1(M,E) \to \L^\infty(M,E)}
=
\sup_{(x,y) \in M \times M} \norm{K(x,y)}_{\Hom(E_y,E_x)}.
\end{equation}
It is a generalization of Dunford--Pettis theorem \cite[p.~528]{Rob91}. Now, we establish some Gaussian estimates. 

\begin{prop}
\label{prop:Gq-compact-Dolbeault}
Consider a holomorphic Hermitian vector bundle $E$ of finite rank over a compact K\"ahler manifold $M$ of complex dimension $n$. Denote by $p_t^{\Delta_{\bar\partial_E}}(x,y)$ the smooth integral kernel of the heat semigroup $\e^{-t\Delta_{\bar\partial_E}}$ acting on $\L^2(\Omega^{r,q}(M,E))$. Then there exist constants $C,c>0$ such that, for all $x,y \in M$,
\begin{equation}
\label{eq:Gq-compact-Dolbeault}
\norm{p_t^{\Delta_{\bar\partial_E}}(x,y)}
\leq
\frac{C}{V(x,\sqrt t)}
\exp\Bigl(-c\frac{\dist(x,y)^2}{t}\Bigr), \quad t > 0,
\end{equation}
where $\norm{\cdot}$ is the operator norm on $\Hom(\Lambda^{r,q}\mathrm{T}_y^*M \ot E_y,\Lambda^{r,q}\mathrm{T}_x^*M \ot E_x)$ and $V(x,r)$ is the volume of the geodesic ball $B(x,r)$.
\end{prop}

\begin{proof}
According to \cite[Corollary 5.3.5 p.~142]{Hsu02} the heat kernel $p_t(x,y)$ of the operator $\e^{t\Delta}$ acting on functions satisfies an estimate of the form
$$
p_t(x,y) 
\lesssim \frac{1}{t^{\frac{d}{2}}}, \quad x,y \in M, 0<t<1.
$$
Using \cite[Exercise 16.5 p.~424]{Gri09} and \cite[Theorem 1.1 p.~35]{Gri97}, this yields a Gaussian upper bound, i.e.~there exists $c>0$ such that
\begin{equation}
\label{eq:G0-compact}
p_t(x,y) 
\lesssim \frac{1}{V(x,\sqrt{t})}\exp\Bigl(-c\frac{\dist(x,y)^2}{t}\Bigr), \quad x,y \in M, 0<t<1.
\end{equation}
The compactness of the manifold $M$ gives the existence of a constant $a_{r,q} \geq 0$ such that $\W_{r,q}(E) \geq -a_{r,q}$. Theorem \ref{th-domination-Li} provides the domination estimate
\begin{equation}
\label{eq:domination-compact}
\bigl|\e^{-t\Delta_{\bar\partial_E}}\omega\bigr|
\ov{\eqref{domination-43}}{\leq}
\e^{a_{r,q} t}\e^{t\Delta}(|\omega|),
\qquad t > 0, \omega \in \L^2(\Omega^{r,q}(M,E)).
\end{equation}
Using Lemma \ref{lemma-kernel-domination-Dolbeault}, we obtain the pointwise kernel domination
\begin{equation}
\label{eq:kernel-domination}
\norm{p_t^{\Delta_{\bar\partial_E}}(x,y)}
\leq
\e^{a_{r,q} t}\,p_t(x,y),
\qquad x,y\in M,t > 0.
\end{equation}
Fix $t \in(0,1]$. We have $\e^{a_{r,q} t}\leq \e^{a_{r,q}}$. Combining \eqref{eq:G0-compact} and \eqref{eq:kernel-domination}, we obtain \eqref{eq:Gq-compact-Dolbeault} if $t \in (0,1]$.

It remains to consider large times. Let $P \co \L^2(\Omega^{r,q}(M,E)) \to \L^2(\Omega^{r,q}(M,E))$ be the orthogonal projection onto the finite-dimensional space $\cal{H}^{r,q}(M,E)$ of $\Delta_{\bar\partial_E}$-harmonic $(r,q)$-forms. Then $\e^{-t\Delta_{\bar\partial_E}}=\e^{-t\Delta_{\bar\partial_E}}P+\e^{-t\Delta_{\bar\partial_E}}(\Id-P)$. Note that $\Delta_{\bar\partial_E}$ is an elliptic pseudo-differential operator of order 2 with a principal symbol of bijective type. We deduce by \cite[(23.35.2)]{Dieu88} that its spectrum is discrete, entirely composed of eigenvalues. Hence, on the subspace $(\Id-P)\L^2(\Omega^{r,q}(M,E))$ the spectrum of $\Delta_{\bar\partial_E}$ is contained in $[\lambda_{r,q},\infty)$ for some $\lambda_{r,q} > 0$. Now, for any $t \geq 2$, spectral theory provides the estimate
\begin{align}
\MoveEqLeft
\label{inter98}
\bnorm{\e^{-(t-2)\Delta_{\bar\partial_E}}(\Id-P)}_{\L^2(\Omega^{r,q}(M,E)) \to \L^2(\Omega^{r,q}(M,E))}
\leq \e^{-\lambda_{r,q}(t-2)} 
\lesssim \e^{-\lambda_{r,q} t}.        
\end{align}
For a fixed $x \in M$, Cauchy--Schwarz inequality gives for any form $\omega \in \Omega^{r,q}(M,E)$
\begin{align*}
\MoveEqLeft
\norm{e^{-\Delta_{\bar\partial_E}}\omega(x)}_{\Lambda^{r,q}\mathrm{T}_x^*M \ot E_x}
\ov{\eqref{integral-operator}}{=} \norm{\int_M p_1^{\Delta_{\bar\partial_E}}(x,y)\omega(y) \d\mu_g(y) }_{\Lambda^{r,q}\mathrm{T}_x^*M \ot E_x} \\
&\leq \int_M \norm{p_1^{\Delta_{\bar\partial_E}}(x,y)} \norm{\omega(y)}_{_{\Lambda^{r,q}\mathrm{T}_y^*M \ot E_y}}
\d\mu_g(y) \\
&\ov{\eqref{Lp-norm-vector-bundle}}{\leq} \left( \int_M \norm{p_1^{\Delta_{\bar\partial_E}}(x,y)}^2 \d\mu_g(y) \right)^{\frac{1}{2}} \norm{\omega}_{\L^2(\Omega^{r,q}(M,E))}.
\end{align*} 
Since $p_1^{\Delta_{\bar\partial_E}}$ is smooth on the compact manifold $M \times M$, the quantity $$
\sup_{x \in M} \bigg( \int_M \norm{p_1^{\Delta_{\bar\partial_E}}(x,y)}^2 \d\mu_g(y)
\bigg)^{\frac{1}{2}}
$$ 
is finite. Thus $\e^{-\Delta_{\bar\partial_E}}$ is bounded from $\L^2(\Omega^{r,q}(M,E))$ into $\L^\infty(\Omega^{r,q}(M,E))$. By duality, using the symmetry of $\Delta_{\bar\partial_E}$, we have a bounded operator $\e^{-\Delta_{\bar\partial_E}} \co \L^1(\Omega^{r,q}(M,E)) \to \L^2(\Omega^{r,q}(M,E))$. 
For any $t \geq 2$, we obtain
\begin{align*}
\MoveEqLeft
\bnorm{\e^{-t\Delta_{\bar\partial_E}}(\Id-P)}_{\L^1(\Omega^{r,q}(M,E)) \to \L^\infty(\Omega^{r,q}(M,E))} \\
&\leq \bnorm{\e^{-\Delta_{\bar\partial_E}}}_{\L^2 \to \L^\infty} \bnorm{\e^{-(t-2)\Delta_{\bar\partial_E}}(\Id-P)}_{\L^2 \to \L^2} \bnorm{\e^{-\Delta_{\bar\partial_E}}}_{\L^1 \to \L^2} 
\ov{\eqref{inter98}}{\lesssim} \e^{-\lambda_{r,q} t}. 
\end{align*}
Furthermore, the map $P=\e^{-t\Delta_{\bar\partial_E}}P \co \L^1(\Omega^{r,q}(M,E)) \to \L^\infty(\Omega^{r,q}(M,E))$ is bounded by \eqref{Dunford}. Indeed, by \cite[(24.48.7) p.~321]{Dieu82} or \cite[Remark 1.4.3 p.~31]{MaM07}, it is a smoothing integral operator. We infer that the operator $\e^{-t\Delta_{\bar\partial_E}} \co \L^1(\Omega^{r,q}(M,E)) \to \L^\infty(\Omega^{r,q}(M,E))$ is bounded with norm $\leq C$ for any $t \geq 2$. By \eqref{Dunford}, this implies that $\bnorm{p_t^{\Delta_{\bar\partial_E}}(x,y)} \leq C$ for any $t \geq 2$ and any $x,y \in M$. For $t \geq 2$, compactness of $M$ gives $V(x,\sqrt t) \simeq \Vol(M)$ and finally
$$
\exp(-c\diam(M)^2) 
\leq \exp\Bigl(-c\frac{\dist(x,y)^2}{t}\Bigr).
$$ 
\end{proof}

Now, we obtain a fundamental property of functional calculus of the Kodaira Laplacian relying on the extrapolation result Proposition \ref{prop:Duong-Robinson-vector-bundle}.

\begin{thm}
\label{Th-funct-Kodaira}
Consider a holomorphic Hermitian vector bundle $E$ of finite rank over a compact K\"ahler manifold $M$. Suppose that $1 < p < \infty$. The closure $\Delta_{\bar\partial_E,p}$ of the Kodaira Laplacian $\Delta_{\bar\partial_E}$ is sectorial on the Banach space $\L^p(\Omega^{\bullet,\bullet}(M,E))$ and admits a bounded $\H^\infty(\Sigma_\theta)$ functional calculus for some angle $\theta \in (0,\frac{\pi}{2})$.
\end{thm}

\begin{proof}
By Proposition \ref{prop:Gq-compact-Dolbeault}, the semigroup generated by the operator $\Delta_{\bar\partial_E,2}$ admits Gaussian kernel estimates on each summand $\Omega^{0,q}(M,E)$. Hence the kernel domination assumption \eqref{eq:kernel-domination-vector-bundle} of Proposition \ref{prop:Duong-Robinson-vector-bundle} is satisfied. Moreover, according to \cite[Proposition 3.1.2 p.~128]{MaM07} and \cite[Corollary 3.3.4 p.~149]{MaM07} the closed operator $\Delta_{\bar\partial_E,2}$ is a positive selfadjoint operator on the complex Hilbert space $\L^2(\Omega^{0,\bullet}(M,E))$. By Example~\ref{Example-analytic-positive-selfadjoint}, its negative generates a bounded holomorphic strongly continuous semigroup on the space $\L^2(\Omega^{0,\bullet}(M,E))$. Furthermore, by Example \ref{positive-selfadjoint} it admits a bounded $\H^\infty(\Sigma_\theta)$ functional calculus on the Hilbert space $\L^2(\Omega^{0,\bullet}(M,E))$ for any angle $\theta > 0$. We can therefore apply Proposition \ref{prop:Duong-Robinson-vector-bundle} and conclude that the closed operator $\Delta_{\bar\partial_E,p}$ admits a bounded $\H^\infty(\Sigma_\theta)$ functional calculus for some angle $\theta \in (0,\frac{\pi}{2})$ on the Banach space $\L^p(\Omega^{0,\bullet}(M,E))$.

Fix $r \in \{0,\ldots,n\}$ and set $E_r \ov{\mathrm{def}}{=} \Lambda^{r,0}\mathrm{T}^*M \ot E$. Then $E_r$ is a Hermitian holomorphic vector bundle. Using the canonical bundle isomorphism
\[
\Lambda^{r,q}\mathrm{T}^*M \ot E
\simeq
\Lambda^{0,q}\mathrm{T}^*M \ot (\Lambda^{r,0}\mathrm{T}^*M \ot E)
=
\Lambda^{0,q}\mathrm{T}^*M \ot E_r,
\]
we obtain identifications $
\Omega^{r,q}(M,E) \simeq \Omega^{0,q}(M,E_r)$ and $\L^p(\Omega^{r,q}(M,E)) \simeq \L^p(\Omega^{0,q}(M,E_r))$.
= Under these identifications, the Dolbeault operators correspond, namely for each $q$
\[
\bar\partial_{E_r}\big|_{\Omega^{0,q}(M,E_r)}
=
\bar\partial_E\big|_{\Omega^{r,q}(M,E)},
\qquad
\bar\partial_{E_r}^*\big|_{\Omega^{0,q}(M,E_r)}
=
\bar\partial_E^*\big|_{\Omega^{r,q}(M,E)}.
\]
Consequently, the Kodaira Laplacians correspond on each bidegree, i.e.~$
\Delta_{\bar\partial_{E_r}}\big|_{\Omega^{0,q}(M,E_r)}
=
\Delta_{\bar\partial_E}\big|_{\Omega^{r,q}(M,E)}$ and the same identity holds for their $\L^p$-closures by conjugation with the above isometric identifications. Applying the first part to the holomorphic Hermitian bundle $E_r$, we obtain that for each $r$ the operator $\Delta_{\bar\partial_E,p}$ restricted to $\L^p(\Omega^{r,\bullet}(M,E))$ is sectorial and admits a bounded $\H^\infty(\Sigma_{\theta_r})$ functional calculus for some $\theta_r \in (0,\frac{\pi}{2})$. We conclude by a direct sum argument.
\end{proof}

According to Theorem \ref{thm:Lp-dolbeault-hodge}, we have the $\L^p$-Bergman projection $
P_p \co \L^p(\Omega^{0,\bullet}(M,E))\to \L^p(\Omega^{0,\bullet}(M,E))$ with range $\Ran P_p=\cal H^{0,\bullet}(M,E)$. We introduce the restrictions 
\[
P_p^{\even}\co \L^p(\Omega^{0,\even}(M,E))\to \L^p(\Omega^{0,\even}(M,E)),
\quad
P_p^{\odd}\co \L^p(\Omega^{0,\odd}(M,E))\to \L^p(\Omega^{0,\odd}(M,E))
\]
of $P_p$ on spaces of even and odd degrees. We have $\Ran P_p^{\odd}=\cal H^{0,\odd}(M,E)$.

\begin{prop}
\label{prop-ker-HD}
Let $E$ be a Hermitian holomorphic vector bundle of finite rank over a compact K\"ahler manifold $M$. Suppose that $1 < p < \infty$. We have 
\begin{equation}
\label{description-kernel}
\ker \D_{E,+,p} 
= \cal{H}^{0,\even}(M,E)
\quad \text{and} \quad
\ker \D_{E,-,p} 
= \cal{H}^{0,\odd}(M,E).
\end{equation}
\end{prop}

\begin{proof}
%
Using Proposition \ref{prop-harmonic-Lp} in the third equality, we have
\begin{align*}
\MoveEqLeft
\ker \D_{E,+,p} \oplus \ker \D_{E,-,p}
\ov{\eqref{Dp-decompo-block}}{=}\ker \D_{E,p}
\ov{\eqref{Bisec-Ran-Ker}}{=} \ker \D_{E,p}^2 \\
&\ov{\eqref{useful-sans-fin}}{=}\ker \Delta_{\bar\partial_E,0,\bullet,p}
=\cal{H}^{0,\bullet}(M,E)
=\cal{H}^{0,\even}(M,E) \oplus \cal{H}^{0,\odd}(M,E).         
\end{align*}
\end{proof}

Now, we identify the ranges.

\begin{prop}
\label{prop:range-Laplacian-harmonic-projection}
Let $E$ be a Hermitian holomorphic vector bundle of finite rank over a compact K\"ahler manifold $M$. Suppose that $1 < p < \infty$. We have
\begin{equation}
\label{eq:range-Laplacian-kernel-projection}
\Ran \Delta_{\bar\partial_E,p}
=\ker P_p, \quad \Ran \D_{E,+,p} 
= \ker P_p^{\odd}
\quad \text{and} \quad 
\Ran \D_{E,-,p} 
= \ker P_p^{\even}.
\end{equation}
\end{prop}

\begin{proof}
Let $G$ be the Green operator of Proposition~\ref{prop-existence-Green}. By Proposition~\ref{prop-elliptic-Sobolev} and the continuous embedding $\W^{2,p}_\nabla(\Omega^{\bullet,\bullet}(M,E))
\hookrightarrow
\L^p(\Omega^{\bullet,\bullet}(M,E))$, 
the operator $G$ extends to a bounded operator on $\L^p(\Omega^{\bullet,\bullet}(M,E))$. Similarly, the harmonic projection $P$ extends to the bounded projection $P_p$.

We first prove that $
\ker P_p \subset \Ran \Delta_{\bar\partial_E,p}$. Let $\eta \in \ker P_p$. Choose a sequence
$(\eta_j)$ in $\Omega^{\bullet,\bullet}(M,E)$ such that $
\eta_j \to \eta$ in $\L^p(\Omega^{\bullet,\bullet}(M,E))$. Set $
u_j \ov{\mathrm{def}}{=} G\eta_j$. Since $G$ maps smooth forms to smooth forms, we have
$u_j \in \Omega^{\bullet,\bullet}(M,E)$. Moreover, $
u_j \to G\eta$ in $\L^p(\Omega^{\bullet,\bullet}(M,E))$. On smooth forms, the Green identity gives $
\Delta_{\bar\partial_E}u_j=\Delta_{\bar\partial_E}G\eta_j
\ov{\eqref{parametrix}}{=}
(\Id-P)\eta_j$. Since $P_p$ is the bounded extension of $P$, we have
\[
(\Id-P)\eta_j
=
(\Id-P_p)\eta_j
\to
(\Id-P_p)\eta
=
\eta
\quad\text{in } \L^p(\Omega^{\bullet,\bullet}(M,E)),
\]
since $\eta \in \ker P_p$. Thus $u_j \to G\eta$ and $\Delta_{\bar\partial_E}u_j \to \eta$ in $\L^p(\Omega^{\bullet,\bullet}(M,E))$. Since $\Delta_{\bar\partial_E,p}$ is the closure of $\Delta_{\bar\partial_E}$ it follows that $G\eta \in \dom \Delta_{\bar\partial_E,p}$ and $\Delta_{\bar\partial_E,p}G\eta=\eta$. Therefore $\eta \in \Ran \Delta_{\bar\partial_E,p}$, and so $\ker P_p \subset \Ran \Delta_{\bar\partial_E,p}$.

Conversely, we prove that $\Ran \Delta_{\bar\partial_E,p} \subset \ker P_p$. Let $u \in \dom \Delta_{\bar\partial_E,p}$. By definition of the closure, there exists a sequence $(u_j)$ in $\Omega^{\bullet,\bullet}(M,E)$ such that $u_j \to u$ and $\Delta_{\bar\partial_E}u_j \to \Delta_{\bar\partial_E,p}u$ in $\L^p(\Omega^{\bullet,\bullet}(M,E))$. If $h \in \cal{H}^{\bullet,\bullet}(M,E)$, then by formal selfadjointness we have 
$$
\big\la \Delta_{\bar\partial_E}u_j,h\big\ra_{\L^2}
=
\big\la u_j,\Delta_{\bar\partial_E}h\big\ra_{\L^2}
=
0
$$ for any $j$. Thus $\Delta_{\bar\partial_E}u_j$ is orthogonal to the harmonic forms, and hence $P\Delta_{\bar\partial_E}u_j=0$. Since $P_p$ extends $P$ and is bounded on $\L^p$, passing to the limit gives
\[
P_p\Delta_{\bar\partial_E,p}u
=
\lim_j P_p\Delta_{\bar\partial_E}u_j
=
\lim_j P\Delta_{\bar\partial_E}u_j
=0.
\]
Therefore $\Delta_{\bar\partial_E,p}u \in \ker P_p$. Hence $\Ran \Delta_{\bar\partial_E,p} \subset \ker P_p$. Combining both inclusions, we obtain $\Ran \Delta_{\bar\partial_E,p} = \ker P_p$. In particular, this space is closed. Finally, we have 
\begin{align}
\MoveEqLeft
\label{infinite}
\ker P_p^\odd \oplus \ker P_p^\even
=\ker P_p
=\Ran \Delta_{\bar\partial_E,0,\bullet,p}
=\ovl{\Ran \Delta_{\bar\partial_E,0,\bullet,p}}
\ov{\eqref{useful-sans-fin}}{=} \ovl{\Ran \D_{E,p}^2} \\
&\ov{\eqref{Bisec-Ran-Ker}}{=} \ovl{\Ran \D_{E,p}}
\ov{\eqref{Dp-decompo-block}}{=} \ovl{\Ran \D_{E,+,p}} \oplus \ovl{\Ran \D_{E,-,p}}. \nonumber        
\end{align}
Let $z \in \ker P_p^{\odd}$. By the first part of the proof, applied on odd forms, we have $
\ker P_p^{\odd}
=
\Ran \Delta_{\bar\partial_E,0,\odd,p}$. Hence there exists $v \in \dom \Delta_{\bar\partial_E,0,\odd,p}$ such that $
\Delta_{\bar\partial_E,0,\odd,p}v=z$. Since $
\D_{E,p}^2 \ov{\eqref{useful-sans-fin}}{=}2\Delta_{\bar\partial_E,p}$, 
we have on odd forms $
\D_{E,+,p}\D_{E,-,p}
=
2\Delta_{\bar\partial_E,0,\odd,p}$. In particular, we have $v \in \dom \D_{E,-,p}$ and $\D_{E,-,p}v \in \dom \D_{E,+,p}$. Then the element $x \ov{\mathrm{def}}{=}\frac12\D_{E,-,p}v$ belongs to $\dom \D_{E,+,p}$ and
\[
\D_{E,+,p}x
=
\frac12\D_{E,+,p}\D_{E,-,p}v
=
\Delta_{\bar\partial_E,0,\odd,p}v
=
z.
\]
Thus $
\ker P_p^{\odd}
\subset
\Ran\D_{E,+,p}$. The reverse inclusion follows from $\Ran \D_{E,+,p}
\subset
\ovl{\Ran\D_{E,+,p}}
\ov{\eqref{infinite}}{=}
\ker P_p^{\odd}$. Consequently, we have $\Ran\D_{E,+,p}=\ker P_p^{\odd}$. The proof of the identity $\Ran\D_{E,-,p}
=
\ker P_p^{\even}$ is similar.
\end{proof}

Now, we will use the results of Section \ref{sec-Hodge-Dirac} with the spaces $
X\ov{\mathrm{def}}{=}\L^p(\Omega^{0,\even}(M,E))$, $Y \ov{\mathrm{def}}{=}\L^p(\Omega^{0,\odd}(M,E))$, $C \ov{\mathrm{def}}{=} \Omega^{0,\even}(M,E)$ and the operators $\partial\ov{\mathrm{def}}{=} \D_{E,+,p}$, $\partial^\dagger \ov{\mathrm{def}}{=} \D_{E,-,p}$, $A \ov{\mathrm{def}}{=} 2\Delta_{\bar\partial_E,0,\even,p}$ and $\tilde{A} \ov{\mathrm{def}}{=} 2\Delta_{\bar\partial_E,0,\odd,p}$. 

\begin{prop}
\label{prop-Q-equals-Hodge-projection}
The abstract projection $Q=RS^*$ of \eqref{eq-def-projection-P} coincides with the Hodge projection away from harmonic odd forms, namely 
\begin{equation}
\label{eq:def-P-odd}
Q \ov{\mathrm{def}}{=} \Id_{\L^p(\Omega^{0,\odd}(M,E))}-P_p^{\odd}
\co \L^p(\Omega^{0,\odd}(M,E)) \to \L^p(\Omega^{0,\odd}(M,E)).
\end{equation}
\end{prop}

\begin{proof}
By Proposition~\ref{prop-P-projection}, we have $
\Ran Q
=\ovl{\Ran\partial}
=\Ran\partial
\ov{\eqref{eq:range-Laplacian-kernel-projection}}{=} \ker P_p^{\odd}$. By Proposition \ref{prop-ker-HD}, we have $
\ker\partial^\dagger
=\ker\D_{E,-,p} 
\ov{\eqref{description-kernel}}{=} \cal{H}^{0,\odd}(M,E)$. Consequently, using Theorem \ref{thm:Lp-dolbeault-hodge}, we obtain the topological direct sum decomposition 
$$
Y \ov{\mathrm{def}}{=}\L^p(\Omega^{0,\odd}(M,E))
=
\ker P_p^{\odd}
\oplus
\cal H^{0,\odd}(M,E)
=\ovl{\Ran\partial}+\ker\partial^\dagger.
$$ 
Hence \eqref{eq-hodge-splitting-Y} is satisfied. By Proposition \ref{prop-kernelQ-kernelpartialdagger}, we deduce that $\ker Q=\ker \partial^\dagger=\cal{H}^{0,\odd}(M,E)$ and that Assumption \ref{ass-projection-reduction} holds. We conclude that $
Q
=\Id_{\L^p(\Omega^{0,\odd}(M,E))}-P_p^{\odd}$. 
\end{proof}

The following corollary is the main result of this section.

\begin{cor}
\label{cor-Dolbeault-Dirac-functional-calculus}
Consider a holomorphic Hermitian vector bundle $E$ of finite rank over a compact K\"ahler manifold $M$. Suppose that $1 < p < \infty$. The Dolbeault--Dirac operator $\D_{E,p}$ is bisectorial and admits a bounded $\H^\infty(\Sigma^\bi_\theta)$ functional calculus for some angle $\theta \in (0,\frac{\pi}{2})$ on the Banach space $\L^p(\Omega^{0,\bullet}(M,E))$.
\end{cor}

\begin{proof}
Note that $
A^*=2\Delta_{\bar\partial_E,0,\even,p^*}$, $\tilde A^*=2\Delta_{\bar\partial_E,0,\odd,p^*}$ and $\partial^*=\D_{E,+,p}^*=\D_{E,-,p^*}$. We first prove the Riesz estimates. By Theorem~\ref{thm-Riesz-bounded}, the Dolbeault--Riesz transforms $\bar\partial_E\Delta_{\bar\partial_E}^{-\frac12}$ and $\bar\partial_E^*\Delta_{\bar\partial_E}^{-\frac12}$ are bounded on the Banach space $\L^p(\Omega^{\bullet,\bullet}(M,E))$. Hence, for every smooth form $x \in \Omega^{0,\even}(M,E)$, we have
\begin{align*}
\MoveEqLeft
\norm{\D_{E,+}x}_{\L^p(\Omega^{0,\odd}(M,E))}
\ov{\eqref{def-DEplus}}{=}
\norm{\sqrt{2}\big(\bar\partial_E x+\bar\partial_E^*x\big)}_{\L^p(\Omega^{0,\odd}(M,E))} \\
&\lesssim \norm{\bar\partial_E x}_{\L^p(\Omega^{0,\odd}(M,E))}
+
\norm{\bar\partial_E^*x}_{\L^p(\Omega^{0,\odd}(M,E))}
\lesssim
\bnorm{\big(\Delta_{\bar\partial_E,0,\even,p}\big)^{\frac12}x}_{\L^p(\Omega^{0,\even}(M,E))}.         
\end{align*}
A similar argument gives $
\norm{\D_{E,-}x}_{\L^p(\Omega^{0,\even}(M,E))}
\lesssim \bnorm{\big(\Delta_{\bar\partial_E,0,\odd,p}\big)^{\frac12}x}_{\L^p(\Omega^{0,\odd}(M,E))}$ if $y \in \Omega^{0,\odd}(M,E)$. Applying the same argument with $p^*$ in place of $p$ and using four times Proposition~\ref{prop-duality-Riesz}, we obtain the $\partial$-Riesz equivalence for $A$, the $(\partial^\dagger)^*$-Riesz equivalence for $A^*$ and the $\partial^*$-Riesz equivalence for $\tilde{A}^*$. By Proposition~\ref{Prop-derivation-closable-sgrp-bis}, we deduce that $\dom A^{\frac12}=\dom\partial$ and $\dom(\tilde{A}^*)^{\frac12}
=
\dom\partial^*$. Since $\dom A \subset \dom A^{\frac12}$ and $\dom \tilde{A}^* \subset \dom (\tilde{A}^*)^{\frac12}$ by \cite[Theorem 15.2.5 p.~438]{HvNVW23}, we obtain $\dom A \subset\dom\partial$, which is an assumption of Proposition~\ref{Prop-equivalences} and $\dom \tilde{A}^* \subset \dom \partial^*$. 

For any form $\omega \in C$, the commutation relation for the Kodaira Laplacian of Lemma \ref{lem:commutation-Kodaira} gives
\begin{align}
\MoveEqLeft
\label{final}
\partial A\omega
=2\D_{E,+}\Delta_{\bar\partial_E,0,\even}\omega
=2(\bar\partial_E^{\even}+(\bar\partial_E^*)^{\even}) \Delta_{\bar\partial_E,0,\even} \omega \\
&\ov{\eqref{commutation-rules}}{=}
2\Delta_{\bar\partial_E,0,\odd} (\bar\partial_E^{\even}+(\bar\partial_E^*)^{\even})\omega 
=2\Delta_{\bar\partial_E,0,\odd} \D_{E,+}\omega         
=\tilde A\partial\omega. \nonumber
\end{align}
Recall $A(C) \subset \dom \partial$. If $z \in\dom\tilde A^*$ then $z \in \dom\partial^*$ and the preceding identity gives for $\omega \in C$
\begin{equation}
\label{final-fin-33}
\la A\omega,\partial^*z \ra_{X,X^*}
=\la \partial A\omega,z \ra_{Y,Y^*}
\ov{\eqref{final}}{=}
\la \tilde A\partial\omega,z \ra_{Y,Y^*}
=
\la \partial\omega,\tilde A^*z \ra_{Y,Y^*}.
\end{equation}
It remains to extend this identity from $C$ to $\dom A$. By Lemma \ref{lem-density-range-partial-domA}, for any $x \in \dom A$ there exists a sequence $(\omega_j)$ of elements in $C$ such that $
\omega_j \to x$, $A \omega_j \to Ax$ in $X$ and $\partial \omega_j \to \partial x$ in $Y$. 
Passing to the limit in $
\la A\omega_j,\partial^*z\ra_{X,X^*}
\ov{\eqref{final-fin-33}}{=} 
\la \partial\omega_j,\tilde A^*z\ra_{Y,Y^*}$, 
we obtain
\[
\la Ax,\partial^*z\ra_{X,X^*}
=
\la \partial x,\tilde A^*z\ra_{Y,Y^*},
\quad x\in\dom A.
\]
Thus the bracket condition \eqref{bracket-useful} holds with $\lambda=0$.

The regularization property of Definition~\ref{def-regularization-coreC} is satisfied as well.
Indeed, we claim that
\begin{equation}
\label{reg-a-prouver}
\tilde T_t(\ovl{\Ran \D_{E,+,p}})
\subset
\D_{E,+}(\Omega^{0,\even}(M,E)),
\qquad t>0.
\end{equation}
Let $y \in \ovl{\Ran \D_{E,+,p}}$. By Proposition~\ref{prop-Q-equals-Hodge-projection} and Proposition \ref{prop-ker-HD}, we have
\[
\ovl{\Ran \D_{E,+,p}}
=
\Ran \D_{E,+,p}
=
\Ran Q
\ov{\eqref{eq:def-P-odd}}{=}
\ker P_p^{\odd}.
\]
Hence \(P_p^{\odd}y=0\). Since $M$ is compact and \(\Delta_{\bar\partial_E,0,\odd,p}\) is elliptic, the heat operator $\tilde{T}_t=\e^{-2t\Delta_{\bar\partial_E,0,\odd}}$ is smoothing for every $t > 0$ by \cite[Chapter~2]{BGV04}. According to \cite[(23.26.10)]{Dieu88} it is a pseudo-differential operator of order $-k$ for any integer $k \geq 0$. By Proposition~\ref{prop-elliptic-Sobolev}, it induces a bounded operator $
\tilde{T}_t
\co
\L^p(\Omega^{0,\odd}(M,E))
=
\W^{0,p}_\nabla(\Omega^{0,\odd}(M,E))
\to
\W^{k,p}_\nabla(\Omega^{0,\odd}(M,E))$ for any $k \geq 0$. We deduce that $z \ov{\mathrm{def}}{=}\tilde T_t y$ belongs to the intersection $\cap_{k \geq 0}\W^{k,p}_\nabla(\Omega^{0,\odd}(M,E))$ of the Sobolev space ${\W^{k,p}_{\nabla}(M,E)}$. According to \cite[Theorem 10.2.36 (d) p.~483]{Nic21}, this implies that $z \in \Omega^{0,\odd}(M,E)$. 

Note that by Proposition \ref{prop:range-Laplacian-harmonic-projection} and Proposition \ref{prop-harmonic-Lp}, the map $P_p^{\odd}$ is the projection onto $\ker \Delta_{\bar\partial_E,0,\odd,p}$ along the subspace $\ovl{\Ran \Delta_{\bar\partial_E,0,\odd,p}}=\Ran \Delta_{\bar\partial_E,0,\odd,p}$. So, $P_p^{\odd}$ commutes with the heat semigroup by \cite[pp.~262-263]{ABHN11}. Thus
\[
P_p^{\odd}z
=
P_p^{\odd}\tilde{T}_t y
=
\tilde{T}_t P_p^{\odd}y
=
0.
\]
Thus $z$ is a smooth odd form with vanishing harmonic component. Let $G_{\odd}$ denote the Green operator of the Kodaira Laplacian on odd forms. Since $P_p^{\odd}z=0$, the parametrix identity gives 
$
z
\ov{\eqref{parametrix}}{=} \Delta_{\bar\partial_E,0,\odd}G_{\odd}z$. 
On odd forms, the identity \(\D_E^2=2\Delta_{\bar\partial_E}\) gives 
$\D_{E,+}\D_{E,-}
=
2\Delta_{\bar\partial_E,0,\odd}$. Hence 
$$
z
=
\frac12\D_{E,+}\D_{E,-}G_{\odd}z.
$$ 
Since $z$ is smooth, $G_{\odd}z$ belongs to $\Omega^{0,\odd}(M,E)$. Set 
$
x
\ov{\mathrm{def}}{=}
\frac12\D_{E,-}G_{\odd}z$. Then \(x \in \Omega^{0,\even}(M,E)\) and $
\D_{E,+}x
=
\frac12\D_{E,+}\D_{E,-}G_{\odd}z
=
z$. 
Consequently, the element $
\tilde T_t y=z$ belongs to the space $\D_{E,+}(\Omega^{0,\even}(M,E))$. This proves \eqref{reg-a-prouver}.

Note that by \cite[Remark I.17 p.~13]{Gun17}, the space $\L^p(M,E)$ may be identified isometrically with the Bochner space $\L^p(M,\mathbb{C}^N)$. Hence $\L^p(M,E)$ is a UMD Banach space by \cite[pp.~291-292]{HvNVW16}. By Theorem \ref{Th-functional-calculus-bisector-Fourier}, we obtain that the ``reduced" Hodge--Dirac operator $\D_{E,p}|_{\L^p(\Omega^{0,\even}(M,E)) \oplus \Ran \D_{E,+,p}}$ is bisectorial and admits a bounded $\H^\infty(\Sigma^\bi_\theta)$ functional calculus for some angle $\theta \in (0,\frac{\pi}{2})$ on the Banach space 
\[
\L^p(\Omega^{0,\even}(M,E)) \oplus_2 \Ran \D_{E,+,p}.
\]
Now, we explain how to pass from this reduced Hodge-Dirac operator to the full Dolbeault--Dirac operator $\D_{E,p}$ from \eqref{Dp-decompo-block} acting on the space $\L^p(\Omega^{0,\even}(M,E))\oplus_2 \L^p(\Omega^{0,\odd}(M,E))$. The key point is to use the bounded projection $Q \co Y \to Y$ of \eqref{eq:def-P-odd}. By Proposition~\ref{prop-Q-equals-Hodge-projection}, Assumption \ref{ass-projection-reduction} holds for $(\partial,\partial^\dagger)$ and $Q$. By Theorem \ref{thm-full-bisectorial}, we conclude that $\D_{E,p}$ is bisectorial and admits a bounded $\H^\infty(\Sigma^\bi_\theta)$ functional calculus. 
\end{proof}

\section{Banach spectral triples from Dolbeault--Dirac operators on $\L^p(\Omega^{0,\bullet}(M,E))$}
\label{sec-Banach-spectral-triples}

\subsection{Commutators of the Dolbeault--Dirac operator $\D_{E,p}$ with multiplication operators}
\label{sec-commutators}

We will use the following elementary observation proved in \cite{Arh26a}.

\begin{lemma}
\label{lem:matrix-multiplication-Lp}
Let $(\Omega,\mu)$ be a measure space and let $1 \leq p < \infty$.  Let $A \co \Omega \to \M_N$ be a strongly measurable function such that $
\esssup_{x \in \Omega} \norm{A(x)}_{\M_N}
< \infty$. 
Define the multiplication operator
\[
M_A \co \L^p(\Omega,\mathbb{C}^N) \to \L^p(\Omega,\mathbb{C}^N), 
\qquad
(M_A u)(x) \ov{\mathrm{def}}{=} A(x)u(x).
\]
Then $M_A$ is bounded and its operator norm is given by
\begin{equation}
\label{eq:norm-matrix-mult}
\norm{M_A}_{\L^p(\Omega,\mathbb{C}^N)\to \L^p(\Omega,\mathbb{C}^N)}
=
\esssup_{x \in \Omega} \norm{A(x)}_{\M_N}.
\end{equation}
\end{lemma}

Recall that the interior product is the adjoint of the wedge multiplication. More precisely, by essentially \cite[(5.6) p.~21]{Dem96}, if $\theta \in \Omega^{0,1}(M)$, $\omega \in \Omega^{0,q}(M,E)$ and $\eta \in \Omega^{0,q-1}(M,E)$ we have
\begin{equation}
\label{dual-interior}
\la i_{\theta^\sharp}\omega, \eta \ra
=\la \omega,\theta \wedge \eta \ra_{},
\end{equation}
where $\theta^\sharp \in \C^\infty(M,\mathrm{T}^{0,1}M)$ is the metric dual of $\theta$ (with respect to the Hermitian metric) and $i_{\theta^\sharp}$ denotes the interior product. For any $\alpha \in \Omega^{0,q}(M)$ and any $\beta \in \Omega^{0,q'}(M,E)$, we have by \cite[Lemma 2.35 p.~59]{Voi07} the Leibniz rule
\begin{equation}
\label{Leibniz-partialE}
\bar{\partial}_{E}(\alpha \wedge \beta)
=\bar{\partial}\alpha \wedge \beta+(-1)^q \alpha \wedge \bar{\partial}_{E}\beta.
\end{equation}

\begin{prop}
\label{prop:commutator-Dolbeault-Dirac}
Let $E$ be a Hermitian holomorphic vector bundle of finite rank over a compact K\"ahler manifold $M$. For any function $f \in \C^\infty(M)$ and any differential form $\omega \in \Omega^{0,\bullet}(M,E)$, we have
\begin{equation}
\label{eq:commutator-D-Mf}
[\D_E,M_f](\omega)
=\sqrt{2}\big(\bar{\partial}  f \wedge \omega - i_{(\bar{\partial} f)^\sharp}\omega\big).
\end{equation}
Suppose that $1 < p < \infty$. Moreover, the commutator $[\D_E,M_f]$ extends to a bounded operator on the Banach space $\L^p(\Omega^{0,\bullet}(M,E))$ with
\begin{equation}
\label{estimation-commutator-HD}
\norm{[\D_E,M_f]}_{\L^p(\Omega^{0,\bullet}(M,E)) \to \L^p(\Omega^{0,\bullet}(M,E))}
=\sqrt{2} \norm{\bar\partial f}_{\L^\infty(\Omega^{0,1}(M))}.
\end{equation}
\end{prop}

\begin{proof}
Let $f \in \C^\infty(M)$. We split the commutator into two parts:
\[
[\D_E,M_f]
\ov{\eqref{Def-Dirac-DE}}{=} \sqrt{2}[\bar{\partial}_{E,0,\bullet} +\bar{\partial}_{E,0,\bullet}^*,M_f]
=\sqrt{2}[\bar{\partial}_{E,0,\bullet},M_f]+\sqrt{2}[\bar{\partial}_{E,0,\bullet}^*,M_f].
\]
For any differential form $\omega \in \Omega^{0,\bullet}(M,E)$, we have
\begin{align}
\label{inter-45}
[\bar{\partial}_{E},M_f](\omega)
&=\bar{\partial}_{E}(f\omega)-f\bar{\partial}_{E}\omega
\ov{\eqref{Leibniz-partialE}}{=} \bar{\partial} f \wedge \omega.
\end{align}
Now, we compute $[\bar{\partial}_{E}^*,M_f]$. Let $\omega \in \Omega^{0,q}(M,E)$ and $\eta \in \Omega^{0,q-1}(M,E)$. Using the definition of the $\L^2$-adjoint we obtain
\begin{align*}
\big\la \bar{\partial}_E^*(f\omega),\eta \big\ra_{\L^2(\Omega^{0,q-1}(M,E))}
&=\big\la f\omega,\bar{\partial}_E\eta \big\ra_{\L^2(\Omega^{0,q}(M,E))}
=\big\la \omega, f\bar{\partial}_E\eta \big\ra_{\L^2(\Omega^{0,q}(M,E))}.
\end{align*}
By the Leibniz rule, we have $
\bar{\partial}_E(f\eta)
\ov{\eqref{Leibniz-partialE}}{=} \bar{\partial} f \wedge \eta + f\bar{\partial}_E\eta$. Hence $f\bar{\partial}_E\eta=\bar{\partial}_E(f\eta)-\bar{\partial} f \wedge \eta$. We deduce that
\begin{align}
\label{first-second}
\big\la \bar{\partial}_E^*(f\omega),\eta \big\ra_{\L^2(\Omega^{0,q-1}(M,E))}
&=\big\la \omega,\bar{\partial}_E(f\eta) \big\ra_{\L^2(\Omega^{0,q}(M,E))}
-\la \omega,\bar{\partial} f \wedge \eta \big\ra_{\L^2(\Omega^{0,q}(M,E))}.
\end{align}
For the first term, by definition of $\bar{\partial}_E^*$, we see that
\begin{equation}
\label{first-term}
\big\la \omega,\bar{\partial}_E(f\eta) \big\ra_{\L^2(\Omega^{0,q}(M,E))}
=\big\la \bar{\partial}_E^*\omega,f\eta \big\ra_{\L^2(\Omega^{0,q-1}(M,E))}
=\big\la f\bar{\partial}_E^*\omega,\eta \big\ra_{\L^2(\Omega^{0,q-1}(M,E))}.
\end{equation}
For the second term, we use \eqref{dual-interior} with $\theta=\bar{\partial} f$:
\begin{equation}
\label{second-term}
\big\la \omega,\bar{\partial} f \wedge \eta \big\ra_{\L^2(\Omega^{0,q}(M,E))}
\ov{\eqref{dual-interior}}{=} \big\la i_{(\bar{\partial} f)^\sharp}\omega,\eta \big\ra_{\L^2(\Omega^{0,q-1}(M,E))}.
\end{equation}
Consequently, we obtain 
\[
\big\la \bar{\partial}_E^*(f\omega),\eta \big\ra_{\L^2(\Omega^{0,q-1}(M,E))}
\ov{\eqref{first-second} \eqref{first-term} \eqref{second-term}}{=} \big\la f\bar{\partial}_E^*\omega,\eta \big\ra_{\L^2(\Omega^{0,q-1}(M,E))}
-\big\la i_{(\bar{\partial} f)^\sharp}\omega,\eta \big\ra_{\L^2(\Omega^{0,q-1}(M,E))}.
\]
Since this holds for all $\eta \in \Omega^{0,q-1}(M,E)$, we deduce that
\begin{equation}
\label{eq:commutator-dbar-star-Mf}
[\bar{\partial}_E^*,M_f](\omega)
=\bar{\partial}_E^*(f\omega)-f\bar{\partial}_E^*\omega
= - i_{(\bar{\partial} f)^\sharp}\omega.
\end{equation}
Combining this with \eqref{inter-45}, we obtain
\begin{equation}
\label{commutator-proof}
[\D_E,M_f](\omega)
=\sqrt{2}\big(\bar{\partial} f \wedge \omega - i_{(\bar{\partial} f)^\sharp}\omega\big).
\end{equation}
This is the desired formula \eqref{eq:commutator-D-Mf}.

Now, we prove the second part of the proposition. Fix $x \in M$. Using the musical isomorphisms, we can identify $\Lambda^k \mathrm{T}_x^*M$ with the exterior power $\Lambda^k \mathrm{T}_xM$, and we equip $\Lambda^k \mathrm{T}_x^*M$ with the inner product induced by $g_x$.

Following \cite[Section 3.4, (3.63), (3.67)]{VaR19}, for any $v \in \mathrm{T}_xM$ we introduce the left exterior multiplication
$E(v) \co \Lambda \mathrm{T}_xM \to \Lambda \mathrm{T}_xM$ by $v$ defined by $
E(v)(A)
\ov{\mathrm{def}}{=} v \wedge A$, 
and for any covector $\beta \in \mathrm{T}_x^*M$ we consider the left contraction
$I(\beta) \co \Lambda \mathrm{T}_xM \to \Lambda \mathrm{T}_xM $ by $\beta$ defined by $
I(\beta)(A)
\ov{\mathrm{def}}{=} \beta \lrcorner A$. If we denote by $\mathrm{T}_xM \to \mathrm{T}_x^*M$, $v \mapsto v^\flat$ the map defined by $v^\flat(w)=g_x(v,w)$, then $I(v^\flat)$ is precisely the interior product $i_v$ acting on differential forms.

In \cite[(3.76)–(3.77) p.~75]{VaR19}, the authors introduce the linear maps $\gamma_+(v) ,\gamma_-(v) \co \mathrm{T}_xM \to \mathrm{End}(\Lambda \mathrm{T}_xM)$ defined by
\begin{equation}
\label{eq:def-Clifford-mappings}
\gamma_+(v)
\ov{\mathrm{def}}{=} E(v) + I(v^\flat),
\qquad
\gamma_-(v)
\ov{\mathrm{def}}{=} E(v) - I(v^\flat),
\quad v \in \mathrm{T}_xM,
\end{equation}
and prove in \cite[Theorem 3.6 p.~76]{VaR19} that for all $u,v \in \mathrm{T}_xM$ these operators satisfy the Clifford relations
\begin{equation}
\label{eq:Clifford-relations-Gamma}
\gamma_+(v)\gamma_+(u) + \gamma_+(u)\gamma_+(v)
=2 g_x(v,u)\Id,
\qquad
\gamma_-(v)\gamma_-(u) + \gamma_-(u)\gamma_-(v)
=-2 g_x(v,u)\Id.
\end{equation}
In particular, taking $u=v$ and dividing by $2$, we obtain
\begin{equation}
\label{eq:Gamma-square}
\gamma_+(v)^2
=g_x(v,v)\Id,
\qquad
\gamma_-(v)^2
=-g_x(v,v)\Id,
\quad v \in \mathrm{T}_xM.
\end{equation}
In our notation, for each covector $\alpha \in \mathrm{T}_x^*M$ we set $v \ov{\mathrm{def}}{=} \alpha^\sharp \in \mathrm{T}_xM$. The endomorphism
$$
C_\alpha \co \Lambda^{0,\bullet} \mathrm{T}_x^*M \to \Lambda^{0,\bullet} \mathrm{T}_x^*M, \omega \mapsto \alpha \wedge \omega - i_{\alpha^\sharp}\omega
$$ 
corresponds, under the previous identification, exactly to the operator $\gamma_-(v)$. Hence, by \eqref{eq:Gamma-square} we have
\begin{equation}
\label{eq:Calpha-square}
C_\alpha^2
=\gamma_-(v)^2
\ov{\eqref{eq:Gamma-square}}{=} -g_x(v,v)\Id_{\Lambda^{0,\bullet} \mathrm{T}_x^*M}
=-|\alpha|_{\Lambda^{0,1} \mathrm{T}_x^*M}^2 \Id_{\Lambda^{0,\bullet} \mathrm{T}_x^*M}.
\end{equation}
On the other hand, the adjointness relation between wedge and interior product \eqref{dual-interior} implies that, for any $\eta,\omega \in \Lambda^{0,\bullet} \mathrm{T}_x^*M$,
\begin{align*}
\la C_\alpha \eta,\omega \ra_x
&=\la \alpha \wedge \eta,\omega \ra_x - \la i_{\alpha^\sharp}\eta,\omega \ra_x
\ov{\eqref{dual-interior}}{=} \la \eta,i_{\alpha^\sharp}\omega \ra_x - \la \eta,\alpha \wedge \omega \ra_x \\
&=-\big\la \eta,(\alpha \wedge \omega - i_{\alpha^\sharp}\omega) \big\ra_x
=-\la \eta,C_\alpha \omega \ra_x.
\end{align*}
Thus $C_\alpha$ is skew-adjoint on the finite-dimensional Hilbert space $\Lambda^{0,\bullet} \mathrm{T}_x^*M$, i.e.
\begin{equation}
\label{eq:Calpha-skew-adjoint}
C_\alpha^*
=-C_\alpha.
\end{equation}
Combining \eqref{eq:Calpha-square} and \eqref{eq:Calpha-skew-adjoint}, we obtain $
C_\alpha^* C_\alpha
\ov{\eqref{eq:Calpha-skew-adjoint}}{=} -C_\alpha^2
\ov{\eqref{eq:Calpha-square}}{=} |\alpha|_{\Lambda^{0,1} \mathrm{T}_x^*M}^2 \Id_{\Lambda^{0,\bullet} \mathrm{T}_x^*M}$. 
Hence, we infer that
\begin{equation}
\label{eq:Calpha-norm}
\norm{C_\alpha}_{\Lambda^{0,\bullet} \mathrm{T}_x^*M \to \Lambda^{0,\bullet} \mathrm{T}_x^*M}
=|\alpha|_{\Lambda^{0,1} \mathrm{T}_x^*M}.
\end{equation}
Now, we apply this fibrewise description to the commutator. For each $x \in M$ and any function $f \in \C^\infty(M)$, consider the element $\alpha_x \ov{\mathrm{def}}{=} \bar\partial f(x)$ of $\Lambda^{0,1}\mathrm{T}_x^*M$. By \eqref{eq:commutator-D-Mf} and the previous identification, for any $\omega \in \Omega^{0,\bullet}(M,E)$ we have
\begin{equation}
\label{inter-A-525}
[\D_E,M_f](\omega)(x)
\ov{\eqref{commutator-proof}}{=} \sqrt{2}(\bar\partial f(x) \wedge \omega(x) - i_{(\bar{\partial} f(x))^\sharp}\omega(x))
=\sqrt{2}(C_{\alpha_x} \ot \Id_{E_x})(\omega(x)).
\end{equation}
Using \eqref{eq:Calpha-norm} with $\alpha=\alpha_x$, we obtain
\begin{align}
\MoveEqLeft
\label{ref-88UY}
|[\D_E,M_f](\omega)(x)|_{\Lambda^{0,\bullet}\mathrm{T}_x^*M \ot E_x}
\ov{\eqref{inter-A-525}}{=} 
\sqrt{2}\bigl|(C_{\alpha_x}\ot \Id_{E_x})(\omega(x))\bigr|_{\Lambda^{0,\bullet}\mathrm{T}_x^*M \ot E_x} \\
&\leq \sqrt{2}\,\norm{C_{\alpha_x}}_{\Lambda^{0,\bullet}\mathrm{T}_x^*M \to \Lambda^{0,\bullet}\mathrm{T}_x^*M}\,
|\omega(x)|_{\Lambda^{0,\bullet}\mathrm{T}_x^*M \ot E_x} \nonumber\\
&\ov{\eqref{eq:Calpha-norm}}{\leq}\sqrt{2} |\alpha_x|_{\Lambda^{0,1} \mathrm{T}_x^*M} |\omega(x)|_{\Lambda^{0,\bullet} \mathrm{T}_x^*M \ot E_x} \nonumber\\
&=\sqrt{2}|\bar\partial f(x)|_{\Lambda^{0,1} \mathrm{T}_x^*M} |\omega(x)|_{\Lambda^{0,\bullet} \mathrm{T}_x^*M \ot E_x}
\leq \sqrt{2}\norm{\bar\partial f}_{\L^\infty(\Omega^{0,1}(M))} |\omega(x)|_{\Lambda^{0,\bullet} \mathrm{T}_x^*M \ot E_x}.  \nonumber
\end{align}
Taking the $\L^p$-norm, we deduce that
\begin{align*}
\MoveEqLeft
\norm{[\D_E,M_f](\omega)}_{\L^p(\Omega^{0,\bullet}(M,E))}
\ov{\eqref{norm-Lp-Df}}{=}
\bigg(\int_M |[\D_E,M_f](\omega)(x)|_{\Lambda^\bullet \mathrm{T}_x^*M \ot E_x}^{p} \d \mu_g(x)\bigg)^{\frac{1}{p}} \\
&\ov{\eqref{ref-88UY}}{\leq} \sqrt{2}\norm{\bar\partial f}_{\L^\infty(\Omega^{0,1}(M))}
\bigg(\int_M |\omega(x)|_{\Lambda^\bullet \mathrm{T}_x^*M \ot E_x}^{ p} \d \mu_g(x)\bigg)^{\frac{1}{p}} \\
&\ov{\eqref{norm-Lp-Df}}{=}
\sqrt{2}\norm{\bar\partial f}_{\L^\infty(\Omega^{0,1}(M))} \norm{\omega}_{\L^p(\Omega^{0,\bullet}(M,E))}.
\end{align*}
Hence $[\D_E,M_f]$ extends by density to a bounded operator on $\L^p(\Omega^{0,\bullet}(M,E))$ and satisfies
\begin{equation}
\label{ine-34}
\norm{[\D_E,M_f]}_{\L^p(\Omega^{0,\bullet}(M,E)) \to \L^p(\Omega^{0,\bullet}(M,E))}
\leq \sqrt{2}\norm{\bar\partial f}_{\L^\infty(\Omega^{0,1}(M))}.
\end{equation}
Fix a local orthonormal frame of the bundle $\Omega^{0,\bullet}(M,E)$ on a coordinate chart $U \subset M$ over which the bundle is trivialized. By \cite[Remark I.17]{Gun17}, this yields an isometric identification of the Banach space $\L^p(\Omega^{0,\bullet}(U,E))$ with the Bochner space $\L^p(U,\mathbb{C}^N)$, where $N \ov{\mathrm{def}}{=} \dim \Lambda^{0,\bullet} \mathrm{T}_x^*M \ot E_x$ does not depend on $x$. In this trivialization, the family of endomorphisms $C_{\alpha_x}$ is represented by a strongly measurable matrix-valued function $
A \co U \to \M_N$, $x \mapsto \sqrt{2} C_{\alpha_x}$, and the operator $[\D_E,M_f]$ restricted to $\L^p(\Omega^{0,\bullet}(U,E))$ corresponds exactly to the multiplication operator
\[
M_A \co \L^p(U,\mathbb{C}^N) \to \L^p(U,\mathbb{C}^N),
\qquad
(M_A u)(x) \ov{\mathrm{def}}{=} A(x)u(x).
\]
For any $x \in U$, we have
\begin{align*}
\MoveEqLeft
\norm{A(x)}_{\M_N}
=\norm{\sqrt{2}(C_{\alpha_x}\ot \Id_{E_x})}_{\Lambda^{0,\bullet}\mathrm{T}_x^*M \ot E_x \to \Lambda^{0,\bullet}\mathrm{T}_x^*M \ot E_x} \\
&=\sqrt{2}\norm{C_{\alpha_x}}_{\Lambda^{0,\bullet}\mathrm{T}_x^*M \to \Lambda^{0,\bullet}\mathrm{T}_x^*M}
\ov{\eqref{eq:Calpha-norm}}{=} \sqrt{2}|\bar\partial f(x)|_{\Lambda^{0,1} \mathrm{T}_x^*M}.
\end{align*}
Hence $
\esssup_{x \in U} \norm{A(x)}_{\M_N}
=\sqrt{2}\norm{\bar\partial f}_{\L^\infty(\Omega^{0,1}(U))}$. Applying Lemma~\ref{lem:matrix-multiplication-Lp} with $\Omega=U$ and this matrix field $A$, we obtain
\begin{equation}
\label{eq:norm-local-commutator}
\norm{[\D_E,M_f]}_{\L^p(\Omega^{0,\bullet}(U,E)) \to \L^p(\Omega^{0,\bullet}(U,E))}
= \norm{M_A}_{\L^p(U,\mathbb{C}^N) \to \L^p(U,\mathbb{C}^N)}
\ov{\eqref{eq:norm-matrix-mult}}{=} \sqrt{2}\norm{\bar\partial f}_{\L^\infty(\Omega^{0,1}(U))}.
\end{equation}
We denote by $
T \co \L^p(\Omega^{0,\bullet}(U,E)) \to \L^p(\Omega^{0,\bullet}(U,E))$ the local version of the commutator $[\D_E,M_f]$ acting on $\L^p(\Omega^{0,\bullet}(U,E))$. Then \eqref{eq:norm-local-commutator} tells us that
\begin{equation}
\label{eq:norm-Tj}
\norm{T}_{\L^p(\Omega^{0,\bullet}(U,E)) \to \L^p(\Omega^{0,\bullet}(U,E))}
=\sqrt{2}\norm{\bar\partial f}_{\L^\infty(\Omega^{0,1}(U))}.
\end{equation}
Next, consider the isometric embedding
\[
J \co \L^p(\Omega^{0,\bullet}(U,E)) \to \L^p(\Omega^{0,\bullet}(M,E)),
\qquad
(J\omega)(x) \ov{\mathrm{def}}{=}
\begin{cases}
\omega(x)&\text{if }x \in U,\\
0&\text{if }x \in M\setminus U.
\end{cases}
\]
Since the commutator $[\D_E,M_f]$ is given pointwise by the formula \eqref{eq:commutator-D-Mf}, it follows that for any $\omega \in \Omega^{0,\bullet}(M,E)$ supported in $U$ we have
\[
[\D_E,M_f](\omega)(x)
=
\begin{cases}
T(\omega|_{U})(x)&\text{if }x \in U,\\
0&\text{if }x \in M \setminus U
\end{cases}.
\]
In particular, for every $\eta \in \L^p(\Omega^{0,\bullet}(U,E))$ we obtain
\[
\norm{[\D_E,M_f](J\eta)}_{\L^p(\Omega^{0,\bullet}(M,E))}
=\norm{T\eta}_{\L^p(\Omega^{0,\bullet}(U,E))},
\qquad
\norm{J\eta}_{\L^p(\Omega^{0,\bullet}(M,E))}
=\norm{\eta}_{\L^p(\Omega^{0,\bullet}(U,E))}.
\]
Hence
\begin{align*}
\MoveEqLeft
\norm{[\D_E,M_f]}_{\L^p(\Omega^{0,\bullet}(M,E)) \to \L^p(\Omega^{0,\bullet}(M,E))}
\geq \sup_{\eta \neq 0} \frac{\norm{[\D_E,M_f](J\eta)}_{\L^p(\Omega^{0,\bullet}(M,E))}}{\norm{J\eta}_{\L^p(\Omega^{0,\bullet}(M,E))}}\\
&= \sup_{\eta \neq 0} \frac{\norm{T\eta}_{\L^p(\Omega^{0,\bullet}(U,E))}}{\norm{\eta}_{\L^p(\Omega^{0,\bullet}(U,E))}}
=\norm{T}_{\L^p(\Omega^{0,\bullet}(U,E)) \to \L^p(\Omega^{0,\bullet}(U,E))}
\ov{\eqref{eq:norm-Tj}}{=} \sqrt{2}\norm{\bar\partial f}_{\L^\infty(\Omega^{0,1}(U))}.
\end{align*}
Observing that $\norm{\bar\partial f}_{\L^\infty(\Omega^{0,1}(M))} = \sup_U \norm{\bar\partial f}_{\L^\infty(\Omega^{0,1}(U))}$, we obtain the lower bound
\begin{equation}
\label{eq:global-lower-bound}
\norm{[\D_E,M_f]}_{\L^p(\Omega^{0,\bullet}(M,E)) \to \L^p(\Omega^{0,\bullet}(M,E))}
\geq \sqrt{2}\norm{\bar\partial f}_{\L^\infty(\Omega^{0,1}(M))}.
\end{equation}
Combining, with \eqref{ine-34} we obtain the equality.
\end{proof}

\subsection{The even compact Banach spectral triple $(\C(M), \L^p(\Omega^{0,\bullet}(M,E)), \D_{E,p},\gamma)$}
\label{sec-compact-spectral-triple}

\paragraph{Compact Banach spectral triples}
We briefly review the notion of compact Banach spectral triple introduced in \cite{Arh26a}, see also \cite[Definition 5.10 p.~218]{ArK22} for a closely related definition. Consider a triple $(\cal{A},X,D)$ where $X$ is a Banach space, $D$ is a bisectorial operator on $X$ with dense domain $\dom D$, and $\cal{A}$ is a Banach algebra represented continuously on $X$ through a homomorphism $\pi \co \cal{A} \to \B(X)$. From a heuristic perspective, Banach spectral triples provide a Banach-space framework for noncommutative geometry. The algebra $\cal{A}$ is interpreted as an algebra of continuous functions on a noncommutative space, whereas the operator $D$ encodes a differential structure of first order. The boundedness of commutators $[D,\pi(a)]$ plays the role of a Lipschitz-type regularity condition on the elements of $\cal{A}$, and the density of the corresponding Lipschitz algebra ensures that this regularity structure is sufficiently rich. The compactness of the resolvent of $D$ is the analogue of compactness in the classical setting.

The triple $(\cal{A},X,D)$ is called a compact Banach spectral triple whenever the following conditions hold:
\begin{enumerate}
\item{} the operator $D$ admits a bounded $\H^\infty(\Sigma^\bi_\theta)$ functional calculus on a bisector $\Sigma^\bi_\theta$ for some angle $\theta \in (\omega_{\bi}(D),\frac{\pi}{2})$,
\item{} $D$ has compact resolvent,  meaning that there exists $\lambda$ belonging to the resolvent set $\rho(D)\ov{\mathrm{def}}{=} \mathbb{C}\backslash \sigma(D)$ such that the resolvent $R(\lambda,D)$ is compact,
\item{} the Lipschitz algebra
\begin{align}
\label{Lipschitz-algebra-def}
\MoveEqLeft
\Lip_D(\cal{A}) 
\ov{\mathrm{def}}{=} \big\{a \in \cal{A} : \pi(a) \cdot \dom D \subset \dom D 
\text{ and the unbounded operator } \\
&\qquad \qquad  [D,\pi(a)] \co \dom D \subset X \to X \text{ extends to an element of } \B(X)\big\}. \nonumber
\end{align} 
is dense in the algebra $\cal{A}$.
\end{enumerate}
If, in addition, there exists an involution $\gamma \co X \to X$ satisfying $\gamma^2=\Id$ together with
\begin{equation}
\label{grading-ST}
D\gamma=-\gamma D
\quad \text{and} \quad
\gamma\pi(a)=\pi(a)\gamma,
\quad a\in\cal{A},
\end{equation}
then the spectral triple is said to be even (or graded). Otherwise, it is called odd (or ungraded).

\paragraph{Chern connections}
Consider a holomorphic vector bundle $E$ of finite rank over a complex compact manifold $M$. Since $\Omega^{1}(M,E)=\Omega^{1,0}(M,E) \oplus \Omega^{0,1}(M,E)$, we can decompose any connection $\nabla$ on $E$ in its two components $\nabla^{1,0} \co \Omega^{0}(M,E) \to \Omega^{1,0}(M,E)$ and $\nabla^{0,1} \co \Omega^{0}(M,E) \to \Omega^{0,1}(M,E)$, i.e., $\nabla=\nabla^{1,0} \oplus \nabla^{0,1}$. Following \cite[Definition 4.2.12 p.~177]{Huy05} and \cite[p.~207]{Lee24}, we say that a connection $\nabla$ on $E$ is compatible with the holomorphic structure if $\nabla^{0,1}=\bar\partial_E$.

If $E$ is a Hermitian holomorphic vector bundle of finite rank, according to \cite[Theorem 7.17 p.~208]{Lee24} or \cite[Proposition 4.2.14 p.~177]{Huy05} there exists a unique Hermitian connection $\nabla^{E,\Ch}$ on $E$ which is compatible with the holomorphic structure. This connection is called the Chern connection on $E$.

\paragraph{The Banach spectral triple $(\C(M), \L^p(\Omega^{0,\bullet}(M,E)), \D_{E,p})$}
Now, we explain the construction of the spectral triple investigated in this paper. Let $E$ be a Hermitian holomorphic vector bundle of finite rank over a compact K\"ahler manifold $M$. For any function $f \in \C(M)$, we introduce the multiplication operator $M_f \co \L^p(\Omega^{0,\bullet}(M,E)) \to \L^p(\Omega^{0,\bullet}(M,E))$, $\omega \mapsto f\omega$. We can consider the representation $\pi \co \C(M) \to \B(\L^p(\Omega^{0,\bullet}(M,E)))$, $f \mapsto M_f$ by multiplication operators, i.e.
\begin{equation}
\label{def-de-pi}
\pi(f)\omega 
\ov{\mathrm{def}}{=} f\omega,\quad f \in \C(M), \omega \in \L^p(\Omega^{0,\bullet}(M,E)).
\end{equation}

Now, we prove our first main result.

\begin{thm}
\label{thm:locally-compact-Hodge-Dirac}
Let $E$ be a Hermitian holomorphic vector bundle of finite rank over a compact K\"ahler manifold $M$. Suppose that $1 < p < \infty$. The quadruple $(\C(M), \L^p(\Omega^{0,\bullet}(M,E)), \D_{E,p},\gamma)$ is an even compact Banach spectral triple.
\end{thm}

\begin{proof}
The first point of the definition of a spectral triple is obviously satisfied by Corollary \ref{cor-Dolbeault-Dirac-functional-calculus}. Moreover, the algebra $\C^\infty(M)$ is a subset of $\Lip_{\D_{E,p}}(\C(M))$ by Proposition \ref{prop:commutator-Dolbeault-Dirac}. Since the subspace $\C^\infty(M)$ is dense in the algebra $\C(M)$, the Lipschitz algebra $\Lip_{\D_{E,p}}(\C(M))$ is dense in $\C(M)$ and the third point is satisfied. 

Now, we prove the second point which is that the operator $\D_{E,p}$ has compact resolvent. Since $\D_{E,p}$ is bisectorial, we have $\pm \i \in \rho(\D_{E,p})$. We will show that $R(\i,\D_{E,p})$ is a compact operator on the Banach space $\L^p(\Omega^{0,\bullet}(M,E))$. Recall that by Proposition \ref{prop:graph-norm-dolbeault-Lp}, the domain of the closed operator $\D_{E,p}$ equipped with the graph norm identifies to the Sobolev space $\W^{1,p}_{\bar\partial_E}(\Omega^{0,\bullet}(M,E))$. The  operator 
$$
(\i-\D_{E,p})^{-1} \co \L^p(\Omega^{0,\bullet}(M,E)) \to \W^{1,p}_{\bar\partial_E}(\Omega^{0,\bullet}(M,E))
$$ 
is bounded by definition\footnote{\thefootnote. Consider a closed operator $T$ on a Banach space $X$. We equip $\dom T$ with the graph norm, which is a Banach space since $T$ is closed. For any $\lambda \in \rho(T)$, the bijection $\lambda - T \co \dom T \to X$ is continuous. By the bounded inverse theorem, the resolvent operator $R(\lambda,T)$ is continuous from $X$ onto the space $\dom T$.}. 

Recall that the (complexified) Levi--Civita connection $\nabla^\LC \co \C^\infty(M,\mathrm{T}M) \to \Omega^1(M,\mathrm{T}M)$ induces by \cite[p.~322]{GHV73} a Hermitian connection on the exterior bundle $\Lambda \mathrm{T}^*M$. Since $M$ is K\"ahlerian, the Levi--Civita connection preserves by \cite[Theorem 8.10 p.~230]{Lee24} the holomorphic tangent bundle, in the sense that
$\nabla^{\LC}(\Gamma(T' M)) \subset \Gamma(T' M)$ (equivalently, $\nabla^{\LC}J=0$). Consequently, the induced connection on $\Lambda T^*M$ preserves the type decomposition of the complexified cotangent bundle, hence it restricts to a Hermitian connection on each bundle $\Lambda^{p,q}T^*M$, and in particular on $\Lambda^{0,q}T^*M$. 
Using the construction described in \ref{tensor-product-connections} with this connection and the Chern connection $\nabla^{E,\Ch}$ (or any Hermitian connection $\nabla^E$ on the vector bundle $E$), we obtain a Hermitian connection $\nabla$ on the vector bundle $\Lambda^{0,q}T^*M \ot E$. 

Recall that by Corollary \ref{cor:Gaffney-dolbeault-Lp}, we have an isomorphism 
$
\W^{1,p}_{\bar\partial_E}(\Omega^{0,q}(M,E)) 
\approx \W^{1,p}_\nabla(\Omega^{0,q}(M,E)).
$  
Taking the direct sum over $q$, we obtain an isomorphism
$$
\W^{1,p}_{\bar\partial_E}(\Omega^{0,\bullet}(M,E)) 
\approx \W^{1,p}_\nabla(\Omega^{0,\bullet}(M,E)).
$$
We denote by $J \co \W^{1,p}_\nabla(\Omega^{0,\bullet}(M,E)) \to \L^p(\Omega^{0,\bullet}(M,E))$ the canonical injection, which is a compact operator by \eqref{Sobolev-compact-embedding}. Then the resolvent operator $R(\i,\D_{E,p})$ admits the factorization
\[
\L^p(\Omega^{0,\bullet}(M,E)) \xra{(\i-\D_{E,p})^{-1}} \W^{1,p}_{\bar\partial_E}(\Omega^{0,\bullet}(M,E))
\xra{\approx} \W^{1,p}_\nabla(\Omega^{0,\bullet}(M,E))
 \xra{J} \L^p(\Omega^{0,\bullet}(M,E)).
\]
By the ideal property \cite[Theorem 4.8 p.~158]{Kat76} of the space of compact operators, we obtain the compactness. By Proposition~\ref{prop:even-Dolbeault-Dirac}, the quadruple $(\C(M),\L^p(\Omega^{0,\bullet}(M,E)),\D_{E,p},\gamma)$ is an even compact Banach spectral triple.
\end{proof}

\section{Coupling with K-theory and index formula}
\label{sec:dolbeault-index-pairing}
\label{sec-coupling}

\subsection{Pairing between K-theory and Banach K-homology}

Let $E$ be a Hermitian holomorphic vector bundle of finite rank over a compact K\"ahler manifold $M$. We set $
\cal{H}^{0,\even}(M,E) 
\ov{\mathrm{def}}{=} \bigoplus_{k \text{ even}} \cal{H}^{0,k}(M,E)$ and $
\cal{H}^{0,\odd}(M,E) 
\ov{\mathrm{def}}{=} \bigoplus_{k \text{ odd}} \cal{H}^{0,k}(M,E)$.  
Moreover, we let
\begin{equation}
\label{def-de-scrD}
\scr{D}
\ov{\mathrm{def}}{=} |\D_{E,p}| + P_p.
\end{equation}
We first give an elementary decomposition lemma.

\begin{lemma}
\label{lem:decomp-W1p-with-H}
Let $X$ be a Banach space. Consider a finite-dimensional subspace $F$ of $X$. Assume that there exists a bounded projection $P \co X \to X$ on $F$. If $Z$ is any closed subspace of $X$ such that $F \subset Z$, then we have a topological direct sum 
$Z 
= F \oplus (Z \cap \ker P)$.
\end{lemma}

\begin{proof}
The restriction $P|_{Z} \co Z \to Z$ is a bounded projection on $F$ with kernel $Z \cap \ker P$.
\end{proof}

\begin{prop}
\label{prop-isomorphism}
Let $E$ be a Hermitian holomorphic vector bundle of finite rank over a compact K\"ahler manifold $M$. Suppose that $1 < p < \infty$. The linear map
\begin{equation}
\label{eq:S-iso}
\scr{D} \co \W^{1,p}_{\bar\partial_E}(\Omega^{0,\bullet}(M,E)) \xrightarrow{\ \cong\ } \L^p(\Omega^{0,\bullet}(M,E))
\end{equation}
is an isomorphism of Banach spaces.
\end{prop}

\begin{proof}
By \eqref{eq:Lp-Hodge-decomp}, we have a direct sum decomposition $\L^p(\Omega^{0,\bullet}(M,E))= \cal{H}^{0,\bullet}(M,E) \oplus \ker P_p$, where $\cal{H}^{0,\bullet}(M,E)$ is the finite-dimensional space of harmonic forms (which is a subset of $\W^{1,p}_{\bar\partial_E}(\Omega^{0,\bullet}(M,E))$). Now, we use Lemma \ref{lem:decomp-W1p-with-H} with $X \ov{\mathrm{def}}{=} \L^p(\Omega^{0,\bullet}(M,E))$, $F \ov{\mathrm{def}}{=} \cal{H}^{0,\bullet}(M,E)$ and $Z \ov{\mathrm{def}}{=} \W^{1,p}_{\bar\partial_E}(\Omega^{0,\bullet}(M,E))$. Using the notation
\begin{equation}
\label{def-Fp}
F_p
\ov{\mathrm{def}}{=} \W^{1,p}_{\bar\partial_E}(\Omega^{0,\bullet}(M,E)) \cap \ker P_p,
\end{equation}
this gives a topological direct sum $
\W^{1,p}_{\bar\partial_E}(\Omega^{0,\bullet}(M,E))
= \cal{H}^{0,\bullet}(M,E) \oplus F_p$. Since $\ker \D_{E,p} = \cal{H}^{0,\bullet}(M,E)$, the action of $\scr{D}$ on the first summand is given by
\[
\scr{D}\big|_{\cal{H}^{0,\bullet}(M,E)} 
\ov{\eqref{def-de-scrD}}{=} (|\D_{E,p}| + P_p )\big|_{\cal{H}^{0,\bullet}(M,E)} 
= \Id_{\cal{H}^{0,\bullet}(M,E)}.
\]

According to \cite[Theorem 13.6 a) p.~318]{BlB13} the Dolbeault--Dirac operator $\D_E$ is an elliptic differential operator of order 1. By \cite[17.13.5 p.~298]{Dieu72}, 
we deduce that its square $\D_E^2$ is a differential operator of order 2, which is elliptic by \cite[p.~277]{Dieu88}. Finally, by \cite[Theorem 1 p.~124]{Bur68} \cite{See67} \cite{Shu01}, we conclude that $|\D_E|=(\D_E^2)^{\frac{1}{2}}$ is an elliptic pseudo-differential operator of order $1$. Note that this operator coincides with the restriction of the operator $|\D_{E,p}|$. By Proposition \ref{prop-elliptic-Sobolev} and Corollary \ref{cor:Gaffney-dolbeault-Lp}, we obtain a continuous map $|\D_{E,p}| \co \W^{1,p}_{\bar\partial_E}(\Omega^{0,\bullet}(M,E)) \to \L^p(\Omega^{0,\bullet}(M,E))$. By restriction, we obtain a bounded map $
|\D_{E,p}|\big|_{F_p} \co F_p \to \L^p(\Omega^{0,\bullet}(M,E))$, 
which is injective, since 
$$
\ker |\D_{E,p}| =\ker \D_{E,p}= \cal{H}^{0,\bullet}(M,E).
$$ 
Now, we describe its range. Using \cite[Theorem 3.2.20]{Ege15} in the first equality, we obtain
\begin{equation}
\label{inter-end}
\ovl{\Ran |\D_{E,p}|} 
= \ovl{\Ran (\D_{E,p}^2)^{\frac{1}{2}}} 
\ov{\textrm{Prop.~}\ref{prop:closure-kodaira-square-dirac}}{=} \ovl{\Ran \Delta_{\bar\partial_E,0,\bullet,p}^{\frac{1}{2}}}
\ov{\eqref{inclusion-range}}{=} \ovl{\Ran \Delta_{\bar\partial_E,0,\bullet,p}} 
\ov{\eqref{eq:range-Laplacian-kernel-projection}}{=} \ker P_p,
\end{equation}
By Proposition \ref{prop-existence-Green}, there exists a Green operator $G$ satisfying \eqref{parametrix}. By Proposition \ref{prop-elliptic-Sobolev}, it induces a bounded operator $G \co \L^p(\Omega^{0,\bullet}(M,E)) \to \W^{2,p}_\nabla(\Omega^{0,\bullet}(M,E))$. Let $\eta \in \L^p(\Omega^{0,\bullet}(M,E))$ and choose $\eta_j \in \Omega^{0,\bullet}(M,E)$ such that $\eta_j\to\eta$ in $\L^p(\Omega^{0,\bullet}(M,E))$. Since $G$ is bounded, we have
$G\eta_j \to G\eta$ in $\W^{2,p}_\nabla$. Moreover, we have 
$$
\Delta_{\bar\partial_E,0,\bullet}G\eta_j=(\Id-P)\eta_j \to (\Id-P_p)\eta.
$$ 
Hence $G\eta \in \dom \Delta_{\bar\partial_E,0,\bullet,p}$ and
\[
\Delta_{\bar\partial_E,0,\bullet,p}G\eta=(\Id-P_p)\eta.
\]
We infer that
\begin{equation}
\label{parametrix-bis}
\Delta_{\bar\partial_E,0,\bullet,p} G  
\ov{\eqref{parametrix}}{=} \Id_{\L^p(\Omega^{0,\bullet}(M,E))}-P_p.
\end{equation}
Now, we show that the map $
|\D_{E,p}| \big|_{F_p} \co F_p \to \ker P_p$ is surjective. Let $\eta \in \ker P_p$, so that $P_p \eta = 0$. We consider the element $v \ov{\mathrm{def}}{=} G\eta$ of $\W^{2,p}_\nabla(\Omega^{0,\bullet}(M,E))$. Then 
$$
\Delta_{\bar\partial_E,0,\bullet,p} v
= \Delta_{\bar\partial_E,0,\bullet,p} G\eta
\ov{\eqref{parametrix-bis}}{=} (\Id - P_p)\eta
= \eta.
$$
Since $\D_{E,p}^2 = 2\Delta_{\bar\partial_E,0,\bullet,p}$ by Proposition \ref{prop:closure-kodaira-square-dirac}, this can be rewritten as $\D_{E,p}^2 v = 2\eta$. Since $|\D_{E,p}|$ is an elliptic pseudo-differential operator of order $1$, it induces a bounded operator $|\D_{E,p}| \co \W^{2,p}_\nabla(\Omega^{0,\bullet}(M,E)) \to \W^{1,p}_{\nabla}(\Omega^{0,\bullet}(M,E))$. So the element $u 
\ov{\mathrm{def}}{=} \frac{1}{2}|\D_{E,p}|v$ belongs to the space $\W^{1,p}_{\nabla}(\Omega^{0,\bullet}(M,E)) = \W^{1,p}_{\bar\partial_E}(\Omega^{0,\bullet}(M,E))$. Furthermore, we have
\[
|\D_{E,p}|u 
= \frac{1}{2}|\D_{E,p}|^2 v 
= \frac{1}{2}\D_{E,p}^2 v 
= \eta.
\]
Since \eqref{inter-end} implies that $\Ran |\D_{E,p}| \subset \ker P_p$, it follows that $
P_pu=\frac12 P_p|\D_{E,p}|v=0$. So $u$ belongs to $\W^{1,p}_{\bar\partial_E}(\Omega^{0,\bullet}(M,E)) \cap \ker P_p \ov{\eqref{def-Fp}}{=} F_p$. We conclude that the map $
|\D_{E,p}| \big|_{F_p} \co F_p \to \ker P_p$ is a continuous bijection. Since $P_p$ vanishes on $F_p$, we have $\scr{D}\big|_{F_p} = |\D_{E,p}|\big|_{F_p}$, which is also a continuous bijection. 

Combining the two pieces, we conclude that $\scr{D}$ is continuous and bijective from the space $\W^{1,p}_{\bar\partial_E}(\Omega^{0,\bullet}(M,E))$ onto the space $\L^p(\Omega^{0,\bullet}(M,E))$, and the open mapping theorem \cite[Corollary 1.6.6 p.~44]{Meg98} yields \eqref{eq:S-iso}.
\end{proof} 

\paragraph{Fredholm operators}
A bounded operator $T \co X \to Y$, acting between complex Banach spaces $X$ and $Y$, is said to be a Fredholm operator \cite[Definition 4.37 p.~156]{AbA02} if the subspaces $\ker T$ and $Y/\Ran T$ are finite-dimensional. In this case, we can introduce the index
\begin{equation}
\label{Fredholm-Index}
\Index T 
\ov{\mathrm{def}}{=} \dim \ker T-\dim \coker T,
\end{equation}
where $\coker T \ov{\mathrm{def}}{=} Y/\Ran T$. According to \cite[Lemma 4.38 p.~156]{AbA02}, every Fredholm operator has a closed range.

\paragraph{$\K$-theory}
We refer to the books \cite{Bl98}, \cite{CMR07}, \cite{Eme24}, \cite{GVF01}, 
and \cite{WeO93} for background on $\K$-theory. 
We briefly recall the construction of the group $\K_0$, which is defined for unital rings and, in particular, for Banach algebras. Let $\cal{A}$ be a unital ring. We denote by $\M_\infty(\cal{A})$ the algebraic direct limit of the matrix algebras $\M_n(\cal{A})$ with respect to the embeddings $x \mapsto \diag(x,0)$. Elements of $\M_\infty(\cal{A})$ may be viewed as infinite matrices with only finitely many nonzero entries. Two idempotents $e,f \in \M_\infty(\cal{A})$ are called (Murray--von Neumann) equivalent if there exist $v,w \in \M_\infty(\cal{A})$ such that $e=vw$ and $f=wv$. We write $[e]$ for the equivalence class of $e$ and we introduce the set $
\V(\cal{A})
\ov{\mathrm{def}}{=}
\{[e] : e \in \M_\infty(\cal{A}),\ e^2=e\}$ of all equivalence classes of idempotents in $\M_\infty(\cal{A})$. By \cite[pp.~3-4]{CMR07}, this set becomes an abelian semigroup with addition given by $[e]+[f]
\ov{\mathrm{def}}{=}
[\diag(e,f)]$. Its neutral element is the class $[0]$ of the zero idempotent. The group $\K_0(\cal{A})$ is then defined as the Grothendieck group associated with the semigroup $\V(\cal{A})$. Hence every element of $\K_0(\cal{A})$ can be represented as a formal difference $[e]-[f]$. Two such formal differences $[e_1] - [f_1]$ and $[e_2] - [f_2]$ are equal in $\K_0(\cal{A})$ if there exists $g \in \V(\cal{A})$ such that $[e_1] + [f_2] +  [g] = [f_1] + [e_2] +  [g]$.

\paragraph{Coupling between Banach K-homology with K-theory}
An even Banach Fredholm module $(X,\pi,F,\gamma)$ over an algebra $\cal{A}$ on a Banach space $X$ consists of a representation $\pi \co \cal{A} \to \B(X)$, a bounded operator $F \co X \to X$ and a bounded operator $\gamma \co X \to X$ with $\gamma^2=\Id_X$ such that
\begin{enumerate}
\item $F^2-\Id_X$ is a compact operator on $X$,
\item for any $a \in \cal{A}$ the commutator $[F, \pi(a)]$ is a compact operator on $X$,
\item $\gamma$ is a symmetry, i.e.~$\gamma^2=\Id_X$, and for any $a \in \cal{A}$ we have
\begin{equation}
\label{commuting-rules}
F\gamma
=-\gamma F
\quad \text{and} \quad  
[\pi(a),\gamma]=0 .
\end{equation}
\end{enumerate}
We say that $\gamma$ is the grading operator. In this case, $P \ov{\mathrm{def}}{=} \frac{\Id_X+\gamma}{2}$ is a bounded projection and we can write $X=X_+\oplus X_-$ where $X_+ \ov{\mathrm{def}}{=} \Ran P$ and $X_- \ov{\mathrm{def}}{=}\Ran (\Id_X-P)$. With respect to this decomposition, the equations of \eqref{commuting-rules} imply that we can write
\begin{equation}
\label{Fredholm-even}
\pi(a)
=\begin{bmatrix}
 \pi_+(a)    &  0 \\
  0   & \pi_-(a)  \\
\end{bmatrix}
\quad \text{and} \quad 
F=\begin{bmatrix}
  0   & F_-  \\
  F_+   & 0  \\
\end{bmatrix},
\end{equation}
where $\pi_+ \co \cal{A} \to \B(X_+)$ and $\pi_- \co \cal{A} \to \B(X_-)$ are representations of $\cal{A}$. The following coupling result is proved in \cite{Arh26a}. 

\begin{thm}
\label{th-pairing-even}
Let $\cal{A}$ be a unital algebra. Consider a projection $e$ of the matrix algebra $\M_n(\cal{A})$ and an even Banach Fredholm module $\bigg(X_+ \oplus X_-,\pi,\begin{bmatrix}
  0   & F_-  \\
   F_+  &  0 \\
\end{bmatrix},\gamma\bigg)$ over $\cal{A}$ with $\pi(1)=\Id_X$. We introduce the bounded operator $e_n \ov{\mathrm{def}}{=} (\Id \ot \pi)(e) \co X^{\oplus n} \to X^{\oplus n}$. Then the bounded operator
\begin{equation}
\label{eFe-90}
e_n (\Id \ot F_+) e_n \co e_n(X_+^{\oplus n}) \to e_n(X_-^{\oplus n}) 
\end{equation}
is Fredholm. 
\end{thm}

Consequently, we have a pairing $\la \cdot, \cdot \ra \to \Z$ defined by
\begin{equation}
\label{pairing-even-1}
\big\la [e], [(X,\pi,F,\gamma)] \big\ra 
=\Index e_n (\Id \ot F_+) e_n.
\end{equation}

The following result is proved in \cite{Arh26a}, which is a variant of \cite[Proposition 2.4 p.~4818]{CGIS14} and \cite[Proposition 4.4]{CPR11}.
\begin{prop} 
\label{prop-triple-to-Fredholm}
Let $(\cal{A},X, D)$ be a compact Banach spectral triple over an algebra $\cal{A}$ via a homomorphism $\pi \co \cal{A} \to \B(X)$. Then $(X,\pi,\sgn D)$ is a Banach Fredholm module over $\cal{A}$.
\end{prop}

Now, we prove the first main result of this section.

\begin{thm}
\label{thm:Euler-characteristic-Lp}
Let $E$ be a Hermitian holomorphic vector bundle of finite rank over a compact K\"ahler manifold $M$. Suppose that $1 < p < \infty$. Let $[1] \in \K_0(\C(M))$ be the $\K$-theory class of the constant function $1$. Then the linear operator $\D_{E,+,p} \co \W^{1,p}_{\bar\partial_E}(\Omega^{0,\even}(M,E)) \to \L^p(\Omega^{0,\odd}(M,E))$ is Fredholm and we have
\[
\big\langle [1],[\L^p(\Omega^{0,\bullet}(M,E)),\pi,\sgn \D_{E,p},\gamma] \big\rangle_{\K_0(\C(M)),\K^0(\C(M),\scr{L}^p)}
= \Index \D_{E,+,p}.
\]
\end{thm}

\begin{proof}
By Theorem~\ref{thm:locally-compact-Hodge-Dirac}, the quadruple $(\C(M),\L^p(\Omega^{0,\bullet}(M,E)),D_{E,p},\gamma)$ is an even compact Banach spectral triple. By Proposition~\ref{prop-triple-to-Fredholm}, it gives rise to an even Banach Fredholm module $\big(\L^p(\Omega^{0,\bullet}(M,E)),\pi,\sgn(\D_{E,p}),\gamma\big)$ over the algebra $\C(M)$ of continuous functions, where $\pi$ is the representation by pointwise multiplication. Moreover, we have
\begin{equation}
\label{eq:sign-Dp-factorization}
\sgn \D_{E,p} 
= \D_{E,p} |\D_{E,p}|^{-1},
\end{equation}
where $|\D_{E,p}| \ov{\mathrm{def}}{=} (\D_{E,p}^2)^{\frac12}$ is defined by the sectorial functional calculus of the sectorial operator $\D_{E,p}^2$. On the subspace $\ker \D_{E,p}$, we have $\sgn \D_{E,p} = 0$ and $|\D_{E,p}|^{-1}=0$. Observe that by Theorem \ref{prop:even-Dolbeault-Dirac} we have $\gamma \D_{E,p} = -\D_{E,p}\gamma$. Since the sign function is odd, we obtain by \cite[Proposition 3.2.10]{Ege15} the equality $
\gamma \sgn(\D_{E,p})
=
-\sgn(\D_{E,p})\gamma$. Hence, with respect to the decomposition \eqref{decompo-Lp-even-odd}, we can write with \cite[Proposition 8.1]{Arh26b} 
\[
\sgn \D_{E,p}
=
\begin{bmatrix}
0 & (\sgn \D_{E,p})_-\\
(\sgn \D_{E,p})_+ & 0
\end{bmatrix},
\]
where $
(\sgn \D_{E,p})_+ \co \L^p(\Omega^{0,\even}(M,E)) \to \L^p(\Omega^{0,\odd}(M,E))$ 
and $
(\sgn \D_{E,p})_- \co \L^p(\Omega^{0,\odd}(M,E)) \to \L^p(\Omega^{0,\even}(M,E))$ are bounded operators.

Now, we apply Theorem \ref{th-pairing-even} with $\cal{A} = \C(M)$, $X_+ \ov{\mathrm{def}}{=} \L^p(\Omega^{0,\even}(M,E))$ and $X_- \ov{\mathrm{def}}{=} \L^p(\Omega^{0,\odd}(M,E))$, the bounded operator $F \ov{\mathrm{def}}{=} \sgn \D_{E,p}$ and the projection $e = 1$ in $\M_1(\C(M))$. Since $e_1 = (\Id \ot \pi)(1) = \Id_{\L^p(\Omega^{0,\bullet}(M,E))}$ the operator
\[
(\sgn \D_{E,p})_+
=e_1(\Id \ot F_+)e_1
\co \L^p(\Omega^{0,\even}(M,E)) \to \L^p(\Omega^{0,\odd}(M,E))
\]
is Fredholm and we have
\begin{equation}
\label{eq:pairing-signDp-plus}
\big\langle [1],[\L^p(\Omega^{0,\bullet}(M,E)),\pi,\sgn \D_{E,p},\gamma] \big\rangle_{\K_0(\C(M)),\K^0(\C(M),\scr{L}^p)}
=
\Index(\sgn \D_{E,p})_+.
\end{equation}
Note that $
\D_{E,p}^2
\ov{\eqref{Dp-decompo-block}}{=}
\begin{bmatrix}
\D_{E,-,p} \D_{E,+,p} & 0\\
0 & \D_{E,+,p}\D_{E,-,p}
\end{bmatrix}$. Using \cite[Theorem 3.2.20]{Ege15}, we deduce that 
$$
|\D_{E,p}|
=
(\D_{E,p}^2)^{\frac12}
=
\begin{bmatrix}
|\D_{E,p}|_+ & 0\\
0 & |\D_{E,p}|_-
\end{bmatrix},
$$ 
where $|\D_{E,p}|_+ \ov{\mathrm{def}}{=} (\D_{E,-,p}\D_{E,+,p})^{\frac12}$ and $|\D_{E,p}|_- \ov{\mathrm{def}}{=} (\D_{E,+,p}\D_{E,-,p})^{\frac12}$. The identity $\D_{E,p} = \sgn(\D_{E,p})|\D_{E,p}|$ which follows from \eqref{eq:sign-Dp-factorization} translates in block form as
\[
\begin{bmatrix}
0 & \D_{E,-,p}\\
\D_{E,+,p} & 0
\end{bmatrix}
=
\begin{bmatrix}
0 & (\sgn \D_{E,p})_-\\
(\sgn \D_{E,p})_+ & 0
\end{bmatrix}
\begin{bmatrix}
|\D_{E,p}|_+ & 0\\
0 & |\D_{E,p}|_-
\end{bmatrix}.
\]
On the even component, we obtain in particular
\begin{equation}
\label{eq:Dp-plus-factorization}
\D_{E,+,p} 
= (\sgn \D_{E,p})_+\,|\D_{E,p}|_+.
\end{equation} 
The kernel of $\D_{E,p}$ coincides with the finite-dimensional space $\cal{H}^{0,\bullet}(M,E)$ of smooth harmonic forms. The Bergman  projection $P \co \L^p(\Omega^{0,\bullet}(M,E)) \to \L^p(\Omega^{0,\bullet}(M,E))$ is a finite-rank operator. It preserves the $\Z_2$-grading, so with respect to the decomposition \eqref{decompo-Lp-even-odd} it has the block form $
P =
\begin{bmatrix}
P_+ & 0\\
0 & P_-
\end{bmatrix}$.
Since $|\D_{E,p}|$ is even and $P$ preserves the grading, the operator $\scr{D} \ov{\eqref{def-de-scrD}}{=} |\D_{E,p}| + P_p$ is also even and can be written as $
\scr{D} =
\begin{bmatrix}
\scr{D}_+ & 0\\
0 & \scr{D}_-
\end{bmatrix}$ 
for appropriate operators $\scr{D}_\pm$. By Proposition \ref{prop-isomorphism}, the map $\scr{D} \co \W^{1,p}_{\bar\partial_E}(\Omega^{0,\bullet}(M,E)) \to \L^p(\Omega^{0,\bullet}(M,E))$  is an isomorphism of Banach spaces. Moreover, the operator $\sgn \D_{E,p}$ vanishes on the subspace $\ker \D_{E,p}$, hence $\sgn(\D_{E,p})\,P = 0$. This gives
\[
\sgn(\D_{E,p})\,\scr{D}
\ov{\eqref{def-de-scrD}}{=} \sgn(\D_{E,p})\,(|\D_{E,p}| + P)
= \sgn(\D_{E,p})\,|\D_{E,p}|
= \D_{E,p}.
\]
Restricting \eqref{eq:S-iso} to the even part, we obtain an isomorphism 
\[
\scr{D}_+ 
\ov{\mathrm{def}}{=} \scr{D}\big|_{\W^{1,p}(\Omega^{0,\even}_{\bar\partial_E}(M,E))}
\co \W^{1,p}_{\bar\partial_E}(\Omega^{0,\even}(M,E)) \xrightarrow{\ \cong\ } X_+.
\]
With respect to the splitting $X_+ \oplus X_-$, the factorization $\D_{E,p} = \sgn(\D_{E,p}) \scr{D}$ reads
\[
\begin{bmatrix}
0 & \D_{E,-,p}\\
\D_{E,+,p} & 0
\end{bmatrix}
=
\begin{bmatrix}
0 & (\sgn \D_{E,p})_-\\
(\sgn \D_{E,p})_+ & 0
\end{bmatrix}
\begin{bmatrix}
\scr{D}_+ & 0\\
0 & \scr{D}_-
\end{bmatrix},
\]
so, on the even component, we obtain
\begin{equation}
\label{eq:Dp-plus-factorization-with-S}
\D_{E,+,p} 
= (\sgn \D_{E,p})_+ \scr{D}_+
\co \W^{1,p}_{\bar\partial_E}(\Omega^{0,\even}(M,E)) \to X_-.
\end{equation}
Recall that the operator $(\sgn \D_{E,p})_+ \co X_+ \to X_-$ is Fredholm. Since the map $\scr{D}_+$ is an isomorphism by \eqref{eq:S-iso}, it is a Fredholm operator with index 0. So the composition \eqref{eq:Dp-plus-factorization-with-S} is Fredholm as well and with the additivity of the index \cite[Theorem 4.43 p.~158]{AbA02}, we deduce that
\[
\Index \D_{E,+,p}
= \Index (\sgn \D_{E,p})_+ + \Index \scr{D}_+ 
= \Index (\sgn \D_{E,p})_+.
\]
\end{proof}

\subsection{Identification of the index}

Now, we describe the cokernel of the operator $\D_{E,+,p} \co \W^{1,p}_{\bar\partial_E}(\Omega^{0,\even}(M,E)) \to \L^p(\Omega^{0,\odd}(M,E))$.

\begin{prop}
\label{prop-coker-HD}
Let $E$ be a Hermitian holomorphic vector bundle of finite rank over a compact K\"ahler manifold $M$.  Suppose that $1 < p < \infty$. We have an isomorphism
\begin{equation}
\label{description-coker}
\coker \D_{E,+,p} 
\cong \cal{H}^{0,\odd}(M,E).
\end{equation}
\end{prop}

\begin{proof}
If $u \in \W^{1,p}_{\bar\partial_E}(\Omega^{0,\bullet}(M,E))$ is an even-degree form, we can write 
$u = \sum_{m \text{ even}} u_m$, where $u_m \in \W^{1,p}_{\bar\partial_E}(\Omega^{0,m}(M,E))$. Since $\bar{\partial}_{E,p}$ raises the degree by one and $\bar{\partial}_{E,p}^*$ lowers the degree by one, we have with Corollary \ref{prop-DEp-sum-dbar-dbarstar}
\[
\D_{E,p}u 
\ov{\eqref{Def-Dirac-DE}}{=} \sqrt{2} (\bar{\partial}_{E,p} + \bar{\partial}_{E,p}^*)u
= \sum_{m \text{ even}} \sqrt{2}(\bar{\partial}_{E,p} u_m + \bar{\partial}_{E,p}^* u_m),
\]
and the term $\bar{\partial}_{E,p} u_m + \bar{\partial}_{E,p}^* u_m$ has degrees $m+1$ and $m-1$, respectively.

Fix an odd integer $q$. The degree-$q$ component of $\D_{E,p}u$ can only come from those $m$ such that $m+1 = q$ or $m-1 = q$, that is $m = q-1$ or $m = q+1$. Therefore, the degree-$q$ component of $\D_{E,p}u$ is 
$$
(\D_{E,p}u)_q
= \sqrt{2}\bar{\partial}_{E,p} u_{q-1} + \sqrt{2}\bar{\partial}_{E,p}^* u_{q+1},
$$ 
with the convention that $u_{q-1}$ or $u_{q+1}$ is zero if the corresponding index is outside $\{0,\dots,n\}$. In particular, we have
\begin{equation}
\label{eq:range-degree-k}
\Ran \D_{E,+,p} \cap \L^p(\Omega^{0,q}(M,E))
=
\bar{\partial}_{E,p} \W^{1,p}_{\bar\partial_{E}}(\Omega^{0,q-1}(M,E))
+
\bar{\partial}_{E,p}^* \W^{1,p}_{\bar\partial_{E}}(\Omega^{0,q+1}(M,E)).
\end{equation}
By the $\L^p$-Hodge decomposition \eqref{eq:Lp-Hodge-decomp}, we deduce that
\begin{equation}
\label{eq:quotient-degree-k}
\L^p(\Omega^{0,q}(M,E)) / (\Ran \D_{E,+,p} \cap \L^p(\Omega^{0,q}(M,E)))
\cong
\cal{H}^{0,q}(M,E).
\end{equation}
Now, we have
\[
\Ran \D_{E,+,p}
=\Ran \D_{E,+,p} \cap \bigg(\bigoplus_{k \text{ odd}} \L^p(\Omega^{0,q}(M,E))\bigg)
= \bigoplus_{q \text{ odd}} \bigl(\Ran \D_{E,+,p} \cap \L^p(\Omega^{0,q}(M,E))\bigr).
\]
Using \eqref{eq:quotient-degree-k} for each odd integer $q$, we obtain
\begin{align*}
\coker \D_{E,+,p}
&=
\L^p(\Omega^{0,\odd}(M,E)) / \Ran \D_{E,+,p} \\
&\cong
\bigoplus_{q \text{ odd}}
\L^p(\Omega^{0,q}(M,E)) / (\Ran \D_{E,+,p} \cap \L^p(\Omega^{0,q}(M,E))) \\
&\ov{\eqref{eq:quotient-degree-k}}{\cong}
\bigoplus_{q \text{ odd}} \cal{H}^{0,q}(M,E)
= \cal{H}^{0,\odd}(M,E).
\end{align*}
\end{proof}

When $M$ is compact of complex dimension $n$, the Euler number $\chi(M,E)$ of the holomorphic vector bundle $E$ is defined in \cite[(1.4.25) p.~33]{MaM07} by $
\chi(M,E)
\ov{\mathrm{def}}{=}
\sum_{q=0}^n (-1)^q \dim \mathrm{H}^{0,q}(M,E)$. 
Now, we show the second main result of this section.

\begin{cor}
\label{cor-Euler operator}
Let $E$ be a Hermitian holomorphic vector bundle of finite rank over a compact K\"ahler manifold $M$. Suppose that $1 < p < \infty$. The index of the Fredholm operator $\D_{E,+,p} \co \W^{1,p}_{\bar\partial_E}(\Omega^{0,\even}(M,E)) \to \L^p(\Omega^{0,\odd}(M,E))$ is 
\begin{equation}
\label{index-Hodge-Dirac-Lp}
\Index \D_{E,+,p}
= \chi(M,E).
\end{equation}
\end{cor}

\begin{proof}
According to Theorem \ref{thm:Euler-characteristic-Lp}, $\D_{E,+,p} \co \W^{1,p}_{\bar\partial_E}(\Omega^{0,\even}(M,E)) \to \L^p(\Omega^{0,\odd}(M,E))$ is a Fredholm operator. By Proposition \ref{prop-ker-HD} and Proposition \ref{prop-coker-HD}, we have
\begin{align*}
\MoveEqLeft
\Index \D_{E,+,p}
\ov{\eqref{Fredholm-Index}}{=}\dim \ker \D_{E,+,p}-\dim \coker \D_{E,+,p} \\
&\ov{\eqref{description-kernel} \eqref{description-coker}}{=} \dim \cal{H}^{0,\even}(M,E) - \dim \cal{H}^{0,\odd}(M,E) \\
&=\sum_{q=0}^{n} (-1)^q \dim \cal{H}^{0,q}(M,E) 
\ov{\eqref{harmonic-space-and-Dolbeault}}{=} \sum_{q=0}^{n} (-1)^q \dim \H^{0,q}(M,E)
= \chi(M,E). 
\end{align*}
\end{proof}

\section{Illustrations and discussions in other contexts}
\label{sec-examples}
\subsection{Riemannian manifolds}
\label{sec-Riemann-manifolds}

Let $M$ be a Riemannian manifold of dimension $d$. We denote by $\Omega^\bullet(M)$ the space of smooth differential forms on $M$ with \textit{complex} coefficients. We consider the Hodge--de Rham-Laplacian $\Delta_{\HdR} \co \Omega^\bullet(M) \to \Omega^\bullet(M)$ defined by $
\Delta_{\HdR} 
\ov{\mathrm{def}}{=} \d\d^* + \d^*\d$, 
see e.g.~\cite[p.~167]{Wel08} or \cite[Definition 9.2.2 p.~425]{GVF01}. 
We can apply the abstract Hodge--Dirac machinery developed in Section~\ref{sec-Hodge-Dirac} with $
X \ov{\mathrm{def}}{=} \L^p(\Omega^\even(M))$, $Y \ov{\mathrm{def}}{=} \L^p(\Omega^\odd(M))$ and the restrictions $A \ov{\mathrm{def}}{=} \Delta_{\HdR,p}^\even$ and $\widetilde{A} \ov{\mathrm{def}}{=}\Delta_{\HdR,p}^\odd$ of the closed Hodge--de Rham Laplacian $\Delta_{\HdR,p}$ to even and odd forms. For any $t >0$, we let $
T_t \ov{\mathrm{def}}{=} \e^{-tA}$ and $\widetilde T_t \ov{\mathrm{def}}{=} \e^{-t\widetilde A}$. With respect to the decomposition $
\L^p(\Omega^\bullet(M))
=
\L^p(\Omega^\even(M)) \oplus \L^p(\Omega^\odd(M))$, 
the closed Hodge--Dirac operator has the off-diagonal form 
$$
D_p
=
\begin{bmatrix}
0 & D_{p,-} \\
D_{p,+} & 0
\end{bmatrix},
$$
where the operators $
D_{p,+} \co \dom D_{p,+} \subset \L^p(\Omega^\even(M)) \to \L^p(\Omega^\odd(M))$ and $
D_{p,-} \co \dom D_{p,-} \subset \L^p(\Omega^\odd(M)) \to \L^p(\Omega^\even(M))$ 
are the even-to-odd and odd-to-even parts of the closure of $D=\d+\d^*$. We apply the abstract theory with $\partial \ov{\mathrm{def}}{=} D_{p,+}$ and $\partial^\dagger \ov{\mathrm{def}}{=} D_{p,-}$. As observed in \cite[Theorem 3.12]{NeV18}, under a suitable assumption of \textit{positive curvature}, the operators $A$ and $\widetilde{A}$ admit bounded $\H^\infty(\Sigma_\theta)$ functional calculi for some angle $\theta \in (0,\frac{\pi}{2})$. Similarly Riesz estimates are available with the same assumption. We also refer to \cite{Arh26b} for the compact case without curvature assumption.

Now, we verify the curvature-intertwining condition. On smooth even forms, the Hodge--de Rham Laplacian commutes with both $\d$ and $\d^*$. It is easy to check that the condition $\Curv_{\partial,\H^\infty}(0)$ is satisfied. The regularization property of Definition~\ref{def-regularization-coreC} is proved in the same way as Corollary \ref{cor-Dolbeault-Dirac-functional-calculus}. Let $P_p^\odd \co \L^p(\Omega^\odd(M)) \to \cal H^\odd(M)$ be the bounded finite-rank projection onto the space of odd harmonic forms, and set $Q \ov{\mathrm{def}}{=} \Id_{\L^p(\Omega^\odd(M))} - P_p^\odd$. We can prove that $\Ran Q = \Ran D_{p,+}$, $\ker Q \subset \dom D_{p,-}$ and $D_{p,-}|_{\ker Q}=0$. 
We conclude that with Theorem \ref{thm-full-bisectorial}, we can recover the unweighted case of \cite[Theorem 1.1 p.~3110]{NeV18}.

\subsection{$q$-Ornstein--Uhlenbeck semigroups}
\label{sec-q-Ornstein}

We consider in this section a classical noncommutative deformation of the Ornstein--Uhlenbeck semigroup called $q$-Ornstein--Uhlenbeck semigroup. Here $q \in [-1,1)$ is a parameter. We refer to 
\cite{BS91}, \cite{BS94}, \cite{BKS97} and \cite{Lus99}  
for more information on this setting. 

We start by recalling several facts about $q$-Gaussian algebras. For any integer $n \geq 1$, we denote by $\mathrm{S}_n$ the symmetric group. If $\sigma$ is a permutation in $\mathrm{S}_n$ we denote by $\Inv(\sigma) \ov{\mathrm{def}}{=} \card \big\{ (i,j) : 1 \leq i < j \leq n, \sigma(i) > \sigma(j) \big\}$ the number of inversions of $\sigma$. Let $H$ be a separable real Hilbert space with complexification $H_{\mathbb{C}} \ov{\mathrm{def}}{=} H+\i H$. We consider the algebraic Fock space
 $\cal{F}_{\mathrm{alg}}(H_{\mathbb{C}}) 
\ov{\mathrm{def}}{=} \bigoplus_{n=0}^{\infty} H_{\mathbb{C}}^{\ot n}$, 
where $H_{\mathbb{C}}^{\otimes 0} \ov{\mathrm{def}}{=} \mathbb{C} \Omega$ for a unit vector $\Omega$. The $q$-Fock space 
$\cal{F}_{q}(H_{\mathbb{C}})$ is the completion of this space for the scalar product 
$$
\la h_1 \ot \cdots \ot h_n ,k_1 \ot \cdots \ot k_m \ra_{q}
\ov{\mathrm{def}}{=} \delta_{m,n} \sum_{\sigma \in \mathrm{S}_n} q^{\Inv(\sigma)} \la h_1,k_{\sigma(1)}\ra_{H_{\mathbb{C}}} \cdots \la h_n,k_{\sigma(n)}\ra_{H_{\mathbb{C}}},
$$
where $\delta_{m,n}$ is the Kronecker symbol. If $q=-1$, we must first divide out by the null space, and we obtain the usual antisymmetric Fock space. For any vector $e \in H$, we can introduce as in \cite[Definition 1.1 p.~133]{BKS97} the creation operator $\ell(e) \co \cal{F}_{q}(H_{\mathbb{C}}) \to \cal{F}_{q}(H_{\mathbb{C}})$, $h_1 \ot \cdots \ot h_n \mapsto e \ot h_1 \ot  \cdots \ot h_n$. 
These operators satisfy the $q$-relation $\ell(f)^*\ell(e)-q\ell(e)\ell(f)^* 
= \langle f,e\rangle_{H} \Id_{\cal{F}_{q}(H_{\mathbb{C}})}$ of \cite[p.~133]{BKS97}. These relations interpolate between the bosonic and fermionic commutation relations. Following \cite[Definition 2.1 p.~136]{BKS97}, we consider for any vector $e \in H$ the selfadjoint operator
\begin{equation}
\label{def-sq}
s_q(e) 
\ov{\mathrm{def}}{=} \ell(e)+\ell(e)^* \co \cal{F}_{q}(H_{\mathbb{C}}) \to \cal{F}_{q}(H_{\mathbb{C}}).
\end{equation}
The $q$-Gaussian von Neumann algebra $\Gamma_q(H)$ is the von Neumann algebra over $\cal{F}_q(H_{\mathbb{C}})$ generated by the operators $s_q(e)$ where $e \in H$. By \cite[Proposition 2.3 p.~136]{BKS97}, the von Neumann algebra $\Gamma_q(H)$ is finite and admits the normal finite faithful trace $\tau$ defined by $\tau(x) \ov{\mathrm{def}}{=} \la \Omega,x(\Omega) \ra_{\mathcal{F}_{q}(H_{\mathbb{C}})}$ where $x \in \Gamma_q(H)$. We also define the $*$-algebra $\cal{A}_q(H) \ov{\mathrm{def}}{=} \ast\mathrm{-alg} \{  s_q(e) : e \in H  \}$.


Let $H$ and $K$ be real Hilbert spaces and $T \co H \to K$ be a contraction with complexification $T_{\mathbb{C}} \co H_{\mathbb{C}} \to K_{\mathbb{C}}$. We can consider the second quantization map $
 \cal{F}_q(T) \co \cal{F}_{q}(H_{\mathbb{C}}) \to \cal{F}_{q}(K_{\mathbb{C}})$, $h_1 \ot \cdots \ot h_n  \mapsto T_{\mathbb{C} }(h_1) \ot \cdots \ot T_{\mathbb{C} }(h_n)$. According to \cite[Theorem 2.11 p.~140]{BKS97}, there exists a unique map $\Gamma_q(T) \co \Gamma_q(H) \to \Gamma_q(K)$ such that 
$$
(\Gamma_q(T)(x))\Omega
=\mathcal{F}_q(T)(x\Omega)
$$ 
for any $x \in \Gamma_q(H)$ and this map is weak* continuous, unital, completely positive and trace preserving. 

Let $(a_t)_{t \geq 0}$ be a strongly continuous semigroup of selfadjoint contractions on $H$. For any $t \geq 0$, set $T_t \ov{\mathrm{def}}{=} \Gamma_q(a_t)$. Then by \cite[Lemma 9.3 p.~97]{JMX06} $(T_t)_{t \geq 0}$ is a weak* continuous semigroup of normal unital completely positive maps, which induces a strongly continuous semigroup of contractions on the noncommutative $\L^p$-space $\L^p(\Gamma_q(H))$ for any $1 \leq p < \infty$. In the special case where $a_t=\mathrm{e}^{-t}\Id_H$, the semigroup $(T_t)_{t \geq 0}$ is called the $q$-Ornstein--Uhlenbeck semigroup, with generator $-A_p$. According to \cite[Proposition 2.3 p.~136]{BKS97}, $\Omega$ is a generating and cyclic vector for $\Gamma_q(H)$. Consequently, each element $x \in \Gamma_q(H)$ is uniquely determined by the vector $\xi=x\cdot\Omega$ of $\cal{F}_q(H_{\mathbb{C}})$ and we write $x=\W_q(\xi)$. For any $h_1, \cdots, h_m \in H_{\mathbb{C}}$, it is known that each element $\W_q(h_1 \ot \cdots \ot h_m)$ belongs to the $*$-algebra $\cal{A}_q(H)$.

In this setting, following \cite[Lemma 5.1]{BGJ23}, we can introduce the linear map $\partial \co \cal{A}_q(H) \to \L^p(\Gamma_q(H \oplus H))$ defined by
\begin{equation*}
\partial(\W_q(h_1 \ot \cdots \ot h_m))
\ov{\mathrm{def}}{=}\sum_{j=1}^m \W_q(h_1 \ot \cdots  \ot (0 \oplus h_j) \ot \cdots \ot h_m).
\end{equation*}
In this framework, the Riesz equivalence \eqref{estim-Riesz-bis} was obtained by Lust-Piquard in \cite[Theorem 3.3 p.~546]{Lus99} (see also \cite{Lus98}). Indeed, we have $
\bnorm{A^{\frac{1}{2}}(f)}_{\L^p(\Gamma_q(H))}
\approx_p \norm{\partial(f)}_{\L^p(\Gamma_q(H \oplus H))}$ for any $f \in \cal{A}_q(H)$ if $1 < p < \infty$. By Proposition \ref{Prop-derivation-closable-sgrp-bis}, we deduce that $\partial$ is closable with closure denoted by $\partial_p$. Now, consider the semigroup $(O_t)_{t \geq 0}$ of contractions acting on the real Hilbert space $H \oplus H$ defined by 
$$
O_t(h_1\oplus h_2)
\ov{\mathrm{def}}{=}\e^{-t} h_1 \oplus h_2, \quad t \geq 0 ,h_1,h_2 \in H.
$$
For any $t \geq 0$, we introduce the operator $\tilde{T}_t \ov{\mathrm{def}}{=} \Gamma_q(O_t)$ acting on the von Neumann algebra $\Gamma_q(H \oplus H)$. It is essentially proved in \cite[Theorem 5.2]{BGJ23} (see also \cite[(5) p.~774]{WiZ21}) that for any $f \in \dom \partial_p$ we have $T_t(f) \in \dom \partial_p$ and that 
$$
\partial_p \circ T_t(f)
=\e^{-t}\tilde{T}_t \circ \partial_p(f)
$$ 
for any $t \geq 0$. Moreover, the negative of the generators of the semigroups $(T_t)_{t \geq 0}$ and $(\tilde{T}_t)_{t \geq 0}$ of contractions acting on the noncommutative $\L^p$-spaces $\L^p(\Gamma_q(H))$ and $\L^p(\Gamma_q(H \oplus H))$ admit bounded $\H^\infty(\Sigma_\theta)$ functional calculi for some angle $\theta \in (0,\frac{\pi}{2})$ by \cite[Theorem 9.4 p.~98, Theorem 9.5 p.~100]{JMX06}. We conclude that we obtain the property $\Curv_{\partial_p,\H^\infty}(1)$ of Definition \ref{curvature-H-infty}. The study of the regularization property in relation to approximation properties of the algebra is beyond the scope of the present paper and will be investigated in a subsequent work.

\subsection{Semigroups of Schur multipliers}
\label{sec-Schur}

Let $I$ be a non-empty index set. Let $(T_t)_{t \geq 0}$ be a symmetric Markovian semigroup of Schur multipliers acting on the von Neumann algebra $\B(\ell^2_I)$ of bounded operators acting on the complex Hilbert space $\ell^2_I$. In this situation, by \cite[Proposition 5.4 p.~415]{Arh13} there exists a \textit{real} Hilbert space $H$ and a family 
$
(\alpha_i)_{i \in I}
$
of vectors of $H$ such that for any $t \geq 0$, the Schur multiplier $T_t \co \B(\ell^2_I) \to \B(\ell^2_I)$ is associated to the matrix $
\big[\e^{-t\|\alpha_i-\alpha_j\|_{H}^2}\big]_{i,j \in I}$. 
This semigroup induces a strongly continuous semigroup on the Schatten space $S^p_I \ov{\mathrm{def}}{=} S^p(\ell^2_I)$ with infinitesimal generator $-A_p$. We denote by $\M_{I,\fin}$ the dense subspace of the Schatten space $S^p_I$ of matrices with a finite number of nonzero  entries. Suppose that $-1 \leq q < 1$. We equip the von Neumann algebra $\Gamma_q(H)$ of Section \ref{sec-q-Ornstein} with its canonical normal finite faithful trace. Following \cite[(2.95) p.~62]{ArK22}, we can consider the unbounded operator $\partial_{\alpha,q} \co \M_{I,\fin} \subset S^p_I \to \L^p(\Gamma_q(H) \otvn \B(\ell^2_I))$, $e_{ij} \mapsto s_q(\alpha_i-\alpha_j) \ot e_{ij}$, where the $q$-Gaussian $s_q(\alpha_i-\alpha_j)$ is defined in \eqref{def-sq}. 

Moreover, if $1 < p < \infty$, the operator $\partial_{\alpha,q}$ is closable by \cite[Proposition 3.11 p.~121]{ArK22}. If we denote by $\partial_{\alpha,q,p}$ its closure, the same result says that $\dom \partial_{\alpha,q,p} =\dom A_p$ and provides the $\partial_{\alpha,q,p}$-Riesz equivalence \eqref{estim-Riesz-bis}, i.e.~we have
$$
\bnorm{A_p^{\frac{1}{2}}(x)}_{S^p_I}
\approx \norm{\partial_{\alpha,q,p}(x)}_{\L^p(\Gamma_q(H),S^p_I)}, \quad x \in \dom \partial_{\alpha,q,p}.
$$
Furthermore, it is implicitly proved in \cite[Section 4.5]{ArK22} that the semigroup $(T_t)_{t \geq 0}$ satisfies the property $\Curv_{\partial_{\alpha,q,p},\H^\infty}(0)$, defined in Definition \ref{curvature-H-infty}. In this context,  regularizing nets, in the sense of Definition \ref{Def-regul}, are provided by truncations of matrices and are described in \cite[Definition 2.3 p.~33]{ArK22}. Finally, the projection $Q \co \L^p(\Gamma_q(H),S^p_I) \to \L^p(\Gamma_q(H),S^p_I)$ defined in \eqref{eq-def-projection-P} satisfies condition \eqref{eq-hodge-splitting-Y} of Proposition \ref{prop-kernelQ-kernelpartialdagger} by \cite[Lemma 4.16 p.~171]{ArK22}.

Consequently, with Theorem \ref{Th-functional-calculus-bisector-Fourier} and Theorem \ref{thm-full-bisectorial}, we recover the result \cite[Theorem 4.9 p.~172]{ArK22} on the functional calculus of the Hodge--Dirac operator $\slashed{D}_p \ov{\mathrm{def}}{=}
\begin{bmatrix} 
0 & (\partial_{p^*})^* \\ 
\partial_p & 0 
\end{bmatrix}$, 
where $p^*$ is the conjugate exponent of $p$.

\section{Appendix: a variant of a result of Duong, Robinson and Haller-Dintelmann}
\label{Appendix}

We prove the following statement, which extends to operators acting on $\L^p$-spaces of vector bundles a result of Duong and Robinson \cite[Theorem 3.4 p.~108]{DuR96} and the vector-valued version established by Haller-Dintelmann \cite{Hal05}. The proof proceeds by reducing the statement to the vector-valued version of Haller-Dintelmann's extrapolation theorem. Finally, the authors of \cite{ALLR25} obtained a closely related finite-dimensional vector-valued version of the Duong--Robinson strategy on $\mathbb{R}^d$, under Gaussian kernel estimates for matrix-valued semigroups.

\begin{prop}
\label{prop:Duong-Robinson-vector-bundle}
Let $E$ be a Hermitian complex vector bundle of finite rank over a compact Riemannian manifold $M$.
Let $A$ be a sectorial operator on $\L^2(M,E)$ such that $-A$ generates a bounded holomorphic semigroup $(S_z)_{z \in \Sigma_\theta}$ on the Hilbert space $\L^2(M,E)$ for some angle $\theta \in (0,\frac{\pi}{2})$. Assume that, for every $t>0$, the operator $S_t$ admits an integral kernel $
K_t(x,y) \in \Hom(E_y,E_x)$ in the sense that
\[
(S_t\omega)(x)
=
\int_M K_t(x,y)\omega(y) \d\mu_g(y),
\qquad
\omega \in \L^2(M,E),
\]
and that there exist constants $c,C>0$ and $m>0$ such that
\begin{equation}
\label{eq:kernel-domination-vector-bundle}
\norm{K_t(x,y)}_{E_y \to E_x}
\leq
\frac{c}{\vol(x,t^{\frac{1}{m}})}
\exp\left(-C\frac{\dist(x,y)^m}{t}\right),
\quad x,y \in M, t>0.
\end{equation}
Suppose that $A$ admits a bounded $\H^\infty(\Sigma_\mu)$ functional calculus on the Banach space $\L^2(M,E)$ for some angle $\mu \in \left(\frac{\pi}{2}-\theta,\frac{\pi}{2}\right)$. Then, for every $p \in (1,\infty)$, the semigroup $(S_t)_{t>0}$ extends consistently to a bounded holomorphic semigroup on $\L^p(M,E)$. If $-A_p$ denotes its generator on $\L^p(M,E)$, then $A_p$ admits a bounded $\H^\infty(\Sigma_{\theta'})$ functional calculus for some angle $\theta' \in (0,\frac{\pi}{2})$.
\end{prop}


\begin{proof}
Let $N \ov{\mathrm{def}}{=} \rank(E)$. By \cite[Remark I.17 p.~13]{Gun17}, there exists a global orthonormal Borel frame $\sigma_1,\ldots,\sigma_N \co M \to E$. Consequently, for every $p \in [1,\infty]$, the map $\Phi_p \co \L^p(M,E) \to \L^p(M,\mathbb C^N)$, $\omega \mapsto (\omega_1,\ldots,\omega_N)$, 
where $
\omega(x)
=\sum_{j=1}^N \omega_j(x)\sigma_j(x)$, is an isometric isomorphism. In particular, the space $\L^p(M,E)$ may be identified isometrically with the Bochner space $\L^p(M,\mathbb{C}^N)$. For any $t > 0$, we define the operator $
\widetilde S_t
\ov{\mathrm{def}}{=}
\Phi_2 S_t \Phi_2^{-1}$ 
acting on the space $\L^2(M,\mathbb C^N)$, which admits a matrix-valued kernel $
\widetilde{K}_t(x,y) \co \mathbb{C}^N \to \mathbb{C}^N$ given by $
\widetilde{K}_t(x,y)
\ov{\mathrm{def}}{=}
\Phi_x K_t(x,y) \Phi_y^{-1}$, where $\Phi_x \co E_x \to \mathbb C^N$ is the fibrewise unitary map induced by the frame. If $f\in \L^1(M,\mathbb C^N)$, the formula
\begin{equation}
\label{kernel-tilde}
(\widetilde S_t f)(x)
=
\int_M \widetilde{K}_t(x,y)f(y) \d\mu_g(y)
\end{equation}
is well-defined for almost every $x \in M$. Since $\Phi_x$ and $\Phi_y$ are unitary, we have
\begin{equation}
\label{eq:matrix-kernel-gaussian}
\bnorm{\widetilde K_t(x,y)}_{\mathbb{C}^N \to \mathbb{C}^N}
=
\norm{K_t(x,y)}_{E_y \to E_x}
\ov{\eqref{eq:kernel-domination-vector-bundle}}{\leq}
\frac{c}{\vol(x,t^{\frac{1}{m}})}
\exp\left(-C\frac{\dist(x,y)^m}{t}\right).
\end{equation}
Since $M$ is compact, as observed in \cite[p.~264]{Hal05} the metric measure space $(M,\dist,\mu_g)$ satisfies the doubling property of \cite[p.~76]{HKST15} and the uniform volume property. More precisely, there exists $C_M \geq 1$ such that
\[
\esssup_{x \in M} \vol(x,r)
\leq
C_M\essinf_{x \in M} \vol(x,r),
\quad r > 0.
\]
Equivalently, we have $
\vol(y,r) \leq C_M \vol(x,r)$ for $x,y \in M$ and $r >0$. Taking $r=t^{\frac{1}{m}}$, we obtain
\begin{equation}
\label{final-34}
\frac{1}{\vol(x,t^{\frac{1}{m}})}
\leq
C_M\frac{1}{\vol(y,t^{\frac{1}{m}})}.
\end{equation}
Thus the Gaussian bound \eqref{eq:matrix-kernel-gaussian} also implies
\[
\bnorm{\widetilde K_t(x,y)}_{\mathbb{C}^N \to \mathbb{C}^N}
\ov{\eqref{eq:matrix-kernel-gaussian}\eqref{final-34}}{\leq} 
\frac{cC_M}{\vol(y,t^{\frac{1}{m}})}
\exp\left(-C\frac{\dist(x,y)^m}{t}\right).
\]
After changing the constants, if we set
\begin{equation}
\label{def-Gt}
G_t(x,y)
\ov{\mathrm{def}}{=}
\frac{c_1}{\vol(x,t^{\frac{1}{m}})}
\exp\left(-c_2\frac{\dist(x,y)^m}{t}\right),
\end{equation}
then we have both $
\bnorm{\widetilde K_t(x,y)}_{\mathbb{C}^N \to \mathbb{C}^N}
\leq
G_t(x,y)$ and $
\bnorm{\widetilde K_t(x,y)}_{\mathbb{C}^N \to \mathbb{C}^N}
\leq
G_t(y,x)$. Therefore
\begin{equation}
\label{estimation-max}
\bnorm{\widetilde{K}_t(x,y)}_{\mathbb{C}^N \to \mathbb{C}^N}
\leq \min\{G_t(x,y), G_t(y,x)\}.
\end{equation}
Equivalently, we can write
$$
G_t(x,y)
=
\frac{1}{\vol(x,t^{\frac{1}{m}})}
g\left(\frac{\dist(x,y)^m}{t}\right)
$$
with $g(s) \ov{\mathrm{def}}{=} c_1 \e^{-c_2s}$. This function is bounded, continuous, decreasing, strictly positive and satisfies the decay condition \cite[(8) p.~93]{DuR96} required in the Poisson bounds of Duong--Robinson, as noticed in \cite[p.~94]{DuR96}. By the scalar estimate \cite[Proposition 2.1 p.~94]{DuR96} applied to the majorant $G_t$, there exists a bounded function $b \co [0,\infty) \to [0,\infty)$, with $b(s) \to 0$ as $s \to \infty$, such that
\[
\esssup_{x \in M} \int_{\dist(x,y)\geq r} G_t(x,y)\d\mu_g(y)
\leq b(\tfrac{r^m}{t}),
\quad r\geq 0, t>0.
\]
Using \eqref{estimation-max}, we obtain, after changing $b$,
\begin{equation}
\label{eq:brownian-vector-kernel}
\esssup_{x \in M} \int_{\dist(x,y) \geq r} 
\Bigl( \bnorm{\widetilde K_t(x,y)} + \bnorm{\widetilde K_t(y,x)} \Bigr)
\d\mu_g(y)
\leq
b(\tfrac{r^m}{t})
\end{equation}
for all $r \geq 0$ and $t > 0$. Taking $r=0$ in \eqref{eq:brownian-vector-kernel}, we obtain
\begin{equation}
\label{final-45}
\esssup_{x \in M} \int_M \bnorm{\widetilde{K}_t(x,y)} \d\mu_g(y)
\leq B
\quad \text{and} \quad
\esssup_{y \in M} \int_M \bnorm{\widetilde K_t(x,y)} \d\mu_g(x)
\leq B
\end{equation}
for some constant $B$ independent of $t > 0$. Now, we prove that the semigroup $(\widetilde{S}_t)_{t \geq 0}$ extends to the whole scale of spaces $\L^p(M,E)$ following the argument of \cite[Proposition 2.3 p.~95]{DuR96}. For any $f \in \L^1(M,\mathbb{C}^N)$, we have
\[
\bnorm{\widetilde{S}_t f(x)}_{\mathbb{C}^N}
\ov{\eqref{kernel-tilde}}{=} \norm{\int_M \widetilde{K}_t(x,y)f(y) \d\mu_g(y)}_{\mathbb{C}^N}
\leq \int_M \bnorm{\widetilde K_t(x,y)} \norm{f(y)}_{\mathbb{C}^N} \d\mu_g(y).
\]
By Fubini's theorem, we obtain
\[
\begin{aligned}
\bnorm{\widetilde{S}_t f}_{\L^1(M,\mathbb{C}^N)}
&\leq
\int_M
\int_M
\bnorm{\widetilde K_t(x,y)}
\norm{f(y)}_{\mathbb{C}^N}
\d\mu_g(y) \d\mu_g(x) \\
&= \int_M \left(\int_M \bnorm{\widetilde K_t(x,y)} \d\mu_g(x)\right) \norm{f(y)}_{\mathbb{C}^N} \d\mu_g(y) 
\ov{\eqref{final-45}}{\leq}
B\norm{f}_{\L^1(M,\mathbb{C}^N)}.
\end{aligned}
\]
Similarly, if $f \in \L^\infty(M,\mathbb{C}^N)$, then
\begin{align*}
\MoveEqLeft
\bnorm{\widetilde{S}_t f}_{\L^\infty(M,\mathbb{C}^N)}
\ov{\eqref{kernel-tilde}}{=}\esssup_{x \in M} \left|\int_M \widetilde{K}_t(x,y)f(y) \d\mu_g(y)\right| \\
&\leq
\esssup_{x \in M}
\int_M \bnorm{\widetilde{K}_t(x,y)} \norm{f(y)}_{\mathbb C^N} \d\mu_g(y) 
\ov{\eqref{final-45}}{\leq}
B\norm{f}_{\L^\infty(M,\mathbb{C}^N)}.
\end{align*}
By interpolation \cite[Theorem 2.2.1 p.~83]{HvNVW16}, for any $p \in [1,\infty]$ we deduce that
\begin{equation}
\label{eq:Lp-uniform-bound-St}
\bnorm{\widetilde S_t}_{\L^p(M,\mathbb{C}^N) \to \L^p(M,\mathbb{C}^N)}
\leq B,
\qquad t > 0.
\end{equation}
Since $\L^p(M,\mathbb{C}^N)\cap \L^2(M,\mathbb{C}^N)$ is dense in the space $\L^p(M,\mathbb C^N)$ for $1 \leq p < \infty$, the operators $\widetilde S_t$, initially defined on $\L^2(M,\mathbb C^N)$, extend uniquely to bounded operators on $\L^p(M,\mathbb C^N)$. The extensions are consistent: if $f$ belongs to two spaces $\L^p$ and $\L^q$, the two definitions agree by density, because they agree on the subspace $\L^p(M,\mathbb C^N)\cap \L^q(M,\mathbb{C}^N) \cap \L^2(M,\mathbb{C}^N)$. Moreover, the semigroup law $\widetilde S_t\widetilde S_s=\widetilde S_{t+s}$ holds first on $\L^2(M,\mathbb C^N)$ and then, by density and boundedness, on every $\L^p(M,\mathbb C^N)$. It remains to check strong continuity for $p \in [1,\infty)$. Let $\cal{D} \ov{\mathrm{def}}{=}\L^\infty(M,\mathbb C^N)\cap \L^2(M,\mathbb{C}^N)$. This space is dense in $\L^p(M,\mathbb{C}^N)$ for every $p<\infty$. If $f \in \cal{D}$, then $\widetilde{S}_t f \to f$ in $\L^2(M,\mathbb{C}^N)$ as $t \to 0^+$, since $(\widetilde {S}_t)_{t \geq 0}$ is the original strongly continuous semigroup on the Hilbert space $\L^2(M,\mathbb{C}^N)$. If $1 \leq p \leq2$, the compactness of $M$ gives
\[
\bnorm{\widetilde{S}_t f-f}_{\L^p(M,\mathbb{C}^N)}
\leq
\mu_g(M)^{\frac1p-\frac12}
\bnorm{\widetilde S_t f-f}_{\L^2(M,\mathbb{C}^N)},
\]
and hence $\widetilde S_t f \to f$ in $\L^p(M,\mathbb{C}^N)$. If $p > 2$, then
\[
\begin{aligned}
\bnorm{\widetilde S_t f-f}_{\L^p}^p
&\leq
\bnorm{\widetilde S_t f-f}_{\L^\infty}^{p-2}
\bnorm{\widetilde S_t f-f}_{\L^2}^2 
\leq
(B+1)^{p-2}\norm{f}_{\L^\infty(M,\mathbb{C}^N)}^{p-2}
\bnorm{\widetilde S_t f-f}_{\L^2(M,\mathbb{C}^N)}^2,
\end{aligned}
\]
which again tends to $0$ as $t \to 0^+$. Thus $\widetilde{S}_t f \to f$ in $\L^p(M,\mathbb{C}^N)$ for every $f \in \cal{D}$. By the uniform bound \eqref{eq:Lp-uniform-bound-St} and the density of $\cal{D}$ in $\L^p(M,\mathbb{C}^N)$, it follows from \cite[Proposition 5.3 p.~38]{EnN00} that the semigroup $(\widetilde{S}_t)_{t \geq 0}$ is strongly continuous on the space $\L^p(M,\mathbb{C}^N)$ for every $p \in [1,\infty)$.

Conjugating by the isometric isomorphism $\Phi_p$, we obtain, by \cite[5.10 p.~43]{EnN00}, bounded operators $
S_t^{(p)}
\ov{\mathrm{def}}{=}
\Phi_p^{-1}\widetilde S_t\Phi_p$ on $\L^p(M,E)$, forming a consistent strongly continuous semigroup for every $p \in [1,\infty)$. The bound $
\bnorm{S_t^{(p)}}_{\L^p(M,E)\to\L^p(M,E)}
\leq B$ is uniform in $t > 0$.

Since $(S_t)_{t \geq 0}$ is bounded and holomorphic on $\L^2(M,E)$ at angle $\theta$, the conjugated semigroup $
\widetilde{S}_t
=
\Phi_2 S_t \Phi_2^{-1}$ 
is bounded and holomorphic on $\L^2(M,\mathbb{C}^N)$ in the same sector. So there exists, by \cite[Definition 3.7.3 p.~150]{ABHN11}, a constant $M_\psi$ such that
\[
\sup_{z \in \Sigma_\psi}
\bnorm{\widetilde{S}_z}_{\L^2(M,\mathbb{C}^N) \to \L^2(M,\mathbb{C}^N)}
\leq M_\psi.
\]
Let $z=t+\i s$ be an element of the sector $\Sigma_\psi$, with $t = \Re z > 0$. As in \cite[p.~107]{DuR96}, choose $\delta = \delta_\psi \in (0,1)$ such that $\delta t+\i s \in \Sigma_\theta$ for all $z=t+\i s \in \Sigma_\psi$. 
Then
\begin{equation}
\label{decomposition}
\widetilde{S}_z
=\widetilde{S}_{t+\i s}
=\widetilde S_{\frac{1-\delta}{2}t}
\widetilde S_{\delta t+\i s}
\widetilde S_{\frac{1-\delta}{2}t}.
\end{equation}
We first derive the following real-time estimates.

\begin{lemma}
We have
\begin{equation}
\label{eq:L1L2-L2Linf-real}
\bnorm{\widetilde S_a}_{\L^1(M,\mathbb{C}^N) \to \L^2(M,\mathbb{C}^N)}
+
\bnorm{\widetilde S_a}_{\L^2(M,\mathbb{C}^N) \to \L^\infty(M,\mathbb{C}^N)}
\lesssim
\sup_{x \in M} \vol(x,a^{\frac1m})^{-\frac12},
\quad a > 0.
\end{equation}
\end{lemma}

\begin{proof}
For any function $f \in \L^1(M,\mathbb{C}^N)$, Minkowski's integral inequality gives
\[
\begin{aligned}
\bnorm{\widetilde{S}_a f}_{\L^2(M,\mathbb{C}^N)}
&\ov{\eqref{kernel-tilde}}{\leq}
\int_M
\left(
\int_M
\bnorm{\widetilde K_a(x,y)f(y)}_{\mathbb C^N}^2
\d\mu_g(x)
\right)^{\frac12}
\d\mu_g(y)     \\
&\leq
\left(
\esssup_{y \in M}
\int_M
\bnorm{\widetilde K_a(x,y)}_{\mathbb C^N\to\mathbb C^N}^2
\d\mu_g(x)
\right)^{\frac12}
\norm{f}_{\L^1(M,\mathbb{C}^N)}.
\end{aligned}
\]
Let $r \ov{\mathrm{def}}{=} a^{\frac1m}$. By definition, we have
\[
G_a(y,x)^2
\ov{\eqref{def-Gt}}{=}
\frac{c_1^2}{\vol(y,r)^2}
\exp\left(-2c_2\frac{\dist(x,y)^m}{r^m}\right).
\]
We decompose $M$ into the ball $B(y,r)$ and the annuli $A_k(y,r) \ov{\mathrm{def}}{=} B(y,2^{k+1}r)\setminus B(y,2^k r)$ for $k \geq 0$. If $x \in A_k(y,r)$, we have $\dist(x,y) \geq 2^k r$. Hence
\[
\exp\left(-2c_2\frac{\dist(x,y)^m}{r^m}\right)
\leq
\exp(-2c_2 2^{km}).
\]
Moreover, by the doubling property, there exist, by \cite[(3.4.9) p.~76]{HKST15} (or \cite[(2.1) p.~265]{Hal05}), some constants $C_D \geq 1$ and $d_0 > 0$ such that
\begin{equation}
\label{estimation-volumes}
\vol(y,2^{k+1}r)
\leq
C_D 2^{(k+1)d_0}\vol(y,r),
\quad k \geq 0.
\end{equation}
Therefore
\begin{align}
\label{fin-fin-32}
\int_M G_a(y,x)^2 \d\mu_g(x)
&\leq
\frac{c_1^2}{\vol(y,r)^2}
\left[
\vol(y,r)
+
\sum_{k=0}^\infty
\e^{-2c_2 2^{km}}
\vol(y,2^{k+1}r)
\right]  \\
&\ov{\eqref{estimation-volumes}}{\lesssim}
\frac{1}{\vol(y,r)^2}
\left[
\vol(y,r)
+
\sum_{k=0}^\infty
\e^{-2c_2 2^{km}}
2^{(k+1)d_0}
\vol(y,r)
\right]  
\lesssim
\frac{1}{\vol(y,r)}. \nonumber
\end{align}
The last series is finite because the exponential decay $\e^{-2c_2 2^{km}}$ dominates the polynomial growth $2^{(k+1)d_0}$. Consequently, using $
\bnorm{\widetilde{K}_a(x,y)}_{\mathbb{C}^N \to \mathbb{C}^N}
\ov{\eqref{estimation-max}}{\leq} G_a(y,x)$, we obtain
\[
\esssup_{y \in M} \int_M \bnorm{\widetilde K_a(x,y)}_{\mathbb{C}^N \to \mathbb{C}^N}^2
\d\mu_g(x)
\leq \esssup_{y \in M} \int_M G_a(y,x)^2 \d\mu_g(x)
\ov{\eqref{fin-fin-32}}{\lesssim}
\esssup_{y \in M} \vol(y,a^{\frac1m})^{-1}.
\]
This proves the $\L^1 \to \L^2$ estimate. Similarly, for $f \in \L^2(M,\mathbb{C}^N)$, Cauchy's inequality gives
\begin{align*}
\MoveEqLeft
\bnorm{\widetilde{S}_a f(x)}_{\mathbb{C}^N}
\ov{\eqref{kernel-tilde}}{=} \Bnorm{\int_M \widetilde{K}_a(x,y)f(y) \d\mu_g(y)}_{\mathbb{C}^N} 
\leq \int_M \bnorm{\widetilde{K}_a(x,y)}_{\mathbb{C}^N \to \mathbb{C}^N}\norm{f(y)}_{\mathbb{C}^N} \d\mu_g(y) \\
&\leq
\left(
\int_M \bnorm{\widetilde{K}_a(x,y)}_{\mathbb{C}^N \to \mathbb{C}^N}^2 \d\mu_g(y) \right)^{\frac12}
\norm{f}_{\L^2(M,\mathbb{C}^N)}.
\end{align*}
Using $
\bnorm{\widetilde{K}_a(x,y)}_{\mathbb{C}^N \to \mathbb{C}^N}
\ov{\eqref{estimation-max}}{\leq} G_a(x,y)$, 
we get
\[
\esssup_{x \in M}
\int_M
\bnorm{\widetilde{K}_a(x,y)}_{\mathbb{C}^N \to \mathbb{C}^N}^2 \d\mu_g(y)
\leq \esssup_{x \in M}
\int_M
G_a(x,y)^2 \d\mu_g(y)
\ov{\eqref{fin-fin-32}}{\lesssim}
\esssup_{x \in M} \vol(x,a^{\frac1m})^{-1}.
\]
This proves \eqref{eq:L1L2-L2Linf-real}.
\end{proof}

Applying \eqref{eq:L1L2-L2Linf-real} with $a=\frac{1-\delta}{2}t$, and using the boundedness of the operator $\widetilde{S}_{\delta t+\i s}$ on the space $\L^2(M,\mathbb{C}^N)$, we obtain
\[
\begin{aligned}
\bnorm{\widetilde S_z}_{\L^1(M,\mathbb{C}^N) \to \L^\infty(M,\mathbb{C}^N)}
&\ov{\eqref{decomposition}}{\leq}
\bnorm{\widetilde S_{\frac{1-\delta}{2}t}}_{\L^2 \to \L^\infty}
\bnorm{\widetilde S_{\delta t+\i s}}_{\L^2 \to \L^2}
\bnorm{\widetilde S_{\frac{1-\delta}{2}t}}_{\L^1 \to \L^2}        \\
&\ov{\eqref{eq:L1L2-L2Linf-real}}{\lesssim_\psi}
\esssup_{x \in M}\vol\left(x,\left(\tfrac{1-\delta}{2}t\right)^{\frac1m}\right)^{-1}.
\end{aligned}
\]
By the doubling property and the uniform volume comparability on the compact manifold $M$, this yields
\begin{equation}
\label{eq:L1Linf-complex}
\bnorm{\widetilde S_z}_{\L^1(M,\mathbb{C}^N)\to\L^\infty(M,\mathbb{C}^N)}
\lesssim_\psi
\esssup_{x \in M}\vol(x,t^{\frac1m})^{-1},
\quad z=t+\i s \in \Sigma_\psi.
\end{equation}
The bound \eqref{eq:L1Linf-complex} implies that, for every $z \in \Sigma_\psi$, the operator $\widetilde{S}_z$ is represented by a matrix-valued kernel $\widetilde{K}_z(x,y)\co \mathbb{C}^N \to \mathbb{C}^N$ such that
\[
(\widetilde{S}_z f)(x)
=
\int_M \widetilde K_z(x,y)f(y)\d\mu_g(y),
\qquad f\in \L^1(M,\mathbb{C}^N),
\]
and
\begin{equation}
\label{eq:complex-rough-kernel}
\bnorm{\widetilde K_z(x,y)}_{\mathbb{C}^N \to\mathbb C^N}
\lesssim_\psi
\vol\big(x,(\Re z)^{\frac1m}\big)^{-1}
\end{equation}
for almost every $(x,y) \in M \times M$. Indeed, let $(e_1,\ldots,e_N)$ be the canonical basis of $\mathbb{C}^N$. For any integers $1 \leq i,j \leq N$, define
\[
T_{ij}(z)f
\ov{\mathrm{def}}{=}
\pi_i\bigl(\widetilde S_z(f e_j)\bigr),
\quad f \in \L^2(M),
\]
where $\pi_i$ denotes the $i$-th coordinate map. It suffices to apply Dunford-Pettis theorem  \cite[p.~528]{Rob91} \cite[Proposition 3.1]{GrH14} to $T_{ij}(z)$ using the estimate \eqref{eq:L1Linf-complex}.

Let $0 < \nu < \psi$. It remains to upgrade the rough complex estimate \eqref{eq:complex-rough-kernel} to a Poisson estimate, in the sense of \cite[Definition 1 p.~266]{Hal05}, in the smaller sector $\Sigma_\nu$. For this, we will apply \cite[Proposition 3.3 p.~105]{DuR96} to the scalar matrix coefficients. Note that $(T_{ij}(z))_{z \in \Sigma_\theta}$ is a holomorphic family of bounded operators on the Hilbert space $\L^2(M)$. For any $z \in \Sigma_\psi$, its scalar kernel is
\begin{equation}
\label{def-kijz}
k_{ij,z}(x,y)
\ov{\mathrm{def}}{=}
\big\la \widetilde{K}_z(x,y)e_j,e_i \big\ra_{\mathbb{C}^N}.
\end{equation}
The rough complex estimate \eqref{eq:complex-rough-kernel} gives
\[
|k_{ij,z}(x,y)|
\ov{\eqref{def-kijz}}{=} \big|\big\la \widetilde{K}_z(x,y) e_j,e_i \big\ra_{\mathbb{C}^N}\big|
\ov{\eqref{eq:complex-rough-kernel}}{\lesssim_\varphi}
\vol\big(x,(\Re z)^{\frac1m}\big)^{-1},
\quad z \in \Sigma_\psi.
\]
On the other hand, the real-time Gaussian estimate gives
\[
|k_{ij,t}(x,y)|
\ov{\eqref{def-kijz}}{\leq}
\bnorm{\widetilde{K}_t(x,y)}_{\mathbb{C}^N \to \mathbb{C}^N}
\ov{\eqref{estimation-max}}{\leq}
\frac{C}{\vol(x,t^{\frac1m})}
g\left(\frac{\dist(x,y)^m}{t}\right),
\quad t > 0.
\]
Thus the assumptions of \cite[Proposition 3.3 p.~105]{DuR96} are satisfied by the scalar holomorphic family $(T_{ij}(z))_{z\in\Sigma_\theta}$ in the sector $\Sigma_\psi$. Consequently, in the smaller sector $\Sigma_\nu$, there exist a constant $C_\nu>0$ and a bounded decreasing function $g_\nu$ satisfying the Poisson decay condition such that
\[
|k_{ij,z}(x,y)|
\leq
\frac{C_\nu}{\vol(x,(\Re z)^{\frac1m})}
g_\nu\left(
\frac{\dist(x,y)^m}{\Re z}
\right),
\quad z \in \Sigma_\nu.
\]
Since $N<\infty$, the operator norm of a matrix is controlled by finitely many scalar matrix coefficients. Therefore, after changing $C_\nu$, we obtain
\[
\bnorm{\widetilde K_z(x,y)}_{\mathbb{C}^N \to \mathbb{C}^N}
\leq
\frac{C_\nu}{\vol(x,(\Re z)^{\frac1m})}
g_\nu\left(
\frac{\dist(x,y)^m}{\Re z}
\right),
\quad z \in \Sigma_\nu.
\]
Using the uniform comparability of volumes on the compact manifold $M$, the same estimate holds with $x$ and $y$ interchanged, after another change of constants. Hence
\begin{equation}
\label{equa-655-final}
\bnorm{\widetilde K_z(x,y)}_{\mathbb{C}^N \to \mathbb{C}^N}
\leq \min \big\{
G_{\Re z}^{(\nu)}(x,y),
G_{\Re z}^{(\nu)}(y,x) \big\},
\quad z \in \Sigma_\nu,
\end{equation}
where
\[
G_t^{(\nu)}(x,y)
\ov{\mathrm{def}}{=}
\frac{C_\nu}{\vol(x,t^{\frac1m})}
g_\nu\left(\frac{\dist(x,y)^m}{t}\right).
\]
Since $\nu<\theta$ was arbitrary, the Poisson angle of $(\widetilde S_z)$ on the space $\L^2(M,\mathbb{C}^N)$ is at least $\theta$.

By \cite[Proposition~2.1 p.~94]{DuR96}, applied to the majorant \(G_t^{(\nu)}\), and then with the variables interchanged, we have
\[
\esssup_{x \in M} \int_M G_{\Re z}^{(\nu)}(x,y) \d\mu_g(y)
\lesssim 1
\quad \text{and} \quad
\esssup_{y \in M} \int_M G_{\Re z}^{(\nu)}(y,x) \d\mu_g(x)
\lesssim 1.
\]
Using \eqref{equa-655-final}, Schur's test for matrix-valued kernels (as for the proof of \eqref{eq:L1Linf-complex}) yields
\[
\sup_{z \in \Sigma_\nu}
\bnorm{\widetilde S_z}_{\L^p(M,\mathbb{C}^N) \to \L^p(M,\mathbb{C}^N)}
<\infty,
\quad 1 \leq p \leq \infty.
\]

Moreover, this family is holomorphic on $\Sigma_\nu$ as a $\B(\L^p(M,\mathbb C^N))$-valued map. Indeed, consider some compact subset  \(K\) of \(\Sigma_\nu\). Let $f \in \L^p(M,\mathbb C^N)$ and \(g\in \L^{p^*}(M,\mathbb C^N)\). Choose sequences $(f_n)$ in $\L^p(M,\mathbb C^N)\cap \L^2(M,\mathbb{C}^N)$ and $(g_n)$ in $\L^{p^*}(M,\mathbb{C}^N) \cap \L^2(M,\mathbb{C}^N)$ such that $f_n\to f$ in $\L^p(M,\mathbb{C}^N)$, $g_n \to g$ in $\L^{p^*}(M,\mathbb{C}^N)$. For every \(n\), the scalar function
\[
F_n(z)
\ov{\mathrm{def}}{=}
\int_M
\big\la \widetilde S_z f_n(x),g_n(x)\big\ra_{\mathbb C^N}
\d\mu_g(x)
\]
is holomorphic on $\Sigma_\nu$, since it is the \(\L^2\)-pairing of the \(\L^2\)-holomorphic map \(z\mapsto \widetilde S_z f_n\) with \(g_n\). Moreover, if
\[
F(z)
\ov{\mathrm{def}}{=}
\int_M
\big\la \widetilde S_z f(x),g(x)\big\ra_{\mathbb{C}^N}
\d\mu_g(x),
\]
then, uniformly for $z \in K$,
\[
\begin{aligned}
|F_n(z)-F(z)|
&\leq
\left|
\int_M
\big\la \widetilde S_z(f_n-f)(x),g_n(x)\big\ra_{\mathbb{C}^N}
\d\mu_g(x)
\right| 
+
\left|
\int_M
\big\la \widetilde S_z f(x),g_n(x)-g(x)\big\ra_{\mathbb{C}^N}
\d\mu_g(x)
\right|  \\
&\lesssim
\norm{f_n-f}_{\L^p(M,\mathbb{C}^N)}
\norm{g_n}_{\L^{p^*}(M,\mathbb{C}^N)}
+
\norm{f}_{\L^p(M,\mathbb{C}^N)}
\norm{g_n-g}_{\L^{p^*}(M,\mathbb{C}^N)}.
\end{aligned}
\]
Since $(g_n)$ is bounded in $\L^{p^*}(M,\mathbb{C}^N)$, the right-hand side tends to \(0\). Thus \(F_n \to F\) uniformly on compact subsets of \(\Sigma_\nu\). Therefore \(F\) is holomorphic on $\Sigma_\nu$. Consequently, for every $f \in \L^p(M,\mathbb C^N)$ and every $g \in \L^{p^*}(M,\mathbb C^N)$, the scalar function $
z \mapsto \la \widetilde S_z f,g \ra_{\L^p,\L^{p^*}}$ is holomorphic on $\Sigma_\nu$. By \cite[Corollary~B.3.3 p.~530]{HvNVW16}, the map $z \mapsto \widetilde{S}_z$ is holomorphic as a $\B(\L^p(M,\mathbb{C}^N))$-valued map on the open sector $\Sigma_\nu$. Consequently, \((\widetilde S_z)_{z\in\Sigma_\nu}\) is a bounded holomorphic semigroup on \(\L^p(M,\mathbb C^N)\). By \cite[Theorem~G.5.2 p.~537]{HvNVW18}, its negative generator \(-\widetilde A_p\) satisfies $\omega_{\sec}(\widetilde A_p)\leq \frac{\pi}{2}-\nu$ for every $0 < \nu < \theta$. In particular, we have $\omega_{\sec}(\widetilde A_p)<\frac{\pi}{2}$. 

Now, we apply the vector-valued extrapolation theorem of Haller-Dintelmann \cite[Theorem 3 p.~267]{Hal05} with $p_0=2$ and value space $X = \mathbb{C}^N$. The space $\mathbb{C}^N$ is finite-dimensional, hence reflexive and has the Radon--Nikodym property by \cite[Theorem 1.3.21 p.~53]{HvNVW16}. Moreover, the compactness of $M$ implies the metric-measure assumptions (A1), (A2) and (A3) of \cite[Section 2]{Hal05}, as observed in \cite[p.~264]{Hal05}.

We now make explicit the consistency of the resolvents. Let $-\widetilde A_r$ be the generator of the strongly continuous semigroup $(\widetilde S_t^{(r)})_{t \geq 0}$ on $\L^r(M,\mathbb{C}^N)$. Let $p,q \in (1,\infty)$. Since the semigroups are consistent, the Laplace formula for the resolvent gives, for $\Re \lambda <0$ and $f \in \L^p(M,\mathbb{C}^N) \cap \L^q(M,\mathbb{C}^N)$,
\[
R(\lambda,\widetilde A_p)f
\ov{\eqref{Resolvent-Laplace}}{=}
-\int_0^\infty \e^{\lambda t}\widetilde S_t^{(p)}f \d t
=
-\int_0^\infty \e^{\lambda t}\widetilde S_t^{(q)}f \d t
\ov{\eqref{Resolvent-Laplace}}{=}
R(\lambda,\widetilde A_q)f.
\]
Here the equality may be read in $\L^1(M,\mathbb C^N)$, since $M$ has finite measure and the spaces $\L^p(M,\mathbb{C}^N)$ and $\L^q(M,\mathbb{C}^N)$ are continuously embedded in $\L^1(M,\mathbb C^N)$.

Let $\Omega_{p,q}$ be the connected component of $\rho(\widetilde A_p)\cap \rho(\widetilde A_q)$ which contains the half-plane $\{\Re \lambda<0\}$. We claim that
\[
R(\lambda,\widetilde A_p)f
=
R(\lambda,\widetilde A_q)f,
\qquad
\lambda \in \Omega_{p,q},\quad f \in \L^p(M,\mathbb{C}^N) \cap \L^q(M,\mathbb{C}^N).
\]
Indeed, denote by $\mathcal C$ the set of all $\lambda \in \Omega_{p,q}$ for which this
identity holds for every $f \in \L^p(M,\mathbb{C}^N) \cap \L^q(M,\mathbb{C}^N)$. The preceding paragraph shows that
$\mathcal C$ is non-empty.

We prove first that $\mathcal C$ is open in $\Omega_{p,q}$. Let $\lambda_0 \in \mathcal C$. Choose $\epsi > 0$ such that
\[
\epsi
<
\min\left\{
\frac{1}{\|R(\lambda_0,\widetilde A_p)\|_{\B(\L^p(M,\mathbb{C}^N))}},
\frac{1}{\|R(\lambda_0,\widetilde A_q)\|_{\B(\L^q(M,\mathbb{C}^N))}}
\right\}
\]
and such that $B(\lambda_0,\varepsilon) \subset \Omega_{p,q}$. By the Neumann series argument for the resolvent, for $|\lambda-\lambda_0| < \epsi$ and $r \in \{p,q\}$ we have
\[
R(\lambda,\widetilde A_r)
=
\sum_{n=0}^{\infty}
(\lambda-\lambda_0)^n R(\lambda_0,\widetilde A_r)^{n+1}
\]
with convergence in $\B(\L^r(M,\mathbb{C}^N))$. Since $\lambda_0 \in \mathcal C$, the operator $R(\lambda_0,\widetilde A_p)$ and the operator $R(\lambda_0,\widetilde A_q)$ agree on $\L^p(M,\mathbb{C}^N) \cap \L^q(M,\mathbb{C}^N)$. In particular, this common operator leaves $\L^p(M,\mathbb{C}^N) \cap \L^q(M,\mathbb{C}^N)$ invariant, and therefore all its powers agree on $\L^p(M,\mathbb{C}^N) \cap \L^q(M,\mathbb{C}^N)$. Hence the two Neumann series above coincide on $\L^p(M,\mathbb{C}^N) \cap \L^q(M,\mathbb{C}^N)$. Thus $\lambda \in \mathcal C$, proving that $\mathcal C$ is open.

We prove next that $\mathcal C$ is closed in $\Omega_{p,q}$. Let $(\lambda_n)$ be a sequence in
$\mathcal C$ converging to some $\lambda \in \Omega_{p,q}$. By continuity of the resolvent on the
resolvent set, for every $f \in \L^p(M,\mathbb{C}^N) \cap \L^q(M,\mathbb{C}^N)$ we have
\[
R(\lambda_n,\widetilde A_p)f \longrightarrow R(\lambda,\widetilde A_p)f
\quad\text{in } \L^p(M,\mathbb{C}^N)
\]
and
\[
R(\lambda_n,\widetilde A_q)f \longrightarrow R(\lambda,\widetilde A_q)f
\quad\text{in } \L^q(M,\mathbb{C}^N).
\]
Since $\L^p(M,\mathbb{C}^N)$ and $\L^q(M,\mathbb{C}^N)$ are continuously embedded in $\L^1(M,\mathbb C^N)$, both convergences hold in $\L^1(M,\mathbb C^N)$. But for every integer $n$ we have
\[
R(\lambda_n,\widetilde A_p)f
=
R(\lambda_n,\widetilde A_q)f.
\]
Passing to the limit in $\L^1(M,\mathbb C^N)$ gives $R(\lambda,\widetilde A_p)f
=
R(\lambda,\widetilde A_q)f$. Thus $\lambda \in \mathcal C$, and $\mathcal C$ is closed.

Since $\Omega_{p,q}$ is connected, we conclude that $\mathcal C=\Omega_{p,q}$. In particular, the resolvents of $\widetilde A_p$ and $\widetilde A_2$ are consistent on the connected component of $\rho(\widetilde A_p) \cap \rho(\widetilde A_2)$ which contains the negative real axis.

On the Hilbert space $\L^2(M,\mathbb{C}^N)$, the operator $\widetilde{A}_2=\Phi_2 A\Phi_2^{-1}$ admits a bounded $\H^\infty(\Sigma_\mu)$ functional calculus by \cite[Proposition 3.2.10 p.~127]{Ege15}, since $A$ does. Furthermore, the preceding Poisson estimate shows that the Poisson angle, defined in \cite[Definition 1 p.~266]{Hal05}, of the semigroup generated by $-\widetilde{A}_2$ is at least $\theta$.  

The proof of Haller-Dintelmann's Theorem~\cite[Theorem 3 p.~267]{Hal05} gives, for every $p \in (1,\infty)$, a bounded $\H^\infty(\Sigma_{\theta'})$ functional calculus for $\widetilde{A}_p$ on the Bochner space $\L^p(M,\mathbb{C}^N)$ for some angle $\theta' \in (0, \frac{\pi}{2})$. Strictly speaking, \cite[Theorem 3 p.~267]{Hal05} assumes $
\omega_{\sec}(\widetilde A_p)\leq \omega_{\sec}(\widetilde A_{p_0})$ in order to obtain the angle estimate $ \omega_{\H^\infty}(\widetilde A_p)\leq \theta_0$. We do not need this sharp estimate. We only use the proof of \cite[Theorem~3]{Hal05}. In the present situation the Poisson angle of \((\widetilde S_z)\) is at least \(\theta\), and \(\widetilde A_2\) admits a bounded \(\H^\infty(\Sigma_\mu)\) functional calculus. Hence the number \(\theta_0\) appearing in \cite[Theorem~3]{Hal05} satisfies
\[
\theta_0
\leq
\max\left\{\mu,\frac{\pi}{2}-\theta\right\}
=
\mu
<
\frac{\pi}{2}.
\]
We have seen that $\omega_{\sec}(\widetilde{A}_p)<\frac{\pi}{2}$. Choose
\[
\theta' \in
\left(\max\{\mu,\omega_{\sec}(\widetilde A_p)\},\frac{\pi}{2}\right).
\]
Then $\theta'>\theta_0$, so \cite[Proposition~12 p.~272]{Hal05} gives the weak type \((1,1)\) estimate for $f(\widetilde A_2)$ for every function $f \in \H^\infty_0(\Sigma_{\theta'})$, which extends by the Marcinkiewicz interpolation theorem to a bounded operator $\big(f(\widetilde{A}_2)\big)_p$ on the Banach space \(\L^p(M,\mathbb{C}^N)\), with 
$$
\bnorm{(f(\widetilde A_2))_p}_{\L^p(M,\mathbb{C}^N) \to \L^p(M,\mathbb{C}^N)}
\lesssim
\norm{f}_{\H^\infty(\Sigma_{\theta'})}.
$$ 
Now, choose an angle $\varphi \in
\big(
\max\{\omega_{\sec}(\widetilde{A}_2),\omega_{\sec}(\widetilde{A}_p)\},\theta'\big)$. For any function $g \in \L^2(M,\mathbb{C}^N) \cap \L^p(M,\mathbb{C}^N)$, the consistency of the resolvents gives
$$
f(\widetilde A_p)g
\ov{\eqref{2CauchySec}}{=}
\frac{1}{2\pi \i}\int_{\partial \Sigma_\varphi}
f(z)R(z,\widetilde{A}_p)g \d z  
=
\frac{1}{2\pi \i}\int_{\partial\Sigma_\varphi}
f(z)R(z,\widetilde A_2)g \d z  
\ov{\eqref{2CauchySec}}{=}
f(\widetilde{A}_2)g
=
(f(\widetilde{A}_2))_p g.
$$
By density, \(f(\widetilde A_p)=\big(f(\widetilde A_2)\big)_p\) on the space \(\L^p(M,\mathbb{C}^N)\). Therefore \(\widetilde{A}_p\) admits a bounded \(\H^\infty(\Sigma_{\theta'})\) functional calculus. Since \(\theta' <\frac{\pi}{2}\), this gives the desired conclusion. We should note that \cite[Theorem~3]{Hal05} is stated for sectorial operators with dense range. In the present situation, this causes no difficulty for our use of the result by classical arguments.
 
Finally, the operator $-A_p \ov{\mathrm{def}}{=}
-\Phi_p^{-1}\widetilde A_p\Phi_p$ is precisely the generator of the extended semigroup $(S_t)_{t \geq 0}$ on the space $\L^p(M,E)$, and the bounded $\H^\infty$ functional calculus transfers back through the isometric isomorphism $\Phi_p$. Hence $A_p$ admits a bounded $\H^\infty(\Sigma_{\theta'})$ functional calculus on the Banach space $\L^p(M,E)$ for some angle $\theta' \in (0,\frac{\pi}{2})$.
\end{proof}

\paragraph{Declaration of interest} None.

\paragraph{Competing interests} The author declares that he has no competing interests.


\small

{\footnotesize


\noindent C\'edric Arhancet\\ 
\noindent 6 rue Didier Daurat, 81000 Albi, France\\
URL: \href{http://sites.google.com/site/cedricarhancet}{https://sites.google.com/site/cedricarhancet}\\
cedric.arhancet@protonmail.com\\
ORCID: 0000-0002-5179-6972 
}

\end{document}